\newtheorem{theorem}{Theorem}
\newtheorem{proposition}{Proposition}
\newtheorem{corollary}{Corollary}
\newtheorem{lemma}{Lemma}
\theoremstyle{definition}
\newtheorem{remark}{Remark}
\newtheorem{example}{Example}
\newcommand{\bdm}{\begin{displaymath}}
\newcommand{\edm}{\end{displaymath}}
\newcommand{\bq}{\begin{equation}}
\newcommand{\eq}{\end{equation}}
\newcommand{\bqn}{\begin{equation*}}
\newcommand{\eqn}{\end{equation*}}
\newcommand{\rn}{\mathbb{R}^n}
\newcommand{\eps}{\varepsilon}
\newcommand{\phw}{\tilde \Phi^{wk}}
\newcommand{\norm}[1]{\left\| #1 \right\|}
\newcommand{\mklm}[1]{\left\{ #1 \right\}}
\newcommand{\eklm}[1]{\left\langle #1 \right\rangle}
\renewcommand{\d}{\,d}
\newcommand{\N}{{\mathbb N}}
\newcommand{\Z}{{\mathbb Z}}
\newcommand{\C}{{\mathbb C}}
\newcommand{\R}{{\mathbb R}}
\newcommand{\D}{{\mathcal D}}
\newcommand{\E}{{\mathcal E}}
\newcommand{\F}{{\mathcal F}}
\renewcommand{\H}{{\mathcal H}}
\newcommand{\M}{{\mathcal M}}
\newcommand{\1}{{\bf 1}}
\renewcommand{\epsilon}{\varepsilon}
\renewcommand{\phi}{\varphi}
\renewcommand{\rho}{\varrho}
\newcommand{\Cinft}{{\rm C^{\infty}}}
\newcommand{\CT}{{\rm C^{\infty}_c}}
\renewcommand{\L}{{\rm L}}
\renewcommand{\S}{{\mathcal S}}
\newcommand{\g}{{\bf \mathfrak g}}
\renewcommand{\k}{{\bf \mathfrak k}}
\newcommand{\p}{{\bf \mathfrak p}}
\newcommand{\Ad}{\mathrm{Ad}\,}
\newcommand{\ad}{\mathrm{ad}\,}
\newcommand{\id}{\mathrm{id}\,}
\renewcommand{\det}{\mathrm{det}\,}
\newcommand{\vol}{\text{vol}\,}
\newcommand{\Crit}{\mathrm{Crit}}
\newcommand{\mult}{\mathrm{mult}}
\DeclareMathOperator{\supp}{supp}
\DeclareMathOperator{\tr}{tr}
\DeclareMathOperator{\gd}{\partial}
\newcommand{\e}[1]{\,{\mathrm e}^{#1}\,}
\newcommand{\dbar}{{\,\raisebox{-.1ex}{\={}}\!\!\!\!d}}
\begin{document}

\author{Pablo Ramacher}
\title[Singular equivariant asymptotics and Weyl's law]{Singular equivariant asymptotics and Weyl's law. \\ On the distribution of  eigenvalues of an \\ invariant elliptic operator} 
\address{Pablo Ramacher, Philipps-Universit\"at Marburg, Fachbereich Mathematik und Informatik, Hans-Meer\-wein-Str., 35032 Marburg, Germany}
\email{ramacher@mathematik.uni-marburg.de}
\date{August 11, 2011}

\begin{abstract}
We study the spectrum of an invariant, elliptic, classical pseudodifferential  operator on a closed $G$-manifold $M$,  where $G$ is a compact, connected Lie group acting effectively and  isometrically on $M$. Using resolution of singularities, we  determine the asymptotic distribution of  eigenvalues along the isotypic components, and  relate it with the reduction of  the corresponding Hamiltonian flow, proving that the reduced spectral counting function satisfies Weyl's law, together with an estimate for the remainder.
\end{abstract}

\maketitle

\setcounter{tocdepth}{1}
\tableofcontents

\section{Introduction}

The asymptotic distribution of eigenvalues of an elliptic operator has been object of mathematical research  for a long time. It was first studied by Weyl \cite{weyl} for certain second order differential operators  in Euclidean space using variational techniques, followed by work of Carleman \cite{carleman},  Minakshishundaram and  Pleijel \cite{minakshisundaram-pleijel}, G{\aa}rding \cite{garding}, and Avacumovi\v{c} \cite{avacumovic}.
Later, H\"ormander \cite{hoermander68} and Duistermaat-Guillemin \cite{duistermaat-guillemin75} extended these results to elliptic pseudodifferential operators on compact manifolds within  the theory of Fourier integral operators.  In this paper, we shall consider this problem in the case that additional symmetries are present. 
 
Let $M$ be a compact, connected, $n$-dimensional Riemannian manifold without boundary, $dM$ its volume density, and 
\bqn
P_0: \Cinft(M) \longrightarrow \L^2(M)
\eqn
 an elliptic, classical  pseudodifferential operator of order $m$ on $M$, regarded as an operator in the Hilbert space $\L^2(M)$ of square integrable functions on $M$ with respect to $dM$, its domain being the space $\Cinft(M)$ of smooth functions on $M$. Assume that $P_0$ is positive and symmetric, which implies that  $P_0$ has a  unique self-adjoint extension  $P$. Due to the compactness of $M$, the spectrum of $P$ is discrete. Consider now in addition  a compact, connected Lie group $G$, acting effectively and isometrically on $M$, and assume that $P$ commutes with the regular representation of $G$ in $\L^2(M)$. In this situation, each eigenspace of $P$ becomes a unitary $G$-module, and it is a natural question to ask about the distribution of the spectrum of $P$ along the isotypic components of $\L^2(M)$ in  the decomposition
\bqn
\L^2(M)\simeq \bigoplus_{\chi \in \hat G} \L^2(M)(\chi), 
\eqn
and the way it is related to the reduction of the corresponding Hamiltonian flow. It  is described by  the reduced spectral counting function $
N_\chi(\lambda)=d_\chi \sum _{t \leq \lambda} \mult_\chi(t)$,
where $\mult_\chi(t)$ denotes the multiplicity of the unitary irreducible representation $\pi_\chi$ corresponding to the character $\chi \in \hat G$ in the eigenspace $E_t$ of $P$ belonging to the  eigenvalue $t$. Let $T^\ast M$ be the  cotangent bundle of $M$, $p(x,\xi)$ the principal symbol of $P_0$, and $S^\ast M=\{(x,\xi) \in T^\ast M: p(x,\xi) = 1\}$. In his classical paper  \cite{hoermander68}, H\"ormander showed that the spectral counting function
$
N(\lambda)=\sum_{t\leq \lambda} \dim E_t
$
satisfies Weyl's law
\bq
\label{eq:weyl}
N(\lambda)=\frac {\vol S^\ast M}{n(2\pi)^n} \lambda^{n/m} +O(\lambda^{{(n-1)}{/m}}), \qquad \lambda \to +\infty,
\eq
 and it has been a long-standing open question whether  a similar description for $N_\chi(\lambda)$ can be achieved.   While in the general case of effective group actions  the leading term  was obtained  via heat kernel methods by Donnelly \cite{donnelly78} and Br\"uning--Heintze \cite{bruening-heintze79}, estimates for the remainder are  not accessible via this approach. On the other hand, the derivation of remainder estimates within the framework of Fourier integral operators meets with serious difficulties when singular orbits are present, and  until recently  could only  be obtained for finite group actions, or actions with orbits of the same dimension as in the work of  Donnelly \cite{donnelly78}, Br\"uning--Heintze \cite{bruening-heintze79}, Br\"uning \cite{bruening83}, Helffer--Robert \cite{helffer-robert84, helffer-robert86}, Guillemin--Uribe \cite{guillemin-uribe90}, and El-Houakmi--Helffer \cite{helffer-elhouakmi91}. It was only in  Ramacher \cite{ramacher08} and Cassanas--Ramacher \cite{cassanas-ramacher09} that first partial results towards more general group actions were obtained  within the setting of approximate spectral  projections using resolution of singularities. The goal of  this paper is to   generalize this  approach, and 
give an asymptotic description of $N_\chi(\lambda)$ analogous to   \eqref{eq:weyl}  for general effective group actions  within the theory of Duistermaat, Guillemin, and H\"ormander. 
  
In order to  explain the difficulties in a more detailed way, denote by $Q=(P)^{1/m}$ the $m$-th root of $P$ given by the spectral theorem, which is a classical pseudodifferential operator of order $1$ with principal symbol $q(x,\xi)=p(x,\xi)^{1/m}$. If $0<\lambda_1 \leq \lambda_2 \leq \dots$ are  the eigenvalues of $P$ repeated according to their multiplicity, the eigenvalues of $Q$ are $\mu_j =(\lambda_j)^{1/m}$. Let $\{dE^Q_\mu\}$ be the spectral resolution of $Q$. The starting point of the method of Fourier integral operators is the Fourier transform of the spectral measure
 \bqn 
 U(t)= \int e^{-it\mu} dE^Q_\mu = e^{-itQ}, \qquad t \in \R,
 \eqn
 which constitutes a one-parameter group of unitary operators in $\L^2(M)$. Although $U(t)$ itself is not trace-class, it has a distribution trace given by  the tempered distribution
\bqn 
\tr U(\cdot ): \S(\R)  \ni \rho \longmapsto \int \sum _{j=1}^\infty e^{-it \mu_j}  \rho(t)dt =\sum _{j=1}^\infty \hat \rho(\mu_j) < \infty,
\eqn
which is the Fourier transform of the spectral distribution
\bqn 
\sigma(\mu) = \sum_{j=1}^\infty \delta(\mu-\mu_j).
\eqn
An asymptotic description of the spectrum of $P$ is then attained by studying  the singularities of the distribution kernel of  $U(t)$ and of  $\tr U(\cdot )$  for small $|t|$.  To be more precise, let  $\Omega_{1/2}$ denote the bundle of half-densities over $M$, and  $U_{1/2}$  the operator which assigns to $u_0 \in \Cinft(M,\Omega_{1/2})$ the solution $u\in \Cinft(\R\times M , \Omega_{1/2})$ of the Cauchy problem 
 \bqn
 \big (i^{-1} \gd_t +Q_{1/2}\big )u=0, \qquad u(0,x)=u_0(x),
 \eqn
 where $Q_{1/2}u = \d M^{1/2} \, Q(u \d M^{-1/2})$. 
Then $U_{1/2}:  \Cinft(M , \Omega_{1/2})\rightarrow \Cinft(\R\times M , \Omega_{1/2})$ can be characterized  globally as a Fourier integral operator with kernel  ${\mathcal{U}} \in I^{-1/4}(\R \times M, M; C')$ and canonical relation
\begin{align*} 
C=&\big \{((t,\tau), (x,\xi), (y,\eta)): (x,\xi), (y,\eta) \in T^\ast M \setminus 0, (t,\tau) \in T^\ast \R\setminus 0,  \\  & \tau + q(x,\xi)=0, \, (x,\xi) = \Phi^t(y,\eta)\big \},
\end{align*}
where $\Phi^t$  is the flow in $T^\ast M\setminus 0$ of the Hamiltonian vector field associated to $q$, and $C'=\mklm{((t,\tau), (x,\xi), (y,-\eta)):((t,\tau), (x,\xi), (y,\eta)) \in C} $ \cite{duistermaat-guillemin75}. This implies that  $\hat \sigma$ is a Fourier integral operator as well, and the study of its singularity at $t=0$   leads to the main result
\bqn 
\hat \sigma \big ( \check \rho e^{i(\cdot) \mu}\big )= \sum_{j=1}^\infty \hat \rho (\mu- \mu_j) \sim (2\pi)^{1-n}  \sum_{k=0}^\infty c_k \mu^{n-1-k}, \qquad \mu \to +\infty,
\eqn
for  suitable  $\rho \in \S(\R)$,  where  $\check \rho(t)=\rho(-t)$, with in principle known coefficients $c_k$. For $\mu \to -\infty$, the above expression is rapidly decreasing. From this, \eqref{eq:weyl} follows using a Tauberian theorem.
To obtain a similar description of $N_\chi(\lambda)$,  one needs an asymptotic expansion of  the sum $\sum_{j=1}^\infty m^Q_\chi(\mu_j) \hat \rho (\mu-\mu_j)$ for suitable $\rho \in S(\R)$, where  $m^Q_\chi(\mu_j)=d_\chi {\mult^Q_\chi(\mu_j)}/{\dim E^Q_{\mu_j}}$, ${\mult^Q_\chi(\mu_j)}$ being the multiplicity of the  irreducible representation $\pi_\chi$ in the eigenspace $E^Q_{\mu_j}$ of $Q$ belonging to the eigenvalue $\mu_j$. In this way, we are led to study the singularities of the distribution trace of $P_\chi \circ U(t)$,  where $P_\chi$ denotes the projector onto the $\chi$-isotypic component $\L^2(M)(\chi)$. This trace   is the Fourier transform of 
\bqn 
\sigma_\chi(\mu)=\sum _{j=1}^\infty m^Q_\chi(\mu_j)\, \delta(\mu- \mu_{j}),
\eqn
and it turns out that, when regarding $\hat \sigma_\chi$ as a distribution density on $\R$ of order $1/2$,   $\hat \sigma_\chi= d_\chi \pi_\ast \, \bar \chi \, \Gamma^\ast \, {\mathcal{U}}$, 
where $\pi:\R \times G \times M \rightarrow \R$ is the projection $(t,g,x) \mapsto t$, and $ \Gamma: \R\times G \times M \rightarrow \R \times M \times M$ the mapping $(t,g,x) \mapsto (t,x,gx)$. Both the pushforward $\pi_\ast$ and the pullback $\Gamma^\ast$ 
can be characterized as  Fourier integral operators, but in general,   neither their composition  $\pi_\ast \,  \bar \chi \, \Gamma^\ast$ nor $\hat \sigma_\chi$ have smooth wavefront sets. Indeed, as pointed out in \cite{donnelly78},
\begin{align*}
\mathrm{WF}( \hat \sigma_\chi)=& \{ (t,\tau): \text{ there exist } x,\eta, g \text{ such that } (x,\eta) \in \Omega,  \\ &(x, -g^\ast \eta) =\Phi^t(gx,\eta), 
  \quad \tau +  q(x, -g^\ast \eta) =0\},
\end{align*} 
where $\Omega= \mathbb{J}^{-1}(0)$ denotes the zero level of the canonical symplectic momentum map
 $\mathbb{J}:T^\ast M\rightarrow \g^\ast$. If the underlying group action is not free, $\mathbb{J}$ is no longer a submersion, so that $\Omega$ is not a smooth manifold.  Therefore $\hat \sigma_\chi$  fails to be  a Fourier integral operator in general, so that, a priori,  it is not clear how to describe its singularities by the method of Duistermaat, Guillemin, and H\"ormander.  Ultimately, the difficulties arise from the necessity  to understand the asymptotic behavior of oscillatory integrals of the form
\begin{align*}
I(\mu)&= \int _{T^\ast Y}  \int_{G} e^{i\mu  \Phi(x , \xi,g) }   a( g  x,  x , \xi,g)  \d g \d(T^\ast Y)( x , \xi),  \qquad \mu \to +\infty,  
\end{align*}
via the stationary phase theorem, where $(\kappa,Y)$ are local coordinates on $M$,  $ \d(T^\ast Y)( x , \xi)$ is the canonical volume  density   on $T^\ast Y$, and $dg$ the volume density on $G$ with respect to some left invariant Riemannian metric on $G$, while $a \in \CT(Y \times T^\ast  Y\times G)$ is an amplitude which might also depend on $\mu$, and $\Phi(x, \xi, g) =\eklm{\kappa(x) - \kappa (g x), \xi }$.  For this, it would be necessary that  the critical set of the phase function $\Phi(x,\xi,g)$
 \begin{align*}
 \Crit(\Phi)&=\mklm{( x  ,\xi, g) \in (\Omega  \cap T^\ast Y)\times G:  \,  g \cdot (x,\xi )=(x, \xi)}
\end{align*}
 were a smooth manifold, which, nevertheless, is only true for free group actions. In  the case of general effective actions  the  stationary phase theorem can therefore not immediately be applied to the study of  integrals of the type $I(\mu)$, compare \cite{bruening83}. 
 
  In this paper,  we shall show how to  overcome this obstacle by partially resolving the singularities of  
$
\mathcal{C}= \mklm{(x,\xi,g) \in \Omega \times G: g\cdot (x,\xi) = (x,\xi)}  
 $,
and applying  the stationary phase principle in a suitable resolution space. This will be achieved by constructing a resolution
of the set 
\bqn 
\mathcal{N}=\mklm{(x,g) \in \M: gx =x}, \qquad \M= M \times G,
\eqn
  which is equivalent to a monomialization of its ideal sheaf $I_\mathcal{N}\subset \E_\M$, where $\E_\M$ denotes the structure sheaf of $\M$.  To be more precise, put $X=T^\ast M \times G$, and let $I_\mathcal{C}\subset \E_X$ be  the ideal sheaf of $\mathcal{C}$.
Consider further the local ideal $I_\Phi=(\Phi)$ generated by the phase function $\Phi$, together with its vanishing set $V_\Phi$. The derivative of $I_\Phi$ is given by $D(I_\Phi) = I_{\mathcal{C}|T^\ast Y\times G}$, 
and $ \Crit(\Phi)\subset V_\Phi$.
The main idea is  to construct a  resolution of $\mathcal{N}$, yielding a partial resolution $\mathcal{Z}:\tilde X \rightarrow X$ of $\mathcal{C}$, and a partial
monomialization of $I_\Phi$ according to 
\bqn 
\mathcal{Z}^\ast (I_\Phi) \cdot \E_{\tilde x, \tilde X} = \prod_j\sigma_j^{l_j}   \cdot\mathcal{Z}^{-1}_\ast(I_\Phi) \cdot \E_{\tilde x, \tilde X}, \qquad \tilde x \in \tilde X,
\eqn
 in such a way that $D(\mathcal{Z}^{-1}_\ast(I_\Phi))$ is a resolved ideal sheaf. Here    $  \mathcal{Z}^\ast (I_\Phi)$ denotes the inverse image ideal sheaf, $\mathcal{Z}^{-1}_\ast(I_\Phi)$  the weak transform of $I_\Phi$,while the  $\sigma_j$ are local coordinate functions, and $l_j$ are natural numbers.  
 As a consequence, the phase function factorizes locally according to $\Phi \circ \mathcal{Z} \equiv \prod \sigma_j^{l_j} \cdot  \tilde \Phi^ {wk}$,
and we show  that  the weak transforms $ \tilde \Phi^ {wk}$ have clean critical sets in the sense of Bott \cite{bott56}. An asymptotic description of the integrals $I(\mu)$ can then be obtained by pulling  them back to the resolution space $\tilde X$, and applying  the stationary phase theorem to the weak transforms $\tilde \Phi^{wk}$  with the variables  $\sigma_j$ as parameters. The desingularization of $\mathcal{N}$  will rely on the stratification of $M$ into orbit types, and consist  of a series of monoidal transformations over $\M$ where the centers are successively chosen as isotropy bundles over unions of maximally singular orbits. 

The main result of the present paper is formulated in Theorem \ref{thm:main}. It states that  the reduced spectral counting function satisfies Weyl's law
\bqn 
N_\chi(\lambda)= \frac{d_\chi [{\pi_\chi}_{|H}:1]}{(n-\kappa)(2\pi)^{n-\kappa}}   \mathrm{vol} \, [(\Omega \cap S^\ast M)/G]  \,  {\lambda} ^{\frac{n-\kappa}m }   + O\big (\lambda^{{(n-\kappa-1)}/m} (\log \lambda)^{\Lambda} \big ),\qquad \lambda \to +\infty,
\eqn
provided that $n-\kappa\geq 1$, where   $\kappa$ is the dimension of a $G$-orbit of principal type, $d_\chi$  the dimension of the irreducible representation $\pi_\chi$,  $ [{\pi_\chi}_{|H}:\1] $   the multiplicity of the trivial representation in the restriction of $\pi_\chi$ to a principal isotropy group $H$, and $\Lambda$ a natural number which is bounded by the number of orbit types of the $G$-action on $M$. The paper itself is structured as follows. Section \ref{sec:FIO} describes the theory of Duistermaat, Guillemin and H\"ormander of spectral asymptotics in the equivariant setting, and 
 explains how the problem of determining $N_\chi(\lambda)$ reduces to the study of integrals of the  type $I(\mu)$ as $\mu \to +\infty$. Section \ref{sec:CGMM} contains some general remarks on compact group actions and the momentum map, followed by the computation of the critical set of the phase function $\Phi$. Singular asymptotics are discussed in Section  \ref{sec:SPRS}, after a  brief account on the  stationary phase principle and resolution of singularities. 
 In Section \ref{sec:DP}, the desingularization process is carried out, giving way  in Sections \ref{sec:MT1} and \ref{sec:MT2} to the phase analysis of the weak transforms $\tilde \Phi^{wk}$.  Asymptotics for integrals of the type $I(\mu)$ are then obtained in Section \ref{sec:INT}, while the proof of the main result is given in Section \ref{sec:MR}.

Singular equivariant asymptotics with reminder estimates were previously  obtained   by Br\"uning-Heintze \cite{bruening-heintze79} and Duistermaat-Kolk-Varadarajan \cite{DKV1} for   the spectrum of   a discrete,  uniform subgroup $\Gamma$ of a connected, semisimple Lie group $G$ with maximal compact subgroup $K$. In the first case, a reminder estimate for the Gelfand-Gangolli-Wallach formula is given,  which describes the distribution of eigenvalues of the Casimir operator along the isotypic components of $\L^2(\Gamma\setminus G)$. For torsion-free $\Gamma$, this corresponds to  the distribution of eigenvalues of the Bochner-Laplace operator on the spaces $\L^2(\Gamma\setminus G /\,  K, E^\chi)$, where $E^\chi$ denotes the vector bundle on $\Gamma\setminus G /\,  K$ induced by an  arbitrary $\chi\in \hat K$. 
 In the second case, and under the assumption that $\Gamma$ has no torsion, asymptotics for the spectral counting function of  the Laplace-Beltrami operator $\Delta$ on $\L^2(\Gamma\setminus G /\,  K)\simeq \L^2(\Gamma\setminus G)^K$ are derived. This  amounts to an asymptotic description of $N_\chi(\lambda)$ for $\Delta$ on $\L^2(\Gamma \setminus G)$ in case that $\chi$ corresponds to  the trivial representation, and  Theorem \ref{thm:main}  generalizes this result to arbitrary $\chi \in \hat K$, and subgroups $\Gamma$ with torsion, as well as arbitrary invariant, elliptic, classical pseudodifferential operators. This is explained in Section \ref{sec:G}.
 
\medskip

{\bf{Acknowledgments.}} The author wishes to express his gratitude to  Mikhail Shubin for introducing him to this subject, and to his former collaborator Roch Cassanas. He also would like to thank Richard Melrose, Werner M\"uller and Mich\`{e}le Vergne for valuable  conversations. This research was completed while the author was a member of the Mathematical Institute of G\"{o}ttingen University, and financed by the grant RA 1370/2-1 of the German Research Foundation (DFG).

\section{Fourier integral operators and equivariant asymptotics}
\label{sec:FIO}

\subsection*{Generalities}

 Let $M$ be a compact, connected, $n$-dimensional Riemannian manifold,  and $G$ a compact, connected Lie group of dimension $d$, acting effectively and isometrically on $M$. Denote the canonical volume density on $M$ by $dM$   \cite{sternberg}, page 112, and choose a left invariant Riemannian metric on $G$ with volume density $dg$.
  Let  $P_0$ be an elliptic,  classical pseudodifferential operator of order $m$ on $M$, regarded as an operator in $\L^2(M)$ with domain $\Cinft(M)$, and assume that $P_0$ is positive and symmetric. Then $P_0$ has a  unique self-adjoint extension $P$ with the $m$-th Sobolev space $H^m(M)$ as domain. Moreover, the spectrum of $P$ is  discrete. Assume now that $P$ commutes with the regular representation of $G$ in $\L^2(M)$ given by 
 \bqn
 T(g) \phi(x) = \phi(g^{-1} x), \qquad g \in G.
 \eqn
 Then every eigenspace of $P$ becomes a unitary $G$-module, and it is natural to  ask about the distribution of the spectrum of $P$ along the isotypic components of $\L^2(M)$, which is described by  the reduced spectral counting function $N_\chi(\lambda)$ introduced in the previous section.  We shall study this problem within the theory of Fourier integral operators developed by H\"ormander, Duistermaat and Guillemin \cite{hoermander68, duistermaat-guillemin75}, and consider for this  the $m$-th root $Q=(P)^{1/m}$
 of $P$ given by the spectral theorem. By Seeley, $Q$ is a classical pseudodifferential operator of order $1$ with principal symbol $q(x,\xi)=p(x,\xi)^{1/m}$ and  domain  $H^1(M)$. If $0<\lambda_1 \leq \lambda_2 \leq \dots$ are  the eigenvalues of $P$ repeated according to their multiplicity, the eigenvalues of $Q$ are $\mu_j =(\lambda_j)^{1/m}$. Denote by $\{dE^Q_\mu\}$ the spectral resolution of $Q$. The starting point of the method developed by H\"ormander, and which goes back to work of Avacumovi\v{c} and Lewitan, is the Fourier transform of the spectral measure
 \bqn 
 U(t)= \int e^{-it\mu} dE^Q_\mu = e^{-itQ}, \qquad t \in \R,
 \eqn
 which constitutes a one-parameter group of unitary operators in $\L^2(M)$. Now, if  $\mklm{e_j}$ denotes an orthonormal basis of eigenfunctions in $\L^2(M)$ of $Q$ corresponding to the eigenvalues $\mklm{\mu_j}$, then
  \bq
  \label{eq:vale}
U(t) u =\sum _{j=1}^\infty e^{-it \mu_j}  (u,e_j)_{\L^2} \, e_j,
\eq
where $u \in \Cinft(M)$, and   $u \in H^s(M)$, $ s\in \Z$, 
 the sum converging in the $\Cinft$-, and $H^s$-topology, respectively, see  \cite{shubin}, page 151. Thus, the distribution kernel of the operator $U(t):\Cinft(M) \rightarrow \Cinft(M)\subset \D'(M)$ can be written as 
\bqn 
U(t,x,y)=\sum _{j=1}^\infty e^{-it \mu_j}  e_j(x) \, \overline{e_j(y)} \, \in \D'(M\times M). 
\eqn
Although $U(t)$ itself is not trace-class, it has a distribution trace given by  the tempered distribution
\bqn 
\tr U(\cdot): \S(\R)  \ni \rho \longmapsto \int \sum _{j=1}^\infty e^{-it \mu_j}  \rho(t)dt =\sum _{j=1}^\infty \hat \rho(\mu_j) < \infty.
\eqn
Indeed, for $N_0 \in \N$, $P^{-N_0}$ is a classical pseudodifferential operator of order $-N_0m$. If $N_0m>n$, its kernel is continuous, and $P^{-N_0}$ is Hilbert-Schmidt, so that $\sum_{j=1}^\infty \lambda_{j}^{-2N_0}<\infty$.
Moreover,  for  $\rho \in \S(\R)$ the infinite sum $\sum _{j=1}^\infty \hat \rho(\mu_j)  e_j(x) \, \overline{e_j(y)}$ converges in $\Cinft(M\times M)$, see \cite{grigis-sjoestrand}, page 133. Because the Fourier transform is an isomorphism in $\S(\R)$, we conclude that $\tr U(t)= \sum _{j=1}^\infty e^{-it \mu_j}$ is the Fourier transform of the spectral distribution
\bqn 
\sigma(\mu) = \sum_{j=1}^\infty \delta(\mu-\mu_j),
\eqn
proving at the same time that $\sigma$ is tempered. An asymptotic description of the spectrum of $P$ is then attained by studying  the singularities of  $U(t,x,y)$ and $\tr U(\cdot)$  for small $|t|$. For this, H\"ormander locally approximated the operator $U(t)$  by  Fourier integral operators, which solve the Cauchy problem approximately. More precisely, let $U_{1/2}$ be the operator which assigns to $u_0 \in \Cinft(M,\Omega_{1/2})$ the solution $u \in \Cinft(\R\times M , \Omega_{1/2})$ of the hyperbolic Cauchy problem
 \bqn
 \big (i^{-1} \gd_t +Q_{1/2}\big )u=0, \qquad u(0,x)=u_0(x),
 \eqn
 where $\Omega_{1/2}$ denotes the bundle of half-densities over $M$, and $Q_{1/2} u = \d M^{1/2} Q( u \d M^{-1/2})$.  It can then be shown \cite{duistermaat-guillemin75}, Theorem 1.1,  that   $U_{1/2}:  \Cinft(M , \Omega_{1/2})\rightarrow \Cinft(\R\times M , \Omega_{1/2})$ can be characterized  globally as a Fourier integral operator with kernel $\mathcal{U} \in I^{-1/4}(\R \times M, M, C')$ and  canonical relation
\begin{align}
\label{eq:canrel}
\begin{split} 
C=&\big \{((t,\tau), (x,\xi), (y,\eta)): (x,\xi), (y,\eta) \in T^\ast M \setminus 0, (t,\tau) \in T^\ast \R\setminus 0,  \\  & \tau + q(x,\xi)=0, \, (x,\xi) = \Phi^t(y,\eta)\big \},
\end{split}
\end{align}
where $\Phi^t$  is the flow in $T^\ast M\setminus 0$ of the Hamiltonian vector field associated to $q$. This implies that the Fourier transform $U_{1/2}(t):  \Cinft(M , \Omega_{1/2})\rightarrow \Cinft(M , \Omega_{1/2})$  of the spectral measure of $Q_{1/2}$  is a Fourier integral operator of order $0$ defined by the canonical transformation $\Phi^t$, and  that $\hat \sigma$ can be characterized as a Fourier integral operator too, see \cite{duistermaat-guillemin75}, pp. 66. Moreover,
\bqn 
\mathrm{sing} \supp U_{1/2} =\mklm{(t,x,y) \in \R \times M \times M: (x,\xi) =\Phi^t(y,\eta) \text{ for suitable } \xi \in T^\ast_xM\setminus 0, \, \eta \in T^\ast_yM\setminus 0},
\eqn 
and similarly, $WF(\hat \sigma) \subset \mklm{(t,\tau): \tau <0 \text{ and } (x,\xi)=\Phi^t(x,\xi) \text{ for some } (x,\xi)}$, 
so that $\hat \sigma$ is smooth on the complement of the set of periodic orbits. The study of the singularity of $\hat \sigma =\tr U$ at $t=0$ then leads to the main result of H\"ormander
\bqn 
\hat \sigma \big ( \check \rho e^{i(\cdot) \mu}\big )= \sum_{j=1}^\infty \hat \rho (\mu- \mu_j) \sim (2\pi)^{1-n}  \sum_{k=0}^\infty c_k \mu^{n-1-k}, \qquad \mu \to +\infty,
\eqn
for suitable $\rho \in \S(\R)$, $\check \rho(t) =\rho(-t)$, and with in principle known coefficients $c_k$, while for   $\mu \to -\infty$  the expression  is rapidly decreasing. From this,  Weyl's classical law \eqref{eq:weyl} follows by a  Tauberian theorem. 

Let us now come back to our initial question. To obtain a description of $N_\chi(\lambda)$,  and to understand the way it is related to the reduction of the corresponding Hamiltonian flow \cite{guillemin-uribe90}, we would like to find an asymptotic expansion of 
$\sum_{j=1}^\infty m^Q_\chi(\mu_j) \hat \rho (\mu-\mu_j)$
 for suitable $\rho \in S(\R)$, where  $m^Q_\chi(\mu_j)=d_\chi {\mult^Q_\chi(\mu_j)}/{\dim E^Q_{\mu_j}}$. This  amounts to study the singularities of 
$
\sum _{j=1}^\infty m^Q_\chi(\mu_j)\,  e^{-it\mu_{j}} \in \S'(\R).
$
It corresponds to the distribution trace of $P_\chi \circ U(t)$,  $P_\chi$ being the projector onto the $\chi$-isotypic component $\L^2(M)(\chi)$, and is the Fourier transform of 
\bqn 
\sigma_\chi(\mu)=\sum _{j=1}^\infty m^Q_\chi(\mu_j)\, \delta(\mu- \mu_{j}).
\eqn
In what follows, denote by $\pi:\R \times G \times M \rightarrow \R$ the projection $(t,g,x) \mapsto t$, and by $ \Gamma: \R\times G \times M \rightarrow \R \times M \times M$ the mapping $(t,g,x) \mapsto (t,x,gx)$. The global theory of Fourier integral operators \cite{donnelly78}, Lemma 7.1,  implies that  the transposed of the pullback, or 
pushforward $\pi_\ast:\D'(\R\times G \times M)\rightarrow \D'(\R)$, can be characterized as a Fourier integral operator  of class $\mathrm{I}^{-n/4-d/4}(\R, \R \times G \times M, C_1)$ with canonical relation 
\bqn 
C_1=\mklm{\big (t,\tau);(t,\tau),(g,0),(x,0)\big )}.
\eqn
It amounts to integration over $M\times G$.
Similarly, the pullback $\Gamma^\ast:\Cinft (\R \times M \times M)\rightarrow \Cinft(\R\times G \times M)$ constitutes a Fourier integral operator of class $\mathrm{I}^{n/4-d/4}(\R\times G\times M, \R \times M \times M, C_2)$ with canonical relation 
\bqn 
C_2=\mklm{\big (t,\tau),(g,x^\ast \xi_2),(x,\xi_1+g^\ast \xi_2);(t,\tau),(x,\xi_1),(gx,\xi_2)\big )},
\eqn
where the map $x:G\rightarrow M$ is given by $g \mapsto gx$, and the map $g:M \rightarrow M$ by $x \mapsto gx$. 
A computation then shows that if we regard $\hat \sigma_\chi$ as a distribution density on $\R$ of order $1/2$, and $\pi_\ast$ and $\Gamma^\ast$ as maps between half densitites,
\bqn 
\hat \sigma_\chi= \sum _{j=1} ^\infty m^Q_\chi(\mu_j) \, e ^{-i(\cdot) \mu_j} =d_\chi \pi_\ast \, \bar \chi \, \Gamma^\ast \, \mathcal{U},
\eqn
compare \cite{duistermaat-guillemin75}, page 66, and \cite{donnelly78}, Section 7. Now, although  $\pi_\ast $, $\bar \chi \, \Gamma^\ast$, and $U_{1/2}$ are Fourier integral operators, their composition is not necessarily a Fourier integral operator. Indeed, the composition of the canonical relations of $\pi_\ast$ and $\Gamma^\ast$ reads
\begin{align*} 
C_1 \circ C_2 &= \mklm{ \big ((t,\tau);(t,\tau), (x,\xi_1),(gx, \xi_2) \big ): x^\ast \xi_2=0, \,  \xi_1+g^\ast \xi_2=0}. 
\end{align*}
But $x^\ast \xi_2=0$ means that $\xi_2 \in \mathrm{Ann} \, T_x (G\cdot x)$. As will be explained in the next section, this is equivalent to $(x,\xi_2) \in \Omega= \mathbb{J}^{-1}(0)$, where
$
\mathbb{J}:T^\ast M\rightarrow \g^\ast
$
 is the canonical symplectic momentum map, and we obtain
\begin{align*}
C_1 \circ C_2  \circ C=& \{ (t,\tau): \text{ there exist } x,\eta, g \text{ such that } (x,\eta) \in \Omega,  \\ &(x, -g^\ast \eta) =\Phi^t(gx,\eta), 
  \quad \tau +  q(x,-g^\ast \eta) =0\}.
\end{align*}
The singularities of $\hat \sigma_\chi$ are therefore determined by the restriction of $\Phi^t$ to  $\Omega$. Since for general effective group actions, the zero level $\Omega$ is not smooth, neither $C_1 \circ C_2$ nor $C_1 \circ C_2  \circ C$ are smooth submanifolds in this case.   Consequently,   neither $\pi_\ast \, \bar \chi \, \Gamma^\ast$ nor  $\hat \sigma_\chi$ are Fourier integral operators in general. This faces us with serious difficulties when trying  to study the singularities of $\hat \sigma_\chi$ within the theory of H\"ormander, Duistermaat, and Guillemin.

\subsection*{A trace formula}

In what follows, we would like to understand the main singularity of $\hat \sigma_\chi$ at $t=0$ in greater detail. To this end, we shall first express 
\bqn 
\hat \sigma_\chi (\check \rho e^{i(\cdot) \mu})=\sum_{j=1}^\infty m^Q_\chi(\mu_j) \hat \rho (\mu-\mu_j), \qquad \rho \in \S(\R),
\eqn 
as the $\L^2$-trace of a certain operator. Observe that  $\sum_{j=1}^\infty \hat \rho(\mu_j) e_j(x) \overline{e_j(y)} \in \Cinft(M\times M)$
is the Schwartz kernel of the bounded operator
\bqn 
\int_{-\infty}^{+\infty} \rho(t) U(t) dt: \L^2(M) \longrightarrow \L^2(M),
\eqn
which is defined as a Bochner integral. It is  of $\L^2$-trace class, since its kernel is square integrable over $M \times M$. Therefore
\bq
\label{op}
P_\chi \circ \int_{-\infty}^{+\infty} \rho(t) U(t) dt = \int_{-\infty}^{+\infty} \rho(t) P_\chi \circ U(t) dt
\eq 
must be of trace class, too, where 
\bqn 
P_\chi= d_\chi \int_G \overline{\chi(g)} T(g) \d g
\eqn 
denotes the projector onto the isotpyic component $\L^2(M)(\chi)$, and $d_\chi$ the dimension of the irreducible representation corresponding to the character $\chi\in \hat G$. We  assert that   kernel of the operator \eqref{op} is given by $\sum_{j=1}^\infty \hat \rho(\mu_j) P_\chi e_j(x) \overline{e_j(y)} \in \Cinft(M\times M)$. Indeed, by choosing the eigenfunctions $\mklm{e_j}$ according to the decomposition of the eigenspaces of $Q$ into isotypic components, we can assume that $P_\chi e_j= 0$ if $e_j \notin \L^2(M)(\chi)$, and $P_\chi e_j =e_j$ otherwise. By Sobolev's inequality we have 
\bqn 
\norm{P_\chi e_j}_{C^k} \leq \norm{e_j}_{C^k} \leq c' \norm{ e_j}_{H^{k+n+1}} \leq c'' \norm{ Q^{k+n+1} e_j}_{L^{2}} \leq c'' \mu_j^{k+n+1},
\eqn
showing that $\sum_j \hat \rho(\mu_j) P_\chi e_j(x) \overline{e_j(y)}$ converges in $ \Cinft(M\times M)$, and with \eqref{eq:vale}  one computes
\begin{align*}
 \int_{-\infty}^{+\infty} \rho(t) P_\chi \circ U(t) dt \, u(x)&=  \int_{-\infty}^{+\infty} \rho(t) \sum_{j=1}^\infty e^{-it\mu_j} (u,e_j)_{\L^2} P_\chi e_j(x) dt \\
 &= \int_M \,  u(y) \sum_{j=1}^\infty   \int_{-\infty}^{+\infty}  \rho(t)  e^{-it\mu_j} P_\chi e_j(x) \overline{e_j(y)} \d t \d M(y), \qquad  u \in \L^2(M),
\end{align*}
everything being absolutely convergent. As a consequence,
\bqn 
\tr \int_{-\infty}^{+\infty} \rho(t) P_\chi \circ U(t) dt =\sum _{j=1}^\infty \hat \rho (\mu_j) (P_\chi e_j, e_j)_{L^2} = \sum _{j=1}^\infty \hat \rho (\mu_j) m^Q_\chi(\mu_j) = \hat \sigma_\chi(\rho), 
\eqn
and we obtain the following $\L^2$-trace formula, which was already derived in \cite{bruening83}.

\begin{lemma}
Let $\rho \in \S(\R)$. Then 
\bq
\hat \sigma_\chi (\rho e^{i(\cdot) \mu})= \tr \int_{-\infty}^{+\infty} \rho(t)  e^{it\mu} P_\chi \circ e^{-itQ} dt =\tr  d_\chi \int_{-\infty}^{+\infty}  \int _G\rho(t)  e^{it\mu}\overline{\chi(g)} T(g)  \circ e^{-itQ} \d g \d t.
\eq
\end{lemma}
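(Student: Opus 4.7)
The plan is to follow precisely the outline indicated in the discussion preceding the lemma: identify the operator on the right-hand side as a trace-class operator with smooth Schwartz kernel, and compute its trace by integrating along the diagonal. Replacing $\rho(t)$ by $\rho(t)e^{it\mu}$ only translates the spectral argument of $\hat\rho$, so it suffices to prove
\[
\tr \int_{-\infty}^{+\infty} \rho(t)\, P_\chi\circ e^{-itQ}\,dt \; = \; \sum_{j=1}^\infty \hat\rho(\mu_j)\, m^Q_\chi(\mu_j) \; = \; \hat\sigma_\chi(\rho), \qquad \rho \in \S(\R);
\]
the second equality in the lemma then follows by inserting the definition $P_\chi=d_\chi\int_G \overline{\chi(g)}T(g)\,dg$ under the trace and exchanging the $g$- and $t$-integrations (legitimate by absolute convergence).

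First, I would fix an orthonormal basis $\{e_j\}$ of $\L^2(M)$ of eigenfunctions of $Q$, adapted to the decomposition of each eigenspace $E^Q_{\mu_j}$ into its isotypic components, so that $P_\chi e_j=e_j$ whenever $e_j\in\L^2(M)(\chi)$ and $P_\chi e_j=0$ otherwise. Using \eqref{eq:vale}, the Bochner integral $\int \rho(t)P_\chi U(t)\,dt$ defines a bounded operator on $\L^2(M)$ whose candidate Schwartz kernel is
\[
K(x,y)=\sum_{j=1}^\infty \hat\rho(\mu_j)\, P_\chi e_j(x)\,\overline{e_j(y)}.
\]

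Next, I would show that this series converges in $\Cinft(M\times M)$. The Sobolev bound $\norm{P_\chi e_j}_{C^k}\leq c''\mu_j^{k+n+1}$ already recorded before the lemma, combined with the rapid decay of $\hat\rho$ on the positive spectrum $\{\mu_j\}$, yields absolute $\Cinft$-convergence. Testing against an arbitrary $u\in\L^2(M)$ and interchanging the sum with the $t$-integral and the $\L^2$-pairing—all of which is justified by this uniform bound—identifies $K$ with the kernel of the operator. Since $K$ is smooth on the compact manifold $M\times M$, the operator is of trace class and
\[
\tr \int \rho(t)\,P_\chi\circ U(t)\,dt \;=\; \int_M K(x,x)\,dM(x)\;=\;\sum_{j=1}^\infty \hat\rho(\mu_j)(P_\chi e_j,e_j)_{\L^2}\;=\;\sum_{j=1}^\infty \hat\rho(\mu_j)\, m^Q_\chi(\mu_j).
\]

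No serious obstacle is anticipated, since every step has been preassembled in the discussion preceding the statement. The only points requiring care are the interchange of the infinite sum with the Bochner integral in $t$ and with the inner product in $x$, which follow from the $\Cinft$-absolute convergence above, and the verification of the multiplicity normalization $(P_\chi e_j,e_j)_{\L^2}=m^Q_\chi(\mu_j)$, which is guaranteed by the choice of basis compatible with the isotypic decomposition together with the factor $d_\chi$ already built into the definition of $P_\chi$.
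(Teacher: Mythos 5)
Your proposal is correct and follows essentially the same route as the paper, which proves the lemma in the discussion preceding its statement: choose the eigenbasis adapted to the isotypic decomposition, show $\sum_j \hat\rho(\mu_j)P_\chi e_j(x)\overline{e_j(y)}$ converges in $\Cinft(M\times M)$ via the Sobolev bound, identify it as the kernel of the trace-class operator, and take the trace along the diagonal. The only small imprecision is your termwise claim $(P_\chi e_j,e_j)_{\L^2}=m^Q_\chi(\mu_j)$: with the adapted basis the left side is $0$ or $1$ while $m^Q_\chi(\mu_j)=d_\chi\mult^Q_\chi(\mu_j)/\dim E^Q_{\mu_j}$ is generally neither, and the required equality holds only after summing over each eigenspace, both totals being $d_\chi\mult^Q_\chi(\mu_j)$.
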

\qed

Let us now recall that   $U_{1/2}:  \Cinft(M , \Omega_{1/2})\rightarrow \Cinft(\R\times M , \Omega_{1/2})$ can be characterized  globally as a Fourier integral operator of class $I^{-1/4}(\R \times M, M, C')$ with canonical relation given by \eqref{eq:canrel}. This means that for each coordinate patch $(\kappa, Y)$, and sufficiently small $t \in (-\delta, \delta)$, the kernel of $U_{1/2}(t)$ can be described locally as an oscillatory integral of the form
\bqn 
\tilde U (t,\tilde x, \tilde y)= \int_{\R^n} e^{i( \psi(t,\tilde x, \eta) - \eklm{\tilde y, \eta } ) } a ( t, \tilde x, \eta) \dbar \eta
\eqn 
on any compactum in $Y \times Y$, where  $\tilde x, \tilde y \in \tilde Y = \kappa(Y)\subset \R^n$, and $a \in \mathrm{S}^0_{phg}$ is a classical symbol with $a(0, \tilde x, \eta)=1$, while $\psi(t,\tilde x, \eta) - \eklm{\tilde y , \eta}$ is the defining phase function of $C$ in the sense that 
\bqn 
C'= \mklm{(t,\gd \psi/\gd t),(  \tilde x, \gd \psi/ \gd \tilde x),(   \gd \psi/ \gd \eta, -\eta)},
\eqn 
see \cite{hoermanderIV}, page 254. Here we employed the notation $\dbar \eta= (2\pi)^{-n} \d \eta$, $d\eta$ being Lebesgue measure in $\rn$. 
Since  $\tau + q(x,\xi)=0$ on $C$, and $(\tilde x, \gd \psi/ \gd \tilde x)= (\gd \psi/\gd \eta, \eta)$ for $t=0$, we deduce $d_{\tilde x, \eta} \psi ( 0, \tilde x, \eta) = d_{\tilde x, \eta} \eklm{\tilde x, \eta}$, so that $\psi$ is the solution of  the Hamilton-Jacobi problem
\bqn 
\frac {\gd \psi} { \gd t } + q \Big (x, \frac{\gd \psi}{\gd \tilde x}\Big )=0 , \qquad  \psi( 0, \tilde x, \eta) = \eklm{\tilde x, \eta},
\eqn  
$\psi$ being homogeneous of degree $1$. To construct an approximation of $U(t): \L^2(M) \rightarrow \L^2(M)$, let $\mklm{(\kappa_\gamma, Y_\gamma)}$ be an atlas  for $M$,  $\mklm{f_\gamma}$ a corresponding partion of unity, and  
\bqn 
\hat v (\eta) = \int_{\R^n} e^{-i\langle \tilde y, \eta \rangle} v(\tilde y) \, d\tilde y, \qquad v \in \CT(\tilde Y_\gamma),
\eqn
  the Fourier transform of $v$. Write $ (\kappa_\gamma^{-1})^\ast  \d M= \beta_\gamma \d \tilde y$, and 
denote by  $\tilde U_\gamma(t)$ the operator
\bqn 
[\tilde U_\gamma(t)v] (\tilde x)= \int _{\R^n} e ^{i\psi_\gamma(t,\tilde  x, \eta)} a_\gamma (t, \tilde x, \eta) \widehat{v\beta_\gamma}(\eta) \dbar \eta, 
\eqn 
$a_\gamma$ and $\psi_\gamma$ being as described above, and set $\bar U_\gamma(t) u = [ \tilde U_\gamma(t) (u \circ \kappa_\gamma^{-1})] \circ \kappa_\gamma$, $ u \in \CT( Y_\gamma)$,  so that we obtain the diagram 
\begin{displaymath}
\begin{CD} 
\CT(Y_\gamma)       @>{\bar U_\gamma(t)}>>   \Cinft(Y_\gamma)               \\
@A {\kappa_\gamma^\ast}AA @AA {\kappa_\gamma^\ast}A\\
 \CT(\tilde Y_\gamma)  @> {\tilde U_\gamma(t)}>>  \Cinft(\tilde Y_\gamma)       
\end{CD}
\end{displaymath} 
 Consider further test functions $\bar f_\gamma \in \CT( Y_\gamma)$ satisfying $\bar f_\gamma \equiv 1$ on $\supp f_\gamma$, and define
\bqn 
\bar U(t) = \sum _\gamma F_\gamma \, \bar U_\gamma(t) \, \bar F_\gamma, 
\eqn
where  $F_\gamma$, $\bar F_\gamma$ denote the multiplication operators corresponding to  $f_\gamma$ and $\bar f_\gamma$, respectively. Then  the result of H\"ormander implies that 
\bq
\label{eq:R(t)}
R(t) = U(t) -\bar U(t) \, \text{is an operator with smooth kernel,}
\eq 
compare \cite{grigis-sjoestrand}, page 134.  Next, one computes  for $u \in \Cinft(M)$
\begin{align*}
F_\gamma \bar U_\gamma(t) \bar F_\gamma u (x) &= f_\gamma(x) [ \tilde U_\gamma(t) ( \bar f_\gamma u \circ \kappa^{-1}_\gamma)] \circ \kappa_\gamma(x)\\ 
& = f_\gamma(x) \int_{\R^n} e^{i\psi_\gamma ( t, \kappa_\gamma(x),\eta)} a_\gamma(t, \kappa_\gamma(x), \eta) [\widehat{\beta_\gamma (\bar f_\gamma u \circ \kappa^{-1}_\gamma)}] (\eta) \dbar \eta \\
&=\int_{\tilde Y_\gamma} \int_{\R^n} f_\gamma(x) e^{i[\psi_\gamma(t,\kappa_\gamma(x), \eta)- \eklm{ \tilde y,\eta}]} a_\gamma(t, \kappa_\gamma(x), \eta)  (\bar f_\gamma u) (\kappa_\gamma^{-1} ( \tilde y )) \beta_\gamma(\tilde y) d\tilde y \, \dbar \eta \\
&= \int _{Y_\gamma} \Big [ f_\gamma(x) \int _{\R^n}   e^{i[\psi_\gamma(t,\kappa_\gamma(x), \eta)- \eklm{ \kappa_\gamma(y),\eta}]} a_\gamma(t, \kappa_\gamma(x), \eta) \dbar \eta \bar f _\gamma(y) \Big ] u(y) dM(y),
\end{align*}
where the last two expressions are oscillatory integrals with suitable regularizations.
With \eqref{eq:R(t)} and the previous lemma we therefore obtain for $\hat \sigma_\chi (\rho e^{i(\cdot) \mu}) $ the expression 
\begin{gather*}
 \frac{d_\chi}{(2\pi)^n} \sum _\gamma  \int_{-\infty}^{+\infty}  \int _G  \int_{T^\ast Y_\gamma} \rho(t)  e^{it\mu}\overline{\chi(g)} f_\gamma(g^{-1} x) e^{i[\psi_\gamma(t,\kappa_\gamma(g^{-1} x), \eta)- \eklm{ \kappa_\gamma(x),\eta}]} a_\gamma(t, \kappa_\gamma(g^{-1}x), \eta) \\  \bar f _\gamma(x)  d(T^\ast Y_\gamma)(x, \eta) \d g \d t + O(|\mu|^{-\infty}),
 \end{gather*}
where  $ d(T^\ast Y_\gamma)(x, \eta)$ denotes the canonical volume density on $T^\ast Y_\gamma$, and $\rho \in \CT(-\delta, \delta)$.  After  the substitution $x'=g^{-1}x$ we get the following 
\begin{corollary}
For $\rho \in \CT(-\delta, \delta)$ one has the equality
\begin{gather*}
\hat \sigma_\chi (\rho e^{i(\cdot) \mu}) 
=  \frac{d_\chi}{(2\pi)^n} \sum _\gamma  \int_{-\delta}^{+\delta}  \int _G  \int_{T^\ast  Y_\gamma } e^{i\big [\psi_\gamma(t,\kappa_\gamma(x), \eta)- \eklm{ \kappa_\gamma( g x),\eta}+t \mu \big ]}  \rho(t)  \overline{\chi(g)} f_\gamma(x)  \\  a_\gamma(t, \kappa_\gamma(x) , \eta)   \bar f _\gamma (g x)  J_\gamma(g, x)  d(T^\ast Y_\gamma)( x, \eta) \d g \d t + O(|\mu|^{-\infty}),
\end{gather*}
where $ J_\gamma(g, x)$ is a Jacobian. 
\end{corollary}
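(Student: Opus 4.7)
The proof is essentially the assembly of identities already established in the preceding discussion, together with a single change of variables. The plan is as follows. Starting from the $\L^2$-trace formula of the Lemma, I would first invoke \eqref{eq:R(t)} to replace $U(t)$ by the local approximation $\bar U(t)$; since $R(t) = U(t) - \bar U(t)$ has a smooth kernel, the function $\tr(T(g) R(t))$ is smooth in $(g,t)$ with compact $t$-support (owing to $\rho \in \CT(-\delta,\delta)$), so integration by parts in $t$ against $e^{it\mu}$ produces the $O(|\mu|^{-\infty})$ remainder.

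Next, I would compute the $\L^2$-trace of $T(g) \circ F_\gamma \bar U_\gamma(t) \bar F_\gamma$ by evaluating its Schwartz kernel on the diagonal. Since $T(g)$ acts on the first argument by $(T(g) K)(x,y) = K(g^{-1}x,y)$, the diagonal value at $x$ equals the kernel of $F_\gamma \bar U_\gamma(t) \bar F_\gamma$ at the off-diagonal point $(g^{-1}x, x)$. Substituting the explicit oscillatory representation of this kernel already computed in the text, and summing over $\gamma$, yields precisely the pre-substitution identity displayed immediately above the statement of the corollary.

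Finally, I would perform the change of variables $x' = g^{-1}x$ in the inner integral over $T^\ast Y_\gamma$ for each fixed $g$. In the local coordinates $\tilde x = \kappa_\gamma(x)$ this substitution is realized by $\tilde x = (\kappa_\gamma \circ L_g \circ \kappa_\gamma^{-1})(\tilde x')$, with $L_g$ denoting left translation by $g$, and produces a Jacobian $J_\gamma(g,x')$ encoding the transformation of $d(T^\ast Y_\gamma)$ under this map; the fibre coordinate $\eta$ is held fixed. After relabeling $x' \to x$, the formula of the corollary follows. The only delicate point is that for generic $g$ the set $g Y_\gamma$ is not contained in $Y_\gamma$, so that the coordinate change is a priori defined only on $\kappa_\gamma(Y_\gamma \cap g^{-1} Y_\gamma)$; however, the cutoffs $f_\gamma(x')$ and $\bar f_\gamma(gx')$ confine the integrand to $\mklm{x' \in Y_\gamma : gx' \in Y_\gamma}$, where all quantities are well-defined, allowing the integration to be extended trivially to $T^\ast Y_\gamma$.
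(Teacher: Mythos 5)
Your proposal is correct and follows essentially the same route as the paper: replace $U(t)$ by the local parametrix $\bar U(t)$ using \eqref{eq:R(t)} (the smooth kernel of $R(t)$ together with $\rho \in \CT(-\delta,\delta)$ giving the $O(|\mu|^{-\infty})$ term), compute $\tr\big(T(g)\circ F_\gamma \bar U_\gamma(t)\bar F_\gamma\big)$ by evaluating the kernel at $(g^{-1}x,x)$ and integrating over $M$, and then perform the substitution $x'=g^{-1}x$ with $\eta$ fixed, the Jacobian $J_\gamma(g,x)$ absorbing the change of the base measure. Your remark on the support of the cutoffs $f_\gamma(x')\bar f_\gamma(gx')$ justifying the coordinate change is a correct, if tacit, point in the paper's own argument.
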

\qed

\subsection*{The singularity of $\hat \sigma_\chi$ at t=0.} So far we have expressed  $\hat \sigma_\chi$ as an oscillatory integral. In order to study it by means of the stationary phase theorem, let us remark that since $\psi_\gamma$ is homogeneous in $\eta$ of degree $1$, Taylor expansion for small $t$ gives
\bqn 
\psi_\gamma(t,\tilde x, \eta) =\psi_\gamma(0,\tilde x, \eta) +t \frac{\gd \psi_\gamma}{\gd t} (0, \tilde x , \eta) + O(t^2)|\eta|= \eklm{\tilde x, \eta} -t q_\gamma(\tilde x, \eta) +O(t^2)|\eta|, 
\eqn 
where we wrote $q_\gamma(\tilde x, \eta)=q(\kappa_\gamma^{-1}(\tilde x),\eta)$. In other words, there exists a smooth function $\zeta_\gamma$ which is homogeneous in $\eta$ of degree $1$ satisfying
\begin{align}
\begin{split}\label{eq:zeta}
\psi_\gamma(t, \tilde x, \eta) &= \eklm {\tilde x, \eta} -t \zeta_\gamma(t, \tilde x, \eta), \\
\zeta_\gamma(0, \tilde x, \eta) &= q_\gamma(\tilde x, \eta), \qquad -2 \gd_t \zeta_\gamma(0, \tilde x, \eta) = \eklm{ \gd_\eta q_\gamma(\tilde x, \eta), \gd_{\tilde x} q_\gamma (\tilde x, \eta)}.
\end{split}
\end{align}
Let us now define
\bqn 
\F(\tau, \tilde x, \eta)= \int_{-\infty}^{+\infty} e^{it \tau} \rho(t) a_\gamma(t, \tilde x, \eta)e^{iO(t^2) | \eta| }  dt.
\eqn 
Clearly, $\F(\tau, \tilde x, \eta)$  is rapidly decaying as a function in $\tau$. More precisely, since $a_\gamma \in \mathrm{S}^0_{phg}$, 
\bq
\label{eq:reg}
|\F(\tau, \tilde x, \eta)| \leq C_N (1+\tau^2)^{-N}, \qquad \forall N \in \N, \, \tilde x \in \tilde Y_\gamma, \, \eta \in \R^n,
\eq
for some constant $C_N>0$ which depends only on $N$. Next note that $ q_\gamma(\tilde x, \omega) \geq \text{const} > 0$ for all $\tilde x$ and $\omega \in S^{n-1}=\mklm{\eta \in \R^n: \norm{\eta}=1}$. There must therefore exist a constant $C>0$ such that 
\bqn 
 C |\eta| \geq q_\gamma( \tilde x, \eta) \geq \frac 1 C |\eta| \qquad \forall \tilde x \in \tilde Y_\gamma, \, \eta \in \R^n,
\eqn 
which implies that for fixed $\mu$, $\F(\mu-q_\gamma(\tilde x, \eta), \tilde x, \eta)$ is rapidly decaying in $\eta$. This yields a regularization of the oscillatory integral in the previous corollary, and we obtain
\begin{gather*}
\hat \sigma_\chi (\rho e^{i(\cdot) \mu}) 
=  \frac{d_\chi}{(2\pi)^n} \sum _\gamma    \int _G  \int_{T^\ast  Y_\gamma } e^{i\eklm{\kappa_\gamma( x)-\kappa_\gamma( g x),\eta}}   \overline{\chi(g)} f_\gamma( x)   \F(\mu -q( x, \eta), \kappa_\gamma( x), \eta)  \\ \bar f _\gamma (g x)  J_\gamma(g,  x)  d(T^\ast Y_\gamma)( x, \eta) \d g  + O(|\mu|^{-\infty}).
\end{gather*}
 But even more is true. If we replace $\mu$ by $-\nu$, then $(\mu-q_\gamma(\tilde x, \eta))^2 \geq 2 \nu q_\gamma(\tilde x, \eta)\geq 2 \nu |\eta|/C$. From \eqref{eq:reg} we therefore infer  that $\hat \sigma_\chi( \rho e^{i(\cdot) \mu})$ is rapidly decreasing as $\mu \to -\infty$, reflecting the positivity of the spectrum.  Assume now that $|1- q_\gamma (\tilde x, \eta/\mu)| \geq \text{const} >0$. Then
\begin{align*}
|\F(\mu-q_\gamma( \tilde x, \eta), \tilde x, \eta)| &\leq C_{N+M} \frac 1 {|\mu|^{N}} \frac 1 { |1 - q_\gamma( \tilde x, \eta /\mu)|^N} \frac 1 {| \mu - q_\gamma( \tilde x, \eta)|^M}\\
& \leq C_{N+M} \frac 1 {|\mu|^{N}}  \frac 1 {| \mu - q_\gamma( \tilde x, \eta)|^M}
\end{align*}
for arbitrary $N, M \in \N$. Let therefore $0 \leq \alpha \in \CT(1/2, 3/2)$ be such that $\alpha \equiv 1$ in a neighborhood of $1$, so that 
\bqn 
1 - \alpha(q_\gamma( \tilde x , \eta/\mu)) \not= 0 \quad \Longrightarrow \quad |1-q_\gamma( \tilde x, \eta/\mu)| \geq  \text{const} >0.
\eqn  
Substituting $\eta=\mu \eta'$,  we can  rewrite  $\hat \sigma_\chi (\rho e^{i(\cdot) \mu})$ as 
\begin{gather*}
\hat \sigma_\chi (\rho e^{i(\cdot) \mu}) 
= \frac {|\mu|^n  d_\chi}{(2\pi)^n} \sum _\gamma  \int_{-\delta}^{+\delta}  \int _G  \int_{T^\ast Y_\gamma } e^{i\mu \big [\psi_\gamma(t,\kappa_\gamma( x),\eta)- \eklm{ \kappa_\gamma(g x),\eta}+t \big ]}  \rho(t)  \overline{\chi(g)} f_\gamma(x)   \\ a_\gamma(t, \kappa_\gamma( x) , \mu \eta)  \bar f _\gamma (g  x)  J_\gamma(g, x)  \alpha( q( x, \eta)) d(T^\ast  Y_\gamma)(x, \eta) \d g \d t + O(|\mu|^{-\infty}),
\end{gather*}
where all integrals are absolutely convergent. 
Now, since $\zeta_\gamma(0, \tilde x , \omega)= q_\gamma(\tilde x, \omega)$, there exists a constant $C>0$ such that for sufficiently small $t \in (-\delta, \delta)$
\bqn 
 C \geq \zeta_\gamma(t,  \tilde x, \omega) \geq \frac 1 C \qquad \forall \tilde x \in \tilde Y_\gamma, \, \omega \in K,
\eqn 
$K$ being a compactum. By introducing the coordinates $\eta = R\omega$, $R>0$, $\zeta_\gamma(t,\kappa_\gamma( x), \omega)=1$, one finally arrives at the following
\begin{proposition}
\label{prop:0}
Let $\delta>0$ be sufficiently small, and  $\rho \in \CT(-\delta, \delta)$. Then, as $\mu \to +\infty$, 
\begin{gather*}
\hat \sigma_\chi (\rho e^{i(\cdot) \mu}) 
= \frac {\mu^n  d_\chi}{(2\pi)^n} \sum _\gamma  \int_{\R}\int _{\R} e^{i\mu[t-Rt]}  \int _G  \int_{S^\ast_t Y_\gamma } e^{i{R \mu} \eklm{\kappa_\gamma( x) - \kappa_\gamma( g x),\omega}}  \rho(t)  \overline{\chi(g)} f_\gamma(x) \\   a_\gamma(t, \kappa_\gamma( x) , \mu R \omega)  \bar f _\gamma (g  x)  J_\gamma(g,  x)  \alpha(R q( x, \omega)) {d(S^\ast_t Y_\gamma)(x, \omega) \d g}    R^{n-1} \d R\d t, 
\end{gather*}
up to terms of order  $O(\mu^{-\infty})$, where $S^\ast_t Y_\gamma =\mklm{(x,\omega) \in T^\ast Y_\gamma: \zeta_\gamma(t, \kappa_\gamma(x), \omega)=1}$.  Here ${d(S^\ast_t Y_\gamma)(x, \omega)}$ denotes the quotient of the volume density on $T^\ast Y_\gamma$ by Lebesgue measure in $\R$ with respect to $\zeta_\gamma(t, \tilde x, \omega)$. On the other hand,  $\hat \sigma_\chi (\rho e^{i(\cdot) \mu}) $ is rapidly decaying as  $\mu \to -\infty$.
\end{proposition}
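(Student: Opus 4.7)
The plan is to assemble the proposition from the ingredients already developed just above its statement, organizing them so that the final integration variables are tuned to the characteristic set of $Q$ at time $t$. Starting point is the oscillatory-integral expression for $\hat\sigma_\chi(\rho e^{i(\cdot)\mu})$ provided by the preceding corollary, in which the phase is
$\psi_\gamma(t,\kappa_\gamma(x),\eta)-\eklm{\kappa_\gamma(gx),\eta}+t\mu$.

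First, I would invoke the decomposition \eqref{eq:zeta}, which writes $\psi_\gamma(t,\tilde x,\eta)=\eklm{\tilde x,\eta}-t\zeta_\gamma(t,\tilde x,\eta)$ with $\zeta_\gamma$ homogeneous of degree $1$ and positive (of the order of $|\eta|$) on the relevant compact set of $\omega$'s. This factors the phase as $\eklm{\kappa_\gamma(x)-\kappa_\gamma(gx),\eta}+t[\mu-\zeta_\gamma(t,\kappa_\gamma(x),\eta)]$, which is the shape needed to localize to the characteristic cone and then polar-decompose. The function $\F(\tau,\tilde x,\eta)$ already introduced, together with the bound \eqref{eq:reg}, serves as a soft regularization that makes the $t$- and $\eta$-integrals absolutely convergent after integrating out $t$; this is also the vehicle for verifying rapid decay as $\mu\to-\infty$, since for $\mu<0$ positivity of $q_\gamma$ forces $|\mu-q_\gamma(\tilde x,\eta)|\gtrsim|\mu|+|\eta|$, and \eqref{eq:reg} with arbitrary $N$ yields $O(|\mu|^{-\infty})$.

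Next, to handle $\mu\to+\infty$, I would split the $\eta$-integral with the cutoff $\alpha(q_\gamma(\tilde x,\eta/\mu))$ in the way already prepared in the text: on the complement of $\supp\alpha$ one has $|1-q_\gamma(\tilde x,\eta/\mu)|\geq\mathrm{const}>0$, so the displayed bound on $\F$ with $N$ and $M$ arbitrary delivers an $O(\mu^{-\infty})$ contribution. On the support of $\alpha$, the substitution $\eta\mapsto\mu\eta'$ produces the factor $\mu^n$ in front and converts the phase into $\mu\cdot[\psi_\gamma(t,\kappa_\gamma(x),\eta)-\eklm{\kappa_\gamma(gx),\eta}+t]$ with the large parameter now explicit, while $\alpha(q_\gamma(\tilde x,\eta/\mu))$ becomes $\alpha(q(x,\eta))$ as written in the intermediate display.

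The main obstacle, and the step that requires the most care, is the final polar decomposition $\eta=R\omega$ adapted to $\zeta_\gamma(t,\kappa_\gamma(x),\cdot)=1$ rather than to the Euclidean sphere. Using homogeneity $\zeta_\gamma(t,\kappa_\gamma(x),\eta)=R\,\zeta_\gamma(t,\kappa_\gamma(x),\omega)=R$ the phase collapses to $\mu[t-Rt]+\mu R\eklm{\kappa_\gamma(x)-\kappa_\gamma(gx),\omega}$, exactly the one in the proposition. The volume density factors as $d(T^\ast Y_\gamma)(x,\eta)=R^{n-1}\,dR\,d(S^\ast_t Y_\gamma)(x,\omega)$, where $d(S^\ast_t Y_\gamma)(x,\omega)$ is precisely the quotient of the canonical density on $T^\ast Y_\gamma$ by Lebesgue measure in the $\zeta_\gamma$-direction, by definition; this requires checking (via the uniform lower bound on $\zeta_\gamma$ for $t\in(-\delta,\delta)$ and $\omega$ in a compact set) that the hypersurface $\zeta_\gamma(t,\kappa_\gamma(x),\cdot)=1$ is smooth and transverse to the radial direction, so that the coarea / polar change of variables applies smoothly in the remaining parameters $(t,x)$. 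Combining the bound on the $1-\alpha$ piece, the $t$-integration against $\rho$, and this polar change of coordinates yields the displayed formula up to $O(\mu^{-\infty})$; rapid decay for $\mu\to-\infty$ has already been established. $\qed$
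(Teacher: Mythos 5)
Your proposal is correct and follows essentially the same route as the paper's own derivation: the factorization $\psi_\gamma=\eklm{\tilde x,\eta}-t\zeta_\gamma$ from \eqref{eq:zeta}, the regularization via $\F$ and \eqref{eq:reg} (which also gives the rapid decay as $\mu\to-\infty$), the cutoff $\alpha(q_\gamma(\tilde x,\eta/\mu))$ producing an $O(\mu^{-\infty})$ remainder, the rescaling $\eta=\mu\eta'$, and finally the $t$-dependent polar decomposition $\eta=R\omega$ on $\{\zeta_\gamma(t,\kappa_\gamma(x),\omega)=1\}$ with the quotient density giving the factor $R^{n-1}\,dR$. Your explicit check that homogeneity collapses the phase to $\mu[t-Rt]+\mu R\eklm{\kappa_\gamma(x)-\kappa_\gamma(gx),\omega}$ and that the level surface is transverse to the radial direction is exactly what the paper leaves implicit in the words ``one finally arrives at.''
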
\qed

Assume now that $\mu \geq 1$. To study the limit of $\hat \sigma_\chi (\rho e^{i(\cdot) \mu}) $  as $\mu \to +\infty$, we shall apply the stationary phase principle to the $Rt$-integral first, and then to the integral over $G\times S^\ast_t Y_\gamma$. For the later phase analysis, it will be convenient to replace the integration over $G\times S^\ast_t Y_\gamma$ by an integration over $G \times T^\ast Y_\gamma$. Let us us therefore note that since $\alpha \in \CT(1/2, 3/2)$,  
 \bqn 
1/2 \leq  R q( x, \omega) \leq 3/2 \qquad \forall x \in Y_\gamma, \omega \in (S^\ast_t Y_
\gamma)_x.
 \eqn
For sufficiently small $ \delta$ we can therefore assume that the $R$-integration is over a compact intervall in $\R^+$. 
Let now $\sigma \in \CT(\R)$ be a non-negative function with $\int \sigma(s) ds =1$, and define $\Delta_{\epsilon, r}(s)=\epsilon^{-1} \sigma((s-r)/\epsilon)$, $r \in \R$. Then 
\bqn 
\Delta_{\epsilon,r} \longrightarrow \delta_r \qquad \text{ as } \epsilon \to 0
 \eqn
 with respect to the weak topology in $\E'(\R)$. Using this approximation of the $\delta$-distribution and the theorem of Lebesgue on bounded convergence we obtain for $\hat \sigma_\chi (\rho e^{i(\cdot) \mu}) $ the expression 
 \begin{gather*}
 \frac {\mu^n  d_\chi}{(2\pi)^n} \sum _\gamma  \int_{\R}\int _{\R} e^{i\mu[t-Rt]}  \int _G  \int_{S^\ast_t Y_\gamma }  \lim_{\epsilon \to 0} \int_{\R^+} e^{i {\mu} \eklm{\kappa_\gamma( x) - \kappa_\gamma( g x),s \omega}}  \rho(t)  \overline{\chi(g)} f_\gamma(x) \\   a_\gamma(t, \kappa_\gamma( x) , \mu s \omega)  \bar f _\gamma (g  x)  J_\gamma(g,  x)  \alpha( q( x, s \omega)) 
 \Delta_{\epsilon,R} (s)  s^{n-1}  \, ds \,  {d(S^\ast_t Y_\gamma)(x, \omega) \d g}     \d R\d t \\
= \frac {\mu^n  d_\chi}{(2\pi)^n} \, \lim_{\epsilon \to 0}\,  \sum _\gamma  \int_{\R}\int _{\R} e^{i\mu[t-Rt]}  \int _G  \int_{T^\ast Y_\gamma }    e^{i {\mu} \eklm{\kappa_\gamma( x) - \kappa_\gamma( g x),\eta}}  \rho(t)  \overline{\chi(g)} f_\gamma(x) \\   a_\gamma(t, \kappa_\gamma( x) , \mu \eta)  \bar f _\gamma (g  x)  J_\gamma(g,  x)  \alpha( q( x, \eta))  
  \Delta_{\epsilon,R}(\zeta_\gamma(t,\kappa_\gamma( x), \eta) )    {d(T^\ast Y_\gamma)(x, \eta) \d g}     \d R\d t, 
\end{gather*}
since $ \int \Delta_{\epsilon, r}(s) \, ds =1$, and all integrals are over compact sets. Let us now  apply the stationary phase theorem to the $Rt$-integral for each fixed $\epsilon$. We then arrive at  the following
\begin{theorem}
\label{thm:Rt}
Let $\rho \in \CT(-\delta, \delta)$, and $\mu \geq 1$. For sufficiently small $\delta$ one has the asymptotic expansion
\begin{gather*}
\hat \sigma_\chi (\rho e^{i(\cdot) \mu})
=\frac {\mu^{n-1}  d_\chi \rho(0)}{(2\pi)^{n-1}} \lim_{\epsilon \to 0} \sum _\gamma   \int _G  \int_{T^\ast Y_\gamma }    e^{i {\mu} \eklm{\kappa_\gamma( x) - \kappa_\gamma( g x),\eta}}   \overline{\chi(g)} f_\gamma(x)      \bar f _\gamma (g  x)  J_\gamma(g,  x)   \\ \Delta_{\epsilon,1}(q( x, \eta))  \,    {d(T^\ast Y_\gamma)(x, \eta) \d g}  + O(\mu^{n-2}),
\end{gather*}
where 
\begin{gather*}
 O(\mu^{n-2})= C \mu^{n-2} \sum _{|\beta| \leq 5} \sup_{R,t} \Big | \gd _{R,t}^\beta    \int _G  \int_{T^\ast Y_\gamma }    e^{i {\mu} \eklm{\kappa_\gamma( x) - \kappa_\gamma( g x),\eta}}  \rho(t)  \overline{\chi(g)} f_\gamma(x) \\   a_\gamma(t, \kappa_\gamma( x) , \mu \eta)  \bar f _\gamma (g  x)  J_\gamma(g,  x)  \alpha( q( x, \eta))  
  \Delta_{\epsilon,R}(\zeta_\gamma(t,\kappa_\gamma( x), \eta))  \,  {d(T^\ast Y_\gamma)(x, \eta) \d g}  \Big |. 
\end{gather*}
For $\mu \to -\infty$,  the expression $\hat \sigma_\chi (\rho e^{i(\cdot) \mu}) $ is rapidly decaying.
\end{theorem}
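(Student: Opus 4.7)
The plan is to begin from the representation of $\hat{\sigma}_\chi(\rho e^{i(\cdot)\mu})$ derived immediately above the theorem, in which the $R$-integral has been converted (via the approximate identity $\Delta_{\epsilon, R}$) into an integral over all of $T^\ast Y_\gamma$ rather than over a fixed cosphere. Viewing the $(R,t)$-integration as the outer layer and the integration over $G \times T^\ast Y_\gamma$ as providing the amplitude, I apply the two-dimensional stationary phase theorem to the $(R,t)$-integral for each fixed small $\epsilon > 0$.

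The phase is $\Psi(R,t) = t - Rt$, which is independent of all other variables. Setting $\partial_R \Psi = -t = 0$ and $\partial_t \Psi = 1 - R = 0$ identifies a single non-degenerate critical point at $(R_0, t_0) = (1, 0)$. The Hessian there has vanishing diagonal entries and off-diagonal entries equal to $-1$, so its determinant is $-1$ and its signature is $0$. The stationary phase theorem in two dimensions therefore yields a leading contribution of $(2\pi/\mu)$ times the amplitude evaluated at $(R_0, t_0)$, together with a remainder bounded by a constant times $\mu^{-2}$ times the supremum over $(R,t)$ of derivatives of order at most five of the full amplitude.

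To evaluate the leading term, I use three facts: $\rho(t)|_{t=0} = \rho(0)$; the normalization $a_\gamma(0, \tilde{x}, \eta) = 1$ gives $a_\gamma(0, \kappa_\gamma(x), \mu\eta) = 1$; and by \eqref{eq:zeta}, $\zeta_\gamma(0, \kappa_\gamma(x), \eta) = q(x,\eta)$, so $\Delta_{\epsilon, R}(\zeta_\gamma(t, \kappa_\gamma(x), \eta))$ restricted to $(R,t) = (1,0)$ becomes $\Delta_{\epsilon, 1}(q(x,\eta))$. For $\epsilon$ sufficiently small, the support of $\Delta_{\epsilon, 1}$ lies in a neighborhood of $1$ where $\alpha \equiv 1$, so the factor $\alpha(q(x,\eta))$ may be dropped against $\Delta_{\epsilon, 1}(q(x,\eta))$. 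Combined with the overall prefactor $\mu^n d_\chi / (2\pi)^n$, the $1/\mu$ from stationary phase yields the claimed leading coefficient $\mu^{n-1} d_\chi \rho(0) / (2\pi)^{n-1}$, and taking $\epsilon \to 0$ on the leading term produces the stated expression.

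The main obstacle is controlling the interplay of the $\mu$- and $\epsilon$-dependence of the amplitude in the stationary phase remainder. Since $a_\gamma \in \mathrm{S}^0_{\mathrm{phg}}$ is a classical symbol of order zero, the substitution $\eta \mapsto \mu \eta$ keeps $(R,t)$-derivatives bounded uniformly in $\mu$, and lower-order homogeneous components contribute $O(\mu^{-1})$ corrections that are absorbed into the $O(\mu^{n-2})$ remainder. The explicit remainder bound stated in the theorem, which depends on $\epsilon$ through derivatives of $\Delta_{\epsilon, R}$, should be understood as the pre-limit identity valid for each fixed small $\epsilon$; since the left-hand side is independent of $\epsilon$, this is the effective statement. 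Finally, the rapid decay of $\hat{\sigma}_\chi(\rho e^{i(\cdot)\mu})$ as $\mu \to -\infty$ is inherited directly from Proposition \ref{prop:0}.
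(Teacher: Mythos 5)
Your proposal is correct and follows exactly the paper's route: the paper's proof is a one-line application of the classical stationary phase theorem (Grigis--Sj\"ostrand, Proposition 2.3) to the $Rt$-integral at the unique non-degenerate critical point $(t,R)=(0,1)$, for each fixed $\epsilon$. Your additional bookkeeping (Hessian determinant $-1$, signature $0$, evaluation of the amplitude via $\rho(0)$, $a_\gamma(0,\cdot,\cdot)=1$, $\zeta_\gamma(0,\cdot,\cdot)=q$, absorption of $\alpha$ by $\Delta_{\epsilon,1}$ for small $\epsilon$, and the pre-limit reading of the remainder) just fills in details the paper leaves implicit.
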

\begin{proof} Since $(t, R)= (0,1)$ is the only critical point of $t-Rt$, the assertion follows from the classical stationary phase theorem  \cite{grigis-sjoestrand}, Proposition 2.3.
\end{proof}

We have thus partially unfolded the singularity of $\hat \sigma_\chi$ at $t=0$. Theorem \ref{thm:Rt} shows that  its structure is more involved than in the non-equivariant setting, or in the case of finite group actions, compare \cite{duistermaat-guillemin75}, pp. 46, and \cite{bruening83}, pp 92. To obtain a complete description, we are therefore left with the task of examining the asymptotic behavior of integrals of the form 
\begin{align}
\label{int}
I(\mu)
&= \int _{T^\ast Y}  \int_{G} e^{i\mu  \Phi(x , \xi,g) }   a( g  x,  x , \xi,g)  \d g \d(T^\ast Y)(x,\xi),  \qquad \mu \to +\infty,  
\end{align}
via the generalized stationary phase theorem, where $(\kappa,Y)$ are local coordinates on $M$,  and $dg$ is the  volume density of a left invariant metric on $G$, while $a \in \CT(Y \times T^\ast  Y\times G)$ is an amplitude which might also depend on $\mu$, and
\bq
\label{eq:phase}
\Phi(x, \xi, g) =\eklm{\kappa(x) - \kappa (g x), \xi }.
\eq 
This will occupy us for the rest  of this paper.

\section{Compact group actions and the momentum  map}
\label{sec:CGMM}

\subsection*{Compact group actions} We commence this section by briefly recalling some basic facts about compact group actions that will be needed later. For a detailed exposition,  we refer the reader  to \cite{bredon}. Let $G$ be a compact Lie group acting locally smoothly on some $n$-dimensional $\Cinft$-manifold $M$, and assume that the orbit space $M/G$ is connected. Denote the stabilizer, or isotropy group, of a point $x\in M$ by
$$G_x=\{ g\in G\, :\, g\cdot x=x \}.$$
The orbit of  $x \in M$  under the action of $G$ will be denoted by $G\cdot x$ or, alternatively, by  $\mathcal{O}_x$, and is homeomorphic to $G/G_x$. The equivalence class of an orbit $\mathcal{O}_x$ under equivariant homeomorphisms is called its orbit type, and the conjugacy class $(G_x)$ of $G_x$ in $G$ its isotropy type. Now, if  $K_1$ and $K_2$ are closed subgroups of $G$, a partial ordering of orbit and isotropy types is   given by
\bqn 
\mathrm{type}\, (G/K_1) \leq  \mathrm{type}\,(G/K_2) \Longleftrightarrow (K_2) \leq (K_1) \Longleftrightarrow
 K_2 \mbox{ is conjugated to a subgroup of } K_1.
\eqn
One of the main results in the theory of compact group actions is the following
\begin{theorem}[Principal orbit theorem]\label{thm:orbit}
There exists a maximum orbit type $G/H$ for $G$ on $M$. The union $M{(H)}$ of orbits of type $G/H$ is open and dense, and its image in $M/G$ is connected.
\end{theorem}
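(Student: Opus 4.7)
The plan is to reduce the theorem to the Slice Theorem (Tube Theorem) of equivariant topology, which asserts that every $x \in M$ admits a $G$-invariant open neighborhood $U_x$ of $\mathcal{O}_x$, equivariantly homeomorphic to the twisted product $G \times_{G_x} S_x$, where the slice $S_x$ is a $G_x$-invariant space transverse to $\mathcal{O}_x$. The immediate consequence I shall use repeatedly is that for every $y \in U_x$ the isotropy $G_y$ is conjugate in $G$ to a subgroup of $G_x$, so that $\mathrm{type}(\mathcal{O}_y) \geq \mathrm{type}(\mathcal{O}_x)$; equivalently, the orbit type does not decrease locally. In particular, for any fixed conjugacy class $(K)$ the set of $x\in M$ whose isotropy is subconjugate to $K$ is open.

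Existence of a maximum orbit type will be established by induction on $\dim M$. The base case $\dim M = 0$ is immediate, since connectedness of $M/G$ then forces $M$ to consist of a single orbit. For $\dim M \geq 1$, fix $x \in M$; if $\mathcal{O}_x$ is already open in $M$ then $G_x$ has minimal dimension among isotropies near $x$ and serves as a local principal isotropy, while otherwise $\dim S_x \geq 1$ and the induction hypothesis applied to the locally smooth $G_x$-action on $S_x$ yields a $G_x$-principal isotropy $H_x \subseteq G_x$. In either case one obtains a dense open subset of $U_x$ on which the isotropy is conjugate in $G$ to $H_x$, so a locally defined principal isotropy class exists near every point of $M$.

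The main obstacle is to glue these local data to a single global conjugacy class. I shall argue that the assignment $x \mapsto (H_x)$ is locally constant: if the tubes $U_x$ and $U_{x'}$ overlap, then intersecting the two dense open subsets on which the isotropies are conjugate to $H_x$ and to $H_{x'}$ respectively produces a point at which both classes must coincide, forcing $(H_x) = (H_{x'})$. Covering $M$ by tubes and chaining such overlaps yields a locally constant assignment that descends to $M/G$. Since $M/G$ is connected by hypothesis, a unique principal isotropy class $(H)$ emerges, giving the sought maximum orbit type $G/H$.

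It remains to verify the stated properties of $M(H)$. Openness follows from the observation that if $x \in M(H)$, then any $y$ in a tube around $x$ satisfies $\mathrm{type}(\mathcal{O}_y) \geq \mathrm{type}(\mathcal{O}_x) = \mathrm{type}(G/H)$, and maximality of this type forces equality. Density follows from the local density of points of isotropy $H$ in each tube already exhibited in the previous step. Finally, the complementary strata of non-principal orbit types correspond to proper closed $G_x$-invariant subsets of the slices $S_x$; a standard codimension analysis shows that their union has codimension at least two in $M/G$ and therefore cannot disconnect the connected orbit space $M/G$, so that $M(H)/G$ is connected.
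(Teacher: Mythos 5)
The paper offers no proof of its own here (it simply cites Bredon, Theorem IV.3.1), and your outline is the standard slice-theorem induction that Bredon's proof follows; however, two of your steps fail as written. First, the induction on $\dim M$ does not close: the slice $S_x$ has dimension $\dim M-\dim\mathcal{O}_x$, so when $\mathcal{O}_x$ is zero-dimensional (e.g.\ at a fixed point, as for $S^1$ rotating $S^2$) you have $\dim S_x=\dim M$ and the inductive hypothesis cannot be applied to the $G_x$-action on $S_x$. The dichotomy ``$\mathcal{O}_x$ open or $\dim S_x\geq 1$'' does not give a dimension drop. Bredon's argument gets around this by using that the slice may be taken to be a disk on which $G_x$ acts orthogonally, so that the orbit types on $S_x\setminus\{x\}$ coincide with those on the unit sphere of the slice, which has dimension $\dim S_x-1<\dim M$; the induction is run on that sphere, with the zero-dimensional sphere treated separately. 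Without this (or an equivalent device) your existence and local-density claims are unproved precisely at the most singular points.

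Second, the connectedness of the image of $M(H)$ in $M/G$ is not a consequence of a codimension count, and the count you invoke is false: the image in $M/G$ of the non-principal set can have codimension one. For $S^1$ rotating $S^2$ the two fixed points map to the endpoints of the orbit-space interval; for $\mathrm{SO}(2)$ acting on the M\"obius band the exceptional central orbit maps to a boundary point of the interval. Moreover, a codimension-one set can of course disconnect, so even a correct bound of that kind would not finish the argument. The standard proof instead shows, within the same induction (again via the sphere of the slice and the cone structure), that inside each tube $G\times_{G_x}S_x$ the principal part has \emph{connected} image in the local orbit space $S_x/G_x$, and then deduces global connectedness of $M(H)/G$ from connectedness of $M/G$ by exactly the chaining-of-overlapping-tubes argument you already use to globalize $(H)$. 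Finally, a small but real omission: you never verify that $(H)$ is subconjugate to \emph{every} isotropy group, i.e.\ that $G/H$ is the maximum type; this does follow in one line from density of $M(H)$ together with the tube property that isotropy groups of nearby points are subconjugate to $G_y$, but it should be said. The first two items are the substantive gaps.
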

\begin{proof}
See \cite{bredon}, Theorem IV.3.1.
\end{proof}
Orbits of type $G/H$ are called of principal type, and the corresponding isotropy groups are called principal. A principal isotropy group  has the property that it is  conjugated to a subgroup of each stabilizer of $M$. Let $K\subset G$ be a closed subgroup containing $H$. An orbit of type $G/K$ is called singular, if $\dim K/H>0$, and exceptional, if $K/H$ is finite and non-trivial, in which case $\dim G/K= \dim G/H$, but $\mathrm{type}\, (G/K) \not= \mathrm{type}\,(G/H)$.
The following result says that there is a stratification of $G$-spaces into orbit types.
\begin{theorem}\label{thm:strat}
Let $G$ and $M$ be as above, $K$ a subgroup of $G$, and denote the set of points on orbits of type $G/K$ by $M{(K)}$. Then $M{(K)}$ is a topological manifold, which is locally closed. Furthermore, $\overline{M{(K)}}$ consists of orbits of type less than or equal to type $G/K$. The orbit map $M{(K)} \rightarrow M{(K)}/G$ is a fiber bundle projection with fiber $G/K$ and structure group $N(K)/K$.
\end{theorem}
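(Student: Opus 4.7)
The plan is to reduce all four assertions to a local computation via the slice theorem for compact Lie group actions, which supplies a canonical $G$-equivariant model around every orbit (cf.\ \cite{bredon}, Chapter II). Applied at $x \in M{(K)}$ with $G_x$ conjugate to $K$, the slice theorem produces a $G_x$-invariant slice $S_x$ through $x$ such that the $G$-saturation $U_x = G \cdot S_x$ is an open tubular neighborhood of $G \cdot x$, with a $G$-equivariant homeomorphism $U_x \simeq G \times_{G_x} S_x$. Under this identification $U_x \cap M{(K)}$ corresponds to $G \times_{G_x} S_x{(K)}$, where $S_x{(K)}$ consists of those $y \in S_x$ whose $G$-stabilizer is $G$-conjugate to $K$. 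Since $G_y \subseteq G_x$ for every $y \in S_x$, this is equivalently the set of points of $S_x$ whose $G_x$-isotropy is of a prescribed type.

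Next I would show that $S_x{(K)}$ is a topological manifold, locally closed in $S_x$. A standard linearization identifies a $G_x$-invariant neighborhood of $x$ in $S_x$ with a neighborhood of the origin in the slice representation on $V = T_x M / T_x(G \cdot x)$. Inside $V$, the locus with $G_x$-isotropy equal to a fixed representative $K' \subseteq G_x$ conjugate to $K$ is precisely the complement, inside the linear subspace $V^{K'}$, of the union of the fixed subspaces $V^L$ corresponding to strictly larger isotropy subgroups $L$ occurring in $V$. This is visibly a topological manifold and locally closed in $V$, and supplies charts for $M{(K)}$ simultaneously with local closedness.

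The closure assertion follows from upper semi-continuity of isotropy: given $y_n \to y$ with $G_{y_n} = g_n K g_n^{-1}$, compactness of $G$ allows one to extract a convergent subsequence $g_n \to g$, and then $g K g^{-1} \subseteq G_y$, whence $\mathrm{type}(G \cdot y) \leq \mathrm{type}(G/K)$. Conversely, each such smaller orbit type already appears as a boundary stratum of the slice model. For the fiber bundle claim, the local identification $U_x \cap M{(K)} \cong G \times_{G_x} S_x{(K)}$ fibers over $S_x{(K)}/G_x \cong (U_x \cap M{(K)})/G$ with fiber $G/K$, which is a local trivialization of the orbit projection. On overlaps, the transition between two such trivializations is a $G$-equivariant self-homeomorphism of a trivial $G/K$-bundle; since the $G$-equivariant self-homeomorphisms of $G/K$ are exactly right translations by elements of $N(K)/K$, the structure group reduces as claimed.

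The technical heart of the argument is the slice theorem together with the description of isotropy strata in a linear $G_x$-representation as differences of fixed subspaces; with these two ingredients in hand, each part of the statement is a fairly mechanical unpacking of the local model.
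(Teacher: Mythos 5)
Your argument is correct and is essentially the standard slice-theorem proof of Bredon's Theorem IV.3.3, which is all the paper itself offers for this statement (it gives no independent proof, only the citation). Two minor remarks: since every isotropy group of a slice point is contained in $G_x$ and a compact Lie group cannot be $G$-conjugate to a proper subgroup of itself, the locus you describe in the slice representation is exactly the full fixed subspace $V^{G_x}$ (your union of larger fixed sets is empty), so the tube meets $M(K)$ in a product $G/G_x \times V^{G_x}$, which makes both the manifold and the local triviality claims immediate; and your sentence that every smaller orbit type ``already appears as a boundary stratum'' is not needed for the theorem --- which only asserts the inclusion of $\overline{M(K)}$ in the union of orbits of type $\leq$ type $G/K$, established by your compactness argument --- and, read as a converse inclusion, it is false in general.
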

\begin{proof}
See \cite{bredon}, Theorem IV.3.3.
\end{proof}
Let now $M_\tau$ denote the union of non-principal orbits of dimension at most $\tau$.
\begin{proposition}
\label{prop:dim}
If $\kappa$ is the dimension of a principal orbit, then $\dim M/G =n-\kappa$, and  $M_\tau$  is a closed set of dimension at most $n-\kappa+\tau-1$.
\end{proposition}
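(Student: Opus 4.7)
The plan is to establish the three claims---$\dim M/G = n-\kappa$, closedness of $M_\tau$, and the dimension bound---in sequence, leaning on the principal orbit theorem, the stratification theorem, and the slice theorem as the main tools.

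First I would handle the dimension of $M/G$. By Theorem \ref{thm:orbit}, $M(H)$ is open and dense in $M$, so $\dim M(H) = n$. By Theorem \ref{thm:strat}, the orbit map $M(H) \to M(H)/G$ is a fibre bundle with fibre $G/H$ of dimension $\kappa$, hence $\dim M(H)/G = n-\kappa$. Since $M(H)/G$ is open and dense in $M/G$, this also gives $\dim M/G = n-\kappa$. Next, for closedness, write
\bqn
M_\tau = \bigcup_{(K)} M(K),
\eqn
where the union ranges over the non-principal isotropy types $(K)$ with $\dim G/K \leq \tau$; as $M$ is compact, this is a finite union. By Theorem \ref{thm:strat}, $\overline{M(K)}$ consists of orbits of type $\leq G/K$. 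Since $(H)$ is the unique maximum, any type below a non-principal $(K)$ is itself non-principal, and the corresponding orbits have dimension $\leq \dim G/K \leq \tau$. Hence $\overline{M(K)} \subseteq M_\tau$, and the finite union $M_\tau$ is closed.

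The heart of the proof is the dimension estimate, which I would attack locally via the slice theorem. Fix a non-principal $(K)$ with $\dim G/K \leq \tau$, and a point $x$ on an orbit of this type. A neighbourhood of $G\cdot x$ is $G$-equivariantly diffeomorphic to $G \times_K V$, where $V$ is a linear $K$-representation of dimension $n - \dim G/K$; under this diffeomorphism $M(K)$ corresponds to $G \times_K V^K \cong G/K \times V^K$, so its image in $M/G$ is identified with $V^K$. The principal $K$-isotropy on $V$ is a conjugate of $H$, which is strictly contained in $K$, so $V^K \subsetneq V$. Furthermore $V/K$ has dimension $\dim V - \dim K/H = n - \kappa$, and $V^K$ lies entirely in its non-principal locus. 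That locus is closed and nowhere dense in the semialgebraic space $V/K$ (by Schwarz's theorem on $K$-invariants), hence of dimension at most $n-\kappa-1$, so $\dim V^K \leq n-\kappa-1$. Consequently
\bqn
\dim M(K) \,=\, \dim G/K + \dim V^K \,\leq\, \tau + (n-\kappa-1),
\eqn
and taking the finite union over the relevant $(K)$ yields $\dim M_\tau \leq n-\kappa+\tau-1$.

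The main subtlety is the codimension-one bound $\dim V^K \leq n-\kappa-1$: one must argue carefully that the non-principal locus in $V/K$ has strictly smaller dimension than $V/K$ itself, rather than merely being proper, and this is where the semialgebraic (or stratified) structure of $V/K$ enters in an essential way.
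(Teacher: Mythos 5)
Your route (slice theorem plus the semialgebraic structure of linear orbit spaces) is genuinely different from the proof the paper invokes, which is Bredon's Theorem IV.3.8, but as written it has a gap at its central step, and the gap is essentially a circularity. The decisive estimate $\dim V^K\leq n-\kappa-1$ rests on two inputs you do not establish. First, you assert $\dim V/K=\dim V-\dim K/H=n-\kappa$; but this is exactly the first claim of the proposition applied to the slice representation $(K,V)$, and nothing proved up to that point gives the upper bound $\dim V/K\leq n-\kappa$. (Your density argument only yields the lower bound: in a separable metric space a dense open subset can have strictly smaller covering dimension than the whole space, e.g.\ a countable dense open set whose complement is an arc, so "open and dense of dimension $n-\kappa$" does not bound the dimension of $M/G$ or of $V/K$ from above.) Second, "closed and nowhere dense" by itself forces no dimension drop; as you note, the burden falls on the semialgebraic structure, but then you must actually prove that the Hilbert-map image of $V/K$ has semialgebraic dimension $n-\kappa$ and that the non-principal locus is a semialgebraic subset. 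The latter is not automatic: the singular part is cut out by the vanishing of minors of the matrix with columns $X_iv$, the $X_i$ a basis of the Lie algebra of $K$, but exceptional strata are not detected by orbit dimension (for those one should instead use that $V^K$ is a proper linear subspace, which suffices because there $\dim K=\dim H$). Neither input is supplied, and together they are precisely the linear case of the statement being proved, so the proposal as it stands assumes its crux.

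The standard way to close the circle, and the way the cited proof runs, is an induction on $\dim M$: in a tube $G\times_K V$ decompose $V=V^K\oplus W$, observe $V/K\cong V^K\times (W/K)$ with $W/K$ a cone over $S(W)/K$, apply the inductive hypothesis to the $K$-action on the sphere $S(W)$ (lower-dimensional, same principal isotropy class), and finish with classical dimension theory for cones and countable unions of closed sets; the first claim $\dim M/G=n-\kappa$ then drops out together with, not before, the estimate on the singular part. Alternatively you may keep your invariant-theoretic route, but then you must prove the two facts above for linear actions (for instance via a semialgebraic Sard-type argument for the Hilbert map, giving $\dim \pi(V)=\dim V-\dim K/H$, together with semialgebraicity of the orbit-type strata); citing Schwarz's theorem alone does not do that work. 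The remaining parts of your argument are fine: the identification of $M(K)$ in a tube with $G/K\times V^K$, the closedness of $M_\tau$ as a finite union of closures of strata, and the passage from the local bound to $\dim M_\tau\leq n-\kappa+\tau-1$ by the countable sum theorem are all correct.
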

\begin{proof}
See \cite{bredon}, Theorem IV.3.8.
\end{proof}
Here the dimension of $M_\tau$ is understood in the sense of general dimension theory.
In what follows, we shall write $\mathrm{Sing}\,  M= M - M{(H)}=M_\kappa$. Clearly,
\bqn
\mathrm{Sing} \, M= M_0 \cup (M_1-M_0) \cup (M_2-M_1) \cup  \dots \cup(M_\kappa -M_{\kappa -1}),
\eqn
where $M_i-M_{i-1}$ is precisely the union of non-principal orbits of dimension $i$, and $M_{-1}=\emptyset$, by definition. Note that 
\bqn
M_i-M_{i-1}=\bigcup_{j} M{(H^i_j)}, \qquad H^i_j \subset G, \, \dim G/H^i_j =i, 
\eqn
is a disjoint union of topological manifolds of possibly different dimensions. Now, a crucial feature of smooth compact group actions is the existence of invariant tubular neighborhoods. 
\begin{theorem}[Invariant tubular neighborhood theorem]
\label{thm:tub}
 Assume that  $G$ acts smoothly on $M$, and let $A$ be a closed $G$-invariant submanifold of $M$.  Then $A$ has an invariant tubular neighborhood, that is, there exists a smooth $G$-vector bundle $\xi:E\rightarrow A$ on $A$ together with an equivariant diffeomorphism $\psi:E \rightarrow M$ onto an open neighborhood $W$ of $A$ such that the restriction of $\psi$ to the the zero section of $\xi$ is the inclusion of $A$ in $M$.
\end{theorem}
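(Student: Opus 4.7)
The plan is to build the tubular neighborhood out of the normal bundle of $A$ in $M$, using an averaged Riemannian metric to make everything $G$-equivariant. First, because $G$ is compact, I would fix any Riemannian metric on $M$ and average it over $G$ via Haar measure to obtain a $G$-invariant Riemannian metric $\eklm{\cdot,\cdot}$ on $M$. Since $A$ is $G$-invariant and smooth, the action preserves $TA \subset TM_{|A}$, and the orthogonal complement with respect to the invariant metric then defines a smooth $G$-vector subbundle $\nu(A) \subset TM_{|A}$, namely the normal bundle of $A$. This will be our candidate $\xi: E = \nu(A) \to A$.

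Next, I would use the exponential map $\exp$ of the invariant metric. A standard fact is that $G$-invariance of the metric implies $G$-equivariance of $\exp$: for $g \in G$ and $v \in T_xM$, one has $\exp_{gx}(dg_x v) = g \cdot \exp_x v$, since $t \mapsto g \cdot \exp_x(tv)$ is a geodesic with the appropriate initial data. The restriction $\Psi := \exp \circ \iota_A : \nu(A) \to M$, where $\iota_A$ is inclusion of the normal bundle into $TM_{|A}$, is therefore $G$-equivariant, smooth, and (by the ordinary tubular neighborhood theorem) a local diffeomorphism along the zero section $A$.

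The key point is to produce a $G$-invariant open neighborhood of the zero section in $\nu(A)$ on which $\Psi$ is an embedding onto an open neighborhood of $A$. Choose, by the non-equivariant theorem, a continuous positive function $\delta_0 : A \to (0,\infty)$ such that $\Psi$ is a diffeomorphism on $\{v \in \nu(A): \|v\| < \delta_0(\xi(v))\}$ onto an open neighborhood of $A$. Because the $G$-orbit $G\cdot x$ of any $x \in A$ is compact, the function
\begin{equation*}
\delta(x) := \inf_{g \in G} \delta_0(g \cdot x)
\end{equation*}
is strictly positive, $G$-invariant, and continuous (by compactness of $G$ and continuity of $\delta_0$); smoothing it if desired by averaging a smooth minorant over $G$, I obtain a smooth $G$-invariant positive $\delta$. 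Then $V := \{v \in \nu(A): \|v\| < \delta(\xi(v))\}$ is a $G$-invariant open neighborhood of the zero section, and by $G$-equivariance of $\Psi$ together with injectivity on each fibre-disk chosen via $\delta_0$, the map $\Psi_{|V}$ is an injective immersion, hence a diffeomorphism onto the open $G$-invariant set $W := \Psi(V) \subset M$.

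Finally, to promote $V$ to all of $E = \nu(A)$, I would compose with a $G$-equivariant diffeomorphism $\Theta: \nu(A) \to V$ given by a radial rescaling, for instance
\begin{equation*}
\Theta(v) = \frac{\delta(\xi(v))}{\sqrt{1 + \|v\|^2}}\, v,
\end{equation*}
which is $G$-equivariant because $\delta$ and $\|\cdot\|$ are both $G$-invariant, which is a diffeomorphism from $\nu(A)$ onto $V$, and which fixes the zero section. Setting $\psi := \Psi \circ \Theta$ gives the desired equivariant diffeomorphism $E \to W$ restricting to the inclusion on the zero section. The main obstacle in this proof is the passage from the non-equivariant radius function $\delta_0$ to a $G$-invariant one, but this is straightforward once one exploits compactness of $G$ together with equivariance of $\Psi$; everything else reduces to averaging arguments and the classical tubular neighborhood theorem.
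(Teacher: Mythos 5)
Your proof is correct and follows essentially the same route as the source the paper cites for this statement (Bredon, Theorem VI.2.2): average the metric over the compact group, take the normal bundle with the equivariant exponential map, shrink to an invariant variable-radius disk bundle, and rescale radially onto the whole bundle. This is also exactly the construction the paper itself uses later, e.g.\ the maps $f_k=\exp\circ\gamma^{(k)}$ in Section \ref{sec:DP}, so there is nothing to add beyond noting that injectivity on your invariant set $V$ already follows from $V\subset\{\|v\|<\delta_0(\xi(v))\}$.
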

\begin{proof}
See \cite{bredon}, Theorem VI.2.2.
\end{proof}
Furthermore, by taking a  $G$-invariant metric on $M$, $W$ can be identified via the exponential map with a neighborhood of the zero section in the normal bundle $\nu(A)$ of $A$. From now on, let $M$ be a closed, connected Riemannian manifold, and $G$ a connected compact Lie group acting on $M$ by isometries. Relying on the stratification of $M$ into orbit types, one can construct a  $G$-invariant covering of $M$ as follows, compare  \cite{kawakubo}, Theorem 4.20. Let $(H_1), \dots, (H_L)$ denote the isotropy types of $M$, and  arrange them in such a way that 
\bqn
(H_i) \geq (H_j)  \quad \Rightarrow \quad i \leq j.
\eqn
By Theorem \ref{thm:strat},  $M$ has a stratification into orbit types according to  $M=M(H_1) \cup \dots \cup M(H_L)$, and the principal orbit theorem implies that  the set $M(H_L)$ is open and dense in $M$, while $M(H_1)$ is a closed, $G$-invariant submanifold. Denote by $\nu_1$ the normal $G$-vector bundle of $M(H_1)$, and by $f_1: \nu_1 \rightarrow M$ a $G$-invariant tubular neighbourhood of $M(H_1)$ in $M$. Take a $G$-invariant metric on $\nu_1$, and put
\bqn
{D}_t(\nu_1)=\mklm {v \in \nu_1: \norm{v} \leq t }, \qquad t >0.
\eqn
We then define the compact, $G$-invariant submanifold with boundary
\bqn
M_2=M - f_1(\stackrel{\circ}{D}_{1/2}(\nu_1)), 
\eqn
on which the isotropy type $(H_1)$ no longer occurs, and endow it with a $G$-invariant Riemannian metric with product form in a $G$-invariant collar neighborhood of $\gd M_2$ in $M_2$. Consider now the union $M_2(H_2)$ of orbits in $M_2$ of type $G/H_2$, a compact $G$-invariant submanifold of $M_2$ with boundary, and let $f_2:\nu_2 \rightarrow M_2$ be a $G$-invariant tubular neighbourhood  of $M_2(H_2)$ in $M_2$, which exists due to the particular form of the metric on $M_2$. Taking a $G$-invariant metric on $\nu_2$, we define
\bqn
M_3=M_2 - f_2(\stackrel{\circ}{D}_{1/2}(\nu_2)), 
\eqn
which constitutes a compact $G$-invariant submanifold with corners and isotropy types $(H_3), \dots (H_L)$. Continuing this way, one finally obtains the decomposition 
\bqn  
M= f_1({D}_{1/2}(\nu_1)) \cup \dots  \cup f_L({D}_{1/2}(\nu_L)),
\eqn
where we identified  $f_L({D}_{1/2}(\nu_L))$ with $M_L$, which leads to the covering 
\bq
\label{eq:covering}
M= f_1(\stackrel{\circ}{D}_{1}(\nu_1)) \cup \dots \cup f_L(\stackrel{\circ}{D}_{1}(\nu_L)),\qquad  f_L(\stackrel{\circ}{D}_{1}(\nu_L))=\stackrel{\circ} M_L.
\eq
We introduce  now the set 
 \bq
\label{eq:calN}
\mathcal{N}=\mklm{(x,g) \in \M: gx =x},  \qquad  \M= M \times G,
\eq
which will play an important role later.  If all isotropy groups of the $G$-action on $M$ have the same dimension, that is, if there are no singular orbits, $\mathcal{N}$ is a smooth manifold. Otherwise, $\mathcal{N}$ is singular, as can be seen from Theorem \ref{thm:tub}. Clearly, $\mathcal{N}= \bigcup_{k} \mathrm{Iso} \, M(H_k)$, where $\mathrm{Iso} \, M(H_k)\rightarrow  M(H_k)$ denotes the isotropy bundle on $M(H_k)$, and by Proposition \ref{prop:dim} we have 
\bqn 
\dim \mathrm{Iso} \, M(H_k)= \dim M(H_k) + \dim H_k \leq n - \kappa + \tau -1 +\dim G - \tau = \dim  \mathrm{Iso} \, M(H_L)-1, 
\eqn
where $ 1 \leq k \leq L-1$, and $\tau = \dim G/H_k$.  The regular part $\mathrm{Reg} \, \mathcal{N}$ is given by the union over all total spaces $\mathrm{Iso} \, M(H_k)$ with non-singular isotropy type $(H_k)$, and  is in general not dense in $\mathcal{N}$. 

\subsection*{The momentum map} We shall now discuss the canonical symplectic momentum map of a closed, connected Riemannian manifold  $M$ on which a  connected, compact Lie group $G$ acts by isometries, and the way it is  related to our problem. Consider the cotangent bundle  $\pi:T^\ast M\rightarrow M$, as well as the tangent bundle $\tau: T(T^\ast M)\rightarrow T^\ast M$, and define on $T^\ast M$ the Liouville form 
\bqn 
\Theta(\mathfrak{X})=\tau(\mathfrak{X})[\pi_\ast(\mathfrak{X})], \qquad \mathfrak{X} \in T(T^\ast M).
\eqn
We regard $T^\ast M$ as a  symplectic manifold with symplectic form 
\bqn 
\omega= d\Theta,
\eqn
and define for any  element $X$ in the Lie algebra $\g$ of $G$ the function
\bqn
J_X: T^\ast M \longrightarrow \R, \quad \eta \mapsto \Theta(\widetilde{X})(\eta),
\eqn
where $\widetilde X$ denotes the fundamental vector field on $T^\ast M$, respectively $M$,  generated by $X$. Note that  $ \Theta(\widetilde{X})(\eta)=\eta(\widetilde X_{\pi(\eta)})$. Indeed, put $\gamma(s)=\e{-s X} \cdot \eta$, $s \in ( -\epsilon, \epsilon)$ for some $\epsilon >0$, so that  $\gamma(0)=\eta$, $\dot {\gamma}(0) =\widetilde X_\eta$. Since $\pi(\e{-sX} \cdot \eta)=\e{-sX} \cdot \pi(\eta)$, one computes
\bqn 
\pi_\ast(\widetilde X_\eta)=\frac d {ds} \pi \circ \gamma(s)_{|s=0} = \frac d {ds} \e{-sX} \cdot \pi(\eta)_{|s=0} = \widetilde X_{\pi(\eta)}.
\eqn 
Therefore
$$ \Theta(\tilde {X})(\eta)=\tau(\widetilde X_\eta)[\pi_\ast(\widetilde X_\eta)]=\eta(\widetilde X_{\pi(\eta)}),$$
as asserted. The function $J_X$ is linear in $X$, and due to the invariance of the Liouville form
\bqn
\mathcal{L}_{\widetilde X} \Theta = dJ_X+ \iota_{\widetilde X} \omega =0, \qquad \forall X \in \g,
\eqn
where $\mathcal{L}$ denotes the Lie derivative. This means that  $G$ acts on $T^\ast M$ in a Hamiltonian way. The corresponding symplectic momentum  map is  then given by 
\bqn
\mathbb{J}:T^\ast M\to \g^\ast,  \quad \mathbb{J}(\eta)(X)=J_X(\eta).
\eqn
As explained in the previous section, we are interested in the asymptotic behavior of integrals of the form \eqref{int}, and would like to  study them by means of the generalized stationary phase theorem, for which  we have to compute the critical set of the phase function $\Phi(x, \xi,g)$. Let $(\kappa,Y)$ be local coordinates on $M$ as in \eqref{int}, and write  $\kappa(x)=(\tilde x_1,\dots, \tilde x_n)$,  $\eta=\sum \xi_i (d\tilde x_i)_x \in T_x^\ast Y$. One  computes then for any $X \in \g$
\begin{align*}
\frac d{dt} \Phi( x, \xi, \e{tX})_{|t=0}&=\frac d{dt} \eklm{ \kappa (\e{-tX}  x),\xi}_{|t=0}= \sum \xi_i \widetilde X_{x}(\tilde x_i)  = \sum \xi_i (d\tilde x_i)_{x}(\widetilde X_{x})\\ 
&=\eta (\widetilde X_{x})=\Theta(\widetilde X)(\eta) = \mathbb{J}(\eta)(X).
\end{align*}
Therefore $\Phi$ represents the global analogue of the momentum map; furthermore, their critical sets are essentially the same. Indeed, one has
\bq
\label{eq:vanx}
\gd _{\tilde x} \Phi ( \kappa ^{-1} ( \tilde x), \xi, g) = [ \1 - \, ^T ( \kappa \circ g \circ \kappa ^{-1} )_{\ast,  \tilde x} ]  \xi= (\1 - g^\ast_{\tilde x}) \cdot \xi, 
\eq
so that $\gd _x \Phi ( x, \xi, g )=0$ amounts precisely to the condition $g^\ast  \xi=\xi$. Since  $\gd_\xi \Phi(x,\xi,g)=0$ if, and only if $gx=x$, 
one obtains  
 \begin{align*}
 \begin{split}
 \Crit(\Phi)&=\mklm{(x,\xi,g) \in T^\ast Y \times G: (\Phi_{\ast})_{(x,\xi,g)}=0} 
= \mklm{( x  ,\xi, g) \in (\Omega  \cap T^\ast Y)\times G:  \,  g\cdot (x,\xi)=(x,\xi)},
\end{split}
\end{align*}
  where $
  \Omega=\mathbb{J}^{-1}(0)
$
is the zero level of the momentum  map. Note that 
\bq
\label{eq:Ann}
\eta \in \Omega  \cap T^\ast _xM \quad \Longleftrightarrow \quad \eta  \in \mathrm{Ann}(T_x (G\cdot x)),
\eq
where $\mathrm{Ann} \, (V_x) \subset T_x^\ast M$ denotes the annihilator of a vector subspace $V_x \subset T_xM$. Now,  the major difficulty in applying the generalized stationary phase theorem in our setting  stems from the fact that, due to the  orbit structure of the underlying group action,  the zero level $\Omega$ of the momentum  map, and, consequently, the considered critical set $\Crit(\Phi)$, are in general singular varieties. In fact,  if the $G$-action on $T^\ast M$ is not free, the considered momentum  map is no longer a submersion, so that $\Omega$ and the symplectic quotient $\Omega/G$ are no longer smooth. Nevertheless, it can be shown that these spaces have Whitney stratifications into smooth submanifolds, see  \cite{lerman-sjamaar}, and \cite{ortega-ratiu}, Theorems 8.3.1 and 8.3.2, which correspond to the stratifications of $T^\ast M$, and $M$ by orbit types  \cite{duistermaat-kolk}. In particular, if $(H_L)$ denotes the principal isotropy type of the $G$-action in $M$, 
$\Omega$ has a principal stratum given by
\bq
\label{eq:x}
\mathrm{Reg} \, \Omega =\mklm{ \eta \in \Omega: G_\eta \sim H_L},
\eq
  where $G_\eta$ denotes the isotropy group of $\eta$. To see this, let $\eta \in \Omega\cap T^\ast _xM$, and  $G_x \sim H_L$. In view of \eqref{eq:Ann} one computes  for $g \in G_x$, and  $\mathfrak{X} =\mathfrak{X}_T + \mathfrak{X}_N \in T_xM =T_x(G\cdot x) \oplus N_x (G \cdot x)$
 \bqn 
g \cdot \eta(\mathfrak{X})  =  \eta \big ( (L_{g^{-1}})_{\ast,x} ( \mathfrak{X}_N))= \eta(\mathfrak{X}),
\eqn 
since $G_x$ acts trivially on $N_x(G\cdot x)$, see  \cite{bredon}, pages 308 and 181. But $G_\eta \subset G_{\pi(\eta)}$ for arbitrary $\eta\in T^\ast M$,  so that we conclude  
\bq
\label{eq:y}
\eta \in \Omega\cap T^\ast _xM, \quad G_{x} \sim H_L \quad \Rightarrow  \quad G_\eta =G_{x}.
\eq
Since  the stratum $\mathrm{Reg} \,\Omega$ is  open and dense in  $\Omega$,  equality \eqref{eq:x} follows. Note that $\mathrm{Reg }\,  \Omega$ is a smooth submanifold in $T^\ast M $ of codimension equal to the dimension $\kappa$ of a principal $G$-orbit in $M$. 
It is therefore clear that the smooth part of $\Crit(\Phi)$ corresponds to 
\bq
\label{eq:z}
\mathrm{Reg}\,  \Crit(\Phi)=\mklm{(x, \xi,g)  \in (\mathrm{Reg}\, \Omega \cap T^\ast Y) \times G: \, g \in G_{(x,\xi)}},
\eq
and constitutes a submanifold of codimension $2\kappa$.

\section{The generalized stationary phase theorem and resolution of singularities}
\label{sec:SPRS}

\subsection*{The principle of the  stationary phase} 

Since  the critical set  of the phase function \eqref{eq:phase} is not necessarily smooth, the stationary phase method can not immediately be applied to derive asymptotics for the integral \eqref{int}. We shall therefore first partially resolve the singularities of $\mathrm{Crit}(\Phi)$,  and then apply the stationary phase principle  in a suitable resolution space. To explain our approach, let us begin by recalling 

\begin{theorem}[Generalized stationary phase theorem for manifolds]
\label{thm:SP}
Let $M$ be a  $n$-dimensional Riemannian manifold with volume density $dM$,  $\psi \in \Cinft(M)$ a real valued phase function,   and set
\bq
\label{eq:SPT}
\mathcal{I}({\mu})=\int_M e^{i\psi(m)/\mu} a(m) \, dM(m), \qquad \mu >0,
\eq
where $a(m)\in \CT(M)$. Let
$$\mathcal C=\mklm{m \in M: \psi_\ast:T_mM \rightarrow T_{\psi(m)}\R \text{ is zero}}$$
 be the  critical set of the phase function $\psi$, and assume that it is clean in the sense of Bott \cite{bott56}, meaning that 
\begin{enumerate}
\item[(I)] $\mathcal{C}$ is a smooth submanifold of $M$  of dimension $p$ in a neighborhood of the support of $a$;
\item[(II)] at each point $m \in \mathcal{C}$, the Hessian $\psi''(m)$  of $\psi$ is transversally non-degenerate, i.e. non-degenerate on $T _mM/ T_mC \simeq N_m\mathcal{C}$, where $N_m\mathcal{C}$ denotes the normal space to $\mathcal{C}$ at $m$.
\end{enumerate}
\noindent
Then, for all $N \in \N$, there exists a constant $C_{N,\psi}>0$ such that
\bqn
|\mathcal{I}(\mu) - e^{i\psi_0/\mu}(2\pi \mu)^{\frac {n-p}{2}}\sum_{j=0} ^{N-1} \mu^j Q_j (\psi;a)| \leq C_{N,\psi} \mu^N \vol (\supp a \cap \mathcal{C}) \sup _{l\leq 2N} \norm{D^l a }_{\infty,M},
\eqn
where $D^l$ is a differential operator on $M$ of order $l$, and $\psi_0$ is the constant value of $\psi$ on $\mathcal{C}$. Furthermore, for each $j$ there exists a constant $\tilde C_{j,\psi}>0$ such that 
\bqn
|Q_j(\psi;a)|\leq \tilde C_{j,\psi}  \vol (\supp a \cap \mathcal{C}) \sup _{l\leq 2j} \norm{D^l a }_{\infty,\mathcal{C}},
\eqn
and, in particular,
\bqn
Q_0(\psi;a)= \int _{\mathcal{C}} \frac {a(m)}{|\det \psi''(m)_{|N_m\mathcal{C}}|^{1/2}} d\sigma_{\mathcal{C}}(m) e^{ i \frac\pi 4 \sigma_{\psi''}},
\eqn
where $d\sigma_{\mathcal{C}}$ is the induced volume density on $\mathcal{C}$, and 
 $\sigma_{\psi''}$  the constant value of the signature of the transversal Hessian $\psi''(m)_{|N_m\mathcal{C}}$ on $\mathcal{C}$.
\end{theorem}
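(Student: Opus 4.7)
The plan is to reduce the assertion to the classical Euclidean stationary phase theorem by localizing to a tubular neighborhood of $\mathcal{C}$ and then applying a parametric Morse lemma transversal to $\mathcal{C}$. Choose a smooth cutoff $\chi \in \CT(M)$ which equals $1$ on a small tubular neighborhood $U$ of $\mathcal{C} \cap \supp a$ and vanishes outside a slightly larger neighborhood $V$ on which the inclusion $\mathcal{C} \cap V \hookrightarrow V$ is still a retract. Then decompose $a = \chi a + (1-\chi) a$ and write $\mathcal{I}(\mu) = \mathcal{I}_1(\mu) + \mathcal{I}_2(\mu)$ accordingly. On $\supp (1-\chi) a$ the differential $d\psi$ is bounded away from zero, so iterated integration by parts via the first-order differential operator $L = (i/\mu)^{-1} \norm{d\psi}^{-2} \sum_k (\partial_k \psi) \partial_k$ (in any local chart, patched by a partition of unity) shows that $\mathcal{I}_2(\mu) = O(\mu^N)$ for every $N$, with constants controlled by finitely many derivatives of $a$ and $\psi$. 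This absorbs $\mathcal{I}_2$ into the error term.

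For $\mathcal{I}_1(\mu)$, fix a Riemannian metric on $M$ and use the normal exponential map to identify $V$ with a neighborhood of the zero section of the normal bundle $N\mathcal{C} \to \mathcal{C}$. Trivializing $N\mathcal{C}$ locally over a finite atlas $\{(U_\alpha,\kappa_\alpha)\}$ of $\mathcal{C}$ and using a subordinate partition of unity, we may assume that on each piece we have coordinates $(c,\nu) \in \tilde U_\alpha \times \R^{n-p}$ in which $\mathcal{C}$ corresponds to $\{\nu = 0\}$, the volume density reads $dM = J_\alpha(c,\nu) \, dc \, d\nu$, and the phase satisfies $\psi(c,0) \equiv \psi_0$, $\partial_\nu \psi(c,0) = 0$, while by the cleanness hypothesis (II) the transversal Hessian $\partial_\nu^2 \psi(c,0)$ is non-degenerate of fixed signature $\sigma_{\psi''}$. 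The parametric Morse lemma (Hörmander, \cite{hoermanderI}, Lemma~C.6.1 or equivalent) then yields a smooth change of variables $\nu = \Psi_\alpha(c, w)$, depending smoothly on $c$, such that
\bq
\psi(c, \Psi_\alpha(c,w)) = \psi_0 + \tfrac{1}{2}\eklm{A_\alpha(c) w, w},
\eq
where $A_\alpha(c)$ is a symmetric matrix with the same signature and determinant equal to $\det \psi''(c)_{|N_c\mathcal{C}}$ up to a positive Jacobian factor that is absorbed into the transformed amplitude.

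In the new coordinates each local piece of $\mathcal{I}_1$ becomes
\bq
e^{i\psi_0/\mu} \int_{\tilde U_\alpha} \biggl( \int_{\R^{n-p}} e^{i\eklm{A_\alpha(c) w, w}/(2\mu)} \tilde a_\alpha(c,w;\mu) \, dw \biggr) dc,
\eq
where $\tilde a_\alpha \in \CT$ depends smoothly on all variables. The inner integral is a standard Euclidean oscillatory integral with non-degenerate quadratic phase and parameter $c$. Applying the classical stationary phase expansion (e.g.\ \cite{hoermanderI}, Theorem~7.7.5) to this inner integral, uniformly in $c$, produces for every $N$ an asymptotic series
\bq
(2\pi \mu)^{(n-p)/2} e^{i\frac{\pi}{4} \sigma_{\psi''}} \sum_{j=0}^{N-1} \mu^j \, q_{j,\alpha}(c) + R_{N,\alpha}(c;\mu),
\eq
with $q_{0,\alpha}(c) = \modulus{\det A_\alpha(c)}^{-1/2} \tilde a_\alpha(c,0;0)$ and remainder $R_{N,\alpha}(c;\mu)$ bounded by $\mu^N$ times a finite sum of sup-norms of derivatives of $\tilde a_\alpha$ in $w$, uniformly in $c$. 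Integrating in $c$ against $dc$ (which, tracked through the Morse change of variables and the trivialization of $N\mathcal{C}$, assembles into the induced volume density $d\sigma_{\mathcal{C}}$) and summing over $\alpha$ gives the global coefficients $Q_j(\psi;a)$, with the stated bound and the explicit formula for $Q_0(\psi;a)$.

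The main technical obstacle is the parametric Morse lemma: one has to guarantee that the diffeomorphism $\nu \mapsto \Psi_\alpha(c,w)$ depends smoothly on $c$ on all of $\supp a \cap \mathcal{C}$ and that the resulting Jacobian and quadratic form $A_\alpha(c)$ patch, across different local trivializations of $N\mathcal{C}$, to quantities whose invariant content is exactly $\modulus{\det \psi''(m)_{|N_m\mathcal{C}}|}$ and the signature $\sigma_{\psi''}$. The remaining bookkeeping — tracking constants, reconciling volume densities, and justifying uniformity in $c$ of the parametric remainder estimate — is routine once the Morse normal form is in place.
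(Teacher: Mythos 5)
Your sketch is correct and is essentially the argument that the paper delegates to its references (and that its own Remark \ref{rmk:A} indicates): choose coordinates adapted to $\mathcal{C}$ so that the critical manifold is $\{y=0\}$, discard the region away from $\mathcal{C}$ by non-stationary phase, and apply the Euclidean stationary phase theorem (\cite{hoermanderI}, Theorem 7.7.5) to the transversal integral with the tangential variable as a parameter, then integrate over $\mathcal{C}$ to assemble the coefficients $Q_j$ and the volume factor. The only inessential difference is your intermediate parametric Morse lemma, which Hörmander's theorem (already uniform in parameters for non-degenerate phases) renders unnecessary; if you do use it, just make sure the fiber diffeomorphism is normalized so that its Jacobian at $w=0$ is accounted for (either Jacobian $1$ with $A_\alpha(c)=\partial^2_\nu\psi(c,0)$, or signature normal form with Jacobian $|\det \partial^2_\nu\psi(c,0)|^{-1/2}$), since this is exactly what produces the factor $|\det \psi''(m)_{|N_m\mathcal{C}}|^{-1/2}$ in $Q_0$.
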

\begin{proof}
See for instance  \cite{hoermanderI}, Theorem 7.7.5, together with \cite{combescure-ralston-robert}, Theorem 3.3, as well as \cite{varadarajan97}, Theorem 2.12.
\end{proof}
\begin{remark}
\label{rmk:A}
An examination of the proof of the foregoing theorem shows that the constants $C_{N,\psi}$ are essentially bounded from above by 
\bqn 
\sup_{m \in \mathcal{C} \cap \supp a} \norm {\Big ( \psi''(m)_{|N_m\mathcal{C}}\Big ) ^{-1}}.
\eqn
Indeed, let $\alpha:(x,y) \rightarrow m \in \mathcal{O}\subset M$ be local normal coordinates such that $\alpha(x,y) \in \mathcal{C}$ if, and only if, $y=0$. The transversal Hessian $\mathrm{Hess} \, \psi(m)_{|N_m\mathcal{C}}$ is given in these coordinates by the matrix
\bqn 
\Big (\gd _{y_k} \gd _{y_l} (\psi \circ \alpha)(x,0) \Big )_{k,l} \
\eqn
where $m =\alpha (x,0)$, compare \eqref{eq:Hess}. If  the transversal Hessian of $\psi$ is non-degenerate at the point $m=\alpha(x,0)$, then $y=0$ is a non-degenerate critical point of the function  $y \mapsto (\psi \circ \alpha)(x,y)$, and therefore an isolated critical point by the lemma of Morse. As a consequence,  
\bq
\label{eq:est}
\frac{|y|}{|\gd_y (\psi \circ \alpha)(x,y)|} \leq 2 \norm {\Big (\gd_{y_k} \gd _{y_l} (\psi\circ \alpha)(x,0)\Big ) _{k,l}^{-1}}
\eq
for $y$ close to zero. The assertion now follows by applying  \cite{hoermanderI}, Theorem 7.7.5,  to the integral
\bqn 
\int_{\alpha^{-1}(\mathcal{O})} e^{i (\psi \circ \alpha) (x,y)/\mu} (a \circ \alpha)(x,y) \d y \d x
\eqn
in the variable $y$ with $x$ as a parameter, since in our situation the constant $C$ occuring in \cite{hoermanderI}, equation (7.7.12), is precisely bounded by \eqref{eq:est}, if we assume as we may that $a$ is supported near $\mathcal{C}$. A similar observation holds with respect to the constants $\tilde C_{j,\psi}$.
\end{remark}
Conditions (I) and (II) in Theorem \ref{thm:SP} are essential. Actually,  the existence of singularities might alter  the asymptotics, as can be seen from the following
\begin{example}
Let $M=\R^2$, $\psi(x,y)=(xy)^2$, and consider the asymptotic behavior of the integral $\mathcal{I}(\mu)= \int \int e^{i\psi(x,y)/\mu} a(x,y) \, dx \, dy$ as $\mu \to 0^+$, where $a(x,y) \in \CT( \R^2)$ is a compactly supported amplitude, and $dx \, dy$ denotes Lebesgue measure in $\R^2$. The critical set of $\psi$ is given by the singular variety $\Crit (\psi) = \mklm{ xy=0}$, and a computation shows that 
\bqn 
\mathcal{I}(\mu)= \frac{ e^{i\pi/4}}{\sqrt 2} a(0,0 ) (2\pi\mu)^{1/2} \log (\mu^{-1})+ O(\mu^{1/2}).
\eqn
\end{example}
In general, one faces serious difficulties in describing the asymptotic behavior of integrals of the form  \eqref{eq:SPT} if the critical set $\mathcal{C}$ is not smooth and in what follows, we shall indicate how to circumvent this obstacle by  using resolution of singularities.

 \subsection*{Resolution of singularities}   Let $M$ be a smooth variety over a field of characteristic zero, $\mathcal{O}_M$ the structure sheaf of rings of $M$, and $I \subset \mathcal{O}_M$ an ideal sheaf. The aim of resolution of singularities is  to construct a birational morphism $\Pi: \tilde M \rightarrow M$ such that $\tilde M$ is smooth, and the inverse image ideal sheaf $\Pi^\ast( I)\subset \mathcal{O}_{\tilde M}$, which is the ideal sheaf generated by the pullbacks of local sections of $I$, is locally principal. This is called the {principalization} of $I$, and implies resolution of singularities. That is, for every quasi-projective variety $X$, there is a smooth variety $\tilde X$, and a birational and projective morphism $\pi:\tilde X \rightarrow X$. Vice versa, resolution of singularities implies principalization. If $\Pi^\ast (I)$ is monomial, that is, if for every $x \in \tilde M$ there are local coordinates $z_i$ and natural numbers $c_i$ such that 
 \bqn 
 \Pi^\ast (I) \cdot  \mathcal{O}_{x,\tilde M} = \prod_i z_i^{c_i} \cdot \mathcal{O}_{x,\tilde M},
 \eqn
 one obtains strong resolution of singularities, which means that, in addition to the properties stated above, $\pi$ is an isomorphism over the smooth locus of $X$, and $\pi^{-1} (\mathrm{Sing}\,  X)$ a divisor with simple normal crossings. By the work of Hironaka \cite{hironaka}, resolutions are known to exist, and we refer the reader to  \cite{kollar} for a detailed exposition.
Let next  $D(I)$ be the derivative of $I$, which is the ideal sheaf that is generated by all derivatives of elements of $I$. Let further $Z \subset M$ be a smooth subvariety, and $\pi: B_Z M \rightarrow M$ the corresponding monoidal transformation with center $Z$ and exceptional divisor $F \subset B_ZM$. Assume that $(I,m)$ is a  marked ideal sheaf with $m \leq \mathrm{ord}_Z I$. The total transform $\pi^\ast (I)$ vanishes along $F$ with multiplicity $\mathrm{ord}_Z I$, and by removing the ideal sheaf $\mathcal{O}_{B_ZM}(m  \cdot F)$ from $\pi^\ast (I)$  we obtain the {birational, or weak transform} $\pi_\ast^{-1} (I,m)=(\mathcal{O}_{B_Z(M)}(mF) \cdot \pi^\ast (I),m) $ of $(I,m)$. Take now local coordinates $(x_1,\dots, x_n)$ on $M$ such that $Z=(x_1= \dots =x_r=0)$. As a consequence,
\bqn
y_1=\frac {x_1}{x_r}, \dots, y_{r-1}=\frac{x_{r-1}}{x_r}, y_r=x_r, \dots,  y_n=x_n
\eqn
define local coordinates on $B_Z M$, and for $(f,m) \in (I,m)$ one puts
\bqn
\pi_\ast^{-1} (f(x_1,\dots,x_n),m)= (y_r^{-m} f (y_1y_r, \dots y_{r-1}y_r, y_r, \dots, y_n),m).
\eqn
By computing the first derivatives of $\pi_\ast^{-1} (f(x_1,\dots,x_n),m)$, one then sees that for any composition  $\Pi:\tilde M \rightarrow M$ of blowing-ups of order greater or equal than $m$, 
\bq
\label{eq:derideal}
\Pi^{-1}_\ast ( D(I,m)) \subset D(\Pi^{-1}_\ast(I,m)),
\eq
see  \cite{kollar}, Sections 3.5 and 3.7. 

Consider now an oscillatory integral of the form \eqref{eq:SPT}, and its asymptotic behavior as $\mu \to +0$, in case that the critical set $\mathcal{C}$ of the phase function $\psi$ is not clean. The essential idea behind our approach  to singular  asymptotics via resolution of singularities is  to obtain a  partial monomialization 
\bqn 
\Pi^\ast( I_\psi)  \cdot  \mathcal{O}_{x,\tilde M}= {z}^{c_1}_{1}\cdots z^{c_k}_{k}  \, \Pi^{-1}_\ast (I_\psi) \cdot  \mathcal{O}_{x,\tilde M}
\eqn
 of the ideal sheaf $I_\psi=(\psi)$ generated by the phase function $\psi$ via a suitable resolution  $\Pi:\tilde M \rightarrow M$  in such a way that  the corresponding  weak transforms $\tilde \psi^ {wk}= \Pi_\ast^{-1}(\psi)$ have clean critical sets in the sense of Bott \cite{bott56}. Here   $z_1, \dots, z_k$ are local variables near each $x \in \tilde M$ and  $c_i$ are natural numbers. This  enables  one  to apply the stationary phase theorem in the resolution space $\tilde M$ to the weak transforms $ \tilde \psi^ {wk}$  with the variables  $z_1, \dots,z_k$ as parameters. Note that by Hironaka's theorem,  $I_\psi$  can always be monomialized.  But in general, this monomialization would not be explicit enough  to allow an application of the stationary phase theorem.  
 
 In the situation of the previous sections, consider the set $\mathcal{N}$ defined in \eqref{eq:calN}. To derive asymptotics for the integral \eqref{int}, we  shall construct  a strong resolution  of $\mathcal{N}$,  from which we shall deduce a partial desingularization   $\mathcal{Z}:\tilde X \rightarrow X=T^\ast M \times G$ of the set
  \bq
  \label{eq:crit}
\mathcal{C}= \mklm{(x,\xi,g) \in \Omega \times G: g\cdot (x,\xi) = (x,\xi)},
  \eq
and a partial 
monomialization of  the local ideal $I_\Phi=(\Phi)$ generated by the phase function \eqref{eq:phase} 
\bqn 
\mathcal{Z}^\ast (I_\Phi) \cdot \E_{\tilde x, \tilde X} = \prod_j\sigma_j^{l_j}   \cdot\mathcal{Z}^{-1}_\ast(I_\Phi) \cdot \E_{\tilde x, \tilde X},
\eqn
 where  $\sigma_j$ are local coordinate functions near each $\tilde x \in \tilde X$, and $l_j$ natural numbers. As a consequence, the phase function factorizes locally according to $\Phi \circ \mathcal{Z} \equiv \prod \sigma_j^{l_j} \cdot  \tilde \Phi^ {wk}$,
and we  show  that  the weak transforms $ \tilde \Phi^ {wk}$ have clean critical sets. 
Asymptotics for the integrals $I(\mu)$ are  then obtained by pulling  them back to the resolution space $\tilde X$, and applying  the stationary phase theorem to the $\tilde \Phi^{wk}$  with the variables  $\sigma_j$ as parameters.

A general description of the asymptotic behavior of oscillatory integrals  with singular critical sets was given in  \cite{bernshtein71}, and later also in \cite{duistermaat74, malgrange74, AGV88},  using Hironaka's theorem on resolution of singularities. It implies that  integrals of the form \eqref{eq:SPT} always  have local expansions of the form 
\bqn 
\sum_{\alpha}\sum_{k=0}^{n-1} c_{\alpha k}(a) \mu^\alpha (\log \mu^{-1})^k, \qquad \mu \to +0,
\eqn
where the coefficient $\alpha$ runs through a finite set of  arithmetic progressions of rational numbers, and the $c_{\alpha k}$ are distributions on $M$ with support in $\mathcal{C}$. The ocurring coefficients $\alpha$ and $k$ are determined by the so-called numerical data of the resolution, and their computation is in general a difficult task, unless one constructs an explicit resolution. 
 Resolution of singularities was first employed  in \cite{bernshtein-gelfand69, atiyah70} to give a new proof of the H\"ormander-Lojasiewicz theorem on the division of distributions and hence to the existence of temperate fundamental solutions for constant coefficient differential operators. Since many problems in analysis originate in  the singularities of some critical variety, it seems likely that an application of resolution of singularities may be relevant in further areas of this field. 

Partial desingularizations  of the zero level set $\Omega$ of the moment map and the symplectic quotient $\Omega/G$ have been obtained  e.g. in \cite{meinrenken-sjamaar} for compact symplectic manifolds with a Hamiltonian compact Lie group action by performing blowing-ups along minimal symplectic suborbifolds containing the strata of maximal depth  in $\Omega$. Recently, resolutions of group actions were also considered in \cite{albin-melrose} to study the equivariant cohomology of compact $G$-manifolds.

\section{The desingularization process}
\label{sec:DP}

We shall now proceed to  resolve the singularities of  \eqref{eq:calN}. For this,   we will have to set up an iterative desingularization process along the strata of the underlying $G$-action, where each step in our iteration will consist of a decomposition, a monoidal transformation, and a reduction. The centers of the monoidal transformations are successively chosen as isotropy bundles over unions of maximally singular orbits. For simplicity, we shall assume that at each iteration step the union of maximally singular orbits is connected. Otherwise each of the connected components, which might even have different dimensions,  has to be treated separately.

\subsection*{First decomposition} Let $M$ be a closed, connected Riemannian manifold, and $G$ a connected compact Lie group  acting on $M$ by isometries. As in the previous section, let $(H_1), \dots, (H_L)$ be the isotropy types of the $G$-action on $M$, $1 \leq k \leq L-1$, and $f_k:\nu_k\rightarrow M_k$  an invariant tubular neighborhood of $M_k(H_k)$ in 
\bdm
M_k=M-\bigcup_{i=1}^{k-1} f_i(\stackrel{\circ}{D}_{1/2}(\nu_i)),
\edm 
a manifold with corners on which $G$ acts with the isotropy types $(H_k), (H_{k+1}), \dots, (H_L)$. Here 
\bqn
f_k(p^{(k)},v^{(k)})=(\exp_{p^{(k)}} \circ \gamma ^{(k)})( v^{(k)}), \qquad  p^{(k)} \in M_k(H_k), \, v^{(k)} \in (\nu_k)_{p^{(k)}},
\eqn
 is an equivariant diffeomorphism, while 
\bqn
\gamma^{(k)}(v^{(k)})=\frac {F_k( p^{(k)})}{(1+\norm{v^{(k)}})^{1/2}} v^{(k)}, 
\eqn
where $F_k:M_k(H_k)\rightarrow \R$ is a smooth, $G$-invariant, positive function, see \cite{bredon}, pp. 306. Let $S_k=\mklm{v \in \nu_k: \norm{v}=1}\rightarrow M_k(H_k)$ be the sphere bundle over $M_k(H_k)$, and put  $W_k=f_k(\stackrel{\circ}{D_1}(\nu_k))$, $W_L = \stackrel{\circ}{M}_L$, so that 
\bqn  
M= W_1 \cup \dots \cup W_L,
\eqn
 see \eqref{eq:covering}.  Endow $G$ with the Riemannian structure
\bdm
d(X_g, Y_g)= -\tr \ad ( dL_{g^{-1}}(X_g)) \ad (dL_{g^{-1}}(Y_g)), \qquad X_g, Y_g \in T_g G,
\edm
where $L_g:h \to g h $, $h \in G$, and introduce for each $p^{(k)}\in M_k(H_k)$  the decomposition
\bqn
T_eG\simeq  \g=\g_{p^{(k)}}\oplus \g_{p^{(k)}}^\perp, 
\eqn
where $\g_{p^{(k)}}\simeq  T_e G_{p^{(k)}}$ denotes the Lie algebra of the  stabilizer $G_{p^{(k)}}$ of $p^{(k)}$, and $\g_{p^{(k)}}^\perp$ its orthogonal complement with respect to the above Riemannian structure. Note that $T_h G_{p^{(k)}} \simeq  dL_h(\g_{p^{(k)}})$, and if $A \in \g_{p^{(k)}}^\perp$, $d(dL_h(X), dL_h(A)) = -\tr \ad( X)\ad ( A)  =0$ for all $ X \in \g_{p^{(k)}}$. Therefore, the mapping
\bqn
\g_{p^{(k)}}^\perp \ni A \mapsto dL_h(A)= \frac {d}{dt} \big (h\e{tA} \big )_{|t=0} \in N_h G_{p^{(k)}}
\eqn
establishes an isomorphism $\g_{p^{(k)}}^\perp  \simeq N_h G_{p^{(k)}}$. In fact,  $\Ad(G_{p^{(k)}}) \g_{p^{(k)}}^\perp \subset \g_{p^{(k)}}^\perp$, so that $G/G_{p^{(k)}}$ constitutes a reductive homogeneous space, while the distribution $G \ni g \mapsto T_g^{{hor}} G=dL_g(\g_{p^{(k)}}^\perp)$ defines a connection on the principal fiber bundle  $G \to G/G_{p^{(k)}}$ for all $p \in M_k(H_k)$.
Consider next  the isotropy  bundle over $M_k(H_k)$
\bqn
\mathrm{Iso} \,M_k(H_k) \rightarrow M_k(H_k),
\eqn
as well as the canonical projection
\bqn 
\pi_k: W_k \rightarrow M_k(H_k), \qquad f_k(p^{(k)},v^{(k)}) \mapsto p^{(k)}, \qquad p^{(k)} \in M_k(H_k), \, v^{(k)} \in (\nu_k)_{p^{(k)}}.
\eqn
 Since $g\in G$ is an isometry, the theorem of Whitehead implies  \begin{align*}
f_k(p^{(k)},v^{(k)}) =g \cdot f_k(p^{(k)},v^{(k)})=(\exp_{gp^{(k)}} \circ \gamma_k)(g_{\ast,p^{(k)}}(v^{(k)})) \quad \Leftrightarrow \quad p^{(k)}=gp^{(k)}, \, v^{(k)}=g_{\ast,p}(v^{(k)}),
\end{align*}
so that one concludes 
\bq
\label{eq:Ncal}
\mathcal{N} \subset  \mathrm{Iso} \, W_L \cup \bigcup_{k=1}^{L-1} \pi^\ast _k \, \mathrm{Iso} \,  M_k(H_k),
\eq
where $\mathrm{Iso} \, W_L \rightarrow W_L$ is the isotropy bundle over $W_L$, and 
\bqn
\pi_k^\ast \, \mathrm{Iso}\,  M_k(H_k)=\mklm {(f_k(p^{(k)},v^{(k)}),h^{(k)})\in W_k \times G: h^{(k)} \in G_{p^{(k)}}}
\eqn
denotes the induced bundle.
Consider now an integral   $I(\mu)$ of the form  \eqref{int}. Introduce a partition of unity $ \{ \chi_k\}_{k=1, \dots, L}$ subordinated to the covering $M=W_1 \cup \dots \cup W_L$, and define
\bqn 
I_k(\mu)
= \int _{T^\ast Y}  \int_{G} e^{i\mu  \Phi(x , \xi,g) }    \chi_k (x) a (g  x,  x , \xi,g)  \d g \d(T^\ast Y)(  x,  \xi).  
\eqn
As will be explained in Lemma \ref{lemma:Reg},  the critical set of the phase function $\Phi$ is clean on the support of $\chi_L a$, so that one can directly apply the stationary phase principle to obtain asymptotics for  $I_L(\mu)$. We shall therefore turn to the case when $1 \leq k \leq  L-1$, and $W_k \cap Y \not= \emptyset$.  
 Let $\{v_1^{(k)},\dots  ,v_{c^{(k)}}^{(k)}\}$ be an orthonormal frame in $\nu_k$,  $(p_1^{(k)},\dots, p^{(k)}_{n-c^{(k)}})$ be local coordinates on $M_k(H_k)$, and write 
\bq
\label{eq:gamma}
\gamma^{(k)} ( v^{(k)})(p^{(k)},\theta^{(k)})= \sum _{i=1}^{c^{(k)}} \theta_i^{(k)} v_i^{(k)}(p^{(k)}) \in \, \gamma^{(k)}((\nu_k)_{p^{(k)}}).
\eq
By choosing $Y$ small enough, we can   assume that the coordinates in the chart $(\kappa, f_k(\nu_k) \cap Y)$ are given by  $\kappa(\exp_{p^{(k)}} \gamma^{(k)}( v^{(k)}))=(\tilde x_1, \dots, \tilde x_n)= (p_1^{(k)},\dots,p_{n-c^{(k)}}^{(k)}, \theta_1^{(k)},\dots,\theta_{c^{(k)}}^{(k)})$.  
By the  considerations leading to \eqref{eq:Ncal}, 
$$\Crit_k(\Phi) \subset \pi^\ast \mathrm{Iso} \, M_k(H_k) \times \rn_\xi,$$
where 
\begin{align*}
  \Crit_k(\Phi) &= \mklm{(x,\xi,g) \in ( \Omega \cap T^\ast (W_k \cap Y)) \times G: g \cdot (x,\xi) =(x,\xi)}.
\end{align*}
 Let therefore $U_k$ be  a tubular neighborhood of $\pi^\ast \mathrm{Iso} \, M_k(H_k)$ in $W_k \times G$, and  
$$\Pi_k: U_k \rightarrow  \pi_k^\ast \,  \mathrm{Iso} \, M_k(H_k)$$
the canonical projection which is obtained by considering geodesic normal coordinates around $\pi_k^\ast \, \mathrm{Iso} M_k(H_k)$ and   by identifying  $U_k$ with a neighborhood of the zero section in  the normal bundle $N\, \pi_k^\ast \,\mathrm{Iso} \, M_k(H_k)$. The non-stationary phase theorem \cite{hoermanderI}, Theorem 7.7.1,  then yields
\begin{align}
\label{intj}
I_k(\mu)
&= \int _{U_k}\int_ { \rn}   e^{i \mu \Phi(x , \xi,g)} \chi_k(x)  b (g  x,  x , \xi,g)  \d \xi  \d g \d M ( x)+ O(\mu^{-\infty}),
\end{align}
where $b$ is equal to the amplitude $a$ multiplied by a smooth cut-off-function with compact support in $U_k$. 
Note that the fiber of $N \, \pi^\ast \mathrm{Iso}\,  M_k(H_k)$ at a point $(f_k(p^{(k)},v^{(k)}),h^{(k)})$ may be identified with the fiber of the normal bundle to $G_{p^{(k)}}$ at the point $h^{(k)}$.  Let  now $A_1(p^{(k)}), \dots, A_{d^{(k)}}(p^{(k)})$ be an orthonormal basis of $\g_{p^{(k)}}^\perp$, and $B_1(p^{(k)}),\dots, B_{e^{(k)}}(p^{(k)})$ an orthonormal basis of $\g_{p^{(k)}}$, and introduce  canonical coordinates of the second kind
\bqn
(\alpha_1, \dots, \alpha_{d^{(k)}}, \beta_1, \dots, \beta_{e^{(k)}}) \mapsto \e{\sum_i \alpha_i A_i(p^{(k)})} \e{\sum_i \beta_i B_i(p^{(k)})}g 
\eqn
in a neighborhood of a point $g \in G$,  see \cite{helgason78}, page 146, which in turn give rise to  coordinates
\bqn
(\alpha_1, \dots, \alpha_{d^{(k)}}) \mapsto \big ( f_k(p^{(k)},v^{(k)}), \e{\sum_i \alpha_i A_i(p^{(k)})} h^{(k)}\big )
\eqn
in $\Pi_k^{-1}(f_k(p^{(k)},v^{(k)}),h^{(k)})$. Integrating along the fibers of the normal bundle to  $\pi_k^\ast \, \mathrm{Iso} M_k(H_k)$, compare \cite{donnelly78}, page 30,   we   obtain  for $I_k(\mu)$ the expression
\begin{align}\begin{split}\label{eq:19}
 I_k(\mu)&=\int_{\pi_k^\ast \, \mathrm{Iso} M_k(H_k)} \left [\int_{\Pi_k^{-1}(f_k(p^{(k)},v^{(k)}),h^{(k)})\times \rn
} e^{i\mu \Phi}\chi_k b \,  {\mathcal{J}}_k \, \d \xi \, dA^{(k)}   \right ]  dh^{(k)} \,  dv^{(k)}  dp^{(k)} \\
 &=\int_{ M_k(H_k) }\left [\int_{  \pi_k^{-1}(p^{(k)})\times G_{p^{(k)}} \times \stackrel{\circ}{D}_ \iota(\g^\perp_{p^{(k)}}) \times\rn
} e^{i\mu \Phi} \chi_k b \,  {\mathcal{J}}_k \, \d \xi \, dA^{(k)} \, dh^{(k)} \, dv^{(k)}  \right ]      dp^{(k)},
\end{split}
\end{align}
up to a term of order $O(\mu^{-\infty})$, where $dp^{(k)}, dv^{(k)}, dh^{(k)}, dA^{(k)}$ are suitable volume densities on the sets $M_k(H_k),\,(\nu_k)_{p^{(k)}}, \,G_{p^{(k)}}, \, \g_{p^{(k)}}^\perp\simeq  N_{h^{(k)}} G_{p^{(k)}} $, respectively, and 
\bq
\label{eq:20}
(p^{(k)}, v^{(k)},  A^{(k)},h^{(k)})\mapsto (f_k(p^{(k)}, v^{(k)}) ,\e{A^{(k)}} h^{(k)} )=(x,g)
\eq
are coordinates on $ U_k $ such that $ \d g \d M( x)\equiv {\mathcal{J}}_k \,  dA^{(k)} \, dh^{(k)} \,  dv^{(k)} \, dp^{(k)}$, $ {\mathcal{J}}_k$ being a Jacobian. Here $\stackrel{\circ}{D}_\iota (\g^\perp_{p^{(k)}})$ denotes the interior of a ball of suitable radius $\iota >0$ around the origin in $\g^\perp_{p^{(k)}}$.

\subsection*{First monoidal transformation} We shall now sucessively resolve the singularities of \eqref{eq:calN}. To begin with, note that  \eqref{eq:Ncal} implies
\bqn 
\mathcal{N}  = \mathrm{Iso} \, W_L  \cup \bigcup_{k=1}^{L-1} \mathcal{N} \cap U_k,
\eqn
and we put $\mathcal{N}_L=\mathrm{Iso } \, W_L$, $\mathcal{N}_k=\mathcal{N} \cap U_k$. While   $\mathcal{N}_L$ is  a smooth submanifold, 
$\mathcal{N}_k$ is in general singular. In particular, if $\dim H_k\not=\dim H_L$,
 $\mathcal{N}_k$ has a  singular locus  given by $\mathrm{Iso} \, M_k(H_k)$. 
We shall  therefore perform for each $k \in \mklm{1,\dots,L-1}$  a monoidal transformation 
\bqn 
\zeta_k: B_{Z_k}( U_k) \longrightarrow U_k 
\eqn
 with center $Z_k= \mathrm{Iso} \, M_k(H_k)\subset \mathcal{N}_k$. By piecing these transformations together, we obtain the monoidal transformation
 \bqn 
 \zeta^{(1)}: B_{Z^{(1)}} ( \M) \longrightarrow \M, \qquad Z^{(1)} = \bigcup _{k=1}^{L-1} Z_k \qquad \mbox{\emph{(disjoint union)}}.
 \eqn
 To get a local description, let $k$ be fixed, and write  $ A^{(k)}(p^{(k)},\alpha^{(k)})=\sum  \alpha_i^{(k)} A_i^{(k)}(p^{(k)})\in \g^\perp_{p^{(k)}}$,  $ B^{(k)}(p^{(k)},\beta^{(k)})=\sum  \beta_i^{(k)} B_i^{(k)}(p^{(k)})\in \g_{p^{(k)}}$.
 With respect to these coordinates and the ones introduced in \eqref{eq:gamma}  and \eqref{eq:20} we have $Z_k\simeq\mklm{T^{(k)}=(\theta^{(k)},\alpha^{(k)})=0}$, so that 
\begin{gather*}
B_{Z_k}( U_k)=\mklm{ (x,g,[t]) \in U_k \times \mathbb{RP}^{c^{(k)} + d^{(k)}-1}:  T^{(k)}_i t_j = T^{(k)}_j t_i  },\\
\zeta_k: (x,g,[t]) \longmapsto (x,g).
\end{gather*}
If $t_\rho \not= 0$,
\bqn 
(x,g,[t]) \mapsto \Big ( p^{(k)}, h^{(k)}, \frac{t_1}{t_\rho}, \dots \, , \hat{.} \, , \dots \, , \frac{t_{c^{(k)}+ d^{(k)}}}{t_\rho}, T^{(k)}_\rho\Big )
\eqn
define local coordinates on $B_{Z_k}(U_k)$. Consequently, setting $V_\rho=\mklm{[t] \in  \mathbb{RP}^{c^{(k)} + d^{(k)}-1}: t_\rho\not=0}$,  we can cover $B_{Z_k}( U_k)$ with charts $\mklm{(\phi^\rho_k, \mathcal{O}^\rho_k)}$, where $\mathcal{O}^\rho_k=B_{Z_k}( U_k)\cap (U_k \times V_\rho)$, such that   $\zeta_k$ is realized in each of the $\theta^{(k)}$-charts $\mklm{\mathcal{O}^\rho_k}_{1\leq \rho\leq c^{(k)}}$ as
\begin{align}
\label{eq:21}\begin{split}
\zeta^\rho_k&=\zeta_k \circ (\phi^\rho_k)^{-1}: ( p^{(k)},\tau_k, \tilde v^{(k)},  A^{(k)}, h^{(k)}) \stackrel{\, ' \zeta^\rho_k} {\mapsto} ( p^{(k)},\tau_k \tilde v^{(k)}, \tau_k A^{(k)}, h^{(k)})   \\ 
& \mapsto (\exp_{p^{(k)}}   \tau_k \tilde v^{(k)}, \e{\tau_k A^{(k)}} h^{(k)})=(x,g),
\end{split}
\end{align}
where $
 \tilde v^{(k)}(p^{(k)},\theta^{(k)})
  \in  \gamma^{(k)} \big (  ( S_k^+)_{p^{(k)}} \big )$,
and 
$ S_k^+=\mklm{v \in \nu_k: v = \sum s_i v_i^{(k)}, s_\rho>0,  \norm{v}=1}$, while $\tau_k \in (-1,1)$. Note that for each $1 \leq \rho\leq c^{(k)}$ we have $W_k \simeq  S_k^+ \times (-1,1)$ up to a set of measure zero. A similar description of $\zeta_k$ is given in the $\alpha^{(k)}$-charts.  As a consequence, we obtain a partial monomialization of the inverse image ideal sheaf $( \zeta^{(1)})^\ast (I_\mathcal{N})$
 \bqn 
 ( \zeta^{(1)})^\ast (I_\mathcal{N}) \cdot \E_{(x,g,[t]),B_{Z^{(1)}} ( \M)}=  \tau_k \cdot ( \zeta^{(1)})^{-1}_\ast (I_\mathcal{N}) \cdot  \E_{(x,g,[t]),B_{Z^{(1)}} ( \M)}
  \eqn 
in a neighborhood of any point $(x,g,[t])\in B_{Z^{(1)}} ( \M)$.  To see this, note that  $I_\mathcal{N}$ is generated locally by the functions  $\tilde x_q( x)-\tilde x_q(g \cdot x)$, $1\leq q \leq n$. We have   $ g \cdot \exp_{p^{(k)}}   \tau_k \tilde v^{(k)}= \exp_{g\cdot p^{(k)}} [g_{\ast,p^{(k)}} ( \tau_k  \tilde v^{(k)})]$,  where $g_{\ast,p^{(k)}} (  \tilde v^{(k)}) \in \gamma^{(k)} (( \nu_k)_{gp^{(k)}} )$, $\nu_k$ being a $G$-vector bundle. Now, Taylor expansion at $\tau_k=0$ gives for  $ y \in Y \cap f_k(\nu_k)$
\bqn
\tilde x_q(\e{\tau_k A^{(k)}} \cdot y)=\tilde x_q(y)- \tau_k\widetilde A^{(k)}_{y} ( \tilde x_q)+ O(|\tau_k^2 A^{(k)}|),
\eqn
where $\tau_k \in (-1,1), A^{(k)} \in \stackrel \circ D_\iota (\g_{p^{(k)}}^\perp)$, and $\iota>0$ is assumed to be sufficiently small. Furthermore, 
 $\widetilde A^{(k)}_{y} ( \tilde x_q)= d\tilde x_q(\widetilde A^{(k)}_{y})$. Consequently,
\begin{align}
\label{eq:kappafact}
\begin{split}
&\kappa\big (\exp_{p^{(k)}} \tau_k \tilde v^{(k)}\big ) - \kappa \big (\e{\tau_k A^{(k)}} h^{(k)} \cdot \exp_{p^{(k)}} \tau_k \tilde v^{(k)}\big ) \\ = \tau_k \Big (\tilde A^{(k)}_{p^{(k)}}(\tilde x_1)&, \dots,   \theta_1^{(k)}(\tilde v^{(k)}) - \theta_1^{(k)}\big  ( ( h^{(k)} )_{\ast,p^{(k)}} \tilde v^{(k)}, \dots  \big ) + O(|\tau_k^2 A^{(k)}|).
\end{split}
\end{align}
Since similar considerations hold  in the $\alpha^{(k)}$-charts $\mklm{\mathcal{O}^\rho_k}_{c^{(k)}+1 \leq \rho \leq c^{(k)}+d^{(k)}}$,  the assertion follows. In the same way, the phase function  \eqref{eq:phase}  factorizes  according to 
\bq
\label{eq:22}
 \Phi \circ (\id_\xi \otimes \zeta_k^\rho) =  \,^{(k)} \tilde \Phi^{tot}=\tau_k \cdot  \,  ^{(k)} \phw, 
 \eq
$ \,^{(k)} \tilde \Phi^{tot}$ and $  \,  ^{(k)} \phw $ being the \emph{total} and \emph{weak transform} of the phase function $\Phi$, respectively.\footnote{ Note that the weak transform is defined only locally, while  the total transform has a global meaning. To keep the notation as simple as possible, we restrained ourselves from making the chart dependence of $\tau_k$ and $ \,  ^{(k)} \phw$ manifest. } In the  $\theta^{(k)}$-charts this explicitly reads  
\begin{align}
\begin{split}
\label{eq:F1}
&\Phi(x, \xi, g) =\eklm{\kappa\big (\exp_{p^{(k)}} \tau_k \tilde v^{(k)}\big ) - \kappa \big (\e{\tau_k A^{(k)}} h^{(k)} \cdot \exp_{p^{(k)}} \tau_k \tilde v^{(k)}\big ), \xi }\\
  =\tau_k  &\left [  \sum_{q=1}^{n-c^{(k)}}  \xi_q \, dp^{(k)}_q(\widetilde A^{(k)}_{p^{(k)}})  + \sum_{r=1}^{c^{(k)}} \Big [  \theta_r^{(k)}(\tilde v^{(k)})-  \theta_r^{(k)}\big  ( (h^{(k)} )_{\ast,p^{(k)}} \tilde v^{(k)} \big )  \Big ]\xi_{n-c^{(k)}+r} +O(|\tau_kA^{(k)}|)\right ].
\end{split}
\end{align}
Since $\zeta_k$ is a real analytic, surjective map, we can lift the integral $I_k(\mu)$ to the resolution space $B_{Z_k}(U_k)$, and introducing a partition $\mklm{u^\rho_k}$ of unity subordinated to the covering $\mklm{\mathcal{O}^\rho_k}$  yields with \eqref{intj} the equality
\bqn 
I_k(\mu)=\sum_{\rho=1} ^{c^{(k)}}  I_k^\rho(\mu)+\sum_{\rho=c^{(k)}+1} ^{d^{(k)}}  \tilde I_k^\rho(\mu)
\eqn 
up to terms of order $O(\mu^{-\infty})$, 
where the integrals  $  I_k^\rho (\mu)$ and $  \tilde I_k^\rho (\mu)$ are given by the expressions
\begin{gather*}
\int_{B_{Z_k}(U_k)\times \rn} u_k^\rho ( \id_\xi \otimes \zeta_k)^\ast (  e^{i\mu \Phi } \chi_k b \d g \d M(x) \d \xi).
\end{gather*}
As we shall see, the weak transforms $\, ^{(k)} \phw$ have no critical points in the $\alpha^{(k)}$-charts, which will imply that the integrals $  \tilde I_k^\rho(\mu)$ contribute to $I(\mu)$ only with lower order terms. In what follows, we shall therefore restrict ourselves to the the examination of the integrals $I_k^\rho(\mu)$. Setting $a_k^\rho = (u_\rho \circ (\phi_k^\rho)^{-1})  \cdot [ (b \chi_k) \circ ( \id_\xi \otimes \zeta_k^\rho)]$
 we obtain with \eqref{eq:19} and \eqref{eq:21}  
\begin{gather*}
I_k^\rho(\mu)= \int_{ M_k(H_k) \times (-1,1) }\Big [\int_{ \gamma^{(k)}((S_k)_{p^{(k)}}) \times  G_{p^{(k)}} \times  \stackrel{\circ}{D}_ \iota(\g_{p^{(k)}}^\perp) \times\rn
} e^{i\mu{\tau_k}\,  ^{(k)} \phw}a_k^\rho \, \bar  {\mathcal{J}}_k^\rho \\   \d \xi \, dA^{(k)} \, dh^{(k)} \, d\tilde v^{(k)}   \Big ]  \d \tau_k \,     \d p^{(k)},
\end{gather*}
 where   $d\tilde v^{(k)}$ is a suitable volume density on $\gamma^{(k)}( (S_k)_{p^{(k)}})$ such that the pulled back density reads $(\zeta_k^\rho)^\ast(\d g \d M( x)) = \bar  {\mathcal{J}}_k^\rho \, d A^{(k)} \, dh^{(k)} \,  d\tilde v^{(k)} \, d\tau_k \, dp^{(k)}$. Furthermore, by compairing \eqref{eq:20} and \eqref{eq:21}  one sees  that  
\bqn 
\bar  {\mathcal{J}}_k^\rho  = |\tau_k|^{c^{(k)}+d^{(k)}-1} \,  {\mathcal{J}}_k\circ  \,  ' \zeta_k^\rho . 
\eqn

\subsection*{First reduction} Let $k$ be fixed, and assume that there exists a $x \in W_k$ with isotropy group $G_x \sim H_j$, and let  $p^{(k)} \in M_k(H_k), v^{(k)} \in (\nu_k)_{p^{(k)}}$ be such that $x=f_k(p^{(k)},v^{(k)})$. Since we can assume that $x$ lies in a slice at $p^{(k)}$ around the $G$-orbit of $p^{(k)}$, we have $G_x \subset G_{p^{(k)}}$, see \cite{kawakubo}, pp. 184, and  \cite{bredon}, page 86. Hence $H_j$ must be conjugate to a subgroup of $H_k\sim G_{p^{(k)}}$. Now, $G$ acts on $M_k$ with the isotropy types $(H_k),(H_{k+1}), \dots, (H_L)$. The isotropy types occuring in $W_k$ are therefore those for which the corresponding isotropy groups  $H_k,H_{k+1}, \dots, H_L$ are conjugate to a subgroup of $H_k$, and we shall denote them by
$
(H_k) = (H_{l_1}), (H_{l_2}), \dots, (H_L).
$
 By the invariant tubular neighborhood theorem, one has the isomorphism
\bqn
W_k/G \simeq (\nu_k)_{p^{(k)}}/ G_{p^{(k)}}
 \eqn
for every $p^{(k)}\in M_k(H_k)$. Furthermore,  $ (\nu_k)_{p^{(k)}}$ is an orthogonal $ G_{p^{(k)}}$-space; therefore $G_{p^{(k)}}$ acts on  $(S_k)_{p^{(k)}}$ with   isotropy types $(H_{l_2}), \dots, (H_L)$, cp.  \cite{donnelly78}, pp. 34, and   $G$ must act on $S_k$ with  isotropy types $(H_{l_2}), \dots, (H_L)$ as well. If all isotropy groups  $H_{l_2}, \dots, H_L$ have the same dimensions, the singularities of  $\mathcal{N}_{k}$   have been resolved. Indeed, note that $\zeta_k^{-1}( \mathcal{N}_k )$ is contained in the union of the $\theta^{(k)}$-charts  $\mklm{\mathcal{O}^\rho_k}_{1\leq \rho\leq c^{(k)}}$ since, in the notation of \eqref{eq:20}, $
\e{A^{(k)}} h^{(k)} \in G_{f_k(p^{(k)}, v^{(k)})} \subset G_{p^{(k)}}$ 
necessarily implies $A^{(k)}=0$. Let therefore $1 \leq \rho \leq c^{(k)}$, and consider  the set $\zeta_k^{-1}( \mathcal{N}_{k}) \cap \mathcal{O}_{k}^\rho$, which  is given by all points $(x,g,[t])$ with coordinates $( p^{(k)}, \tau_k, \tilde v^{(k)}, A^{(k)}, h^{(k)})$ satisfying
 \bqn 
 \e{\tau_kA^{(k)}} h^{(k)}\in G_{\exp_{p^{(k)}}\tau_k \tilde v^{(k)}}\subset G_{p^{(k)}}.
 \eqn
 If $\tau_k\not=0$, this implies $A^{(k)}=0$ and $h^{(k)} \in G_{\tilde v^{(k)}}$. Therefore
\begin{align*}
\zeta_k^{-1}( \mathcal{N}_{k}) \cap \mathcal{O}_{k}^\rho=\mklm{ A^{(k)}=0, \, h^{(k)} \in G_{\tilde v^{(k)}}, \tau_k\not=0 }\cup \mklm{\tau_k=0}.
\end{align*}
Assume now that  all isotropy groups  $H_{l_2}, \dots, H_L$ have the same dimension. 
 If  $H_k$ has the  same dimension, too, $\mathcal{N}_k$ is already a manifold. Otherwise, the invariant tubular neighborhood theorem implies that $\zeta_k^{-1}( \mathrm{Reg }\, \mathcal{N}_{k}) \cap \mathcal{O}_{k}^\rho=\mklm{ A^{(k)}=0, \, h^{(k)} \in G_{\tilde v^{(k)}}, \tau_k \not=0 }$, where $\mathrm{Reg}\,  \mathcal{N}_k= \mathrm{Reg} \, \mathcal{N} \cap U_k$ denotes the regular part of $\mathcal{N}_k$. The closure of this set is  a smooth manifold, and taking the union over all $1 \leq \rho \leq c^{(k)}$  yields a smooth manifold $ \tilde {\mathcal{ N}_k}\subset  B_{Z_k}( U_k) $ which intersects $\zeta^{-1}_k(\mathrm{Sing} \, \mathcal{N}_k)$ normally.   After performing an additional monomial transformation with center $\tilde {\mathcal{ N}_k}\cap \zeta^{-1}_k(\mathrm{Sing} \, \mathcal{N}_k)$, we obtain a strong resolution for $\mathcal{ N}_k$. Furthermore, if $G$ acts on $S_k$ only with isotropy type $(H_L)$,  we shall  see in Sections  \ref{sec:MT1} and \ref{sec:MT2} that in each of the $\theta^{(k)}$-charts the critical sets of the weak transforms $^{(k)} \phw$ are clean, so that one can apply the stationary phase theorem  in order to compute each of the $I_k^\rho(\mu)$. But in general, $G$ will act on $S_k$ with singular orbit types, so that  neither $\mathcal{N}_k$ is   resolved, nor do the weak transforms $^{(k)} \tilde \Phi^{wk}$ have clean critical sets, and we are forced to continue with the iteration.

\subsection*{Second Decomposition}

In what follows, let  $1\leq k \leq L -2$, and $p^{(k)}\in M_k(H_k)$ be fixed. Since $\gamma^{(k)}: \nu_k \rightarrow \nu_k$ is an equivariant diffeomorphism onto its image, $\gamma^{(k)}((S_k)_{p^{(k)}})$ is a compact $G_{p^{(k)}}$-manifold, and we consider the covering 
\bqn
\gamma^{(k)}((S_k)_{p^{(k)}} )=W_{kl_2} \cup \dots \cup W_{kL}, \qquad W_{kl_j}= f_{kl_j}(\stackrel \circ D_1(\nu_{kl_j})), \quad W_{kL}= \mathrm{Int} (\gamma^{(k)}((S_k)_{p^{(k)}})_L),
\eqn
where $f_{kl_j}:\nu_{kl_j} \rightarrow\gamma^{(k)} ((S_k)_{p^{(k)}})_{l_j}$ is an invariant tubular neighborhood of $ \gamma^{(k)}((S_k)_{p^{(k)}})_{l_j}(H_{l_j})$ in 
\bqn 
\gamma^{(k)}((S_k)_{p^{(k)}})_{l_j}= \gamma^{(k)}((S_k)_{p^{(k)}}) - \bigcup_{r=2}^{j-1} f_{kl_r}(\stackrel \circ D_{1/2}(\nu_{kl_r})), \qquad j \geq 2,
\eqn
and $f_{kl_j}(p^{(l_j)},v^{(l_j)})=(\exp_{p^{(l_j)}} \circ \gamma^{(l_j)})(v^{(l_j)})$, $p^{(l_j)} \in\gamma^{(k)} ((S_k)_{p^{(k)}})_{l_j} (H_{l_j})$, $ v ^{(l_j)} \in ( \nu_{kl_j}) _{p^{(l_j)}}$, $\gamma^{(l_j)}: \nu_{kl_j} \rightarrow \nu_{kl_j}$ being an equivariant diffeomorphism onto its image given by 
\bqn
\gamma^{(l_j)}(v^{(l_j)})=\frac {F_{l_j}( p^{(l_j)})}{(1+\norm{v^{(l_j)}})^{1/2}} v^{(l_j)}, 
\eqn
where $F_{l_j}:((S_k)_{p^{(k)}})_{l_j} (H_{l_j})\rightarrow \R$ is a smooth, $G_{p^{(k)}}$-invariant, positive function.
Let now $\{\chi_{kl_j}\}$ denote a partition of  unity subordinated to the covering $\mklm{W_{kl_j}}$, which extends to a partition of unity on $\gamma^{(k)}(S_k)$ as a consequence of the  invariant tubular neighborhood theorem, by which in particular $\gamma^{(k)}(S_k) /G \simeq \gamma^{(k)}((S_k)_{p^{(k)}})/G_{p^{(k)}}$ for all $p^{(k)}$. We then  define
\begin{align}
\label{eq:Ikij}
\begin{split}
I_{kl_j}^\rho(\mu) =& \int_{ M_k(H_k) \times (-1,1)  }\Big [\int_{\gamma^{(k)}( (S_k)_{p^{(k)}}) \times  G_{p^{(k)}} \times  \stackrel{\circ}{D}_ \iota(\g_{p^{(k)}}^\perp) \times\rn
} e^{i\mu{\tau_k} \,  ^{(k)} \phw} a_k^\rho \\ & \chi_{kl_j} \bar {\mathcal{J}}_k^\rho \d \xi \, dA^{(k)} \, dh^{(k)} \, d\tilde v^{(k)}   \Big ]  \d \tau_k \,     \d p^{(k)},
\end{split}
\end{align}
so that $I_k^\rho(\mu)= I_{kl_2}^\rho(\mu) + \dots + I_{kL}^\rho (\mu)$. 
Since $G_{p^{(k)}}$ acts on $W_{kL}$ only with type $(H_L)$, the iteration process for $I_{kL}^\rho (\mu)$ ends here. For the remaining integrals $I_{kl_j}^\rho(\mu)$ with $k < l_j < L$ and non-zero integrand, let us denote by
\bqn
\mathrm{Iso} \,\gamma^{(k)}((S_k)_{p^{(k)}})_{l_j}(H_{l_j}) \rightarrow \gamma^{(k)} ((S_k)_{p^{(k)}})_{l_j}(H_{l_j})
\eqn
the isotropy bundle over $\gamma^{(k)}((S_k)_{p^{(k)}})_{l_j}(H_{l_j})$, and by $\pi_{kl_j}: W_{kl_j} \rightarrow \gamma^{(k)}((S_k)_{p^{(k)}})_{l_j}(H_{l_j})$ the canonical projection.   We then assert that in each $\theta^{(k)}$-chart  $\mklm{\mathcal{O}^\rho_k}_{1\leq \rho\leq c^{(k)}}$ 
\begin{align}
\begin{split}
\label{eq:A}
\Crit_{kl_j}( \,^{(k)} \phw) & \\ \subset \Big \{ (p^{(k)}, \tau_k,\tilde v^{(k)},\xi, h^{(k)}, A^{(k)}): (\tilde v^{(k)}, h^{(k)}) \in& \pi_{kl_j}^\ast \mathrm{Iso} \,\gamma^{(k)}((S_k)_{p^{(k)}})_{l_j}(H_{l_j}), \quad A^{(k)} =0 \Big \},
\end{split}
\end{align}
where 
$$\Crit_{kl_j}( \,^{(k)} \phw)=\mklm{ (p^{(k)}, \tau_k,\tilde v^{(k)},\xi, h^{(k)}, A^{(k)}): \, ^{(k)} \phw_{\ast} = 0,\, \tilde v^{(k)} \in W_{kl_j}}.$$
Indeed,  from \eqref{eq:22} it is clear that for $\tau_k\not=0$ the condition $\gd_\xi  \,^{(k)} \phw=0$ is equivalent to 
\bqn
 \e{\tau_k\sum \alpha_i^{(k)} A_i^{(k)}(p^{(k)})} h^{(k)}\in G_{\exp_{p^{(k)}}\tau_k \tilde v^{(k)}}\subset G_{p^{(k)}},
\eqn
which implies $\alpha^{(k)}=0$, and consequently $h^{(k)} \in G_{\tilde v^{(k)}}$. But if $\tilde v^{(k)}=f_{kl_j}(p^{(l_j)}, v)$, where $p^{(l_j)} \in\gamma^{(k)}((S_k)_{p^{(k)}})_{l_j}(H_{l_j})$, $v \in \stackrel \circ D_1(\nu_{kl_j})_{p^{(l_j)}}$, then  $h^{(k)} \in G_{p^{(l_j)}}$.
On the other hand, assume that $\tau_k=0$. By \eqref{eq:F1},  the vanishing of the $\xi$-derivatives of $\,^{(k)} \phw$ is equivalent to
\bqn
 \big (\widetilde A^{(k)}_{ p^{(k)}}(p^{(k)}_1), \dots, \widetilde A^{(k)}_{ p^{(k)}}(p^{(k)}_{n-c^{(k)}}) \big )=0, \qquad  (\1 - h^{(k)} )_{\ast,p^{(k)}} \,\tilde v^{(k)}=0,
\eqn
which again implies $\alpha^{(k)}=0$, as well as $h^{(k)} \in G_{\tilde v^{(k)}}$. But if $\tilde v^{(k)}=f_{kl_j}(p^{(l_j)}, v)$ as above, we again conclude $h^{(k)} \in G_{p^{(l_j)}}$, and \eqref{eq:A} follows, since
\begin{align*}
 \pi_{kl_j}^\ast \mathrm{Iso} \,\gamma^{(k)}((S_k)_{p^{(k)}})_{l_j}(H_{l_j})=&\{(w, g) \in W_{kl_j} \times G_{p^{(k)}}: w=f_{kl_j}(p^{(l_j)}, v), \\ & p^{(l_j)} \in\gamma^{(k)}((S_k)_{p^{(k)}})_{l_j}(H_{l_j}), \, v \in \stackrel \circ D_1(\nu_{kl_j})_{p^{(l_j)}}, \,g \in G_{p^{(l_j)}}\}.
\end{align*}
The same reasoning also shows that the weak transforms $^{(k)} \tilde \Phi^{wk}$ can have no critical points in the $\alpha^{(k)}$-charts  $\mklm{\mathcal{O}^\rho_k}_{c^{(k)}+1 \leq \rho\leq c^{(k)}+d^{(k)}}$ .
Let now  $U_{kl_j}$ denote a tubular neighborhood of the set $\pi_{kl_j}^\ast \mathrm{Iso} \,\gamma^{(k)}((S_k)_{p^{(k)}})_{l_j}(H_{l_j})$ in $W_{kl_j} \times G_{p^{(k)}}$, and let $b_k^\rho$ be equal to the product of the amplitude $a_k^\rho$ with some smooth cut-off-function with compact support in $U_{kl_j}$ that depends smoothly on $p^{(k)}$. The non-stationary phase theorem then implies that, up to terms of lower order, we can replace $a_k^\rho$ by $b_k^\rho$  in \eqref{eq:Ikij}, compare Section \ref{sec:MR}.  For given  $p^{(l_j)} \in\gamma^{(k)}((S_k)_{p^{(k)}})_{l_j}(H_{l_j})$,  consider next the decomposition
\bqn
\g = \g_{p^{(k)}} \oplus \g_{p^{(k)}}^\perp =(\g_{p^{(l_j)}}\oplus \g_{p^{(l_j)}}^\perp) \oplus \g_{p^{(k)}}^\perp.
\eqn
Let further $h^{(l_j)} \in G_{p^{(l_j)}}$, and $A_1^{(l_j)}, \dots ,A_{d^{(l_j)}}^{(l_j)}$ be an orthonormal frame in $ \g_{p^{(l_j)}}^\perp$,  as well as $B_1^{(l_j)}, \dots ,B_{e^{(l_j)}}^{(l_j)}$ be an orthonormal frame in $ \g_{p^{(l_j)}}$, and $v_1^{(kl_j)}, \dots, v_{c^{(kl_j)}}^{(kl_j)}$ an orthonormal frame in $(\nu_{kl_j}) _{p^{(l_j)}}$. Integrating along the fibers in a neighborhood of $\pi_{kl_j}^\ast \mathrm{Iso} \,\gamma^{(k)}((S_k)_{p^{(k)}})_{l_j}(H_{l_j})$ then yields  for $ I_{kl_j}^\rho(\mu)$ the expression
\begin{align*}
I_{kl_j}^\rho(\mu) &=  \int_{M_k(H_k)\times (-1,1) } \Big [ \int_{\gamma^{(k)}((S_k)_{p^{(k)}})_{l_j}(H_{l_j})} \Big [ \int_{ \pi^{-1}_{kl_j}(p^{(l_j)}) \times G_{p^{(l_j)}}\times  \stackrel{\circ}{D}_ \iota(\g_{p^{(l_j)}}^\perp) \times \stackrel{\circ}{D}_ \iota(\g_{p^{(k)}}^\perp) \times  \rn} e^{i{\mu \tau_k} \, ^{(k)} \phw } \\ &b_k^\rho  \, \chi_{kl_j} \,  {\mathcal{J}}_{kl_j}^\rho \, \d \xi \, d A^{(k)} \, d A^{(l_j)} \, dh^{(l_j)} \,  dv^{(l_j)}     \big ] dp^{(l_j)}  \Big ]  d\tau_k \,  dp^{(k)}
\end{align*}
up to lower order terms, where ${\mathcal{J}}_{kl_j}^\rho$ is a Jacobian, 
and 
\bqn
(p^{(l_j)}, v^{(l_j)}, A^{(l_j)},h^{(l_j)})\mapsto (f_{kl_j}(p^{(l_j)}, v^{(l_j)}),\e{A^{(l_j)}}h^{(l_j)})=(\tilde v^{(k)},h^{(k)})
\eqn
are coordinates on $U_{kl_j}$, while $dp^{(l_j)}$, $dA^{(l_j)}, dh^{(l_j)}$, and $ dv^{(l_j)} $ are suitable volume densities in the spaces   $\gamma^{(k)}((S_k)_{p^{(k)}})_{l_j}(H_{l_j})$,  $\g_{p^{(l_j)}}^\perp$, $G_{p^{(l_j)}}$, and $\stackrel \circ D_1(\nu_{kl_j})_{p^{(l_j)}}$, respectively, such that we have the equality $ \bar  {\mathcal{J}}_k^\rho \d h^{(k)} \d \tilde v^{(k)}\equiv {\mathcal{J}}_{kl_j}^\rho \,  dA^{(l_j)} \, dh^{(l_j)} \,  dv^{(l_j)}\, dp^{(l_j)} $.

\subsection*{Second monoidal transformation}

Put $\tilde M^{(1)}=B_{Z^{(1)}}(\M)$, and consider   the  monoidal transformation 
\bqn 
\zeta^{(2)}: B_{Z^{(2)}} (\tilde M^{(1)}) \longrightarrow \tilde M^{(1)}, \qquad Z^{(2)}= \bigcup_{k<l<L,  \, (H_l) \leq (H_k)} Z_{kl} \qquad \mbox{\emph{(disjoint union)}},
\eqn
where
\bqn
Z_{kl}\simeq    \bigcup _{p^{(k)} \in M_k(H_k)} (-1,1)\times \mathrm{Iso} \,  \gamma^{(k)} ( (S_k)_{p^{(k)}})_l (H_l), \qquad k< l< L, \quad  (H_l) \leq (H_k),
\eqn
are  the possible maximal singular loci of $(\zeta^{(1)})^{-1}(\mathcal{N})$. To obtain a local description of $\zeta^{(2)}$, let us write   $ A^{(l)}(p^{(k)},p^{(l)},\alpha^{(l)})=\sum  \alpha_i^{(l)} A_i^{(l)}(p^{(k)}, p^{(l)})\in \g_{p^{(l)}}^\perp$,   $ B^{(l)}(p^{(k)},p^{(l)},\beta^{(l)})=\sum  \beta_i^{(l)} B_i^{(l)}(p^{(k)}, p^{(l)})\in \g_{p^{(l)}}$, as well as
\bqn
 \gamma^{(l)}(v^{(l)})(p^{(k)}, p^{(l)},\theta^{(l)})= \sum _{i=1}^{c^{(l)}} \theta_i^{(l)} v_i^{(kl)}(p^{(k)},p^{(l)})\in \gamma^{(l)}((\nu_{kl})_{p^{(l)}}).
\eqn
One has $Z_{kl}\simeq\mklm{\alpha^{(k)}=0, \, \alpha^{(l)}=0, \, \theta^{(l)}=0}$, which in particular shows that each $Z_{kl}$ is a manifold. If we now cover $B_{Z^{(2)}} (\tilde M^{(1)})$ with the standard charts, a computation shows that $(\zeta^{(1)} \circ \zeta^{(2)})^{-1}( \mathcal{N})$ is contained in the $(\theta^{(k)},\theta^{(l)})$-charts. For our purposes, it will therefore  suffice to examine $\zeta^{(2)}$ in each of these charts in which it reads
\begin{align}
\begin{split}
\label{eq:25}
\zeta_{kl}^{\rho\sigma}: (p^{(k)},\tau_k, p^{(l)}, \tau_l, \tilde v^{(l)}, A^{(l)},  h^{(l)},  A^{(k)})  \stackrel{\, '\zeta_{kl}^{\rho\sigma}}{\mapsto}  (p^{(k)},\tau_k, p^{(l)}, \tau_l  \tilde v^{(l)},\tau_l  A^{(l)},  h^{(l)}, \tau_l A^{(k)}) 
\\ \mapsto (p^{(k)},\tau_k, \exp_{p^{(l)}} \tau_l \tilde v^{(l)},  \e{\tau_l  A^{(l)}} h^{(l)}, \tau_l A^{(k)})\equiv(p^{(k)},\tau_k, \tilde v^{(k)},h^{(k)}, A^{(k)}),
\end{split}
\end{align}
where $\tau_l \in (-1,1)$, and 
\bqn
\tilde v^{(l)}(p^{(k)},p^{(l)},\theta^{(l)}) \in  \gamma^{(l)} ((S_{kl}^+)_{p^{(l)}}). 
\eqn
Here $S_{kl}$ stands for the 
the sphere subbundle in $\nu_{kl}$,  and  $S_{kl}^+=\mklm { v \in S_{kl}:  v=\sum s_i v_i^{(kl)}, \, v_\sigma>0 }$
for some $\sigma$. Note that $Z_{kl}$ has normal crossings with the exceptional divisor $E_k=\zeta_k^{-1}(Z_k) \simeq \mklm{\tau_k=0}$, and that for each $p^{(k)}\in M_k(H_k)$ we have 
$W_{kl} \simeq S_{kl}^+ \times (-1,1)$, up to a set of measure zero. Now, Taylor expansion at $\tau_l=0$ gives
\begin{gather*}
^T \theta^{(k)}\big ( \exp _{p^{(l)}} \tau_l  \tilde v ^{(l)} \big )- ^T \theta^{(k)}\big ( (  \e{\tau_l A^{(l)}} h^{(l)})_{\ast, p^{(k)}} \exp _{p^{(l)}} \tau_l  \tilde v ^{(l)} \big ) \\ 
= \tau_l \frac \gd {\gd \tau_l} \Big [ \, ^T \theta^{(k)}\big ( \exp _{p^{(l)}} \tau_l  \tilde v ^{(l)} \big )- \, ^T\theta^{(k)}\big ( (  \e{\tau_l A^{(l)}} h^{(l)})_{\ast, p^{(k)}} \exp _{p^{(l)}} \tau_l  \tilde v ^{(l)} \big )\Big ]_{|\tau_l=0} \\ 
+ O(|\tau_l ^2 \, A^{(l)}|)+O\big (\big |\tau_l^2  \big [\theta^{(l)} ( \tilde v ^{(l)}) - \theta^{(l)} \big ((h^{(l)})_{\ast, p^{(k)}} \tilde v ^{(l)}\big )\big ]\big | \big )\\
= \tau_l \left ( \frac {\gd \theta ^{(k)}(1, p^{(l)}, 0)}{\gd ( \tau_k, p^{(l)}, \theta^{(l)})}\right ) \, ^T \Big (0,  dp^{(l)}_1( \tilde A^{(l)} _{p^{(l)}}), \dots, dp^{(l)}_{c^{(k)} -c^{(l)} -1}( \tilde A^{(l)} _{p^{(l)}}),  \theta^{(l)}_1\big  (   \tilde v^{(l)} \big )- \theta^{(l)}_1\big  ( ( h^{(l)})_{\ast, p^{(k)}}\tilde v^{(l)} \big ),  \\ \dots, \theta^{(l)}_{c^{(l)}} \big  (   \tilde v^{(l)} \big )-  \theta^{(l)}_{c^{(l)}} \big  (  ( h^{(l)})_{\ast, p^{(k)}} \tilde v^{(l)} \big ) \Big )+  O(|\tau_l^2 A^{(l)}|)+O\big (\big |\tau_l^2  \big [\theta^{(l)} ( \tilde v ^{(l)}) - \theta^{(l)} \big ((h^{(l)})_{\ast, p^{(k)}} \tilde v ^{(l)}\big )\big ]\big | \big ), 
\end{gather*}
where $\{p^{(l)}_r\}$ are local coordinates on $ \gamma^{(k)} ((S_k)_{p^{(k)}})_{l}(H_{l})$, 
\bqn 
\left ( \frac {\gd \theta ^{(k)}}{\gd ( \tau_k, p^{(l)}, \theta^{(l)})}\right ) (\tau_k, p^{(l)}, \theta^{(l)})
\eqn
denotes the Jacobian of the coordinate change $ \theta^{(k)}=  \theta^{(k)}( \tau_k \exp_{p^{(l)}} \gamma^{(l)} ( v^{(l)}))$, and 
all vectors are considered as row vectors, the transposed being a column vector. Since similar considerations hold in the other charts, we obtain with  \eqref{eq:kappafact} and \eqref{eq:25} a  partial monomialization of $( \zeta^{(1)}\circ  \zeta^{(2)})^\ast (I_\mathcal{N})$ according to
 \bqn 
 ( \zeta^{(1)} \circ  \zeta^{(2)})^\ast (I_\mathcal{N}) \cdot \E_{\tilde m, B_{Z^{(2)}} (\tilde M^{(1)})}=  \tau_k \tau_l \cdot ( \zeta^{(1)} \circ  \zeta^{(2)})^{-1}_\ast (I_\mathcal{N}) \cdot  \E_{\tilde m, B_{Z^{(2)}} (\tilde M^{(1)})}
  \eqn 
  in a neighborhood of any point $\tilde m\in B_{Z^{(2)}} (\tilde M^{(1)})$. 
In the same way,  the phase function factorizes locally according to 
\bqn 
\Phi \circ (\id_\xi \otimes (\zeta_k^\rho \circ \zeta_{kl}^{\rho\sigma}))= \,^{(kl)} \tilde \Phi^{tot}=\tau_k \, \tau_l\,  ^{(kl)} \phw,
\eqn
which by \eqref{eq:F1} and \eqref{eq:25} explicitly reads 
\begin{gather*}
\Phi(x,\xi,g)  =\tau_k \Big [ \tau_l \sum_{q=1}^{n-c^{(k)}}  \xi_q \, dp^{(k)}_q (\widetilde A^{(k)}_{ p^{(k)}})  \\ + \sum_{r=1}^{c^{(k)}}   \Big [ \theta_r^{(k)}\Big  (  \exp_{p^{(l)}} \tau_l \tilde v^{(l)} \Big )- \theta_r^{(k)}\Big  ( ( \e{\tau_l  A^{(l)}} h^{(l)} )_{\ast,p^{(k)}} \, \exp_{p^{(l)}} \tau_l \tilde v^{(l)} \Big ) \Big ] \xi_{n-c^{(k)}+r}  +O(|\tau_k\tau_l A^{(k)}|) \Big ]\\ 
=\tau_k \tau_l \left [ \left \langle  \left ( \frac {\gd (p^{(k)} , \theta ^{(k)})(p^{(k)}, 1, p^{(l)}, 0)}{\gd (p^{(k)}, \tau_k, p^{(l)}, \theta^{(l)})}\right )  \,^T \Big (dp^{(k)}_1 (\widetilde A^{(k)}_{ p^{(k)}}), \dots, 0,  dp^{(l)}_1( \tilde A^{(l)} _{p^{(l)}}),  \dots,   \theta^{(l)}_1\big  (  \tilde v^{(l)} \big ) \right. \right. \\
 \left. \left.  -  \theta^{(l)}_1\big  (  (  h^{(l)})_{\ast, p^{(k)}} \tilde v^{(l)} \big ), \dots  \Big )  , \xi \right \rangle +O(|\tau_k \,A^{(k)}|)+O(|\tau_l \,A^{(l)}|)+O(|\tau_l  [\theta^{(l)} ( \tilde v ^{(l)}) - \theta^{(l)} ((h^{(l)})_{\ast, p^{(k)}} \tilde v ^{(l)})]| )\right  ]
\end{gather*}
in the  $(\theta^{(k)},\theta^{(l)})$-charts. A computation now shows that the weak transforms $ ^{(kl)} \phw$ have no critical points in the $(\theta^{(k)},\alpha^{(l)})$-charts. We shall therefore see in Section \ref{sec:MR} that modulo lower order terms $ I_{kl}^\rho(\mu)$ is given by a sum of integrals of the form
\begin{align*}
 I_{kl}^{\rho\sigma}(\mu) =&  \int_{M_k(H_k)\times(-1,1)} \Big [ \int_{ \gamma^{(k)} ((S_k)_{p^{(k)}})_{l}(H_{l})\times (-1,1)} \Big [ \int_{ \gamma^{(l)} ( (S_{kl})_{p^{(l)}}) \times G_{p^{(l)}}\times \stackrel{\circ}{D}_ \iota(\g_{p^{(l)}}^\perp)) \times \stackrel{\circ}{D}_ \iota(\g_{p^{(k)}}^\perp)) \times  \rn}\\ &  e^{i{\mu}{\tau_k\tau_l} \, ^{(kl)} \phw } a_{kl}^{\rho\sigma}  \bar {\mathcal{J}}_{kl}^{\rho\sigma} \, \d \xi \, d A^{(k)}  \,  d A^{(l)}  \, dh^{(l)} \,  d\tilde v^{(l)}    \big ] d\tau_l  \, dp^{(l)}  \Big ]  d\tau_k \,  dp^{(k)}
\end{align*}
for some $ \iota>0$, where $a_{kl}^{\rho\sigma}$ are compactly supported amplitudes, and  $d\tilde v^{(l)}$ is a suitable density on $ \gamma^{(l)} ((S_{kl})_{p^{(l)}})$ such that we have the equality 
$$\d M(x) \d g \equiv \bar {\mathcal{J}}_{kl}^{\rho\sigma} \,  d A^{(k)} \, dA^{(l)} \, dh^{(l)} \,  d\tilde v^{(l)}  \, d\tau_l \, dp^{(l)}\, d\tau_k \, dp^{(k)}.$$
Furthermore, a computation shows that $\bar {\mathcal{J}}_{kl} ^{\rho\sigma}= |\tau_l | ^{c^{(l)} +d^{(k)} +d^{(l)} -1} {\mathcal{J}}_{kl}^\rho \circ \, '\zeta_{kl}^{\rho\sigma}$.

\subsection*{Second reduction} Now, the group $G_{p^{(k)}}$ acts on $ \gamma^{(k)} ((S_k)_{p^{(k)}})_l$ with the isotropy types $(H_l)=(H_{l_j}),(H_{l_{j+1}}), \dots, (H_L)$. By the same arguments given in the first reduction, the isotropy types occuring in $W_{kl}$ constitute a subset of these types, and we shall denote them by
\bqn
(H_l) = (H_{l_{m_1}}), (H_{l_{m_2}}), \dots, (H_L).
\eqn
Consequently, for each $p^{(k)}\in M_k(H_k)$, $G_{p^{(k)}}$ acts on $S_{kl}$ with the isotropy types $(H_{l_{m_2}}), \dots, (H_L)$. If the isotropy groups  $H_{l_{m_2}}, \dots, H_L$ have the same dimensions, we shall see  that the singularities of $(\zeta^{(1)})^{-1}(\mathcal{N})$ can be locally resolved over $Z_{kl}$. Moreover,  if  $G_{p^{(k)}}$ acts on $S_{kl}$ only with type $(H_L)$, the ideal $I_\Phi$ can be partially monomialized in such a way that the critical sets of the corresponding weak transforms are  clean. But since this  is not the case in general,  we have to continue with the iteration.

\subsection*{N-th decomposition}
Denote by $\Lambda\leq L$  the maximal number of elements that a totally ordered subset of the set of isotropy types can have. Assume that  $3 \leq N <\Lambda$, and let   $\mklm{(H_{i_1}), \dots , (H_{i_{N}})}$ be a totally ordered subset of the set of isotropy types  such that $i_1 <  \dots < i_N<L$. Let 
 $f_{i_1}$, $f_{i_1i_2}$, $S_{i_1}$, $S_{i_1i_2}$, as well as  $p^{(i_1)}\in M_{i_1}(H_{i_1}), \quad p^{(i_2)}\in \gamma^{(i_1)}  ((S_{i_1}^+)_{p^{(i_{1})}} ) _{i_2}(H_{i_2}),\dots$
be defined as in the first two iteration steps, and assume that $f_{i_1\dots i_{j}}$,  $S_{i_1\dots i_{j}},p^{(i_{j})},\dots$ have already been defined for $j<N$. 
For every fixed $p^{(i_{N-1})}$, let $ \gamma^{(i_{N-1})} ((S_{i_1\dots i_{N-1}})_{p^{(i_{N-1})}})_{i_{N}}$ be the submanifold with corners of the closed $G_{p^{(i_{N-1})}}$-manifold 
$ \gamma^{(i_{N-1})}  ((S_{i_1\dots i_{N-1}})_{p^{(i_{N-1})}})$ from which all  orbit types  less than $G/H_{i_{N}}$ have been removed. Consider  the invariant tubular neighborhood 
$$f_{i_1\dots i_{N}}=\exp \circ \gamma^{(i_{N})}: \nu _{i_1\dots i_{N}} \rightarrow \gamma^{(i_{N-1})}  ((S_{i_1\dots i_{N-1}})_{p^{(i_{N-1})}})_{i_{N}}$$ of the set  $ \gamma^{(i_{N-1})} ((S_{i_1\dots i_{N-1}})_{p^{(i_{N-1})}})_{i_{N}}(H_{i_{N}})$, and define $S_{i_1\dots i_{N}}$  as the sphere subbundle in $ \nu _{i_1\dots i_{N}}$, while  
 $$S_{i_1 \dots i_N}^+=\mklm { v \in S_{i_1 \dots i_N}:  v=\sum v_i v_i^{(i_1 \dots i_N)}, \, v_{\rho_{i_N}}>0 }$$
for some $\rho_{i_N}$. 
Put $W_{i_1 \dots i_N}= f_{i_1 \dots i_N}(\stackrel \circ D_1(\nu_{i_1 \dots i_N}))$, and denote the corresponding integral in the decomposition of $I_{i_1 \dots i_{N-1}}^{\rho_{i_1} \dots \rho_{i_{N-1}}}(\mu)$ by $I_{i_1 \dots i_N}^{\rho_{i_1} \dots \rho_{i_{N-1}}}(\mu)$. Here we can assume that, modulo terms of lower order,  the $W_{i_1\dots i_N} \times G_{p^{(i_{N-1})}}$-support of the integrand in   $I_{i_1 \dots i_N}^{\rho_{i_1} \dots \rho_{i_{N-1}}}(\mu)$ is contained in a compactum of a tubular neighborhood  of the induced bundle $\pi^\ast _{i_1 \dots i_N} \mathrm{Iso} \,\gamma^{(i_{N-1})} ((S_{i_1\dots i_{N-1}})_{p^{(i_{N-1})}})_{i_{N}}(H_{i_{N}})$, where $\pi _{i_1 \dots i_N}:W_{i_1 \dots i_N}\rightarrow \gamma^{(i_{N-1})} ((S_{i_1\dots i_{N-1}})_{p^{(i_{N-1})}})_{i_{N}}(H_{i_{N}})$ denotes the canonical projection.
For  a given point $p^{(i_{N})}\in \gamma^{(i_{N-1})}   ((S^+_{i_1\dots i_{N-1}})_{p^{(i_{N-1})}})_{i_{N}}(H_{i_{N}})$,  consider further  the decomposition
\bqn 
 \g_{p^{(i_{N-1})}}= \g_{p^{(i_N)}}\oplus \g_{p^{(i_N)}}^\perp, 
\eqn
and set $d^{(i_N)}=\dim \g_{p^(i_N)}^\perp$, $ e^{(i_N)}=\dim \g_{p^(i_N)}$. This yields  the decomposition
\begin{gather}
\label{eq:gdecomp}
\g = \g_{p^{(i_1)}} \oplus \g_{p^{(i_1)}}^\perp =(\g_{p^{(i_2)}}\oplus \g_{p^{(i_2)}}^\perp) \oplus \g_{p^{(i_1)}}^\perp =\dots = \g_{p^{(i_N)}}\oplus \g_{p^{(i_N)}}^\perp \oplus \cdots \oplus \g_{p^{(i_1)}}^\perp.
\end{gather}
Denote by  $\{ A_r^{(i_N)}(p^{(i_1)},\dots,p^{(i_N)})\}$ a basis of $\g_{p^(i_N)}^\perp$, and by $\{ B_r^{(i_N)}(p^{(i_1)},\dots,p^{(i_N)})\}$ a basis of $\g_{p^(i_N)}$. For $A^{(i_N)} \in \g_{p^(i_N)}^\perp$ and $B^{(i_N)}\in \g_{p^(i_N)}$ write  further 
\begin{align*}
A^{(i_N)} &=\sum_{r=1}^{d^{(i_N)}} \alpha^{(i_N)}_r A_r^{(i_N)}(p^{(i_1)},\dots,p^{(i_N)}), \qquad B^{(i_N)} =\sum_{r=1}^{e^{(i_N)}} \beta^{(i_N)}_r B_r^{(i_N)}(p^{(i_1)},\dots,p^{(i_N)}),
\end{align*}
and let   $\mklm{v_r^{(i_1\dots i_N)}(p^{(i_1)}, \dots p^{(i_N)}  )}$ be  an orthonormal frame in $(\nu_{i_1\dots  i_N})_{p^{(i_N)}}$.

 \subsection*{N-th monoidal transformation} 
 
 Let the monoidal transformations $\zeta^{(1)}, \zeta^{(2)}$ be defined as in the first two iteration steps, and assume that the monoidal transformations $\zeta^{(j)}$ have already been defined for $j<N$. Put $\tilde \M^{(j)} = B_{Z^{(j)}}( \tilde \M^{(j-1)})$, $\tilde \M ^{(0)}= \M=M \times G$, and consider the monoidal transformation 
 \bq
 \label{eq:montrans}
 \zeta^{(N)}: B_{Z^{(N)}}( \tilde \M^{(N-1)} )\rightarrow  \tilde \M^{(N-1)}, \qquad Z^{(N)}= \bigcup_{i_1 < \dots < i_N<L} Z_{i_1\dots i_N}, \qquad \mbox{\emph{(disjoint union)}},
 \eq
 where the union is over all totally ordered subsets $\mklm{(H_{i_1}), \dots , (H_{i_{N}})}$ of $N$ elements with $i_1 < \dots < i_N < L$, and 
  \begin{gather*}
 Z_{i_1\dots i_N}\simeq \bigcup _{p^{(i_{1})},   \dots,  p^{(i_{N-1})}}   (-1,1)^{N-1} \times \mathrm{Iso} \, \gamma^{(i_{N-1})}((S_{i_1\dots i_{N-1}})_{p^{(i_{N-1})}})_{i_N} (H_{i_N})
 \end{gather*}
 are the  possible maximal singular loci of $(\zeta^{(1)} \circ \dots \circ \zeta^{(N-1)})^{-1}(\mathcal{N})$.  Denote   by  $ \zeta^{\rho_{i_1}}_{i_1} \circ \dots \circ  \zeta^{\rho_{i_1}\dots \rho_{i_N}}_{i_1\dots i_N}$ a local realization of the sequence of monoidal transformations $ \zeta^{(1)} \circ \dots \circ \zeta^{(N)}$  corresponding to   the totally ordered subset    $\mklm{(H_{i_1}), \dots, (H_{i_{N}})}$  in a set of $(\theta^{(i_1)}, \dots, \theta^{(i_N)})$-charts labeled by the indices $\rho_{i_1},\dots ,\rho_{i_N}$. 
 As a consequence, we obtain a  partial monomialization of the inverse image ideal sheaf $( \zeta^{(1)}\circ \cdots \circ  \zeta^{(N)})^\ast (I_\mathcal{N})$ according to
 \bqn 
 (\zeta^{(1)}\circ \cdots \circ  \zeta^{(N)})^\ast (I_\mathcal{N}) \cdot \E_{\tilde m, \tilde \M^{(N)}}=  \tau_{i_1} \cdots \tau_{i_N} \cdot (\zeta^{(1)}\circ \cdots \circ  \zeta^{(N)})^{-1}_\ast (I_\mathcal{N}) \cdot  \E_{\tilde m, \tilde \M^{(N)}}
  \eqn 
  in a neighborhood of any point $\tilde m\in \tilde \M^{(N)}=B_{Z^{(N)}}(\tilde \M^{(N-1)})$, as well as local factorizations of  the phase function according to 
 \bqn
\Phi \circ (\id_\xi \otimes (\zeta_{i_1}^{\rho_{i_1}} \circ\dots \circ  \zeta_{\sigma_{i_1} \dots \sigma_{i_N}}^{\rho_{i_1} \dots \rho_{i_N}}))=\, ^{(i_1\dots i_N)} \tilde \Phi^{tot}=\tau_{i_1} \cdots \tau_{i_N} \, ^{(i_1\dots i_N)}\tilde \Phi^ {wk},
\eqn
where in the relevant $(\theta^{(i_1)}, \dots, \theta^{(i_N)})$-charts 
\begin{align*}
 ^{(i_1\dots i_N)}\tilde \Phi^ {wk}=&  \Big \langle \Xi \cdot  \, ^T \big ( dp_1 ^{(i_1)} (\tilde A^{(i_1)}_{p^{(i_1)}}), \dots, 0, dp_1 ^{(i_2)} (\tilde A^{(i_2)}_{p^{(i_2)}}),\dots, 0,\dots,  dp_1 ^{(i_N)} (\tilde A^{(i_N)}_{p^{(i_N)}}),\dots, \\ &  \theta_1 ^{(i_N)} \big ( \tilde v^{(i_N)} \big )-  \theta_1 ^{(i_N)} \big ((h^{(i_N)})_{\ast, p^{(i_1)}}  \tilde v^{(i_N)} \big ), \dots \big ), \xi \Big \rangle \\ 
 &+ \sum _{j=1}^N O(|\tau_{i_j} A^{(i_j)}|)+O(| \tau_{i_N} [ \theta^{(i_N)}( \tilde v^{(i_N)}) - \theta^{(i_N)}( (h^{(i_N)})_{\ast, p^{(i_1)}} \tilde v^{(i_N)})] |),
\end{align*}
the $\{p^{(i_j)}_s\}$ being local coordinates,  $\tilde v^{(i_N)}(p^{(i_j)},\theta^{(i_N)}) \in  \gamma^{(i_N)} ((S_{i_1\dots i_N}^+)_{p^{(i_N)}})$, $h^{(i_N)} \in G_{p^{(i_N)}}$, and 
\begin{align*}
\Xi =&\Xi ^{(i_1)} \cdot \Xi^{(i_1i_2)} \cdot \cdots \cdot \Xi^{(i_1 \dots i_{N-1})}, \\
\Xi^{(i_1 \dots i_{j})}=& \frac{\gd ( p^{(i_1)}, \tau_{i_1}, p^{(i_2)}, \tau_{i_2}, \dots , p^{(i_j)}, \theta ^{(i_j)}))}{\gd ( p^{(i_1)}, \tau_{i_1}, p^{(i_2)}, \tau_{i_2}, \dots , p^{(i_j)},\tau_{i_j}, p^{(i_{j+1})},  \theta ^{(i_{j+1})} )} ( p^{(i_1)},1, p^{(i_2)},1, \dots , p^{(i_j)},1, p^{(i_{j+1})},0). 
\end{align*}
Here $\Xi^{(i_1 \dots i_{j})}$ corresponds to  the Jacobian of the coordinate change given by 
$$\theta^{(i_j)} = \theta^{(i_j)}\big ( \tau_{i_j} \exp _{p^{(i_{j+1})}} \gamma^{(i_{j+1})}(v^{(i_{j+1})})\big ).$$
Modulo lower order terms, $I(\mu)$ is then given by a sum of integrals of the form
\begin{align}
\label{eq:N}
\begin{split}
 & \qquad\qquad\qquad\qquad \qquad \qquad \qquad I_{i_1\dots i_N}^{\rho_{i_1}\dots \rho_{i_N}}(\mu) =\\ &\int_{M_{i_1}(H_{i_1})\times (-1,1)} \Big [ \int_{\gamma^{(i_{1})}((S_{i_1})_{p^{(i_1)}})_{i_2}(H_{i_2})\times (-1,1) } \dots \Big [  \int_{\gamma^{(i_{N-1})}((S_{i_1\dots i_{N-1}})_{p^{(i_{N-1})}})_{i_{N}}(H_{i_{N}})\times (-1,1)} \\
&\Big [ \int_{\gamma^{(i_{N})}((S_{i_1\dots i_{N}})_{p^{(i_N)}})\times G_{p^{(i_{N})}}\times \stackrel \circ D_ \iota( \g_{p^{(i_{N})}}^\perp) \times \cdots \times \stackrel \circ D_ \iota (\g_{p^{(i_{1})}}^\perp)\times \rn }  e^{i\mu {\tau_1 \dots \tau_N} \, ^{(i_1\dots i_N)} \tilde \Phi ^{wk}}   \\ & \, a_{i_1\dots i_N}^{\rho_{i_1}\dots \rho_{i_N}} \,   \bar {\mathcal{J}}_{i_1\dots i_N}^{\rho_{i_1}\dots \rho_{i_N}}  \d \xi  \d A^{(i_1)} \dots  \d A^{(i_N)}  \d h^{(i_N)} \d \tilde v^{(i_N)} \Big ]  \d \tau_{i_N} \d p^{(i_{N})} \dots  \Big ] \d \tau_{i_2} \d p^{(i_{2})} \Big ]\d \tau_{i_1} \d p^{(i_{1})}.
\end{split}
\end{align}
Here $a_{i_1\dots i_N}^{\rho_{i_1}\dots \rho_{i_N}} $
are amplitudes with compact support in a system of $(\theta^{(i_1)}, \dots, \theta^{(i_N)})$-charts labeled  by the indices $ \rho_{i_1}, \dots, \rho_{i_N}$, while
\begin{align*}
\bar {\mathcal{J}}_{i_1\dots i_N}^{\rho_{i_1}\dots \rho_{i_N}} &=\prod_{j=1}^N |\tau_{i_j}|^{c^{(i_j)}+\sum_{r=1}^j d^{(i_r)}-1}{\mathcal{J}}_{i_1\dots i_N}^{\rho_{i_1}\dots \rho_{i_N}},
\end{align*}
where ${\mathcal{J}}_{i_1\dots i_N}^{\rho_{i_1}\dots \rho_{i_N}}$ are functions which do not depend on the variables $\tau_{i_j}$.

\subsection*{N-th reduction} For each $p^{(i_{N-1})}$, the isotropy group $G_{p^{(i_{N-1})}}$ acts on $ \gamma^{(i_{N-1})} ((S_{i_1\dots i_{N-1}})_{p^{(i_{N-1})}})_{i_{N}}$ by the types $(H_{i_N}), \dots, (H_L)$. The types occuring in $W_{i_1 \dots i_N}$ constitute a subset of these, and   $G_{p^{(i_{N-1})}}$ acts on  the sphere bundle $S_{i_1\dots i_N}$ over the submanifold $\gamma^{(i_{N-1})}((S_{i_1\dots i_{N-1}})_{p^{(i_{N-1})}})_{i_N} (H_{i_N})\subset W_{i_1 \dots i_N}$  with one type less.

 \medskip

\subsection*{End of iteration}

As before, let $\Lambda\leq L$ be the maximal number of elements of a totally ordered subset of the set of isotropy types. After  $N=\Lambda-1$ steps, the end of the iteration is reached. In particular, we will have achieved a desingularization of $\mathcal{N}$. For this, it is actually sufficient to consider only monoidal transformations \eqref{eq:montrans} whose centers $Z^{(N)}$ are unions  over  totally ordered subsets $\mklm{(H_{i_1}),  \dots , (H_{i_{N}})}$ for which the corresponding orbit types $G/H_{i_j}$ are singular. 
\begin{theorem}
\label{thm:desing} Consider  a compact, connected $n$-dimensional Riemannian manifold $M$, together with  a compact, connected Lie groups $G$ acting effectively and isometrically on $M$, and put 
\bqn
\mathcal{N}=\mklm{(x,g) \in \M: gx=x}.
\eqn 
For every $1 \leq N \leq \Lambda -1$, let the monoidal transformation 
 $\zeta^{(N)}$  be  defined as in \eqref{eq:montrans}, where  $Z^{(N)}$ is a union over totally ordered subsets    $\mklm{(H_{i_1}), \dots, (H_{i_{N}})}$ of singular isotropy types of $N$ elements.  
 Denote the sequence of monoidal transformations $ \zeta^{(1)} \circ \dots \circ \zeta^{(\Lambda-1)}$ by $\zeta$, and put  $\tilde \M = \tilde \M^{(\Lambda-1)}$.  Then $\zeta: \tilde \M \rightarrow \M$ yields  a  strong  resolution of  $\mathcal{N}$.
\end{theorem}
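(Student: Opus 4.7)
The plan is to proceed by induction on $N = 1, 2, \ldots, \Lambda - 1$, tracking two assertions at each stage: first, that the center $Z^{(N)}$ is a smooth $G$-invariant submanifold of $\tilde{\M}^{(N-1)}$ that meets all previously introduced exceptional divisors transversally; second, that the strict transform of $\mathcal{N}$ inside $\tilde{\M}^{(N)}$ is, in each chart still meeting it, locally a product of already-resolved parameters $\tau_{i_j}$ with an isotropy bundle over a sphere bundle $S_{i_1\ldots i_N}$ on which the ambient isotropy group acts with one fewer level of singular type than before. Combined, these will yield smoothness of the strict transform of $\mathcal{N}$ in $\tilde{\M}$ together with the normal crossings property for $\zeta^{-1}(\mathrm{Sing}\,\mathcal{N})$.

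First I would verify smoothness of each center. By Theorem~\ref{thm:strat} applied to the action of $G_{p^{(i_{N-1})}}$ on the slice sphere, each stratum $\gamma^{(i_{N-1})}((S_{i_1\ldots i_{N-1}})_{p^{(i_{N-1})}})_{i_N}(H_{i_N})$ is a smooth submanifold, so its isotropy bundle is a smooth fiber bundle; multiplication by the open box $(-1,1)^{N-1}$ of previously resolved parameters preserves smoothness, and taking unions over $p^{(i_1)},\ldots,p^{(i_{N-1})}$ yields a smooth $Z_{i_1\ldots i_N}$. Transversality of $Z^{(N)}$ with the exceptional divisors $\{\tau_{i_j} = 0\}$ produced at earlier stages is then read off directly from the explicit chart coordinates \eqref{eq:21} and \eqref{eq:25}, since each center is cut out locally only by vanishing of some of the $\theta^{(i_j)}$ and $\alpha^{(i_j)}$.

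Next I would carry out the inductive reduction of singularity. The local computation in the $N$-th reduction paragraph shows that after blowing up along $Z^{(i_1\ldots i_N)}$ and passing to the $(\theta^{(i_1)},\ldots,\theta^{(i_N)})$-charts---the only ones that meet the strict transform---the isotropy action on the new sphere bundle $S_{i_1\ldots i_N}$ has a set of isotropy types that is a proper initial segment of the previous one. Since by definition any totally ordered chain of isotropy types has length at most $\Lambda$, after $\Lambda - 1$ blow-ups the only isotropy type left on the residual slice sphere is the principal one $(H_L)$. The induced action of $G_{p^{(i_{\Lambda-1})}}/(H_L\cap G_{p^{(i_{\Lambda-1})}})$ there is locally free, so the remaining isotropy bundle is a smooth submanifold; together with the transversality of the divisors $\tau_{i_j} = 0$ secured at each step, this upgrades the partial monomialization $\zeta^\ast(I_\mathcal{N}) = \tau_{i_1}\cdots \tau_{i_{\Lambda-1}} \cdot \zeta_\ast^{-1}(I_\mathcal{N})$ to a full principalization with smooth weak transform, so that $\zeta^{-1}(\mathcal{N})$ is a divisor with simple normal crossings.

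The hard part will be the combinatorial bookkeeping required to make the above induction rigorous: one must check that the system of $(\theta^{(i_1)},\ldots,\theta^{(i_j)})$-charts coherently covers the evolving strict transform at each stage, and verify in each of them both the product normal form claimed for the strict transform and the transversality statement for the accumulated exceptional divisors. All of this is encoded in the explicit local factorizations of $I_\mathcal{N}$ computed throughout the section, cf.\ \eqref{eq:kappafact}, so the argument amounts to synthesizing those computations and invoking the stratification theorem at each iteration. Finally, because the centers $Z^{(N)}$ are successively chosen inside the singular locus of the current strict transform, the composition $\zeta$ is an isomorphism over $\mathrm{Reg}\,\mathcal{N}$, which together with smoothness of $\tilde{\M}$, projectivity of $\zeta$ as a composition of blow-ups along smooth centers, and the normal crossings property established above establishes all properties of a strong resolution.
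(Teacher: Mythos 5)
Your overall strategy---induction along the chains of orbit types, smoothness of the centers via Theorem \ref{thm:strat}, the chart-by-chart identification of the transform of $\mathcal{N}$ in the $(\theta^{(i_1)},\dots,\theta^{(i_N)})$-charts, and the observation that after at most $\Lambda-1$ steps the residual sphere carries only isotropy groups of one fixed dimension, so that the remaining isotropy bundle is smooth---is essentially the paper's argument. But your end-game contains a genuine gap. You conclude that, because the centers are chosen inside the singular locus, $\zeta$ is an isomorphism over $\mathrm{Reg}\,\mathcal{N}$ and that this ``establishes all properties of a strong resolution.'' The delicate point, stated explicitly in Section \ref{sec:CGMM} and confronted in the paper's own proof, is that $\mathrm{Reg}\,\mathcal{N}$ is in general \emph{not} dense in $\mathcal{N}$: the isotropy bundles over singular strata are pieces of $\mathcal{N}$ of smaller dimension which do not lie in the closure of the regular part. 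Consequently the smooth submanifold $\tilde{\mathcal{N}}\subset\tilde\M$ obtained as the closure of the sets $\{\tau_{i_1}\cdots\tau_{i_N}\neq 0,\ A^{(i_j)}=0,\ h^{(i_N)}\in G_{\tilde v^{(i_N)}}\}$ maps bijectively onto $\mathrm{Reg}\,\mathcal{N}$, but it is in general neither surjective nor birational onto $\mathcal{N}$; the paper must perform further blow-ups along $\tilde{\mathcal{N}}\cap\zeta^{-1}(\mathrm{Sing}\,\mathcal{N})$ before the strong resolution is obtained. Your argument never addresses this and would fail exactly at the birationality/isomorphism-over-the-smooth-locus requirement.

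A second, lesser problem is the claim that the partial monomialization is upgraded ``to a full principalization'' so that $\zeta^{-1}(\mathcal{N})$ becomes a simple normal crossings divisor. Generically $\mathcal{N}$ has codimension $\kappa$ in $\M$, and the weak transform retains this codimension; what the construction yields---and all that is needed---is $\zeta^\ast(I_\mathcal{N})=\tau_{i_1}\cdots\tau_{i_{\Lambda-1}}\cdot\zeta^{-1}_\ast(I_\mathcal{N})$ with a \emph{resolved}, not principal, weak transform, the normal crossings condition being required of $\zeta^{-1}(\mathrm{Sing}\,\mathcal{N})$ relative to the resolved model, not of the ambient preimage of $\mathcal{N}$. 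Relatedly, after the last step the residual sphere may still carry exceptional isotropy types, so the induced action need not be locally free and it is not true that only $(H_L)$ remains; what matters, and what maximality of the chain of \emph{singular} types guarantees, is that all remaining isotropy groups have the same dimension, so the isotropy bundle is smooth---this is also the reason why exceptional strata never need to be taken as centers, a point your induction uses implicitly but does not justify.
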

\begin{proof} If all $G$-orbits on $M$ have the same dimension, $\mathcal{N}$ is a manifold, and $\zeta : \M \rightarrow \M$ is the identity. Let us therefore assume that there are singular orbits, and begin by recalling the covering
\bqn 
\mathcal{N}  =   \mathcal{N}_1\cup \dots \cup \mathcal{N}_L,
\eqn
  where  $\mathcal{N}_L=\mathrm{Iso } \, W_L$ is a manifold, and the $\mathcal{N}_k=\mathcal{N} \cap U_k$ are in general singular for $k <L$. Let $1 \leq N \leq \Lambda-1$, and consider a totally ordered subset    $\mklm{(H_{i_1}), \dots, (H_{i_{N}})}$ of isotropy types such that $i_1 < \dots<  i_{N}$. In case that $(H_{i_j})$ is exceptional, all types $(H_{i_{j'}})$ with $j < j'$ are exceptional, or principal. Indeed, if $H_{i_j}/H_L$ is finite and non-trivial, $  H_{i_{j'}}/H_L$ is also finite. In  particular, if  $(H_{i_1})$ is exceptional, $\mathcal{N}_{i_1}$ is a manifold. In what follows, let us therefore restrict to the case where $\mklm{(H_{i_1}), \dots, (H_{i_{N}})}$ is a totally ordered subset of singular isotropy types which is maximal in the sense that there is no singular isotropy type $(H_{i_{N+1}})$ with $i_N< i_{N+1}$ such that  $\mklm{(H_{i_1}), \dots, (H_{i_{N+1}})}$ is a totally ordered subset.   Let  $ \zeta^{\rho_{i_1}}_{i_1} \circ \dots \circ  \zeta^{\rho_{i_1}\dots \rho_{i_N}}_{i_1\dots i_N}$ be a local realization of the sequence of monoidal transformations $ \zeta^{(1)} \circ \dots \circ \zeta^{(N)}$  corresponding to   the totally ordered subset    $\mklm{(H_{i_1}), \dots, (H_{i_{N}})}$  in a set of $(\theta^{(i_1)}, \dots, \theta^{(i_N)})$-charts labeled by the indices $\rho_{i_1},\dots ,\rho_{i_N}$. The preimage of $\mathcal{N}_{i_1}$ under $ \zeta^{\rho_{i_1}}_{i_1} \circ \dots \circ  \zeta^{\rho_{i_1}\dots \rho_{i_N}}_{i_1\dots i_N}$  is given by all points  $$(\tau_{i_1}, \dots, \tau_{i_N}, p^{(i_1)}, \dots, p^{(i_N)},   \tilde v ^{(i_N)},  A^{(i_1)}, \dots, A^{(i_N)}, h^{(i_N)}) $$
satisfying 
\bqn 
(x^{(i_1\dots i_{N})},g^{(i_1\dots i_{N})}) \in \mathcal{N},
\eqn
 where for  $j=1,\dots ,N$ we set
\begin{align}
\label{eq:70}
\begin{split}
x^{(i_j\dots i_{N})}&=\exp_{p^{(i_j)}}[\tau_{i_j} \exp_{p^(i_{j+1})}[\tau_{i_{j+1}}\exp_{p^(i_{j+2})}[\dots [ \tau_{i_{N-2}}\exp_{p^(i_{N-1})}[\tau_{i_{N-1}}\exp_{p^{(i_N)}}[ \tau_{i_N} \tilde v ^{(i_N)}]]] \dots ]]], \\
g^{(i_j\dots i_{N})}&=\e{\tau_{i_j} \cdots \tau_{i_N}A^{(i_j)}}\e{\tau_{i_{j+1}} \cdots \tau_{i_N}A^{(i_{j+1})}}\cdots\e{\tau_{i_{N-1}}  \tau_{i_N}A^{(i_{N-1})}} \e{\tau_{i_N} A^{(i_N)}}h^{(i_N)}.
\end{split}
\end{align}
Assume now that $\tau_{i_1}\cdots \tau_{i_N}\not=0$. Since the point $x^{(i_1\dots i_N)}$ lies in a slice around  $G \cdot p^{(i_1)}$, the condition  $g^{(i_1\dots i_N)} \in G_{x^{(i_1\dots i_N)}}$ implies that   
  $g^{(i_1\dots i_N)}$ must stabilize $p^{(i_1)}$ as well. Frome the inclusions
\begin{align}
\label{eq:isotr}
G_{p^{(i_N)}} \subset G_{p^{(i_{N-1})}} \subset \dots \subset G_{p^{(i_1)}}
\end{align}
and  $\g_{p^{(i_{j+1})}}^\perp \subset \g_{p^{(i_j)}}$ one deduces 
$g^{(i_2\dots i_N)} \in G_{p^{(i_1)}}$, and we obtain
\bqn
g^{(i_1\dots i_N)}p^{(i_1)}= \e{\tau_{i_1} \dots \tau_{i_N}\sum \alpha_r^{(i_1)} A_r ^{(i_1)}} p^{(i_1)} =p^{(i_1)}.
\eqn
Thus we conclude $\alpha^{(i_1)} =0$, which implies $g^{(i_2\dots i_N)} \in G_{x^{(i_1\dots i_N)}}$, and consequently $g^{(i_2\dots i_N)} \in G_{x^{(i_2\dots i_N)}}$. Repeating the above argument  we see that 
\bqn 
(x^{(i_1\dots i_{N})},g^{(i_1\dots i_{N})}) \in \mathcal{N} \quad \Longleftrightarrow \quad A^{(i_j)}=0, \quad  h^{(i_N)} \in G_{\tilde v^{(i_N)}}
\eqn
in case that  $\tau_{i_1}\cdots \tau_{i_N}\not= 0$. Actually we have shown that  if $\tau_{i_1}\cdots \tau_{i_N}\not= 0$
\bq
\label{eq:G}
G_{x^{(i_1,\dots,i_N)}} =G_{\tilde v^{(i_N)}},
\eq
 since $G_{\tilde v^{(i_N)}}\subset G_{p^{(i_N)}}$. The preimage of $\mathcal{N}_{i_1}$ under $ \zeta^{\rho_{i_1}}_{i_1} \circ \dots \circ  \zeta^{\rho_{i_1}\dots \rho_{i_N}}_{i_1\dots i_N}$ is therefore given by
\bqn 
\mklm{\tau_{i_1}\cdots \tau_{i_N}\not=0, \quad A^{(i_j)}=0, \quad  h^{(i_N)} \in G_{\tilde v^{(i_N)}} } \cup \bigcup_{j=1}^N \mklm{\tau_{i_j}=0}.
\eqn
By assumption, $G_{p^{(i_N)}}$ acts on $(S_{i_1 \dots i_N})_{p^{(i_N)}}$ with orbits of the same dimension, so that   
\bq
\label{eq:36}
\{ A^{(i_j)}=0, \quad  h^{(i_N)} \in G_{\tilde v^{(i_N)}} \}
\eq
 is a smooth submanifold, being equal  to the total space of the isotropy  bundle given by the local trivialization
\bqn 
(\tau_{i_j}, p^{(i_j)},  \tilde v ^{(i_N)}, G_{\tilde v ^{(i_N)}})\mapsto (\tau_{i_j}, p^{(i_j)},  \tilde v ^{(i_N)}).
\eqn
Now, for $1\leq N \leq \Lambda -1$, let $\zeta^{(N)}$ be defined as in \eqref{eq:montrans}, where $Z^{(N)}$ is a union over 
totally ordered subsets  of singular isotropy types of $N$ elements, and put 
$\zeta = \zeta^{(1)} \circ \dots \circ \zeta^{(\Lambda-1)}$. By construction, $\zeta$ is given locally by sequences of local transformations 
 $ \zeta^{\rho_{i_1}}_{i_1} \circ \dots \circ  \zeta^{\rho_{i_1}\dots \rho_{i_{N}}}_{i_1\dots i_{N}}$ corresponding to maximal, totally ordered subsets   $\mklm{(H_{i_1}), \dots, (H_{i_{N}})}$ of singular isotropy types of $N \leq \Lambda -1$ elements. Taking the union over all the corresponding sets \eqref{eq:36}   yields a 
 smooth submanifold $ \mathcal{\tilde N}$ which has normals crossings with the exceptional divisor  $\zeta^{-1} (\mathrm{Sing }\, \mathcal{N}) \subset \tilde \M$. Furthermore, $\zeta$ maps the union of the sets $\mklm{\tau_{i_1}\cdots \tau_{i_N}\not=0, \quad A^{(i_j)}=0, \quad  h^{(i_N)} \in G_{\tilde v^{(i_N)}} }$ bijectively onto  the non-singular part $\mathrm{Reg} \, \mathcal{N}$ of $\mathcal{N}$. However, $\mathrm{Reg} \, \mathcal{N}$ is not necessarily dense in $\mathcal{N}$, nor is $\zeta(\mathcal{\tilde N})$, so that $\zeta:\mathcal{\tilde N}\rightarrow \mathcal{N}$ might  not be a birational map in general. Nevertheless, by sucessively blowing up the intersections of $\mathcal{\tilde N}$ with $\zeta^{-1} (\mathrm{Sing }\, \mathcal{N})$ one finally obtains a strong resolution of $\mathcal{N}$.
\end{proof}

 The resolution of $\mathcal{N}$ constructed in Theorem \ref{thm:desing} was deduced from   a  monomialization of the ideal sheaf $I_\mathcal{N}$ 
 \bqn 
 \zeta^\ast (I_\mathcal{N}) \cdot \E_{\tilde m, \tilde \M}=  \tau_{i_1} \cdots \tau_{i_{\Lambda-1}} \cdot \zeta^{-1}_\ast (I_\mathcal{N}) \cdot  \E_{\tilde m, \tilde \M},   \qquad \tilde m\in \tilde \M,
  \eqn 
 where $\zeta^{-1}_\ast (I_\mathcal{N})$ is a resolved ideal sheaf.  
  In the following two sections, we shall derive from this a partial monomialization of  the local ideal $I_\Phi=(\Phi)$ such  that  the corresponding weak transforms  of $\Phi$ have clean critical sets. This will  allow us to derive  asymptotics for the integrals $I_{i_1\dots i_N}^{\rho_{i_1}\dots \rho_{i_N}}(\mu)$ in Section \ref{sec:INT} via the stationary phase theorem.

\section{Phase analysis of the weak transforms. The first main theorem}
\label{sec:MT1}

We continue with  the notation of the previous sections and recall   that the sequence of monoidal transformations 
$\zeta = \zeta^{(1)} \circ \dots \circ \zeta^{(\Lambda-1)}$ is given locally by sequences of local transformations 
 $ \zeta^{\rho_{i_1}}_{i_1} \circ \dots \circ  \zeta^{\rho_{i_1}\dots \rho_{i_{N}}}_{i_1\dots i_{N}}$ corresponding to totally ordered subsets  $\mklm{(H_{i_1}), \dots, (H_{i_{N}})}$ of non-principal isotropy types  that are maximal in the sense that, if there is an isotropy type $(H_{i_{N+1}})$ with $i_N < i_{N+1}$ such that $\mklm{(H_{i_1}), \dots , (H_{i_{N+1}}) }$ is a totally ordered subset, then  $(H_{i_{N+1}})= (H_L)$. Let  now $x \in M$ be fixed, and   $Z_x \subset T_xM$ be a neighborhood of zero such that $\exp_x: Z_x \longrightarrow M$ is a diffeomorphism onto its image. One has
\bqn
\qquad (\exp_x)_{\ast,v}: T_v Z_x \longrightarrow T_{\exp_xv}M, \quad v \in Z_x, 
\eqn
and under the identification $T_xM \simeq T_0 Z_x$ one computes $(\exp_x)_{\ast, 0} \equiv \id$. Furthermore,  for $g \in G$ we have 
$g \cdot \exp_x v= L_g (\exp_x v)=\exp_{L_g(x)}(L_g)_{\ast,x}( v)$.  Consider next a maximal,  totally ordered subset $\mklm{(H_{i_1}), \dots , (H_{i_{N}}) }$  of isotropy types with $i_1< \dots< i_N<L$, and denote by 
\bqn
\lambda: \g_{p^{(i_1)}} \longrightarrow \mathfrak{gl}(\nu_{i_1,p^{(i_1)}}), \quad B^{(i_1)} \mapsto \frac d{dt} (L_{\e{-tB^{(i_1)}}})_{\ast, p^{(i_1)}|t=0}, 
\eqn
the linear representation of $ \g_{p^{(i_1)}}$ in $\nu_{i_1,p^{(i_1)}}$,  where $p^{(i_1)} \in M _{i_1} (H_{i_1}) $. 
 For an arbitrary element $A ^{(i_j)}\in \g_{i_j}^\perp$ with $2 \leq j \leq N$, and $x^{(i_1\dots i_{N})}$ given as in \eqref{eq:70}, one  computes
 \begin{align*}
 (\widetilde A ^{(i_j)})_{ x^{(i_1\dots i_{N})}}&=\frac d{dt} \e{-t A^{(i_j)}} \cdot  x^{(i_1\dots i_{N})}_{|t=0}= \frac d {dt} \exp_{p^{(i_1)}} \big [(\e{-t A^{(i_j)}})_{\ast, p^{(i_1)}} [\tau_{i_1}  x^{(i_2\dots i_{N})}]\big ]_{|t=0}\\
 &=(\exp_{p^{(i_1)}})_{\ast, \tau_{i_1}  x^{(i_2\dots i_{N})}}[ \lambda(A^{(i_j)})\tau_{i_1}  x^{(i_2\dots i_{N})}],
 \end{align*}
 successively obtaining
  \begin{align}
  \label{eq:15}
  \begin{split}
 (\widetilde A ^{(i_j)})&_{ x^{(i_1\dots i_{N})}}= \frac d {dt} \exp_{p^{(i_1)}} \big [\tau_{i_1} \exp_{p^{(i_2)}} [\dots [\tau_{i_{j-1}}  (\e{-t A^{(i_j)}})_{\ast, p^{(i_1)}} x^{(i_j\dots i_{N})}]\dots ]\big ]_{|t=0}\\
 &=(\exp_{p^{(i_1)}})_{\ast, \tau_{i_1}  x^{(i_2\dots i_{N})}} \big [ \tau_{i_1} (\exp_{p^{(i_2)}})_{\ast, \tau_{i_2} x^{(i_3\dots i_N)}}[ \dots [\tau_{i_{j-1}}  \lambda(A^{(i_j)})  x^{(i_j\dots i_{N})}]\dots ]\big ],
 \end{split}
 \end{align}
where we made the canonical identification $T_v(\nu_{i_1,p^{(i_1)}}) \equiv \nu_{i_1,p^{(i_1)}}$ for any $v \in(\nu_{i_1})_{p^{(i_1)}}$. 
We shall next  define certain geometric distributions $E^{(i_j)}$ and $F^{(i_N)}$ on $M$ by setting
\begin{equation}
\label{eq:EF}
\begin{split}
E^{(i_1)}_{x^{(i_1 \dots i_N)}}&=\mathrm{Span} \{  \tilde Y _{x^{(i_1 \dots i_N)}}: Y \in \g_{p^{(i_1)}} ^\perp\}, \\ 
E^{(i_j)} _{x^{(i_1 \dots i_N)}}&= (\exp_{p^{(i_1)}})_{\ast, \tau_{i_1}  x^{(i_2\dots i_N)} } \dots  (\exp_{p^{(i_{j-1})}})_{\ast, \tau_{i_{j-1}}x^{(i_j\dots i_N)}}[\lambda(\g_{p^{(i_j)}}^\perp)  x ^{(i_j\dots i_N)}]
, \\
F^{(i_N)}_{x^{(i_1 \dots i_N)}} &=  (\exp_{p^{(i_1)}})_{\ast, \tau_{i_1}  x^{(i_2\dots i_N)} } \dots (\exp_{p^{(i_{N})}})_{\ast, \tau_{i_{N}}\tilde v^{(i_N)}}[\lambda(\g_{p^{(i_N)}})  \tilde v ^{(i_N)}],
\end{split}
\end{equation}
where $2 \leq j \leq N$. By construction, if $\tau_{i_1} \cdots \tau_{i_N}\not=0$, the $G$-orbit through $x^{(i_1\dots i_N)}$ is of principal type $G/H_L$, which amounts to the fact that $G_{p^{(i_{N-1})}}$ acts on $S_{i_1\dots i_N}$ only with  isotropy type $(H_L)$, where we understand  that $G_{p^{(i_{0})}}=G$.  Furthermore,  \eqref{eq:gdecomp} and \eqref{eq:15} imply that 
\begin{align}
\label{eq:32}
T_{x^{(i_1 \dots i_N)}} ( G\cdot x^{(i_1 \dots i_N)})=E^{(i_1)}_{x^{(i_1 \dots i_N)}}\oplus  \bigoplus _{j=2}^N \tau_{i_1}\dots \tau_{i_{j-1}} E^{(i_j)} _{x^{(i_1 \dots i_N)}}  \oplus  \tau_{i_1}\dots \tau_{i_N} F^{(i_N)}_{x^{(i_1 \dots i_N)}}. 
\end{align}
The main result of this  section is the following 
 \begin{theorem}[First Main  Theorem]
\label{thm:MT1}
Let  $\mklm{(H_{i_1}), \dots, (H_{i_{N}})}$ be a maximal, totally ordered subset of non-principal isotropy types, and 
  $ \zeta^{\rho_{i_1}}_{i_1} \circ \dots \circ  \zeta^{\rho_{i_1}\dots \rho_{i_N}}_{i_1\dots i_N}$  a corresponding local realization of the sequence of monoidal transformations $ \zeta^{(1)} \circ \dots \circ \zeta^{(N)}$  in a set of $(\theta^{(i_1)}, \dots, \theta^{(i_N)})$-charts labeled by the indices $\rho_{i_1},\dots ,\rho_{i_N}$. Consider the corresponding factorization 
\bqn
\Phi \circ (\id_\xi \otimes (\zeta_{i_1}^{\rho_{i_1}} \circ\dots \circ  \zeta_{\sigma_{i_1} \dots \sigma_{i_N}}^{\rho_{i_1} \dots \rho_{i_N}}))=\, ^{(i_1\dots i_N)} \tilde \Phi^{tot}=\tau_{i_1} \cdots \tau_{i_N} \, ^{(i_1\dots i_N)}\tilde \Phi^ {wk, \, pre}
\eqn
of the phase function \eqref{eq:phase} where \footnote{Note that $^{(i_1\dots i_N)}\tilde \Phi^ {wk,pre}$ was formerly denoted by $^{(i_1\dots i_N)}\tilde \Phi^ {wk}$. }
\begin{align*}
 ^{(i_1\dots i_N)}\tilde \Phi^ {wk,pre}=&  \Big \langle \Xi \cdot  \, ^T \big ( dp_1 ^{(i_1)} (\tilde A^{(i_1)}_{p^{(i_1)}}), \dots, 0, dp_1 ^{(i_2)} (\tilde A^{(i_2)}_{p^{(i_2)}}),\dots, 0,\dots,  dp_1 ^{(i_N)} (\tilde A^{(i_N)}_{p^{(i_N)}}),\dots, \\ &  \theta_1 ^{(i_N)} \big (  \tilde v^{(i_N)} \big ) - \theta_1 ^{(i_N)} \big ( ( h^{(i_N)})_{\ast, p^{(i_1)}} \tilde v^{(i_N)} \big ), \dots \big ), \xi \Big \rangle + \sum _{j=1}^N O(|\tau_{i_j} A^{(i_j)}|) \\
 +&O(| \tau_{i_N} [ \theta^{(i_N)}( \tilde v^{(i_N)}) - \theta^{(i_N)}( (h^{(i_N)})_{\ast, p^{(i_1)}} \tilde v^{(i_N)})] |).
\end{align*}
Let further 
$ 
 \, ^{(i_1\dots i_N)}\tilde \Phi^ {wk}
$
denote the pullback of  $ \, ^{(i_1\dots i_n)}\tilde \Phi^ {wk,\,pre}$ along the  substitution $\tau=\delta_{i_1\dots i_N}(\sigma)$ given by the sequence of local quadratic transformations
\begin{align*}
\delta_{i_1\dots i_N}: (\sigma_{i_1}, \dots \sigma_{i_N}) &\mapsto \sigma_{i_1}( 1, \sigma_{i_2}, \dots, \sigma_{i_N})= (\sigma_{i_1}', \dots ,\sigma_{i_N}')\mapsto \sigma_{i_2}'(\sigma_{i_1}',1,\dots, \sigma_{i_N}')= (\sigma_{i_1}'', \dots, \sigma_{i_N}'')\\
 &\mapsto \sigma_{i_3}''(\sigma_{i_1}'',\sigma_{i_2}'', 1,\dots, \sigma_{i_N}'')= \cdots \mapsto \dots = (\tau_{i_1}, \dots ,\tau_{i_N}).
\end{align*}
Then the critical set $\Crit(\, ^{(i_1\dots i_N)} \phw)$ of $\, ^{(i_1\dots i_N)} \phw$ is given by all points 
$$(\sigma_{i_1}, \dots, \sigma_{i_N}, p^{(i_1)}, \dots, p^{(i_N)},   \tilde v ^{(i_N)},  A^{(i_1)}, \dots, A^{(i_N)}, h^{(i_N)}, \xi) $$
 satisfying  the conditions

\medskip
\begin{tabular}{ll}
\emph{(I)} &  $A^{(i_j)} =0$ for all $j=1,\dots,N$, and $(h^{(i_N)})_{\ast, p^{(i_1)}}\tilde v^{(i_N)}= \tilde v^{(i_N)}$, \\[2pt]
\emph{(II)} & $\eta_{x^{(i_1 \dots i_N)}} \in \mathrm{Ann}\big (E^{(i_j)} _{x^{(i_1 \dots i_N)}}\big )$ for all $j=1,\dots, N$, \\[2pt]
\emph{(III)} &  $\eta_{x^{(i_1 \dots i_N)}}  \in\mathrm{Ann} \big (  F^{(i_N)} _{x^{(i_1 \dots i_N)}}\big )$,
\end{tabular}
\medskip

\noindent where  $\eta$ denotes the $1$-form $\sum_{i=1}^{n}  \xi_i\,  d\tilde x_i$. Furthermore,  $\Crit(\, ^{(i_1\dots i_N)} \phw)$ is a $\Cinft$-submanifold of codimension $2\kappa$, where $\kappa=\dim G/H_L$ is the dimension of a principal orbit.
\end{theorem}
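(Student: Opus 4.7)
The plan is to compute the vanishing locus of each partial derivative of $\,^{(i_1\dots i_N)}\tilde\Phi^{wk}$ separately, verifying that the three stated conditions (I)--(III) are precisely what is required. The role of the quadratic substitution $\delta_{i_1\dots i_N}$ is to redistribute the monomial factors $\tau_{i_j}$ so that the error terms $O(|\tau_{i_j}A^{(i_j)}|)$ and $O(|\tau_{i_N}[\theta^{(i_N)}(\tilde v^{(i_N)}) - \theta^{(i_N)}((h^{(i_N)})_\ast \tilde v^{(i_N)})]|)$ in $\,^{(i_1\dots i_N)}\tilde\Phi^{wk,\,pre}$ acquire an extra factor which guarantees that, after division by the monomial $\sigma_{i_1}\cdots\sigma_{i_N}$, all derivatives are well-defined and the critical locus decouples cleanly from the exceptional divisor.

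The first step is the $\xi$-derivative. Since $\Phi\circ (\id_\xi\otimes \zeta) = \tau_{i_1}\cdots\tau_{i_N}\,\cdot\,^{(i_1\dots i_N)}\tilde\Phi^{wk,pre}$ carries the full dependence on $\kappa(x^{(i_1\dots i_N)}) - \kappa(g^{(i_1\dots i_N)}x^{(i_1\dots i_N)})$, the condition $\gd_\xi\,^{(i_1\dots i_N)}\tilde\Phi^{wk} = 0$ is equivalent, away from the exceptional divisor, to $g^{(i_1\dots i_N)}\cdot x^{(i_1\dots i_N)} = x^{(i_1\dots i_N)}$. By the slice argument carried out in the proof of Theorem \ref{thm:desing} (using the nested inclusions \eqref{eq:isotr} of isotropy groups), this successively forces $A^{(i_j)}=0$ for all $j$ and $h^{(i_N)}\in G_{\tilde v^{(i_N)}}$, which is condition (I). The second step is to evaluate the group-direction derivatives $\gd_{A^{(i_j)}}\,^{(i_1\dots i_N)}\tilde\Phi^{wk}$ on the locus where (I) holds. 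Using the identity $\frac{d}{dt}\Phi(x,\xi,\e{tX})|_{t=0} = \eklm{\eta,\widetilde X_x}$ together with the chain-rule computation \eqref{eq:15}, these derivatives equal (up to the invertible Jacobian $\Xi$) the pairing $\eta_{x^{(i_1\dots i_N)}}(\widetilde{A}^{(i_j)}_{x^{(i_1\dots i_N)}})$, for $A^{(i_j)}$ ranging over $\g_{p^{(i_j)}}^\perp$. Their simultaneous vanishing is exactly $\eta \in \mathrm{Ann}(E^{(i_j)}_{x^{(i_1\dots i_N)}})$, i.e.\ condition (II). Analogously, the derivatives $\gd_{h^{(i_N)}}$ and $\gd_{\tilde v^{(i_N)}}$ produce the pairing of $\eta$ with the fundamental vector fields generated by $\g_{p^{(i_N)}}$ pushed forward through the iterated exponentials, which annihilates $F^{(i_N)}_{x^{(i_1\dots i_N)}}$ exactly when (III) holds.

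The third step is to verify that the remaining derivatives $\gd_{\sigma_{i_j}}$ and $\gd_{p^{(i_j)}}$ vanish automatically on the locus defined by (I)--(III). This is the most delicate point, and here the quadratic substitution $\delta_{i_1\dots i_N}$ is essential: after it, every error term in $\,^{(i_1\dots i_N)}\tilde\Phi^{wk}$ carries at least one factor of $\sigma_{i_j}$ that already appears in one of the leading terms whose coefficient vanishes by (I) or (II), so differentiating in $\sigma_{i_j}$ or $p^{(i_j)}$ produces a sum of terms each of which either contains a factor $A^{(i_j)}$ (killed by (I)) or a factor $\big[\theta^{(i_N)}(\tilde v^{(i_N)})-\theta^{(i_N)}((h^{(i_N)})_\ast\tilde v^{(i_N)})\big]$ (killed by the second half of (I)), or contains a pairing $\eklm{\eta, \widetilde Y}$ with $\widetilde Y \in E^{(i_j)}\oplus F^{(i_N)}$ (killed by (II)--(III)).

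For the smoothness and the codimension count, the critical locus is parametrized by $(\sigma_{i_1},\dots,\sigma_{i_N}, p^{(i_1)},\dots, p^{(i_N)}, \tilde v^{(i_N)}, h^{(i_N)}, \xi)$ subject to (I)--(III); since $\{(H_{i_1}),\dots,(H_{i_N})\}$ is maximal, $G_{p^{(i_{N-1})}}$ acts on $S_{i_1\dots i_N}$ only with principal isotropy type $(H_L)$, so on the critical locus $h^{(i_N)} \in G_{\tilde v^{(i_N)}} \sim H_L$ defines a smooth principal-isotropy subbundle. Simultaneously, the decomposition \eqref{eq:32} shows that, when $\tau_{i_1}\cdots\tau_{i_N}\neq 0$, the spaces $E^{(i_1)},\dots,E^{(i_N)},F^{(i_N)}$ sum to the tangent space of the principal $G$-orbit through $x^{(i_1\dots i_N)}$, so conditions (II)--(III) together amount to $\eta \in \mathrm{Ann}(T_{x^{(i_1\dots i_N)}}(G\cdot x^{(i_1\dots i_N)}))$, which imposes exactly $\kappa$ linear conditions on $\xi$. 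Combined with the $\kappa$ conditions given by (I) (via $h^{(i_N)}\in H_L$ and $A^{(i_j)}=0$ modulo the gauge already fixed by the resolution), the total codimension equals $2\kappa$, while smoothness follows because all conditions are linear in the fiber variables over a smooth base. The main obstacle will be step three, the clean factorization of the error terms through the quadratic transformation $\delta_{i_1\dots i_N}$, and the verification that no spurious critical directions appear along the exceptional divisor; this is where the combinatorics of the iterated blow-ups plays the decisive role.
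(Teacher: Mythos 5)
Your steps 1--2 reproduce the necessity half of the paper's argument (the $\xi$-, $\alpha$- and $h^{(i_N)}$-derivatives giving (I)--(III)), but the sufficiency half is where your proposal has a genuine gap, and you in fact flag it yourself as ``the main obstacle.'' The problem is concentrated in the derivatives along the $\tilde v^{(i_N)}$-direction (the former base-point directions), which you incorrectly fold into step 2: condition (III) is equivalent to the vanishing of the $h^{(i_N)}$-derivative alone, whereas differentiating the leading term in $\tilde v^{(i_N)}$ produces $\eklm{\Xi\cdot\,^T(0,\dots,(\1-(h^{(i_N)})_{\ast})w,\dots),\xi}$ for tangent vectors $w$, and this does \emph{not} vanish by (I)--(III) through any bookkeeping of $\sigma$-factors in the error terms; what is needed is the relation $h^{(i_N)}\cdot\eta_{x^{(i_1\dots i_N)}}=\eta_{x^{(i_1\dots i_N)}}$, which is a nontrivial \emph{consequence} of (I)--(III). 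The paper obtains it from genuinely geometric input that is absent from your proposal: the lemma that the $G_{p^{(i_N)}}$-orbit of $\tilde v^{(i_N)}$ in the iterated slice $V^{(i_1\dots i_N)}$ is of principal type, combined with the fact that a principal isotropy group acts trivially on the normal space to its orbit. Moreover, the paper avoids your step 3 altogether: since the weak transform is linear in $\xi$ it vanishes on its critical set, so off the exceptional divisor $\Crit(\,^{(i_1\dots i_N)}\phw)$ coincides with $\Crit(\,^{(i_1\dots i_N)}\tilde\Phi^{tot})$, i.e.\ with the pullback of $\mathcal{C}$, and the points with some $\sigma_{i_j}=0$ are then recovered by the closure identity $\overline{\Crit(\,^{(i_1\dots i_N)}\tilde\Phi^{tot})_{\sigma_{i_1}\cdots\sigma_{i_N}\neq0}}=\Crit(\,^{(i_1\dots i_N)}\phw)$, rather than by a direct inspection of the $\sigma$- and $p$-derivatives of the error terms. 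Without either that continuity mechanism or the principal-type lemma, your locus (I)--(III) is only known to \emph{contain} the critical set, not to equal it.

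A second, smaller gap concerns smoothness and the codimension count along the exceptional divisor. Your argument that (II)--(III) impose exactly $\kappa$ conditions on $\xi$ rests on \eqref{eq:32}, which is valid only for $\tau_{i_1}\cdots\tau_{i_N}\neq0$; on the divisor the tangent space to the $G$-orbit collapses, and the whole point of the paper's definition of the distributions $E^{(i_j)}$, $F^{(i_N)}$ (pushforwards by the differentials of the exponential maps, without the $\tau$-scalings) is that $\sum_j\dim E^{(i_j)}_{x^{(i_1\dots i_N)}}+\dim F^{(i_N)}_{x^{(i_1\dots i_N)}}=\kappa$ holds for \emph{arbitrary} $\sigma$, so that the annihilator condition defines a vector bundle of constant rank $n-\kappa$ and the critical set is the fiber product of this bundle with the isotropy bundle $G_{\tilde v^{(i_N)}}$. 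You would need to state and verify this rank constancy explicitly; ``linear in the fiber variables over a smooth base'' is not enough, since linear conditions of jumping rank do not define a submanifold.
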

\begin{proof}
In what follows, set
\bq
\label{eq:40}
\mathcal{Z}^{\rho_{i_1}\dots \rho_{i_N}}_{i_1\dots i_N}=  (\zeta^{\rho_{i_1}}_{i_1} \circ \dots \circ  \zeta^{\rho_{i_1}\dots \rho_{i_N}}_{i_1\dots i_N} \circ (\delta_{i_1\dots i_N}\otimes \id) )\otimes \id_\xi
\eq
so that 
\bqn
\Phi \circ \mathcal{Z}^{\rho_{i_1}\dots \rho_{i_N}}_{i_1\dots i_N} =\, ^{(i_1\dots i_N)} \tilde \Phi^{tot}=\tau_{i_1}(\sigma) \dots \tau_{i_N}(\sigma) \, ^{(i_1\dots i_N)}\tilde \Phi^ {wk}, 
\eqn
 and   let  $\sigma_{i_1} \cdots \sigma_{i_N}\not=0$. In this case, $\mathcal{Z}^{\rho_{i_1}\dots \rho_{i_N}}_{i_1\dots i_N}$ constitutes a diffeomorphism, so that  
 \begin{align*}
 \mathrm{Crit}(\, ^{(i_1\dots i_N)} \tilde \Phi^{tot})_{\sigma_{i_1} \cdots \sigma_{i_N}\not=0}=\{&(\sigma_{i_1}, \dots, \sigma_{i_N}, p^{(i_1)}, \dots, p^{(i_N)},  \tilde v ^{(i_N)}, A^{(i_1)}, \dots, A^{(i_N)}, h^{(i_N)}, \xi):  \\ &(x^{(i_1 \dots i_N)},\xi,   g^{(i_1\dots i_N)}) \in  {\mathcal{C}}, \quad  {\sigma_{i_1} \cdots \sigma_{i_N}\not=0}  \},
  \end{align*}
where we employed the notation of \eqref{eq:70}.     Now, by \eqref{eq:crit},
  \begin{align*} 
  (x ^{(i_1\dots i_N)}, \xi, g ^{(i_1\dots i_N)}) \in \mathcal{C} \quad \Longleftrightarrow &\quad \eta_{x ^{(i_1\dots i_N)}}=\sum_{i=1}^{n}  \xi_i\,  (d\tilde x_i)_{x ^{(i_1\dots i_N)}} \in \Omega,\\ & \quad g ^{(i_1\dots i_N)} \in G_{(x ^{(i_1\dots i_N)},\eta_{x ^{(i_1\dots i_N)}})}.
  \end{align*}
The reasoning which led to  \eqref{eq:G}  in particular implies that condition (I)  is equivalent to $g^{(i_1\dots i_N)} \in G_{x^{(i_1\dots i_N)}}$ in case that all $\sigma_{i_j}$ are different from zero.  Now, $ \eta_{{x ^{(i_1\dots i_N)}}} \in \Omega$ means that 
\bqn
 \sum \xi_i (d\tilde x_i)_{x^{(i_1\dots i_ N)}} \in \mathrm{Ann}(T_{x^{(i_1\dots i_ N)}} (G \cdot {x^{(i_1\dots i_ N)}})).
 \eqn
But if $\sigma_{i_j}\not=0$ for all $j=1,\dots,N$, (II) and (III) imply that 
\bqn 
\eta_{x^{(i_1\dots i_ N)}}\Big ( (\exp_{p^{(i_1)}})_{\ast, \tau_{i_1}  x^{(i_2\dots i_N)} } [\dots  (\exp_{p^{(i_{j-1})}})_{\ast, \tau_{i_{N-1}}x^{(i_N)}}[\lambda(Z)  x^{(i_N)}] \dots \big ] \Big )=0 \quad \forall Z \in \g_{p^{(i_{N-1})}},
\eqn
since $ \g_{p^{(i_{N-1})}} =  \g_{p^{(i_{N})}}\oplus  \g_{p^{(i_{N})}}^\perp$. By repeatedly using this argument, we  conclude  that under the assumption $\sigma_{i_1}\cdots \sigma_{i_N}\not= 0$
\bq
\label{eq:IVb}
\mathrm{(II), \, (III)} \quad \Longleftrightarrow \quad \eta_{x^{(i_1 \dots i_N)}} \in \mathrm{Ann}(T_{x^{(i_1\dots i_ N)}} (G \cdot {x^{(i_1\dots i_ N)}})).
\eq
Taking everything together therefore gives
\begin{align}
\begin{split}
\label{eq:XX}
\mathrm{Crit}(\, ^{(i_1\dots i_N)}& \tilde \Phi^{tot})_{\sigma_{i_1}\cdots \sigma_{i_N}\not=0}\\ 
 &=\{(\sigma_{i_1}, \dots, \sigma_{i_N}, p^{(i_1)}, \dots, p^{(i_N)},  \tilde v ^{(i_N)}, A^{(i_1)}, \dots, A^{(i_N)}, h^{(i_N)}, \xi):  \\    &{\sigma_{i_1} \cdots \sigma_{i_N}\not=0}, \, \text{(I)-(III) are fulfilled and $h^{(i_N)}\cdot  \eta_{{x ^{(i_1\dots i_N)}}}= \eta_{{x ^{(i_1\dots i_N)}}}$}  \},
\end{split}
  \end{align}  
and we assert that   
$$ \mathrm{Crit}(\, ^{(i_1\dots i_N)} \tilde \Phi^{wk})=\overline{\mathrm{Crit}(\, ^{(i_1\dots i_N)} \tilde \Phi^{tot})_{\sigma_{i_1} \cdots \sigma_{i_N}\not=0}}.$$
To show this, let us  still assume that all $\sigma_{i_j}$ are different from zero. Then all $\tau_{i_j}$ are different from zero, too, and $\gd_\xi \, ^{(i_1\dots i_N)} \phw=0$ is equivalent to
\begin{gather*}
\gd _\xi \Phi(x^{(i_1\dots i_N)},\xi,g^{(i_1\dots i_N)})=0,
\end{gather*}
which gives us the condition $g^{(i_1\dots i_N)} \in G_{x^{(i_1\dots i_N)}}$. By the reasoning which led to \eqref{eq:G} we therefore obtain condition (I) in the case that all $\sigma_{i_j}$ are different from zero. 
Let now one of the  $\sigma_{i_j}$ be equal to zero.  Then all $\tau_{i_j}$ are zero, too, and $\gd_\xi \, ^{(i_1\dots i_N)} \phw=0$ is equivalent to
\begin{align}
\label{eq:B}
\begin{split}
 \widetilde A^{(i_j)}_{ p^{(i_j)}}(p^{(i_j)}_q) &=0  \quad  \text{for all $ 1 \leq j \leq N$ and $q$}, \quad (\1 - h^{(i_N)} )_{\ast,p^{(i_1)}} \, \tilde v^{(i_N)} =0,
\end{split}
\end{align}
since the $(n\times n)$-matrix $\Xi$ is invertible, so that the kernel of the corresponing linear transformation is trivial. Denote by  $N_{p^{(i_1)}} ( G \cdot p^{(i_1)})$ the normal space in $T_{p^{(i_1)}}M$ to the orbit $G \cdot p^{(i_1)}$, on which $G_{p^{(i_1)}}$ acts, and define  $N_{p^{(i_{j+1})}} ( G_{p^{(i_{j})}} \cdot p^{(i_{j+1})})$ successively as the normal space to the orbit $ G_{p^{(i_{j})}} \cdot p^{(i_{j+1})}$  in the $ G_{p^{(i_{j})}}$-space $N_{p^{(i_j)}} ( G_{p^{(i_{j-1})}} \cdot p^{(i_j)})$, where we understand that $G_{p^{(i_0)}}=G$.  Since smooth actions of compact Lie groups are locally smooth, the aforementioned  actions can be assumed to be orthogonal, see   \cite{bredon}, pages 171 and  308. Since  $\widetilde A^{(i_1)}_{ p^{(i_1)}} \in   T_{p^{(i_1)}} ( G \cdot p^{(i_1)})$ is tangent to $M_{i_j}(H_{i_j})$, and 
$
\widetilde A^{(i_j)}_{ p^{(i_j)}} \in   T_{p^{(i_j)}} ( G_{p^{(i_{j-1})}} \cdot p^{(i_j)})
$ 
is tangent to $\gamma^{(i_{j-1})}   ((S^+_{i_1\dots i_{j-1}})_{p^{(i_{j-1})}})_{i_{j}}(H_{i_{j}})$, we finally obtain
\bq
\label{eq:I}
\gd_\xi \, ^{(i_1\dots i_N)} \phw=0 \quad \Longleftrightarrow \quad \text{(I)}
\eq
for arbitrary $\sigma_{i_j}$.  In particular, one concludes that $\, ^{(i_1\dots i_N)} \phw$ must vanish on its critical set. Since 
\bqn 
d(\, ^{(i_1\dots i_N)} \tilde \Phi^{tot})= d(\tau_{i_1}\dots \tau_{i_N}) \cdot \, ^{(i_1\dots i_N)} \tilde \Phi^{wk} +  \tau_{i_1}\dots \tau_{i_N} d\,( ^{(i_1\dots i_N)} \tilde \Phi^{wk}),
\eqn
one sees that 
\bqn 
 \mathrm{Crit}(\, ^{(i_1\dots i_N)} \tilde \Phi^{wk})\subset \mathrm{Crit}(\, ^{(i_1\dots i_N)} \tilde \Phi^{tot}).
\eqn
In turn, the vanishing of $\Phi$ on its critical set implies
\bq
\label{eq:38}
 \mathrm{Crit}(\, ^{(i_1\dots i_N)} \tilde \Phi^{wk})_{\sigma_{i_1}\dots \sigma_{i_N}\not=0}=  \mathrm{Crit}(\, ^{(i_1\dots i_N)} \tilde \Phi^{tot})_{\sigma_{i_1}\dots \sigma_{i_N}\not=0}.
\eq
Therefore, by continuity, 
\bq
\label{eq:XXbis}
\overline{\mathrm{Crit}(\, ^{(i_1\dots i_N)} \tilde \Phi^{tot})_{\sigma_{i_1}\dots \sigma_{i_N}\not=0}} \subset  \mathrm{Crit}(\, ^{(i_1\dots i_N)} \tilde \Phi^{wk}).
\eq
In order to see the converse inclusion we shall henceforth  assume that $\gd_\xi \, ^{(i_1\dots i_N)} \phw=0$, and consider next the $\alpha$-derivatives, where we shall again take $\sigma_{i_1}\cdots \sigma_{i_N}\not=0$. Taking into account \eqref{eq:G} and  \eqref{eq:I}, one sees that 
\begin{gather*}
\gd_{\alpha_r^{(i_j)}} \, ^{(i_1\dots i_N)} \phw=0 \\  \Longleftrightarrow \quad \frac 1 {\tau_{i_1}\cdots \tau_{i_N}} \gd_{\alpha_r^{(i_j)}} \Phi(x^{(i_1\dots i_N)},\xi,g^{(i_1\dots i_N)})=
 \frac 1 {\tau_{i_1}\cdots \tau_{i_{j-1}}} \sum_{q=1}^{n}  \xi_q\,  (\widetilde A^{(i_j)}_r)_{ x^{(i_1\dots i_N)}} (\tilde x_q) =0.
\end{gather*}
By  \eqref{eq:15} we therefore obtain for arbitrary $\sigma$ and $1 \leq j \leq N$
\begin{align*}
\gd_{\alpha^{(i_j)}} \, ^{(i_1\dots i_N)} \phw=0 \quad & \Longleftrightarrow \quad \sum_{q=1}^{n}  \xi_q\,  (d\tilde x_q)_{ x^{(i_1\dots i_N)}} \in \mathrm{Ann} (E_{ x^{(i_1\dots i_N)}}^{(i_j)}).
\end{align*}
Consequently, 
\bq
\label{eq:II}
\gd_{\alpha} \, ^{(i_1\dots i_N)} \phw=0 \quad  \Longleftrightarrow \quad \mathrm{(II)}.
\eq
In a similar way, one  sees that 
\begin{align}
\label{eq:III}
\gd_{h^{(i_N)}} \, ^{(i_1\dots i_N)} \phw=0 \quad & \Longleftrightarrow \quad \mathrm{(III)},
\end{align}
by which the necessity of the conditions (I)--(III) is established. In order to see their sufficiency, let them be fulfilled, and assume again that $\sigma_{i_j}\not=0$ for all $j=1,\dots,N$. Then \eqref{eq:IVb} implies that  $ \eta_{x^{(i_1 \dots i_N)}} \in \mathrm{Ann}(T_{x^{(i_1\dots i_ N)}} (G \cdot {x^{(i_1\dots i_ N)}}))$. Now, if $\sigma_{i_1}\cdots \sigma_{i_N}\not=0$, $G  \cdot x^{(i_1\dots i_N)}$ is of principal type $G/ H_L$ in $M$, so that the isotropy group of $x^{(i_1\dots i_N)}$ must act trivially on $N_{x^{(i_1\dots i_N)}}(G\cdot x^{(i_1\dots i_N)})$, compare \cite{bredon}, page 181. If therefore $ \mathfrak{X}= \mathfrak{X}_T+ \mathfrak{X}_N$ denotes an arbitrary element in $ T_{x^{(i_1\dots i_N)}}M = T_{x^{(i_1\dots i_ N)}} (G \cdot {x^{(i_1\dots i_ N)}}))\oplus N_{x^{(i_1\dots i_ N)}} (G \cdot {x^{(i_1\dots i_ N)}}))$, and $g \in G_{x^{(i_1\dots i_N)}}$, one computes 
\begin{align*} 
g \cdot \eta_{x^{(i_1\dots i_N)}}( \mathfrak{X})&= [ ( L_{g^{-1}})^\ast _{gx^{(i_1\dots i_N)}} \eta_{x^{(i_1\dots i_N)}}]( \mathfrak{X})= \eta_{x^{(i_1\dots i_N)}} ( ( L_{g^{-1}}) _{\ast, x^{(i_1\dots i_N)}}( \mathfrak{X}_N))\\
&= \eta_{x^{(i_1\dots i_N)}} (  \mathfrak{X}_N)= \eta_{x^{(i_1\dots i_N)}} (  \mathfrak{X}).
\end{align*}
With \eqref{eq:G} we then conclude that $ h^{(i_N)}\cdot \eta_{x^{(i_1\dots i_N)}}=\eta_{x^{(i_1\dots i_N)}}$, since   $( h^{(i_N)})_{\ast,p^{(i_1)}} \tilde v^{(i_N)}=\tilde v^{(i_N)}$ by \eqref{eq:I}.  Set next   
\bq
\label{eq:V}
V^{(i_1\dots i_{j})}= N_{p^{(i_j)}} ( G_{p^{(i_{j-1})}} \cdot p^{(i_j)}).
\eq
We then have the following 
\begin{lemma}
The orbit of the point $\tilde v^{(i_N)}$ in the $G_{p^{(i_N)}}$-space $V^{(i_1\dots i_N)}$ is of principal type.
\end{lemma}
\begin{proof}[Proof of the lemma]
By assumption, for  $\sigma_{i_j}\not=0$,  $1 \leq j \leq  N$, the $G$-orbit of $x^{(i_1\dots i_N)}$ is of principal type $G/ H_L$ in $M$. The theory of compact group actions then implies that this is equivalent to the fact that $x^{(i_2 \dots i_N)} \in V^{(i_1)}$ is of principal type in the $G_{p^{(i_1)}}$-space $V^{(i_1)}$, see  \cite{bredon}, page 181, which in turn is equivalent to the fact that $x^{(i_3 \dots i_N)} \in V^{(i_1i_2)}$ is of principal type in the $G_{p^{(i_2)}}$-space $V^{(i_1i_2)}$, and so forth. Thus, $x^{(i_j \dots i_N)} \in V^{(i_1 \dots i_{j-1})}$ must be of principal type in the $G_{p^{(i_{j-1})}}$-space $V^{(i_1\dots i_{j-1})}$ for all $j=1,\dots N$, and the assertion follows.
\end{proof}
Let us now assume that one of the $\sigma_{i_j}$ vanishes. Then  
\begin{align}
\label{eq:VII}
\mathrm{(II), \, (III)} \quad \Leftrightarrow \quad & \left \{
\begin{array}{l}
\eta_{p^{(i_1)}} \in \mathrm{Ann}(E_{p^{(i_1)}}^{(i_j)})  \quad \forall \, j=1, \dots, N, \\
\eta_{p^{(i_1)}} \in \mathrm{Ann}(F_{p^{(i_1)}}^{(i_N)} ),
\end{array} \right.
\end{align}
where $E^{(i_1)}_{p^{(i_1)}}  = T_{p^{(i_1)}}(G\cdot p^{(i_1)})$, and  
\bq
\label{eq:isom} 
E^{(i_j)}_{p^{(i_1)}} \simeq T_{p ^{(i_j)}}(G_{p^{(i_{j-1})}} \cdot p ^{(i_j)}) \subset  V^{(i_{1} \dots i_{j-1})}, \qquad 2 \leq j \leq N, 
\eq
while $F^{(i_N)}_{p^{(i_1)}}\simeq T_{\tilde v^{(i_N)}} (G_{p^{(i_{N})}}\cdot \tilde v^{(i_N)})  \subset V^{(i_1 \dots i_{N})}$.  Consequently,  we obtain the direct sum of vector spaces
\bqn 
E_{p^{(i_1)}}^{(i_1)}\oplus E_{p^{(i_1)}}^{(i_2)}\oplus \dots  \oplus E_{p^{(i_1)}}^{(i_N)}\oplus F_{p^{(i_1)}}^{(i_N)} \subset T_{p^{(i_1)}}M.
\eqn 
Now, as a consequence of the previous lemma,  the stabilizer of $\tilde v^{(i_N)} $  must act trivially on $N_{\tilde v^{(i_N)}} ( G_{p^{(i_N)}} \cdot\tilde v^{(i_N)} )$. If therefore  $ \mathfrak{X}= \mathfrak{X}_{T}+  \mathfrak{X}_N$ denotes an arbitrary element in 
\begin{align*}
T_{p^{(i_1)}} M 
& \simeq \bigoplus_{j=1}^N T_{p^{(i_j)}} (G_{p^{(i_{j-1})}} \cdot {p^{(i_j)}}) \oplus  T_{\tilde v^{(i_N)}} (G_{p^{(i_{N})}}\cdot \tilde v^{(i_N)}) \oplus  N_{\tilde v^{(i_N)}} (G_{p^{(i_{N})}}\cdot \tilde v^{(i_N)})\\
&\simeq \bigoplus_{j=1}^N E^{(i_j)}_{p^{(i_1)}} \oplus F^{(i_N)}_{p^{(i_1)}}\oplus N_{\tilde v^{(i_N)}} (G_{p^{(i_{N})}}\cdot \tilde v^{(i_N)}),
\end{align*}
  \eqref{eq:isotr},  \eqref{eq:VII}, and $  G_{\tilde v ^{(i_N)}} \subset G_{p^{(i_N)}}$  imply that  for $g \in G_{\tilde v^{(i_N)}}$ 
\begin{align*} 
g \cdot \eta_{p^{(i_1)}}( \mathfrak{X})&= [ ( L_{g^{-1}})^\ast _{gp^{(i_1)}} \eta_{p^{(i_1)}}]( \mathfrak{X})= \eta_{p^{(i_1)}} ( ( L_{g^{-1}}) _{\ast, p^{(i_1)}}( \mathfrak{X}_N))\\
&= \eta_{p^{(i_1)}} (  \mathfrak{X}_N)= \eta_{p^{(i_1)}} (  \mathfrak{X}).
\end{align*}
Collecting everything together we have shown for arbitrary $\sigma=(\sigma_{i_1}, \dots, \sigma_{i_N})$ that 
\begin{align}
\label{eq:IV}
 \gd_{\xi, \alpha^{(i_1)}, \dots , \alpha^{(i_N)} , h^{(i_N)}} \, ^{(i_1\dots i_N)} \phw=0 \quad \Longleftrightarrow \quad \mathrm{(I), \,(II), \,(III)} \quad & \Longrightarrow \quad h^{(i_N)} \in G_{\eta_{x^{(i_1\dots i_N)}}}.
\end{align}
By \eqref{eq:XX} and \eqref{eq:XXbis} we therefore conclude 
\bq
\label{eq:CC}
\overline{\mathrm{Crit}(\, ^{(i_1\dots i_N)} \tilde \Phi^{tot})_{\sigma_{i_1}\dots \sigma_{i_N}\not=0}} =  \mathrm{Crit}(\, ^{(i_1\dots i_N)} \tilde \Phi^{wk}).
\eq
We have thus computed the critical set of $\, ^{(i_1\dots i_N)} \phw$, and it remains to show that it is a $\Cinft$-submanifold of codimension $2\kappa$. By the previous considerations, 
\begin{align}
\label{eq:C}
\begin{split}
&\Crit(\, ^{(i_1\dots i_N)} \phw)\\ 
= \Big \{ A^{(i_j)}=0, \quad h^{(i_N)} \in & \, G_{\tilde v^{(i_N)}}, \quad  \eta_{x^{(i_1\dots i_N)}}\in \mathrm{Ann} \Big(\bigoplus_{j=1}^N E^{(i_j)}_{x^{(i_1\dots i_N)}}\oplus F^{(i_N)}_{x^{(i_1\dots i_N)}}\Big ) \Big \}.
\end{split}
\end{align}
Now, since for  $\sigma_{i_1}\cdots \sigma_{i_N}\not=0$ the $G$-orbit of $x^{(i_1\dots i_N)}$ is of principal type $G/ H_L$ in $M$, \eqref{eq:32} implies in this case that 
\begin{align*}
\kappa =&  \dim T_{x^{(i_1 \dots i_N)}} ( G\cdot x^{(i_1 \dots i_N)})
=\dim \Big [E^{(i_1)}_{x^{(i_1 \dots i_N)}}\oplus  \bigoplus _{j=2}^N \tau_{i_1}\dots \tau_{i_{j-1}} E^{(i_j)} _{x^{(i_1 \dots i_N)}}  \oplus  \tau_{i_1}\dots \tau_{i_N} F^{(i_N)}_{x^{(i_1 \dots i_N)}}\Big ] \\
 =& \sum_{j=1} ^N \dim E^{(i_j)}_{x^{(i_1\dots i_N)}} + \dim F^{(i_N)}_{x^{(i_1\dots i_N)}}.
\end{align*}
But  $\dim E^{(i_j)}_{x^{(i_1\dots i_N)}} = \dim G_{p^{(i_{j-1})}} \cdot p ^{(i_j )}$ in particular shows that  the dimensions of the spaces $E^{(i_j)}_{x^{(i_1 \dots i_N)}}$   do not depend on the variables $\sigma_{i_j}$. A similar argument applies to $F^{(i_N)}_{x^{(i_1 \dots i_N)}}$, so that we obtain the equality 
\bq
\label{eq:kappa}
\kappa=\sum_{j=1} ^N \dim E^{(i_j)} _{x^{(i_1 \dots i_N)}}+ \dim F^{(i_N)}_{x^{(i_1 \dots i_N)}}
\eq
for arbitrary ${x^{(i_1 \dots i_N)}}$. Note that, in contrast, the dimension of  $T_{x^{(i_1 \dots i_N)}} ( G\cdot x^{(i_1 \dots i_N)})$ collapses, as soon as one of the $\tau_{i_j}$ becomes zero. Since the annihilator of a subspace of $T_x M$ is a linear subspace of $T^\ast_x M$,  we arrive at a vector bundle with $(n-\kappa)$-dimensional fiber that is locally given by the trivialization 
\bqn 
\Big (\sigma_{i_j}, p^{(i_j)},  \tilde v ^{(i_N)}, \mathrm{Ann}  \Big(\bigoplus _{j=1}^N  E^{(i_j)} _{x^{(i_1 \dots i_N)}}  \oplus  F^{(i_N)}_{x^{(i_1 \dots i_N)}}\Big ) \Big )\mapsto (\sigma_{i_j}, p^{(i_j)},  \tilde v ^{(i_N)}).
\eqn
Consequently, by \eqref{eq:IV} and  \eqref{eq:C} we see that $\Crit(\, ^{(i_1\dots i_N)} \phw)$ is equal to the total space of the fiber product of the mentioned vector bundle with the isotropy  bundle given by the local trivialization
\bqn 
(\sigma_{i_j}, p^{(i_j)},  \tilde v ^{(i_N)}, G_{\tilde v ^{(i_N)}})\mapsto (\sigma_{i_j}, p^{(i_j)},  \tilde v ^{(i_N)}).
\eqn
Lastly, equation \eqref{eq:G} implies  $\dim \g_{\tilde v ^{(i_N)}}=d-\kappa$, which concludes the proof of Theorem \ref{thm:MT1}.
\end{proof}

\section{Phase analysis of the weak transforms. The second main theorem}
\label{sec:MT2}

In this section, we shall prove  that the Hessians of the weak transfoms  $ \, ^ {(i_1\dots i_N)} \tilde \Phi ^{wk}$ are transversally non-degenerate at each point of their critical sets.  We begin with  the following general observation. Let $M$ be a $n$-dimensional $\Cinft$-manifold, and $C$ the critical set of a function $\psi \in \Cinft(M)$, which is assumed to be a smooth submanifold in a chart $\mathcal{O} \subset M$. Let further 
\bqn
\alpha:(x,y) \mapsto m, \qquad \beta:(q_1,\dots, q_n) \mapsto m, 	\qquad m \in \mathcal{O},
\eqn
be two systems of  local coordinates on $\mathcal{O}$, such that $\alpha(x,y) \in C$ if and only if $y=0$. One computes
\begin{align*}
\gd_{y_l}(\psi \circ \alpha)(x,y)=\sum_{i=1}^n \frac{\gd(\psi \circ \beta)}{\gd q_i} (\beta^{-1}\circ \alpha(x,y)) \, \gd _{y_l} (\beta^{-1} \circ \alpha)_i (x,y),
\end{align*}
as well as
\begin{align*}
\gd_{y_k} \gd_{y_l} (\psi \circ \alpha)(x,y)&=\sum_{i=1}^n \frac{\gd(\psi \circ \beta)}{\gd q_i} (\beta^{-1}\circ \alpha(x,y)) \, \gd_{y_k}  \gd _{y_l} (\beta^{-1} \circ \alpha)_i (x,y)\\
&+ \sum_{i,j=1}^n \frac{\gd^2(\psi \circ \beta)}{\gd q_i \gd{q_j}} (\beta^{-1}\circ \alpha(x,y))  \gd_{y_k}(\beta^{-1} \circ\alpha)_j(x,y) \,  \gd_{y_l}(\beta^{-1} \circ\alpha)_i(x,y).
\end{align*}
Since
\bqn
\alpha_{\ast,(x,y)}(\gd_{y_k})=\sum_{j=1}^n \gd_{y_k} (\beta^{-1} \circ \alpha)_j(x,y) \, \beta_{\ast,(\beta^{-1} \circ \alpha)(x,y)}(\gd_{q_j}),
\eqn
this implies
\bq
\label{eq:Hess}
\gd_{y_k} \gd_{y_l} (\psi \circ \alpha)(x,0)= \mathrm{Hess}\,  \psi_{|\alpha(x,0)} (\alpha_{\ast,(x,0)}(\gd_{y_k}),\alpha_{\ast,(x,0)}(\gd_{y_l})),
\eq
by definition of the Hessian \cite{milnor63}, Section 2. Let us now write $x=(x',x'')$, and consider the restriction of $\psi$ onto  the $\Cinft$-submanifold
$
M_{c'}=\mklm { m \in \mathcal{O}: m=\alpha(c',x'',y)}.
$
We write $\psi_{c'}=\psi_{|M_{c'}}$, and denote the critical set of $\psi_{c'}$ by $C_{c'}$, which contains $C \cap M_{c'}$ as a subset. Introducing on $M_{c'}$ the local coordinates
$
\alpha':(x'',y) \mapsto \alpha(c',x'',y),
$
we obtain 
\bqn
\gd_{y_k} \gd_{y_l} (\psi_{c'} \circ \alpha')(x'',0)= \mathrm{Hess}\,  \psi_{c'|\alpha(x'',0)} (\alpha'_{\ast,(x'',0)}(\gd_{y_k}),\alpha'_{\ast,(x'',0)}(\gd_{y_l})).
\eqn
Let us now assume $C_{c'}=C \cap M_{c'}$, a transversal intersection.  Then $C_{c'}$ is a submanifold of $M_{c'}$, and the complement of  $T_{\alpha'(x'',0)}C_{c'}$ in  $T_{\alpha'(x'',0)} M_{c'}$ at a point $\alpha'(x'',0)$ is spanned by the vector fields $\alpha'_{\ast,(x'',0)} (\gd _{y_k})$.
Since clearly
\bqn
\gd_{y_k} \gd_{y_l} (\psi_{c'} \circ \alpha')(x'',0)=\gd_{y_k} \gd_{y_l} (\psi \circ \alpha)(x,0),\qquad x=(c',x''),
\eqn
we thus have proven the following
\begin{lemma}
\label{lemma:A}
Assume that $C_{c'}=C \cap M_{c'}$. Then $\mathrm{Hess} \, \psi$ is transversally non-degenerate at $\alpha(c', x'',0) \in C$  if, and only if  $\mathrm{Hess} \, \psi_{c'}$ is transversally non-degenerate at $\alpha'( x'',0) \in C_{c'}$. That is, $\mathrm{Hess} \, \psi$  defines a non-degenerate quadratic form on 
\bqn 
T_{\alpha(c', x'',0)} M/ T_{\alpha (c',x'',0)}C
\eqn
if, and only if $\mathrm{Hess} \, \psi_{c'}$  defines a non-degenerate quadratic form on
\bqn
T_{{\alpha'(x'',0)}}M_{c'}/{T_{\alpha'(x'',0)}C_{c'}}.
\eqn
\end{lemma}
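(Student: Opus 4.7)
The plan is to show that, under the transversality hypothesis $C_{c'}=C\cap M_{c'}$, both transversal Hessians are represented by the same symmetric matrix in naturally chosen bases, so that non-degeneracy on one side is equivalent to non-degeneracy on the other. First, I would exploit the adapted coordinates: since $C$ is locally cut out by $y=0$ in the $\alpha$-chart, the vectors $\alpha_{\ast,(x,0)}(\gd_{y_k})$ project to a basis of the transverse quotient $T_{\alpha(x,0)}M/T_{\alpha(x,0)}C$. The transversality assumption, combined with $M_{c'}=\{x'=c'\}$, forces $C_{c'}$ to be cut out in the $\alpha'$-chart by $y=0$ as well, so the vectors $\alpha'_{\ast,(x'',0)}(\gd_{y_k})$ likewise project to a basis of $T_{\alpha'(x'',0)}M_{c'}/T_{\alpha'(x'',0)}C_{c'}$.

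Next, I would apply the coordinate formula \eqref{eq:Hess} in both chart systems. This gives
\begin{gather*}
\gd_{y_k}\gd_{y_l}(\psi\circ\alpha)(x,0)=\mathrm{Hess}\,\psi_{|\alpha(x,0)}\bigl(\alpha_{\ast,(x,0)}\gd_{y_k},\,\alpha_{\ast,(x,0)}\gd_{y_l}\bigr),\\
\gd_{y_k}\gd_{y_l}(\psi_{c'}\circ\alpha')(x'',0)=\mathrm{Hess}\,\psi_{c'|\alpha'(x'',0)}\bigl(\alpha'_{\ast,(x'',0)}\gd_{y_k},\,\alpha'_{\ast,(x'',0)}\gd_{y_l}\bigr).
\end{gather*}
Since by construction $\psi_{c'}\circ\alpha'(x'',y)=\psi\circ\alpha(c',x'',y)$, the left-hand sides are literally equal at $x=(c',x'')$. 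Hence the Gram matrices of the two transversal Hessians in the above bases coincide, and invertibility of one is tantamount to invertibility of the other.

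The main subtlety I anticipate is the verification that these formulas indeed define bilinear forms on the quotient spaces, independent of the choice of adapted chart. This is the standard fact that at a critical point of $\psi$ the Hessian descends to a well-defined quadratic form on $T_mM/T_mC$: since $d\psi$ vanishes along $C$, differentiating along a tangential direction to $C$ kills the $y$-second derivatives, so a change of adapted coordinates acts on the $y$-Hessian by conjugation through the Jacobian of the transition map restricted to the transverse directions. Once this observation is in place, the argument above is purely bookkeeping, and the lemma follows immediately.
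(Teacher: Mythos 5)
Your proof is correct and follows essentially the same route as the paper: both arguments apply \eqref{eq:Hess} in the $\alpha$- and $\alpha'$-charts, use the hypothesis $C_{c'}=C\cap M_{c'}$ to ensure that the vectors $\alpha'_{\ast,(x'',0)}(\gd_{y_k})$ span a complement of $T_{\alpha'(x'',0)}C_{c'}$ in $T_{\alpha'(x'',0)}M_{c'}$, and conclude from the identity $\gd_{y_k}\gd_{y_l}(\psi_{c'}\circ\alpha')(x'',0)=\gd_{y_k}\gd_{y_l}(\psi\circ\alpha)(c',x'',0)$ that the two transversal Hessians are represented by the same matrix. Your closing remark on well-definedness on the quotient is exactly the point the paper disposes of by invoking Milnor's intrinsic definition of the Hessian at a critical point.
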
 \qed

Let us now state the main result of this section, the notation being the same as in the previous sections.

\begin{theorem}[Second Main Theorem]
\label{thm:II}
Let  $\mklm{(H_{i_1}), \dots, (H_{i_{N}})}$ be a maximal, totally ordered subset of non-principal isotropy types of the $G$-action on $M$, and  $\mathcal{Z}^{\rho_{i_1}\dots \rho_{i_N}}_{i_1\dots i_N}$ be  defined as in \eqref{eq:40}.  Consider the corresponding factorization 
\bqn
\Phi \circ \mathcal{Z}^{\rho_{i_1}\dots \rho_{i_N}}_{i_1\dots i_N} =\, ^{(i_1\dots i_N)} \tilde \Phi^{tot}=\tau_{i_1}(\sigma) \dots \tau_{i_N}(\sigma) \, ^{(i_1\dots i_N)}\tilde \Phi^ {wk}
\eqn
 of the phase function \eqref{eq:phase}. 
 Then, at each point of the critical manifold $\Crit(\, ^{(i_1\dots i_N)}\tilde \Phi^ {wk})$, the Hessian $\mathrm{Hess} \, ^{(i_1\dots i_N)}\tilde \Phi^ {wk}$ is transversally non-degenerate.
\end{theorem}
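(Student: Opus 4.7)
The plan is to invoke Lemma~\ref{lemma:A} in order to reduce transversal non-degeneracy of $\mathrm{Hess}\,^{(i_1\dots i_N)}\tilde\Phi^{wk}$ along the $2\kappa$-codimensional critical manifold $\Crit(^{(i_1\dots i_N)}\tilde\Phi^{wk})$ to non-degeneracy of the Hessian of the restriction of $^{(i_1\dots i_N)}\tilde\Phi^{wk}$ to a transverse slice through each point $\bar c$. By Theorem~\ref{thm:MT1}, I shall parameterize such a $2\kappa$-dimensional transverse slice by normal coordinates $y=(A,h^\perp,\xi^\perp)$, where $A=(A^{(i_1)},\dots,A^{(i_N)})\in\bigoplus_j\g^\perp_{p^{(i_j)}}$ has dimension $\sum_j d^{(i_j)}$; the variable $h^\perp$ parameterizes a complement of $G_{\tilde v^{(i_N)}}$ inside $G_{p^{(i_N)}}$, of dimension $\kappa-\sum_j d^{(i_j)}$ thanks to \eqref{eq:G}; and $\xi^\perp$ ranges over a $\kappa$-dimensional complement of $\mathrm{Ann}\big(\bigoplus_j E^{(i_j)}_{\bar x}\oplus F^{(i_N)}_{\bar x}\big)$ in $T^\ast_{\bar x}M$, with the dimension count supplied by \eqref{eq:kappa} and $\bar x=x^{(i_1\dots i_N)}(\bar c)$.

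Since $\Phi(x,\xi,g)=\langle\kappa(x)-\kappa(gx),\xi\rangle$ is linear in $\xi$, the weak transform $^{(i_1\dots i_N)}\tilde\Phi^{wk}$ is linear in $\xi^\perp$, and hence the $(\xi^\perp,\xi^\perp)$-block of the transverse Hessian vanishes identically. Arranging coordinates in the order $((A,h^\perp),\xi^\perp)$, the transverse Hessian at $\bar c$ therefore takes the block form
\begin{equation*}
H_{\bar c}=\begin{pmatrix}\ast & B \\ B^T & 0\end{pmatrix},\qquad B=\partial_{(A,h^\perp)}\partial_{\xi^\perp}\,^{(i_1\dots i_N)}\tilde\Phi^{wk}\big|_{\bar c}.
\end{equation*}
A direct application of the Schur complement formula shows that $H_{\bar c}$ is non-singular provided the square cross-block $B$ is invertible; squareness of $B$ as a $\kappa\times\kappa$ matrix is immediate from the dimension count above, and it only remains to establish its invertibility.

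To compute $B$, I shall use the explicit form of $^{(i_1\dots i_N)}\tilde\Phi^{wk,pre}$ given by Theorem~\ref{thm:MT1}. The remainder terms $O(|\tau_{i_l}A^{(i_l)}|)$ and $O\big(|\tau_{i_N}[\theta^{(i_N)}(\tilde v^{(i_N)})-\theta^{(i_N)}((h^{(i_N)})_\ast\tilde v^{(i_N)})]|\big)$ are bilinear in $\xi$ and in the normal quantities $A^{(i_l)}$ and $h^{(i_N)}-G_{\tilde v^{(i_N)}}$, both of which vanish at $\bar c$, so they contribute nothing to the mixed second derivatives $\partial_{(A,h^\perp)}\partial_{\xi^\perp}$ evaluated at $\bar c$. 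Only the leading bilinear form $\langle\Xi\cdot W,\xi\rangle$ then contributes, and using the invertibility of the Jacobian $\Xi$ together with formula \eqref{eq:15}, the columns of $B$ indexed by $\alpha_r^{(i_j)}$ and by $h^\perp$ compute, up to the invertible transformation $\Xi$ and a $\sigma$-monomial produced by the chain of exponential maps and the $\delta$-pullback, the evaluation of $\xi^\perp$ on vectors spanning $E^{(i_j)}_{\bar x}$ and $F^{(i_N)}_{\bar x}$ as defined in \eqref{eq:EF}. Invertibility of $B$ thus reduces to non-degeneracy of the canonical pairing between $\bigoplus_j E^{(i_j)}_{\bar x}\oplus F^{(i_N)}_{\bar x}$ and the $\xi^\perp$-complement in the cotangent space, which is a tautology of linear algebra.

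The hard part will be handling points of $\Crit(^{(i_1\dots i_N)}\tilde\Phi^{wk})$ at which some of the $\sigma_{i_j}$ vanish, since there the tangent space $T_{\bar x}(G\cdot\bar x)$ collapses and the principal-orbit duality cannot be invoked directly. The crucial observation \eqref{eq:kappa}, that the total dimension $\sum_j\dim E^{(i_j)}_{\bar x}+\dim F^{(i_N)}_{\bar x}$ remains equal to $\kappa$ independently of $\sigma$, together with the smooth dependence of the distributions $E^{(i_j)}$ and $F^{(i_N)}$ on all coordinates and the fact that $\Xi$ remains an invertible Jacobian at $\sigma=0$, is precisely what will ensure that the pairing encoded by $B$ stays non-degenerate uniformly along the critical manifold, completing the argument.
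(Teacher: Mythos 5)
Your overall architecture for the exceptional-divisor points is the same as the paper's: reduce via Lemma \ref{lemma:A} to a Hessian in the directions transverse to $\Crit(\,^{(i_1\dots i_N)}\phw)$, observe that the $\xi\xi$-block vanishes by linearity in $\xi$, note that an invertible square off-diagonal block makes the whole matrix non-singular irrespective of the remaining block (this mirrors the paper's observation that the $\beta\beta$-block $\mathcal{F}$ in Proposition \ref{prop:1} is irrelevant), and feed in the invertibility of $\Xi$, the dimension identity \eqref{eq:kappa}, and the pairing of $\xi$ with $\bigoplus_j E^{(i_j)}\oplus F^{(i_N)}$. However, there are two genuine gaps. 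First, your dismissal of the remainder terms is invalid precisely at the critical points with $\sigma_{i_1}\cdots\sigma_{i_N}\neq 0$: a term of the form $\tau_{i_j}\,g(A^{(i_j)})\,\eklm{w,\xi}$ with $g(0)=0$ has a \emph{non}-vanishing mixed $\gd_{A}\gd_{\xi}$-derivative at $A^{(i_j)}=0$ whenever $\tau_{i_j}\neq 0$ — bilinearity in two quantities that vanish at the point is exactly the situation in which the mixed second derivative survives (and the full covector $\xi$ does not vanish at $\bar c$; only its $\xi^\perp$-component does). So your computation of $B$ is only justified on the exceptional divisor, where all $\tau_{i_j}$ vanish; off the divisor the corrections are of size $O(\tau)$ with $\tau$ not small, so they cannot be handled by perturbation either. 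The paper treats those points by a different argument which you omit: there $\mathcal{Z}^{\rho_{i_1}\dots\rho_{i_N}}_{i_1\dots i_N}$ is a diffeomorphism, Lemma \ref{lemma:Reg} gives transversal non-degeneracy of $^{(i_1\dots i_N)}\tilde\Phi^{tot}$, and the product-rule relation between $\mathrm{Hess}\,^{(i_1\dots i_N)}\tilde\Phi^{tot}$ and $\mathrm{Hess}\,^{(i_1\dots i_N)}\phw$ (using that $\phw$ and its first derivatives vanish on the critical set) transfers it to the weak transform. Alternatively one can show, from the first-derivative identities in the proof of Theorem \ref{thm:MT1} (which hold for all $\xi$ once condition (I) is imposed), that the total mixed block, remainders included, equals the clean pairing for every $\sigma$; but you have not done this, and your stated justification is incorrect.

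Second, your phrase that the columns of $B$ compute the pairing "up to the invertible transformation $\Xi$ and a $\sigma$-monomial" is at odds with your own conclusion: if the $\alpha^{(i_j)}$- or $h^\perp$-columns carried residual monomial factors in $\sigma$, then $B$ would be singular exactly at the divisor points you identify as the hard case. That no such factors remain — i.e. that after dividing by $\tau_{i_1}\cdots\tau_{i_N}$ the mixed derivatives pair $\xi$ with the renormalized spaces $E^{(i_j)}$ and $F^{(i_N)}$ of \eqref{eq:EF} via \eqref{eq:15}, uniformly down to $\sigma=0$ — is precisely the content of the explicit computation in Proposition \ref{prop:1} (the matrix $\mathcal{E}$, the unravelling of $\Xi$ and $\Xi^{(i_1\dots i_j)}$, and the identification of the conditions $(b_1),(b_2),(c)$ with the annihilator conditions), which your proposal asserts but does not carry out; the closing paragraph ("is precisely what will ensure") is a plan, not an argument. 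As it stands, the key invertibility of $B$ along the exceptional divisor, and the non-degeneracy at interior points, are both left unproven.
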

For the proof of Theorem \ref{thm:II} we need  the following
\begin{lemma}
\label{lemma:Reg}
 Let  $(x,\xi,g) \in \Crit{(\Phi)}$, and  $ x \in M({H_L})$. Then $(x,\xi,g) \in \mathrm{Reg} \,\Crit{(\Phi)}$, and $\mathrm{Hess} \, \Phi$ is transversally non-degenerate at $(x,\xi,g)$. 
 \end{lemma}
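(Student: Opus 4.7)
The first assertion unpacks directly from earlier text. Since $(x,\xi,g)\in\Crit(\Phi)$ we have $\xi\in\Omega\cap T^\ast_xM$ and $g\in G_{(x,\xi)}$, while $x\in M(H_L)$ gives $G_x\sim H_L$; hence \eqref{eq:y} yields $G_\xi=G_x\sim H_L$, placing $\xi$ in $\mathrm{Reg}\,\Omega$ by \eqref{eq:x}, and combined with the description \eqref{eq:z} this puts $(x,\xi,g)$ in $\mathrm{Reg}\,\Crit(\Phi)$, a submanifold of codimension $2\kappa$.

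For the transversal non-degeneracy my plan is to exploit the particularly simple local structure of $M$ near a principal orbit. Because the slice representation of $H_L$ on $N_x(G\cdot x)$ is trivial on a principal orbit (see \cite{bredon}, p.\ 181), the invariant tubular neighborhood theorem provides an equivariant diffeomorphism of a $G$-neighborhood of $G\cdot x$ with $(G/H_L)\times W$, $G$ acting by left translation only on the first factor. Writing $x=(p,w)$ and $\xi=(\xi_T,\xi_N)\in T^\ast_p(G/H_L)\oplus T^\ast_wW$, the phase becomes
\bqn
\Phi((p,w),(\xi_T,\xi_N),g)=\eklm{\tilde p-\widetilde{gp},\xi_T},
\eqn
independent of $(w,\xi_N)$; the constraint $\xi\in\Omega$ reduces to $\xi_T=0$, and the critical set is locally $\{\xi_T=0,\,g\in G_p\}$.

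Next I will invoke Lemma \ref{lemma:A} and compute the Hessian on a transverse slice. Parametrizing $g=\exp(\sum\alpha_iA_i)\exp(\sum\beta_jB_j)g_0$ with $\{A_i\}$ a basis of $\mathfrak{g}_{p_0}^\perp$ and $\{B_j\}$ one of $\mathfrak{g}_{p_0}$, and fixing $(p,w,\xi_N,\beta)$ at their base values while letting $(\xi_T,\alpha)$ vary, a first-order Taylor expansion of $gp_0$ in $\alpha$ gives $\tilde p(gp_0)=V\alpha+O(|\alpha|^{2})$, where $V$ represents the isomorphism $\mathfrak{g}_{p_0}^\perp\ni A\mapsto \widetilde A_{p_0}\in T_{p_0}(G/H_L)$ in the chosen bases. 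The restricted phase thus equals $-\eklm{V\alpha,\xi_T}+O(|\alpha|^{2}|\xi_T|)$, and its Hessian at the critical point will be the symmetric block matrix
\bqn
H=\begin{pmatrix} 0 & -V\\ -V^{T} & 0\end{pmatrix}
\eqn
of determinant $\pm(\det V)^{2}\neq 0$, which will yield transversal non-degeneracy.

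The one step demanding care is verifying that $\mathrm{span}(\partial_{\xi_T},\partial_\alpha)$ is a genuine complement to $T_m\Crit(\Phi)$, since the isotropy condition $g\in G_p$ tilts the critical set in the $(p,\alpha)$-block; but a short linear algebra check using the invertibility of $V$ will suffice, as a tangent vector $a\,\partial_{\xi_T}+b\,\partial_\alpha$ in $T_m\Crit(\Phi)$ must satisfy $a=0$ (to preserve $\xi_T=0$) and $Vb=0$ (from the linearization of $gp=p$), hence $a=b=0$.
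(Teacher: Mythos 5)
Your argument is correct in substance, and it takes a genuinely different route from the paper. The paper first observes that on the principal stratum $\gd_{\xi,g}\Phi=0$ already forces $\gd_x\Phi=0$, then invokes Lemma \ref{lemma:A} to reduce to the function $\Phi_x(\xi,g)$ with $x$ frozen, computes the full $(\xi,g)$-Hessian, identifies its kernel with $T(\Crit\,\Phi_x)$, and concludes by the general fact that a symmetric bilinear form is non-degenerate on any complement of its kernel. You instead exploit the triviality of the principal slice representation to pass to an adapted tube $(G/H_L)\times W$, restrict to the $2\kappa$-dimensional slice spanned by the $\xi_T$- and $\g_{p_0}^\perp$-directions, obtain the block Hessian $\begin{pmatrix}0&-V\\-{}^TV&0\end{pmatrix}$ with $V$ invertible, and check that this slice is transverse to $\Crit(\Phi)$ (your linear-algebra check of $a=0$, $Vb=0$ is right, using $T_m\Crit(\Phi)\subset T_m\mklm{gx=x}$ and that $\mklm{gx=x}$ is smooth over the principal stratum). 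Since every point of $\Crit(\Phi)$ is critical, $T_m\Crit(\Phi)\subset\ker\mathrm{Hess}\,\Phi(m)$, so non-degeneracy on any complement is indeed equivalent to transversal non-degeneracy; your smaller $2\kappa\times2\kappa$ computation is more economical than the paper's $(n+d)$-sized one.

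The one point you must repair is the choice of adapted product coordinates. The lemma is invoked for the phase $\Phi(x,\xi,g)=\eklm{\kappa(x)-\kappa(gx),\xi}$ built from the \emph{arbitrary} charts $(\kappa_\gamma,Y_\gamma)$ appearing in Proposition \ref{prop:0} and Theorem \ref{thm:Rt}; replacing $\kappa$ by an adapted chart changes $\Phi$ as a function (not merely by a coordinate change of its domain), and its Hessian at a critical point changes accordingly, so non-degeneracy in your special chart does not formally transfer. Fortunately your argument needs almost nothing from the adapted chart: in any chart, $\Phi$ is fiberwise linear in $\xi$, so the $(\xi,\xi)$-block of the Hessian on your slice vanishes; the mixed $(\xi,\alpha)$-block is, up to the invertible matrix $\1-{}^Tg^\ast$-independent pairing, the matrix $\big((d\tilde x_i)_x((\widetilde A_j)_x)\big)$ pairing a complement of $\mathrm{Ann}(T_x(G\cdot x))$ in $T^\ast_xM$ with the orbit directions $\widetilde A_x$, $A\in\g_x^\perp$, hence invertible; and the $(\alpha,\alpha)$-block, which is the only place where curvature of the chart enters, is irrelevant, since a form $\begin{pmatrix}0&B\\{}^TB&D\end{pmatrix}$ with $B$ square and invertible is non-degenerate for every $D$. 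Rewriting your computation in this chart-free way (which is essentially the structure of the matrix in the paper's proof) closes the gap; as written, the claim that the $(\alpha,\alpha)$-block is zero and the implicit chart-independence are the two statements left unjustified.
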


\begin{proof}
The first assertion is clear from \eqref{eq:x} - \eqref{eq:z}.  To see the second, consider the $1$-form  $\eta = \sum \xi_i d\tilde x_i$, and note that by \eqref{eq:y}
\bqn 
\eta_x \in \Omega \cap T_x^\ast Y,\, x \in M(H_L), \, g \in G_{{x}} \quad \Longrightarrow \quad g \cdot \eta_x = \eta_x.
\eqn
Since by \eqref{eq:vanx} the condition $\gd_x \Phi(x,\xi,g)=0$ is equivalent to $g \cdot \eta_x = \eta_x$, and 
\bqn 
\gd_\xi \Phi (x,\xi, g) =0 \quad \Longleftrightarrow  \quad gx=x, \qquad \quad \gd_g \Phi(x,\xi, g) =0 \quad \Longleftrightarrow \quad \eta_{gx} \in \Omega,
\eqn 
we obtain on  $T^\ast (Y\cap M(H_{L}))\times G$ the implication 
\bqn 
\gd_{\xi,g} \Phi (x,\xi,g) =0 \quad \Longrightarrow  \quad \gd_x \Phi(x,\xi,g) =0.
\eqn 
Let $\Phi_x(\xi,g)$ denote the phase function \eqref{eq:phase} regarded as a function of the coordinates $\xi,g$ alone, while $x$ is regarded as a parameter. 
Lemma \ref{lemma:A} then implies that  on $T^\ast (Y \cap M(H_{L}))\times G$ the study of the transversal Hessian of $\Phi$ can be reduced to the study of the transversal Hessian of $\Phi_x$. Now, with respect to the coordinates $\xi,g$, the Hessian of $\Phi_x$ is given by 
\bqn 
\left ( \begin{array}{cc}
0 & (d\tilde x_i)_x((\widetilde X_{j})_x) \\ (d\tilde x_j)_x((\widetilde X_{i})_x) &  (\gd_{X_i} \gd_{X_j} \Phi_x)(\xi,g) \\
\end{array} \right ),
\eqn 
where $\mklm{X_1, \dots, X_d}$ denotes a basis of $\g$. 
A computation then shows that the kernel of the corresponding linear transformation is equal  to $\mklm{(\tilde \xi,\tilde s): \sum \tilde \xi_i (d\tilde x_i)_{x} \in \mathrm{Ann}(T_{x} ( G \cdot x)), \sum \tilde s_j (\widetilde X_j)_x =0}\simeq T_{\xi,g}( \Crit \, \Phi_x)$. The lemma now follows by the following general observation.
Let $\mathcal{B}$ be a symmetric bilinear form on an $n$-dimensional $\mathbb{K}$-vector space $V$, and $B=(B_{ij})_{i,j}$ the corresponding Gramsian matrix with respect to a basis $\mklm{v_1,\dots,v_n}$ of $V$ such that 
\bqn 
\mathcal{B}(u,w) = \sum_{i,j} u_i w_j B_{ij}, \qquad  u=\sum u_i v_i, \quad w=\sum w_i v_i.
\eqn
We denote the linear operator given by $B$ with the same letter, and write 
\bqn 
V =\ker B \oplus W.
\eqn
Consider the restriction $\mathcal{B}_{|W \times W}$ of $\mathcal{B}$ to $W\times W$, and assume that $\mathcal{B}_{|W\times W}(u,w) =0$ for all $u \in W$, but $w\not=0$. Since the Euclidean scalar product in $V$ is non-degenerate, we necessarily must have $Bw=0$, and consequently $ w \in \ker  B \cap W=\mklm{0}$, which is a contradiction. Therefore $\mathcal{B}_{|W \times W}$ defines a non-degenerate symmetric bilinear form. 
\end{proof}

\begin{proof}[Proof of Theorem \ref{thm:II}]  For $\sigma_{i_1} \cdots \sigma_{i_N}\not=0$, the sequence of monoidal transformations  $\mathcal{Z}^{\rho_{i_1}\dots \rho_{i_N}}_{i_1\dots i_N}$ defined in \eqref{eq:40}  is a diffeomorphism, so that by the previous lemma 
\bqn 
\mathrm{Hess} ^{(i_1\dots i_N)} \tilde \Phi^{tot} (\sigma_{i_j},p^{(i_j)},\tilde v^{(i_N)}, \alpha^{(i_j)}, h^{(i_N)},\xi)
\eqn
is transversally non-degenerate at each point of $ \mathrm{Crit}(\, ^{(i_1\dots i_N)} \Phi^{tot})_{\sigma_{i_1}\cdots \sigma_{i_N}\not=0}$.  Next, one computes
\begin{align*}
\left (\frac{\gd^2 \, ^{(i_1\dots i_N)}\tilde \Phi^ {tot}}{\gd \gamma_k\gd \gamma_l}  \right )_{k,l} & = \tau_{i_1}(\sigma) \cdots  \tau_{i_N }(\sigma) 
\left (\frac{\gd^2 \, ^{(i_1\dots i_N)}\tilde \Phi^ {wk}}{\gd \gamma_k\gd \gamma_l}  \right )_{k,l} \\ &+
\left (\begin{array}{cc}
\left (   \frac{ \gd ^2 (\tau_{i_1}(\sigma) \cdots  \tau_{i_N }(\sigma))}{\gd \sigma_{i_r}\sigma_{i_s}} \right )_{r,s} & 0 \\ 0 & 0
\end{array}\right ) \, ^{(i_1\dots i_N)}\tilde \Phi^ {wk} +R,
\end{align*}
where $\gamma_k$ stands for any of the coordinates, and $R$ represents a matrix whose entries contain first order derivatives of $^{(i_1\dots i_N)}\tilde \Phi^ {wk}$ as factors. But since by \eqref{eq:38}
\bqn 
\mathrm{Crit}(\, ^{(i_1\dots i_N)} \tilde \Phi^{tot})_{\sigma_{i_1}\cdots \sigma_{i_N}\not=0} =\Crit(^{(i_1\dots i_N)}\tilde \Phi^ {wk})_{|\sigma_{i_1}\cdots \sigma_{i_N}\not=0}, 
\eqn
 we conclude that  the transversal Hessian of $^{(i_1\dots i_N)}\tilde \Phi^ {wk}$ does not degenerate along the manifold $\Crit(^{(i_1\dots i_N)}\tilde \Phi^ {wk})_{|\sigma_{i_1}\cdots \sigma_{i_N}\not=0}$. Therefore, it  remains to study the transversal Hessian of $^{(i_1\dots i_N)}\tilde \Phi^ {wk}$ in the case that any of the $\sigma_{i_j}$ vanishes, that is, along the exceptional divisor. Now, the proof of the first main theorem,  in particular  \eqref{eq:IV},  showed that
\bqn 
\gd _{\xi, \alpha^{(i_1)}, \dots, \alpha^{(i_N)}, h^{(i_N)}} \, ^{(i_1\dots i_N)}\tilde \Phi^ {wk}=0 \quad \Longrightarrow \quad \gd _{\sigma_{i_1}, \dots, \sigma_{i_N}, p^{(i_1)}, \dots, p^{(i_N)},\tilde v^{(i_N)}} \, ^{(i_1\dots i_N)}\tilde \Phi^ {wk}=0.
\eqn
 If therefore 
$$
\, ^{(i_1\dots i_N)}\tilde \Phi^ {wk}_{\sigma_{i_j}, p^{(i_j)},\tilde v^{(i_N)}}(\alpha^{(i_j)},  h^{(i_N)},\xi)
$$ 
denotes the weak transform of the phase function $\Phi$ regarded as a function of the variables $(\alpha^{(i_1)},\dots, \alpha^{(i_N)}, h^{(i_N)},\xi)$ alone, while the variables $(\sigma_{i_1},\dots,\sigma_{i_N}, p^{(i_1)},\dots, p^{(i_N)},\tilde v^{(i_N)})$ are kept fixed at constant values,
\bqn 
\Crit \big ( \, ^{(i_1\dots i_N)}\tilde \Phi^ {wk}_{\sigma_{i_j}, p^{(i_j)},\tilde v^{(i_N)}}\big )=\Crit \big ( \, ^{(i_1\dots i_N)}\tilde \Phi^ {wk}\big )  \cap \mklm{\sigma_{i_j}, p^{(i_j)},\tilde v^{(i_N)} = \, \, \text{constant}}. 
\eqn
Thus, the critical set of $\, ^{(i_1\dots i_N)}\tilde \Phi^ {wk}_{\sigma_{i_j}, p^{(i_j)},\tilde v^{(i_N)}}$ is equal to the fiber over $(\sigma_{i_j}, p^{(i_j)},\tilde v^{(i_N)})$ of the vector bundle
\bqn 
\Big ((\sigma_{i_j}, p^{(i_j)},\tilde v^{(i_N)}), G _{\tilde v^{(i_N)}} \times \mathrm{Ann} \big ( \bigoplus\limits_{j=1}^N E^{(i_j)}_{x^{(i_1\dots i_N)}} \oplus F^{(i_N)}_{x^{(i_1\dots i_N)}} \big ) \Big ) \mapsto  (\sigma_{i_j}, p^{(i_j)},\tilde v^{(i_N)}),
\eqn
and in particular  a smooth submanifold. Lemma \ref{lemma:A} then implies that the study of the transversal Hessian  of $\, ^{(i_1\dots i_N)}\tilde \Phi^ {wk}$ can be reduced to the study of the transversal Hessian of $\, ^{(i_1\dots i_N)}\tilde \Phi^ {wk}_{\sigma_{i_j}, p^{(i_j)},\tilde v^{(i_N)}}$. The crucial fact is now contained in the following 
\begin{proposition}
\label{prop:1}
Assume that 
$\sigma_{i_1} \cdots \sigma_{i_N}=0$. Then 
\bqn 
\ker  \mathrm{Hess} \, ^{(i_1\dots i_N)}\tilde \Phi^ {wk}_{\sigma_{i_j}, p^{(i_j)},\tilde v^{(i_N)}}(0,\dots, 0, h^{(i_N)},\xi)\simeq T_{(0, \dots, 0,h^{(i_N)},\xi)}  \mathrm{Crit} \big (\,^{(i_1\dots i_N)}\tilde \Phi^ {wk}_{\sigma_{i_j}, p^{(i_j)},\tilde v^{(i_N)}} \big )
\eqn
for all $(0,\dots, 0, h^{(i_N)},\xi) \in \mathrm{Crit} \big (\,^{(i_1\dots i_N)}\tilde \Phi^ {wk}_{\sigma_{i_j}, p^{(i_j)},\tilde v^{(i_N)}} \big )$, and arbitrary $p^{(i_j)}$, $\tilde v^{(i_j)}$.
\end{proposition}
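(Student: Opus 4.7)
My plan is to exploit the elementary fact that $T_c \Crit(\Psi) \subseteq \ker \mathrm{Hess}\, \Psi(c)$ always holds at a critical point where $\Crit(\Psi)$ is a smooth submanifold---it follows by differentiating the identity $d\Psi \equiv 0$ along any curve lying in $\Crit(\Psi)$---so only the reverse inclusion needs work. Abbreviating $\Psi := \,^{(i_1\dots i_N)}\tilde\Phi^{wk}_{\sigma_{i_j}, p^{(i_j)}, \tilde v^{(i_N)}}$ and grouping its arguments as $\alpha = (\alpha^{(i_1)}, \dots, \alpha^{(i_N)})$, $h := h^{(i_N)}$, $\xi$, I would first observe that the substitution $\delta_{i_1\dots i_N}$, being a composition of $N$ quadratic blow-up-type maps, produces for each $\tau_{i_j}(\sigma)$ a monomial in which every coordinate $\sigma_{i_k}$ appears with positive exponent. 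Hence the hypothesis $\sigma_{i_1}\cdots\sigma_{i_N} = 0$ forces $\tau_{i_1} = \cdots = \tau_{i_N} = 0$, whereupon each error term $O(|\tau_{i_j} A^{(i_j)}|)$ and $O(|\tau_{i_N}[\theta^{(i_N)}(\tilde v^{(i_N)}) - \theta^{(i_N)}((h^{(i_N)})_{\ast, p^{(i_1)}}\tilde v^{(i_N)})]|)$ appearing in $^{(i_1\dots i_N)}\tilde\Phi^{wk,\,pre}$, which carries $\tau_{i_j}$ as an overall prefactor with $\sigma$ held fixed, vanishes identically together with all its first and second partial derivatives in $(\alpha, h, \xi)$.

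With the error terms eliminated, only the principal part $\eklm{\Xi \cdot v, \xi}$ governs the Hessian: it is linear in $\xi$, affine in $\alpha$ through the entries $dp^{(i_j)}_s(\widetilde A^{(i_j)}_{p^{(i_j)}})$, and smoothly $h$-dependent only via the entries $\theta^{(i_N)}_r((h^{(i_N)})_{\ast, p^{(i_1)}}\tilde v^{(i_N)})$. From this I would read off the block decomposition
\begin{equation*}
\mathrm{Hess}\, \Psi = \begin{pmatrix} 0 & 0 & \Psi_{\alpha\xi} \\ 0 & \Psi_{hh} & \Psi_{h\xi} \\ \Psi_{\xi\alpha} & \Psi_{\xi h} & 0 \end{pmatrix},
\end{equation*}
where $\Psi_{\alpha\alpha}=0$ from the affine dependence on $\alpha$, $\Psi_{\alpha h}=0$ from the decoupling of $\alpha$ and $h$ in the principal term, and $\Psi_{\xi\xi}=0$ from linearity in $\xi$. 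A vector $(\delta\alpha,\delta h,\delta\xi)\in\ker \mathrm{Hess}\,\Psi$ thus satisfies $\Psi_{\alpha\xi}\delta\xi=0$, $\Psi_{hh}\delta h+\Psi_{h\xi}\delta\xi=0$, and $\Psi_{\xi\alpha}\delta\alpha+\Psi_{\xi h}\delta h=0$. The first row encodes $\delta\xi\in\mathrm{Ann}(E^{(i_j)}_{p^{(i_1)}})$ for $j=1,\dots,N$; the third row, split into its $p^{(i_j)}$- and $\theta^{(i_N)}$-blocks, forces $\delta\alpha=0$ by the linear independence of $\{\widetilde A^{(i_j)}_r\}_r$, and $\lambda(\delta h)\tilde v^{(i_N)}=0$, i.e.\ $\delta h\in T_{h^{(i_N)}} G_{\tilde v^{(i_N)}}$.

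For the middle row I would compute $\Psi_{hh}(\delta h,\delta h)=-\xi\cdot\Xi\cdot \lambda(\delta h)^2\tilde v^{(i_N)}$ via the curve $h(t)=\e{t\delta h} h^{(i_N)}$, noting that this vanishes whenever $\delta h\in\g_{\tilde v^{(i_N)}}$; polarising and invoking the commutator identity $\lambda(\delta h)\lambda(\delta h')-\lambda(\delta h')\lambda(\delta h)=\lambda([\delta h,\delta h'])$ together with the annihilation of $F^{(i_N)}_{p^{(i_1)}}=\lambda(\g_{p^{(i_N)}})\tilde v^{(i_N)}$ by $\xi\cdot\Xi$ at the critical point, one concludes that $\Psi_{hh}(\delta h,\delta h')=0$ for arbitrary $\delta h'\in\g_{p^{(i_N)}}$ whenever $\delta h\in\g_{\tilde v^{(i_N)}}$. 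The middle equation therefore collapses to $\Psi_{h\xi}\delta\xi=0$, which forces $\delta\xi\in\mathrm{Ann}(F^{(i_N)}_{p^{(i_1)}})$. Collecting the three conditions reproduces exactly the description of $T_c\Crit(\Psi)$ given by \eqref{eq:C}, yielding $\ker\mathrm{Hess}\,\Psi\subseteq T_c\Crit(\Psi)$ and hence equality, which proves the proposition.

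The main obstacle I foresee is the careful bookkeeping of the identifications hidden in the expression for $v$---specifically, verifying that the $\alpha^{(i_j)}$-derivatives of the $p^{(i_j)}$-block of $v$ surject onto $E^{(i_j)}_{p^{(i_1)}} \simeq T_{p^{(i_j)}}(G_{p^{(i_{j-1})}} \cdot p^{(i_j)})$ in the sense of \eqref{eq:isom}, and that the $h^{(i_N)}$-derivative of the $\theta^{(i_N)}$-block surjects onto $F^{(i_N)}_{p^{(i_1)}} = \lambda(\g_{p^{(i_N)}}) \tilde v^{(i_N)}$. Both surjectivities follow from the decomposition \eqref{eq:gdecomp} and the definitions \eqref{eq:EF} once the coordinate change encoded by $\Xi$ is unwound, so no essential new difficulty is expected beyond this linearised version of the algebraic analysis already carried out in the proof of Theorem \ref{thm:MT1}.
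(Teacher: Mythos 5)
Your proposal is correct and follows essentially the same route as the paper's proof: after noting that $\sigma_{i_1}\cdots\sigma_{i_N}=0$ kills all $\tau_{i_j}$ and hence the error terms, you compute the same block Hessian (only the $\alpha\xi$-, $h\xi$- and $hh$-blocks survive), identify its kernel via the invertible coordinate-change matrix $\Xi$ with the annihilator and isotropy conditions, and match this with the description \eqref{eq:C} of the critical manifold. The only cosmetic differences are that you prove just the inclusion $\ker \mathrm{Hess} \subseteq T\,\mathrm{Crit}$ (the converse being automatic) and that you handle the $hh$-block via the commutator identity and condition (III), which is if anything slightly more careful about the symmetrization of that block than the paper's direct computation.
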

\begin{proof}
Since $\sigma_{i_1} \cdots \sigma_{i_N}=0$, we have
\begin{align*}
 ^{(i_1\dots i_N)}\tilde \Phi^ {wk}_{\sigma_{i_j}, p^{(i_j)},\tilde v^{(i_N)}}&=  \Big \langle \Xi \cdot  \, ^T \big ( dp_1 ^{(i_1)} (\tilde A^{(i_1)}_{p^{(i_1)}}), \dots, 0, dp_1 ^{(i_2)} (\tilde A^{(i_2)}_{p^{(i_2)}}),\dots, 0,\dots,  dp_1 ^{(i_N)} (\tilde A^{(i_N)}_{p^{(i_N)}}),\dots, \\ &  \theta_1 ^{(i_N)} \big (  \tilde v^{(i_N)} \big ) -\theta_1 ^{(i_N)} \big ( (h^{(i_N)})_{\ast, p^{(i_1)}} \tilde v^{(i_N)} \big ), \dots \big ), \xi \Big \rangle.
\end{align*}
The only non-vanishing second order derivatives at a critical point $(0, \dots, 0, h^{(i_N)},\xi)$ therefore read
\begin{align*}
\gd_{\alpha^{(i_j)}_s} \gd _{\xi_r} \, ^{(i_1\dots i_N)}\tilde \Phi^ {wk}_{\sigma_{i_j}, p^{(i_j)},\tilde v^{(i_N)}} =&\Big [ \Xi \cdot \, ^T \big (0, \dots, 0, dp_1^{(i_j)}((\widetilde A^{(i_j)}_{s})_{p^{(i_j)}}),\dots, 0, \dots, 0 \big )  \Big ] _r, \\
\gd_{\beta^{(i_N)}_s} \gd_{\xi_{r}} \, ^{(i_1\dots i_N)}\tilde \Phi^ {wk}_{\sigma_{i_j}, p^{(i_j)},\tilde v^{(i_N)}} =&\Big [ \Xi \cdot \, ^T \big (0, \dots, 0, d\theta_1^{(i_N)}((\widetilde B^{(i_N)}_{s})_{\tilde v^{(i_N)}}),\dots \big )  \Big ] _r, \\
\gd_{\beta^{(i_N)}_r} \gd_{\beta^{(i_N)}_{s}} \, ^{(i_1\dots i_N)}\tilde \Phi^ {wk}_{\sigma_{i_j}, p^{(i_j)},\tilde v^{(i_N)}} =&-\Big \langle \Xi \cdot  \, ^T \big ( 0, \dots, 0,     \theta_1 ^{(i_N)} \big (\lambda(B_r^{(i_N)}) \lambda(B_s^{(i_N)}) \,  \tilde v^{(i_N)} \big ), \dots \big ), \xi \Big \rangle,
\end{align*}
where we used canonical coordinates of the first kind on $G_{p^{(i_N)}}$ of the form $\e{\sum \beta_m^{(i_N)} B^{(i_N)}_m} \cdot h^{(i_N)}$. 
Consequently, the Hessian of the function $\, ^{(i_1\dots i_N)}\tilde \Phi^ {wk}_{\sigma_{i_j}, p^{(i_j)},\tilde v^{(i_N)}}$  with respect to the coordinates $\xi, \alpha^{(i_j)}, \beta^{(i_N)}$ is given on its 
critical set by the matrix
\bq
\label{eq:Hesswk}
\left ( \begin{array}{cc} 
\Xi  &0 \\ 0  & \1_d  
\end{array} \right )
\left ( \begin{array}{cc} 
0 &\mathcal{E} \\ ^T\mathcal{E} & \mathcal{F}
\end{array} \right )
\left ( \begin{array}{cc} 
^T \Xi  & 0  \\ 0 & \1_d  
\end{array} \right ),
\eq
where the $(n \times d)$-matrix $\mathcal{E}$  is defined by 
\bqn 
\mathcal{E}= \left ( \begin{array}{ccccc}
 dp_r^{(i_1)}((\widetilde A^{(i_1)}_{s})_{p^{(i_1)}}) &0 & 0 & \dots & 0 \\
 0 & 0 & 0 & \dots  & 0\\
 0 &  0 & dp_r^{(i_2)} \big ( (\widetilde A_s^{(i_2)})_{ p^{(i_2)}}\big ) &\dots  & 0\\
 \vdots & \vdots & \vdots & \ddots & \vdots \\
 0 &  0 &  0 &  \dots  & d\theta^{(i_N)}_r((\widetilde B_s^{(i_N)})_{\tilde v^{(i_N)}}) \\
\end{array} \right ),
\eqn
and the $(d \times d)$-matrix $\mathcal{F}$ by
\bqn 
\mathcal{F}= \left ( \begin{array}{cc}
0 & 0 \\ 0 & -\Big \langle \Xi \cdot  \, ^T \big ( 0, \dots, 0,    \theta_1 ^{(i_N)} \big (\lambda(B_r^{(i_N)}) \lambda(B_s^{(i_N)}) \,  \tilde v^{(i_N)} \big ), \dots \big ), \xi \Big \rangle
\end{array} \right ). 
\eqn
Since $\Xi$ is invertible, the kernel of the linear transformation corresponding to \eqref{eq:Hesswk}  is isomorphic to the kernel of the linear transformation defined by 
\bqn 
\left ( \begin{array}{cc} 
0 &\mathcal{E} \\ ^T\mathcal{E} & \mathcal{F} 
\end{array} \right )
\eqn
which we shall now compute.  Cleary,  the column vector $^T(\tilde \xi, \tilde \alpha^{(i_1)}, \dots, \tilde \alpha^{(i_N)}, \tilde \beta^{(i_N)})$ lies in the kernel if and only if

\medskip
\begin{tabular}{ll}
{(a)} & $\sum_s  \tilde \alpha_s^{(i_j)} (\widetilde A_s^{(i_j)})_{ p^{(i_j)}}=0$ for all $1 \leq j \leq N$, $ \sum_m  \tilde \beta_m^{(i_N)} (\widetilde B_m^{(i_N)})_{ \tilde v^{(i_N)}}=0$; \\[2pt]
{($b_1$)} & $\sum _{r=1} ^{n - c ^{(i_1)}} \tilde \xi_r dp_r^{(i_1)}\big ( T_{p^{(i_1)}} ( G \cdot p^{(i_1)}
 ) \big )=0$; \\[2pt]
{($b_2$)} & $\sum_{r=1}^{c ^{(i_{j-1})}- c ^{(i_j)} -1} \tilde \xi_{n-c^{(i_{j-1})}+r+1} dp^{(i_j)}_r\big (T_{p^{(i_j)}}(G_{p^{(i_{j-1} )}} \cdot  {p^{(i_j)}})\big )=0$ for all  $2 \leq j \leq N$ ;\\[2pt]
{(c)} &$\sum_{r=1}^{c^{(i_N)}} \tilde \xi_{n - c^{(i_N)} + r} d\theta^{(i_N)}_r \big (T_{\tilde v^{(i_N)}} ( G_{p^{(i_N)}} \cdot \tilde v^{(i_N)})\big )=0$.
\end{tabular}
\medskip

\noindent
In this  case, the vector $^T ( \tilde \xi', \tilde \alpha ^{(i_1)}, \dots, \tilde \alpha^{(i_N)}, \tilde \beta^{(i_N)})$ lies in the kernel of \eqref{eq:Hesswk}, where $\tilde \xi ' = \tilde \xi  \cdot \Xi^{-1}$. 
Let now $E^{(i_j)}$,  $F^{(i_N)}$, and  $V^{(i_1\dots i_N)}$ be defined as in \eqref{eq:EF} and \eqref{eq:V}, and remember that we have the isomorphisms \eqref{eq:isom}. 
 For condition (a) to hold, it is  necessary  and sufficient that 
\bqn 
\tilde \alpha^{(i_j)} =0, \quad 1 \leq j \leq N, \qquad \sum_m  \tilde \beta_m^{(i_N)} (\widetilde B_m^{(i_N)})_{ \tilde v^{(i_N)}}=0.
\eqn 
On the other hand, condition $(b_1)$ means that on $T_{p^{(i_1)}} ( G \cdot p^{(i_1)})$ we have 
\bqn 
0=\sum _{r=1} ^{n - c ^{(i_1)}} \tilde \xi_r dp_r^{(i_1)}=\sum _{r=1} ^{n - c ^{(i_1)}} (\tilde \xi' \cdot \Xi)_r dp_r^{(i_1)}=\sum_{i=1}^n \tilde \xi ' d \tilde x_i,
\eqn
where $(\tilde x_1, \dots \tilde x_n)=( p_1^{(i_1)}, \dots,p_{n-c^{(i_1)}}^{(i_1)}, \theta_1^{(i_1)}, \dots, \theta_{c^{(i_1)}}^{(i_1)}) $ are the coordinates introduced in Section \ref{sec:DP}. Indeed, 
\bqn 
d \theta_s^{(i_1)} ( \tilde A^{(i_1)}_{p^{(i_1)}})= \tilde A ^{(i_1)} _{p^{(i_1)}} ( \theta_s ^{(i_1)}) =0 \qquad \forall s=1, \dots, c^{(i_1)}, \quad A^{(i_1)} \in \g^\perp_{p^{(i_1)}}, 
\eqn
$M_{i_1}(H_{i_1})$ being $G$-invariant. Therefore $\sum_{i=1}^n \tilde \xi ' (d \tilde x_i) _{p^{(i_1)}} \in \mathrm{Ann} \big (E^{(i_1)}_{p^{(i_1)}}\big )$. In the same way, condition $(b_2)$ implies that on $T_{p^{(i_j)}} ( G_{p^{(i_{j-1})}} \cdot p^{(i_j)})$ 
\begin{align*}
0=&\sum_{r=1}^{c ^{(i_{j-1})}- c ^{(i_j)} -1} ( \tilde \xi' \cdot \Xi)_{n-c^{(i_{j-1})}+r+1} dp^{(i_j)}_r=\sum_{r=1}^{c ^{(i_{j-1})}- c ^{(i_j)} -1} ( \tilde \xi'' \cdot \Xi^{(i_1\dots i_{j-1})})_{n-c^{(i_{j-1})}+r+1} dp^{(i_j)}_r \\
+& \sum_{s=1}^{n- c ^{(i_j)}} ( \tilde \xi'' \cdot \Xi^{(i_1\dots i_{j-1})})_{n-c^{(i_{j})}+s} d\theta^{(i_j)}_s+( \tilde \xi'' \cdot \Xi^{(i_1\dots i_{j-1})})_{n-c^{(i_{j-1})}+1} d\tau_{i_{j-1}} \\ 
=&\sum_{r=1}^{n- c ^{(i_{j-1})}} \tilde \xi'' _{n-c^{(i_{j-1})}+s} d\theta^{(i_{j-1})}_s,
\end{align*}
where we put $\tilde \xi '' = \tilde \xi' \cdot \Xi ^{(i_1)} \cdot \cdots \cdot \Xi^{(i_1 \dots i_{j-2})}$. Hereby we expressed the differential forms $d\theta^{(i_{j-1})}_s$ in terms of the differential forms $d\tau_{i_{j-1}}$, $dp^{(i_j)}_r$, and  $d\theta^{(i_j)}_s$, and took into account that the forms $d\tau_{i_{j-1}}$ and $d\theta^{(i_j)}_s$ vanish on $T_{p^{(i_j)}} ( G_{p^{(i_{j-1})}} \cdot p^{(i_j)}) \subset (\nu_{i_1 \dots i_{j-1}})_{p^{(i_{j-1})}}$. Now, if $v \in (\nu_{i_1 \dots i_{j-1}})_{p^{(i_{j-1})}}$, and $\sigma_v(t) = \exp_{p^{(i_{j-1})}} t v$,
\bqn 
v(p^{(i_{j-1})}_r)= \frac d{dt} p^{(i_{j-1})}_r(\sigma_v(t))_{t=0}=0,
\eqn
so that the forms $dp^{(i_{j-1})}_r$ must vanish on $(\nu_{i_1 \dots i_{j-1}})_{p^{(i_{j-1})}}$. In case that $3 \leq j \leq N$, the same argument shows that the forms $d\tau_{i_{j-2}}$ also vanish on $(\nu_{i_1 \dots i_{j-1}})_{p^{(i_{j-1})}}$, and proceeding inductively this way,  we see that conditions $(b_1)$ and $(b_2)$ are equivalent to 
\bqn 
\sum_{i=1}^n \tilde \xi ' (d \tilde x_i) _{p^{(i_1)}} \in \mathrm{Ann} \big (E^{(i_j)}_{p^{(i_1)}}\big ) \qquad \forall \quad  j=1, \dots, N.
\eqn
Similarly, one deduces that condition $(c)$ is equivalent to 
\bqn 
\sum_{i=1}^n \tilde \xi ' (d \tilde x_i) _{p^{(i_1)}} \in \mathrm{Ann} \big (F^{(i_N)}_{p^{(i_1)}}\big ).
\eqn
 On the other hand, by \eqref{eq:C},
\begin{gather*}
T_{(0, \dots, 0,h^{(i_N)},\xi)}  \mathrm{Crit} \big (\,^{(i_1\dots i_N)}\tilde \Phi^ {wk}_{\sigma_{i_j}, p^{(i_j)},\tilde v^{(i_N)}} \big )= \Big \{( \tilde \alpha^{(i_1)}, \dots, \tilde \alpha^{(i_N)}, \tilde \beta ^{(i_N)}, \tilde \xi' ): \tilde \alpha^{(i_j)}=0, \\  \sum_m \tilde  \beta^{(i_N)}_m \lambda(B^{(i_N)}_m) \in \g_{\tilde v^{(i_N)}}, \, \sum \tilde \xi_i' (d\tilde x_i) _{p^{(i_1)}} \in \mathrm{Ann}\Big ( \bigoplus_{j=1}^N E^{(i_j)}_{p^{(i_1)}}\oplus F^{(i_N)}_{p^{(i_1)}}\Big ) \Big \},
\end{gather*}
and the proposition follows.
\end{proof}
The previous proposition now  implies that for $\sigma_{i_1}\cdots \sigma_{i_N}=0$, the Hessian of  $
\,^{(i_1\dots i_N)}\tilde \Phi^ {wk}_{\sigma_{i_j}, p^{(i_j)},\tilde v^{(i_N)}}$
is transversally non-degenerate at each point $(0, \dots, 0,h^{(i_N)},\xi)$ of its critical set, and Theorem \ref{thm:II} follows with  Lemma \ref{lemma:A}.
\end{proof}

\medskip

\section{Asymptotics for the integrals  $I_{i_1\dots i_N}^{\rho_{i_1}\dots \rho_{i_N}}(\mu)$ }
\label{sec:INT}

We are now in position to describe the  asymptotic behavior of   the integrals $I_{i_1\dots i_N}^{\rho_{i_1}\dots \rho_{i_N}}(\mu)$ defined in \eqref{eq:N} using the stationary phase theorem. 
Let  $\mklm{(H_{i_1}), \dots, (H_{i_{N}})}$ be a maximal, totally ordered subset of non-principal isotropy types, and 
  $ \zeta^{\rho_{i_1}}_{i_1} \circ \dots \circ  \zeta^{\rho_{i_1}\dots \rho_{i_N}}_{i_1\dots i_N}$  a corresponding  local realization of the sequence of monoidal transformations $ \zeta^{(1)} \circ \dots \circ \zeta^{(N)}$   in a set of $(\theta^{(i_1)}, \dots, \theta^{(i_N)})$-charts labeled by the indices $\rho_{i_1},\dots ,\rho_{i_N}$.   Since the considered integrals are absolutely convergent, we can interchange the order of integration by Fubini, and write
\bq
\label{eq:fub}
I_{i_1\dots i_N}^{\rho_{i_1}\dots \rho_{i_N}}(\mu)= \int_{ (-1,1)^N}  \hat  J_{i_1\dots i_N}^{\rho_{i_1}\dots \rho_{i_N}}\Big (  \mu \cdot {\tau_{i_1}\cdots \tau_{i_N}} \Big ) \prod_{j=1}^N |\tau_{i_j}|^{c^{(i_j)} + \sum _{r=1}^j d^{(i_r)} -1} \d \tau_{i_N} \dots \d \tau_{i_1},
\eq
where we set
\begin{align}
\label{eq:62}
\begin{split}
\hat  J_{i_1\dots i_N}^{\rho_{i_1}\dots \rho_{i_N}}(\nu)= \int_{M_{i_1}(H_{i_1})} \Big [ \int_{\gamma^{(i_1)}((S_{i_1})_{p^{(i_1)}})_{i_2}(H_{i_2})} \dots  \Big [ &  \int_{\gamma^{(i_{N-1})}((S_{i_1\dots i_{N-1}})_{p^{(i_{N-1})}})_{i_{N}}(H_{i_{N}}) } \\
 \Big [ \int_{\gamma^{(i_N)}((S_{i_1\dots i_{N}})_{p^{(i_N)}})\times G_{p^{(i_{N})}}\times  \stackrel \circ D_ \iota( \g_{p^{(i_{N})}}^\perp) \times \cdots \times \stackrel \circ D_ \iota (\g_{p^{(i_{1})}}^\perp)\times \rn } & e^{i\nu \, ^{(i_1\dots i_N)} \tilde \Phi ^{wk,pre}}  \, a_{i_1\dots i_N}^{\rho_{i_1}\dots \rho_{i_N}} \,   {\mathcal{J}}_{i_1\dots i_N}^{\rho_{i_1}\dots \rho_{i_N}} \\
  \d \xi  \d A^{(i_1)} \dots  \d A^{(i_N)}  \d h^{(i_N)} \d \tilde v^{(i_N)} \Big ]   \d p^{(i_{N})} \dots  &\Big ]  \d p^{(i_{2})} \Big ] \d p^{(i_{1})},
  \end{split}
\end{align}
according to  the notation introduced in Theorem \ref{thm:MT1}, 
and introduced the new parameter
\bqn
\nu = \mu\cdot  {\tau_{i_1}\cdots \tau_{i_N}}.
\eqn
Now,  for a sufficiently small  $\epsilon>0 $ to be chosen later we define
\begin{align*}
\, ^1I_{i_1\dots i_N}^{\rho_{i_1}\dots \rho_{i_N}}(\mu)&= \int_{\prod_{j=1}^N(-1,1)\setminus (-\epsilon,\epsilon)}  \hat J_{i_1\dots i_N}^{\rho_{i_1}\dots \rho_{i_N}}\Big (  \mu\cdot {\tau_{i_1}\cdots \tau_{i_N}} \Big ) \prod_{j=1}^N |\tau_{i_j}|^{c^{(i_j)} + \sum _{r=1}^j d^{(i_r)} -1} \d \tau_{i_N} \dots \d \tau_{i_1},\\
\, ^2 I_{i_1\dots i_N}^{\rho_{i_1}\dots \rho_{i_N}}(\mu)&= \int_{(-\epsilon,\epsilon)^N}  \hat J_{i_1\dots i_N}^{\rho_{i_1}\dots \rho_{i_N}}\Big (  \mu\cdot {\tau_{i_1}\cdots \tau_{i_N}} \Big ) \prod_{j=1}^N |\tau_{i_j}|^{c^{(i_j)} + \sum _{r=1}^j d^{(i_r)} -1} \d \tau_{i_N} \dots \d \tau_{i_1} . 
\end{align*}
\begin{lemma}
\label{lemma:kappa}
One has $c^{(i_j)} + \sum _{r=1}^j d^{(i_r)} -1\geq \kappa$ for arbitrary $j=1,\dots, N$, where $\kappa$ denotes the dimension of $G/H_L$.
\end{lemma}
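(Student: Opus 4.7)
The plan is to interpret each quantity geometrically and reduce the inequality to a dimension count of orbits in the iterative slice representations. Setting $G_{p^{(i_0)}}:=G$ by convention, the orthogonal decompositions $\g_{p^{(i_{r-1})}}=\g_{p^{(i_r)}}\oplus\g_{p^{(i_r)}}^\perp$ used throughout the iteration telescope immediately to
\[
\sum_{r=1}^{j} d^{(i_r)} \;=\; \dim\g-\dim\g_{p^{(i_j)}} \;=\; \dim G-\dim G_{p^{(i_j)}}.
\]
Combined with $\kappa=\dim G-\dim H_L$, the asserted inequality is therefore equivalent to
\[
c^{(i_j)}-1 \;\geq\; \dim G_{p^{(i_j)}}-\dim H_L.
\]
By construction $c^{(i_j)}$ is the fibre dimension of the normal bundle $\nu_{i_1\ldots i_j}$, so the left hand side is precisely the dimension of the unit sphere $(S_{i_1\ldots i_j})_{p^{(i_j)}}$ inside the slice $V_j:=(\nu_{i_1\ldots i_j})_{p^{(i_j)}}$.

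Next I would establish by induction on $j$ that the principal isotropy type of the $G_{p^{(i_j)}}$-representation $V_j$ is $(H_L)$. For $j=1$, the invariant tubular neighborhood theorem identifies a neighborhood of $G\cdot p^{(i_1)}$ in $M$ with $G\times_{G_{p^{(i_1)}}} V_1$; since the principal stratum $M(H_L)$ is open and dense by the principal orbit theorem, it meets this neighborhood, which under the identification forces $(H_L)$ to be the minimal, hence principal, $G_{p^{(i_1)}}$-isotropy type occurring on $V_1$. For the inductive step, the analogous construction carried out in the $N$-th decomposition identifies a $G_{p^{(i_{j-1})}}$-invariant neighborhood of $G_{p^{(i_{j-1})}}\cdot p^{(i_j)}$ in $V_{j-1}$ with $G_{p^{(i_{j-1})}}\times_{G_{p^{(i_j)}}} V_j$; the induction hypothesis for $V_{j-1}$ together with the same identification transfers the principal isotropy type $(H_L)$ to the $G_{p^{(i_j)}}$-action on $V_j$.

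Finally, since the $G_{p^{(i_j)}}$-action on $V_j\setminus\{0\}$ is radially homogeneous, the principal isotropy type is already attained on the unit sphere $(S_{i_1\ldots i_j})_{p^{(i_j)}}$. Picking a point $v$ on this sphere whose $G_{p^{(i_j)}}$-isotropy is conjugate to $H_L$, the orbit $G_{p^{(i_j)}}\cdot v\subset (S_{i_1\ldots i_j})_{p^{(i_j)}}$ has dimension $\dim G_{p^{(i_j)}}-\dim H_L$, whence
\[
\dim G_{p^{(i_j)}}-\dim H_L \;\leq\; \dim (S_{i_1\ldots i_j})_{p^{(i_j)}} \;=\; c^{(i_j)}-1,
\]
which is the desired inequality. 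No single step presents a serious obstacle; the only point requiring genuine care is the inductive identification of the principal isotropy of each slice representation with $(H_L)$, which follows from the slice theorem (Theorem \ref{thm:tub}) combined with the principal orbit theorem (Theorem \ref{thm:orbit}) applied at each stage of the iteration.
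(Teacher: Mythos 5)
Your proof is correct, and its skeleton coincides with the paper's: both arguments ultimately rest on the observation that the sphere $(S_{i_1\dots i_j})_{p^{(i_j)}}$, of dimension $c^{(i_j)}-1$, contains a $G_{p^{(i_j)}}$-orbit of dimension $\dim G_{p^{(i_j)}}-\dim H_L$. Where you differ is in how this orbit and its dimension are produced. The paper takes the concrete point $x^{(i_{j+1}\dots i_N)}$ coming from the blow-up coordinates, computes $\dim G_{p^{(i_j)}}\cdot x^{(i_{j+1}\dots i_N)}=\sum_{l>j}\dim E^{(i_l)}_{x^{(i_1\dots i_N)}}+\dim F^{(i_N)}_{x^{(i_1\dots i_N)}}$ from \eqref{eq:15} and \eqref{eq:32}, notes $d^{(i_r)}=\dim E^{(i_r)}_{x^{(i_1\dots i_N)}}$, and closes with \eqref{eq:kappa}, thereby reusing the machinery of the distributions $E^{(i_j)}$, $F^{(i_N)}$ already installed for the phase analysis. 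You instead telescope the decompositions \eqref{eq:gdecomp} to get $\sum_{r\le j}d^{(i_r)}=\dim G-\dim G_{p^{(i_j)}}$, reduce the claim to $c^{(i_j)}-1\ge \dim G_{p^{(i_j)}}-\dim H_L$, and obtain the required orbit abstractly by propagating the principal isotropy type $(H_L)$ through the iterated slice representations via Theorems \ref{thm:orbit} and \ref{thm:tub} — the same standard fact (from Bredon) that the paper invokes in the unnumbered lemma inside the proof of Theorem \ref{thm:MT1}; since the two computations give the same number ($\kappa-\sum_{r\le j}\dim E^{(i_r)}=\dim G_{p^{(i_j)}}-\dim H_L$), the arguments agree. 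Your route is more elementary and self-contained, at the price of redoing an induction the paper gets for free from \eqref{eq:kappa}. Two small points you should make explicit: the sphere is nonempty, i.e. $c^{(i_j)}\ge 1$, because $(H_{i_j})$ is non-principal and hence its stratum has positive codimension in the ambient slice sphere (this is tacitly needed when you pick $v$); and the isotropy group of your principal vector $v$ is conjugate to $H_L$ only in $G$, not necessarily inside $G_{p^{(i_j)}}$, which is harmless because only its dimension enters the estimate.
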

\begin{proof}
We first note that $c^{(i_j)} = \dim (\nu_{i_1\dots i_j})_{p^{(i_j)}} \geq \dim G_{p^{(i_j)}} \cdot x^{(i_{j+1}\dots i_N)} +1$. 
Indeed, $(\nu_{i_1\dots i_j})_{p^{(i_j)}}$ is an orthogonal $G_{p^{(i_j)}}$-space, so that the dimension of the $G_{p^{(i_j)}}$-orbit of $x^{(i_{j+1}\dots i_N)}\in\gamma^{(i_j)}( (S^+_{i_1 \dots i_j})_{p^{(i_j)}})$ can be at most $c^{(i_j)}-1$. Now, under the assumption $\sigma_{i_1}\cdots \sigma_{i_N}\not=0$, \eqref{eq:gdecomp} and \eqref{eq:15} imply
\begin{align*}
T_{x^{(i_{j+1}\dots i_N)}} &(G_{p^{(i_j)}} \cdot x^{(i_{j+1}\dots i_N)}) \simeq T_{x^{(i_{1}\dots i_N)}} (G_{p^{(i_j)}} \cdot x^{(i_{1}\dots i_N)}) \\
=  \bigoplus _{l=j+1}^N &\tau_{i_{1}}\dots \tau_{i_{l-1}} E^{(i_l)} _{x^{(i_1 \dots i_N)}}  \oplus  \tau_{i_{1}}\dots \tau_{i_N} F^{(i_N)}_{x^{(i_1 \dots i_N)}}, 
\end{align*}
where the distributions $E^{(i_{l})}$, $F^{(i_{N})}$ where defined in \eqref{eq:EF}. On then computes
\begin{align*}
\dim G_{p^{(i_j)}}  \cdot x^{(i_{j+1}\dots i_N)}= \sum_{l=j+1} ^N \dim E^{(i_l)} _{x^{(i_1 \dots i_N)}}+ \dim F^{(i_N)}_{x^{(i_1 \dots i_N)}},
\end{align*}
which implies 
\bqn
c^{(i_j)} \geq \sum_{l=j+1} ^N \dim E^{(i_l)}_{x^{(i_1 \dots i_N)}} + \dim F^{(i_N)}_{x^{(i_1 \dots i_N)}} +1.
\eqn
But since $E^{(i_l)}_{x^{(i_1 \dots i_N)}}\simeq G_{p^{(i_{l-1})}} \cdot p^{(i_l)}$ for any $\sigma$, the last inequality holds for arbitrary $\sigma$, too.  
On the other hand, one has
\begin{align*}
d^{(i_j)}&=\dim \g_{p^{(i_j)}}^\perp =\dim [\lambda( \g_{p^{(i_j)}}^\perp) \cdot p^{(i_j)} ]=\dim [ \lambda( \g_{p^{(i_j)}}^\perp)  \cdot x^{(i_j\dots i_N)} ]= \dim E^{(i_j)}_{x^{(i_1 \dots i_N)}}.
\end{align*}
The assertion now follows with \eqref{eq:kappa}.
\end{proof}
As a consequence of the lemma, we obtain for $\, ^2 I_{i_1\dots i_N}^{\rho_{i_1}\dots \rho_{i_N}}(\mu)$ the estimate
\begin{align}
\label{eq:I2}
\begin{split}
\big | \, ^2 I_{i_1\dots i_N}^{\rho_{i_1}\dots \rho_{i_N}}(\mu)\big | &\leq C \int_{(-\epsilon,\epsilon)^N}  \prod_{j=1}^N |\tau_{i_j}|^{c^{(i_j)} + \sum _{r=1}^j d^{(i_r)} -1} \d \tau_{i_N} \dots \d \tau_{i_1} \\
&\leq C \int_{(-\epsilon,\epsilon)^N}  \prod_{j=1}^N |\tau_{i_j}|^{\kappa} \d \tau_{i_N} \dots \d \tau_{i_1} =\frac{2C}{\kappa+1} \epsilon^{N (\kappa+1)}
\end{split}
\end{align}
for some $C>0$. Let us now turn to the integral $\, ^1 I_{i_1\dots i_N}^{\rho_{i_1}\dots \rho_{i_N}}(\mu)$. After performing the change of variables $\delta_{i_1\dots i_N}$
one obtains for $\, ^1 I_{i_1\dots i_N}^{\rho_{i_1}\dots \rho_{i_N}}(\mu)$ the expression
\begin{gather*}
 \int\limits _{\epsilon < |\tau_{i_j} (\sigma)|<1} \hspace{-.5cm} J_{i_1\dots i_N}^{\rho_{i_1}\dots \rho_{i_N}}\Big ( \mu\cdot {\tau_{i_1}(\sigma)\cdots \tau_{i_N}(\sigma)} \Big )  \prod_{j=1}^N |\tau_{i_j}(\sigma)|^{c^{(i_j)} + \sum _{r=1}^j d^{(i_r)} -1} \, |\det D\delta_{i_1\dots i_N}(\sigma) | \d \sigma,
\end{gather*}
where $ J_{i_1\dots i_N}^{\rho_{i_1}\dots \rho_{i_N}}(\nu)$ is defined by the right hand side of  \eqref{eq:62}, with $ ^{(i_1\dots i_N)} \tilde \Phi ^{wk,pre } $ being replaced by 
 $ ^{(i_1\dots i_N)} \tilde \Phi ^{wk}_\sigma $, which denotes the weak transform $ ^{(i_1\dots i_N)} \tilde \Phi ^{wk}$ as a function of the variables $ p^{(i_j)},  \tilde v^{(i_N)}$, $\alpha^{(i_j)}, h^{(i_N)}, \xi$ alone, while the variables $\sigma=(\sigma_{i_1},\dots \sigma_{i_N})$ are regarded as parameters. 
 \begin{theorem}
\label{thm:J}
Let $\sigma=(\sigma_{i_1},\dots, \sigma_{i_N})$ be a fixed set of parameters. Then, for every $\tilde N \in \N$ there exists a constant $C_{\tilde N, ^{(i_1\dots i_N)} \tilde \Phi^{wk}_\sigma}>0$ such that 
\bqn
|J_{i_1\dots i_N}^{\rho_{i_1}\dots \rho_{i_N}} (\nu) -(2\pi |\nu|^{-1})^{\kappa}\sum_{j=0} ^{\tilde N-1} |\nu|^{-j} Q_j (^{(i_1\dots i_N)} \tilde \Phi ^{wk}_\sigma;a_{i_1\dots i_N} \mathcal{J}_{i_1\dots i_N})| \leq C_{\tilde N,^{(i_1\dots i_N)} \tilde \Phi ^{wk}_\sigma} |\nu|^{-\tilde N},
\eqn
with estimates for the coefficients $Q_j$, and an explicit expression for $Q_0$. Moreover,  the constants $C_{\tilde N, ^{(i_1\dots i_N)} \tilde \Phi ^{wk}_\sigma}$ and the coefficients $Q_j$ have uniform bounds in $\sigma$.
\end{theorem}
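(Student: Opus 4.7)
The plan is to apply the generalized stationary phase theorem (Theorem \ref{thm:SP}) directly to the integral $J_{i_1\dots i_N}^{\rho_{i_1}\dots \rho_{i_N}}(\nu)$, treating $\sigma$ as a fixed parameter. The integration is over the manifold of variables $(p^{(i_j)}, \tilde v^{(i_N)}, \alpha^{(i_j)}, h^{(i_N)}, \xi)$, with phase function $\,^{(i_1\dots i_N)}\tilde \Phi^{wk}_\sigma$ and compactly supported amplitude $a_{i_1\dots i_N}^{\rho_{i_1}\dots \rho_{i_N}} \, \mathcal{J}_{i_1\dots i_N}^{\rho_{i_1}\dots \rho_{i_N}}$, both depending smoothly on all variables including $\sigma$.

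First I would verify the Bott cleanness conditions. For condition (I), the critical set of $\,^{(i_1\dots i_N)}\tilde \Phi^{wk}_\sigma$ is precisely the fiber over $\sigma$ of the critical set $\Crit(\,^{(i_1\dots i_N)}\tilde \Phi^{wk})$ described by Theorem \ref{thm:MT1}. The key fact, already established at the end of the proof of Theorem \ref{thm:MT1} via the identity $\kappa = \sum_j \dim E^{(i_j)}_{x^{(i_1\dots i_N)}} + \dim F^{(i_N)}_{x^{(i_1\dots i_N)}}$, is that the dimensions of these geometric distributions remain constant as $\sigma$ varies, so that this fiber is a smooth submanifold of codimension $2\kappa$ independently of $\sigma$. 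For condition (II), the transversal non-degeneracy of $\mathrm{Hess}\,^{(i_1\dots i_N)}\tilde \Phi^{wk}_\sigma$ on this fiber follows directly from the Second Main Theorem (Theorem \ref{thm:II}) together with Lemma \ref{lemma:A}: fixing $\sigma$ corresponds precisely to restricting to a submanifold transverse to $\Crit(\,^{(i_1\dots i_N)}\tilde \Phi^{wk})$, and the transversal Hessian is preserved under such a restriction.

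Once the cleanness is verified, Theorem \ref{thm:SP} yields the claimed asymptotic expansion with remainder $C_{\tilde N, \,^{(i_1\dots i_N)}\tilde \Phi^{wk}_\sigma} |\nu|^{-\tilde N}$, the factor $(2\pi |\nu|^{-1})^{\kappa}$ arising from the codimension $2\kappa$ of the critical set, along with the prescribed bounds on the $Q_j$ and the explicit integral formula for $Q_0$. The remaining task is to establish uniformity of the constants in $\sigma$. By Remark \ref{rmk:A}, the constants $C_{\tilde N,\,^{(i_1\dots i_N)}\tilde \Phi^{wk}_\sigma}$ are controlled from above by $\sup_{m \in \mathcal{C}_\sigma} \, \|(\,^{(i_1\dots i_N)}\tilde \Phi^{wk}_\sigma{}''(m)_{|N_m \mathcal{C}_\sigma})^{-1}\|$ together with bounds on a finite number of derivatives of the amplitude. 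Since everything depends smoothly on $\sigma$, and the compactly supported amplitude restricts the domain of $\sigma$ to a compact set, continuity together with the uniform non-degeneracy of the transversal Hessian (valid even at $\sigma_{i_j}=0$, as shown by Proposition \ref{prop:1}) yields uniform bounds. The main subtlety lies at the exceptional locus $\{\sigma_{i_1}\cdots\sigma_{i_N}=0\}$, where the tangent space to the $G$-orbit collapses; however, Theorems \ref{thm:MT1} and \ref{thm:II} were established precisely to guarantee that neither the codimension of the critical manifold nor the non-degeneracy of the transversal Hessian deteriorates there, whence the uniform control extends across the exceptional divisor.
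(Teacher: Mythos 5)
Your argument is correct and follows essentially the same route as the paper: cleanness of $\Crit(\,^{(i_1\dots i_N)}\tilde \Phi^{wk}_\sigma)$ for each fixed $\sigma$ via Theorems \ref{thm:MT1} and \ref{thm:II} together with Lemma \ref{lemma:A}, then Theorem \ref{thm:SP}, and $\sigma$-uniformity via Remark \ref{rmk:A} by identifying the transversal Hessian of $\,^{(i_1\dots i_N)}\tilde \Phi^{wk}_\sigma$ with that of $\,^{(i_1\dots i_N)}\tilde \Phi^{wk}$, which is uniformly non-degenerate across the exceptional divisor. The only point in the paper's proof you omit is the remark that the amplitude may depend on $\mu$ through a classical symbol $a_\gamma(t,\kappa_\gamma(x),\mu\eta)$ of order $0$, whose $\eta$-derivatives are bounded uniformly in $\mu$, so this dependence does not affect the expansion.
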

\begin{proof}
In what follows, we regard  $\tilde {\mathcal{M}}^{(N)}$ as a Riemannian manifold with the product metric induced by the Riemannian metrics on $\gamma^{(i_{j-1})} ((S_{i_1\dots i_{j-1}})_{p^{(i_{j-1})}})_{i_j}(H_{i_j})$, $\gamma^{(i_{N})} ((S_{i_1\dots i_{N}})_{p^{(i_{N})}})$, $\g^\perp_{p^{(i_j)}}$, $G_{p^{(i_N)}}$,  and $(-1,1)^N$, and  corresponding volume density. Similarly, $\tilde X$ carries a Riemannian structure, being a paracompact manifold. 
As a consequence of the main theorems, and Lemma \ref{lemma:A}, together with the observations preceding Proposition \ref{prop:1}, the phase function $^{(i_1\dots i_N)} \tilde \Phi ^{wk}_\sigma$ has a clean critical set for any value of the parameters $\sigma$. That is, in the relevant charts we have 
\begin{itemize}
\item the critical set of $ ^{(i_1\dots i_N)} \tilde \Phi ^{wk}_\sigma $ is a $\Cinft$-submanifold of codimension $2\kappa$ for arbitrary $\sigma$;
\item  the Hessian of $  \,^{(i_1\dots i_N)} \tilde \Phi ^{wk}_\sigma$ is transversally non-degenerate at each point of  its critical set.
\end{itemize}
Thus, the necessary conditions for applying the principle of the stationary phase to the integral $J_{i_1\dots i_N}^{\rho_{i_1}\dots \rho_{i_N}}(\nu)$ are fulfilled, and we obtain the desired asymptotic expansion by  Theorem \ref{thm:SP}. Note that the amplitude $a_{i_1 \dots i_N}^{\rho_{i_1} \dots \rho_{i_N}}$ might depend on $\mu$, compare the expression for $O(\mu^{n-2})$ in Theorem \ref{thm:Rt}, the dependence being of the form $a_\gamma(t, \kappa_\gamma (x ), \mu \eta)$, where $a_\gamma \in \mathrm{S}^0_{phg}$ is a classical symbol of order $0$. But since for large $|\eta|$
 \footnote{Hereby we are taking into account that due to the presence of the factor $\Delta_{\epsilon, R} (\zeta_\gamma(t, \kappa_\gamma(x), \eta) )$, $|\eta|$ is bounded away from zero.} 
\bqn 
\big | \gd ^\alpha_\eta a_\gamma ( t, \kappa _\gamma (x), \mu \eta) \big | =|\mu|^{|\alpha|} \big | (\gd^\alpha_\eta a _\gamma)( t, \kappa_\gamma(x), \mu \eta) \big | \leq C |\eta|^{-|\alpha|},
\eqn
this dependence does not interfer with the asymptotics. To see  the existence of the uniform bounds, note that by  Remark \ref{rmk:A} we have
\bqn 
C_{\tilde N, ^{(i_1\dots i_N)} \tilde \Phi ^{wk}_\sigma} \leq C'_{\tilde N} \sup_{ p^{(i_j)},  \tilde v^{(i_N)}, \alpha^{(i_j)}, h^{(i_N)},\xi} \norm{\Big ({\mathrm{Hess} \,  ^{(i_1\dots i_N)} \tilde \Phi ^{wk}_\sigma}_{|N \Crit ( ^{(i_1\dots i_N)} \tilde \Phi ^{wk}_\sigma)}\Big ) ^{-1}}.
\eqn
But since by Lemma \ref{lemma:A} the transversal Hessian 
\bqn 
{\mathrm{Hess} \,  ^{(i_1\dots i_N)} \tilde \Phi ^{wk}_\sigma}_{|N_{ (p^{(i_j)},  \tilde v^{(i_N)}, \alpha^{(i_j)}, h^{(i_N)},\xi ) } \Crit ( ^{(i_1\dots i_N)} \tilde \Phi ^{wk}_\sigma)}
\eqn 
is given by 
\bqn 
{\mathrm{Hess} \,  ^{(i_1\dots i_N)} \tilde \Phi ^{wk}}_{|N_{ (\sigma_{i_j}, p^{(i_j)},  \tilde v^{(i_N)}, \alpha^{(i_j)}, h^{(i_N)},\xi ) } \Crit ( ^{(i_1\dots i_N)} \tilde \Phi ^{wk})},
\eqn 
we finally obtain the estimate
\bqn 
C_{\tilde N, ^{(i_1\dots i_N)} \tilde \Phi ^{wk}_\sigma} \leq C'_{\tilde N} \sup_{\sigma_{i_j},  p^{(i_j)},  \tilde v^{(i_N)}, \alpha^{(i_j)}, h^{(i_N)},\xi} \norm{\Big ({\mathrm{Hess} \,  ^{(i_1\dots i_N)} \tilde \Phi ^{wk}}_{|N \Crit ( ^{(i_1\dots i_N)} \tilde \Phi ^{wk})}\Big ) ^{-1}} \leq C_{\tilde N, {i_1\dots i_N}}
\eqn
by a constant independent of $\sigma$. Similarly, one can show the existence of bounds of the form
\bqn 
|Q_j(^{(i_1\dots i_N)} \tilde \Phi ^{wk}_\sigma; a_{i_1\dots i_N} \Phi_{i_1\dots i_N}) |\leq \tilde C_{j, {i_1\dots i_N}},
\eqn
with constants $\tilde C_{j, {i_1\dots i_N}}$ independent of $\sigma$. 
\end{proof}

\begin{remark}
Before going on, let us remark that for the computation of the integrals $^1I_{i_1\dots i_N}^{\rho_{i_1}\dots \rho_{i_N}}(\mu)$ it is only necessary to have an asymptotic expansion for the integrals $J_{i_1\dots i_N}^{\rho_{i_1}\dots \rho_{i_N}} (\nu)$ in the case that $\sigma_{i_1} \cdots \sigma_{i_N}\not=0$, which can also be  obtained without the main theorems using only the factorization of the phase function $\Phi$ given by the resolution process, together with Lemma \ref{lemma:Reg}. Nevertheless, the main consequence to be drawn from the main theorems is that the constants $C_{\tilde N, ^{(i_1\dots i_N)} \tilde \Phi ^{wk}_\sigma}$ and the coefficients $Q_j$ in Theorem \ref{thm:J} have uniform bounds in $\sigma$. 
\end{remark}

As a consequence of Theorem \ref{thm:J}, we obtain for arbitrary $\tilde N\in \N$
\begin{gather*}
| J_{i_1\dots i_N}^{\rho_{i_1}\dots \rho_{i_N}} (\nu)  -(2\pi |\nu|^{-1})^{\kappa} Q_0( ^{(i_1\dots i_N)} \tilde \Phi ^{wk}_\sigma;a_{i_1\dots i_N} \Phi_{i_1\dots i_N})| \\ 
\leq 
\Big |J_{i_1\dots i_N}^{\rho_{i_1}\dots \rho_{i_N}} (\nu) -(2\pi |\nu|^{-1})^{\kappa}\sum_{l=0} ^{\tilde N-1} |\nu|^{-l} Q_l (^{(i_1\dots i_N)} \tilde \Phi ^{wk}_\sigma;a_{i_1\dots i_N} \Phi_{i_1\dots i_N})\Big | \\
  + (2\pi |\nu|^{-1})^{\kappa} \sum_{l=1} ^{\tilde N-1} |\nu|^{-l} |Q_l (^{(i_1\dots i_N)} \tilde \Phi ^{wk}_\sigma;a_{i_1\dots i_N} \Phi_{i_1\dots i_N})| \leq c_1 |\nu|^{-\tilde N}+c_2 |\nu|^{-\kappa} \sum_{l=1}^{\tilde N-1} |\nu|^{-l}
\end{gather*}
with constants $c_i>0$ independent  of both $\sigma$ and $\nu$.
From this we deduce
\begin{align*}
& \Big | \, ^1I_{i_1\dots i_N}^{\rho_{i_1}\dots \rho_{i_N}}(\mu)  - (2\pi /\mu)^{\kappa}  \int _{\epsilon < |\tau_{i_j}(\sigma) |< 1}   Q_0 \prod_{j=1}^N |\tau_{i_j}(\sigma)|^{c^{(i_j)} + \sum _{r=1}^j d^{(i_r)} -1-\kappa}  |\det D\delta_{i_1\dots i_N}(\sigma) | \d \sigma \Big| \\&\leq c_3 \mu^{-\tilde N}   \int_{\epsilon < |\tau_{i_j}(\sigma) |< 1}   \prod_{j=1}^N |\tau_{i_j}(\sigma)|^{c^{(i_j)} + \sum _{r=1}^j d^{(i_r)} -1-\tilde N} \, |\det D\delta_{i_1\dots i_N}(\sigma) | \d \sigma\\
&+ c_4 \mu^{-\kappa} \sum_{l=1}^{\tilde N-1} \mu ^{-l}  \int_{\epsilon < |\tau_{i_j}(\sigma) |< 1}   \prod_{j=1}^N |\tau_{i_j}(\sigma)|^{c^{(i_j)} + \sum _{r=1}^j d^{(i_r)} -1-\kappa -l} \, |\det D\delta_{i_1\dots i_N}(\sigma) | \d \sigma\\
& \leq c_5 \mu^{-\tilde N} \prod _{j=1}^N  (-\log \epsilon)^{q_j} \max \Big \{1,  \epsilon^{c^{(i_j)} + \sum _{r=1}^j d^{(i_r)} -\tilde N} \Big \} \\
& +c_6 \sum_{l=1}^{\tilde N -1} \mu^{-\kappa -l} \prod _{j=1}^N  (-\log \epsilon)^{q_{lj}} \max \Big \{1,  \epsilon^{c^{(i_j)} + \sum _{r=1}^j d^{(i_r)} -\kappa -l} \Big \},
\end{align*}
where the exponents $q_j$, $q_{lj}$ can take the values $0$ or $1$. Having in mind that we are interested in the case where $\mu \to +\infty$, we now set $
\epsilon=\mu^{-1/N}$.
Taking into account Lemma \ref{lemma:kappa}, one infers  that the right hand side of the last inequality can be estimated by  a constant times 
\bqn 
 \mu^{-\kappa-1} (\log \mu)^N,
\eqn
so that we finally obtain an asymptotic expansion for $I_{i_1\dots i_N}^{\rho_{i_1}\dots \rho_{i_N}}(\mu)$ by taking into account \eqref{eq:I2}, and the fact that 
\bqn 
(2\pi /\mu)^\kappa \int_{0 < |\tau_{i_j} |< \mu^{-1/N}}  Q_0  \prod_{j=1}^N |\tau_{i_j}|^{c^{(i_j)} + \sum _{r=1}^j d^{(i_r)} -1-\kappa} \d \tau_{i_N} \dots \d \tau_{i_1} = O(\mu^{-\kappa-1}). 
\eqn
\begin{theorem}
\label{thm:9}
Let the assumptions of the first main theorem be fulfilled. Then 
\bqn 
I_{i_1\dots i_N}^{\rho_{i_1}\dots \rho_{i_N}}(\mu)=({2 \pi}/ \mu)^\kappa \mathcal{L}_{i_1\dots i_N}^{\rho_{i_1}\dots \rho_{i_N}}+ O\big (\mu^{-\kappa-1}(\log \mu)^N\big ),
\eqn
where the leading coefficient $\mathcal{L}_{i_1\dots i_N}^{\rho_{i_1}\dots \rho_{i_N}}$ is given by 
\bq
\label{eq:L}
\mathcal{L}_{i_1\dots i_N}^{\rho_{i_1}\dots \rho_{i_N}}=\int_{\Crit( ^{(i_1\dots i_N)} \tilde \Phi^{wk})} \frac { a_{i_1\dots i_N}^{\rho_{i_1}\dots \rho_{i_N}} {\mathcal{J}}_{i_1\dots i_N}^{\rho_{i_1}\dots \rho_{i_N}} \, d\Crit( ^{(i_1\dots i_N)} \tilde \Phi^{wk})} {|\det \mathrm{Hess} ( ^{(i_1\dots i_N)} \tilde \Phi^{wk})_{N\Crit( ^{(i_1\dots i_N)} \tilde \Phi^{wk})}|^{1/2}},
\eq
where $ d\Crit( ^{(i_1\dots i_N)} \tilde \Phi^{wk})$ denotes the induced Riemannian volume density.
\end{theorem}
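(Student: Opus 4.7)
The plan is to read off the statement directly from the Fubini decomposition \eqref{eq:fub} combined with the uniform stationary phase expansion of Theorem \ref{thm:J}, following exactly the splitting $I = {}^1I + {}^2I$ that precedes the theorem. First I would split the $\tau$-integration according to whether all $|\tau_{i_j}| \geq \epsilon$ or at least one $|\tau_{i_j}| < \epsilon$, for a small parameter $\epsilon > 0$ to be optimized at the end, and estimate ${}^2I^{\rho_{i_1}\dots\rho_{i_N}}_{i_1\dots i_N}(\mu)$ trivially. For this trivial bound, Lemma \ref{lemma:kappa} gives $c^{(i_j)} + \sum_{r=1}^j d^{(i_r)} - 1 \geq \kappa$ for every $j$, so the density $\prod |\tau_{i_j}|^{c^{(i_j)} + \sum d^{(i_r)} - 1}$ is at least $\prod |\tau_{i_j}|^{\kappa}$, producing the estimate $O(\epsilon^{N(\kappa+1)})$ displayed in \eqref{eq:I2}.

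Next I would handle the main piece ${}^1I^{\rho_{i_1}\dots\rho_{i_N}}_{i_1\dots i_N}(\mu)$ by pulling back through the sequence of quadratic transformations $\delta_{i_1\dots i_N}$, replacing $^{(i_1\dots i_N)}\tilde\Phi^{wk,pre}$ by $^{(i_1\dots i_N)}\tilde\Phi^{wk}_\sigma$, and applying Theorem \ref{thm:J} to the inner integral $J^{\rho_{i_1}\dots\rho_{i_N}}_{i_1\dots i_N}(\nu)$ with $\nu = \mu\,\tau_{i_1}(\sigma)\cdots\tau_{i_N}(\sigma)$, viewing $\sigma$ as a parameter. The uniformity of the constants $C_{\tilde N,{}^{(i_1\dots i_N)}\tilde\Phi^{wk}_\sigma}$ and $Q_j$ in $\sigma$, which is the genuine payoff of the First and Second Main Theorems, is essential here: it allows me to integrate the remainder bound $c_1|\nu|^{-\tilde N} + c_2 |\nu|^{-\kappa}\sum |\nu|^{-l}$ against $\prod|\tau_{i_j}(\sigma)|^{c^{(i_j)}+\sum d^{(i_r)}-1}|\det D\delta_{i_1\dots i_N}|$ over $\epsilon < |\tau_{i_j}(\sigma)| < 1$ term by term. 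The resulting $\sigma$-integrals are bounded by products $\prod_j(-\log\epsilon)^{q_j}\max\{1,\epsilon^{c^{(i_j)}+\sum d^{(i_r)}-\kappa-l}\}$, with logarithmic factors appearing precisely when one of the exponents equals $-1$, thus at most $N$ such factors in total.

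Then I would choose $\epsilon = \mu^{-1/N}$ to balance the ${}^2I$-estimate $O(\epsilon^{N(\kappa+1)}) = O(\mu^{-(\kappa+1)})$ against the remainders from ${}^1I$, which combine into $O(\mu^{-\kappa-1}(\log\mu)^N)$. For the leading term, extending the integration of $(2\pi/\mu)^\kappa \int Q_0(\sigma)\prod |\tau_{i_j}(\sigma)|^{c^{(i_j)}+\sum d^{(i_r)}-1-\kappa}|\det D\delta_{i_1\dots i_N}|\,d\sigma$ from the annular region back to the full cube $(-1,1)^N$ costs only $O(\mu^{-\kappa-1})$ by the same exponent estimate. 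Finally I would recognize the resulting expression as the $Q_0$-term produced by Theorem \ref{thm:SP} applied to the full phase function $^{(i_1\dots i_N)}\tilde\Phi^{wk}$ on the resolution space $\tilde{\mathcal M}^{(N)}$ (now with $\sigma$ included as genuine integration variables): the explicit formula for $Q_0$ in Theorem \ref{thm:SP} reassembles into precisely the integral $\mathcal{L}^{\rho_{i_1}\dots\rho_{i_N}}_{i_1\dots i_N}$ defined in \eqref{eq:L}.

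The main obstacle is the bookkeeping in the third step: one has to identify, among the remainder terms $\mu^{-\kappa-l}\prod(-\log\epsilon)^{q_{lj}}\epsilon^{c^{(i_j)}+\sum d^{(i_r)}-\kappa-l}$, which exponents of $\epsilon$ vanish (yielding a logarithm) and which are positive or negative, and then verify that after setting $\epsilon = \mu^{-1/N}$ none of these terms exceeds the target $\mu^{-\kappa-1}(\log\mu)^N$. The logarithm power $N$ is sharp because logarithms can accumulate independently in each of the $N$ variables $\tau_{i_j}$. Beyond this, one should verify that the $\mu$-dependence of the amplitude through factors like $a_\gamma(t,\kappa_\gamma(x),\mu\eta)$ is harmless, since $a_\gamma \in \mathrm{S}^0_{phg}$ ensures $|\partial^\alpha_\eta a_\gamma(t,\kappa_\gamma(x),\mu\eta)| \leq C|\eta|^{-|\alpha|}$, so the symbol behaves as a bounded amplitude with respect to the stationary phase expansion.
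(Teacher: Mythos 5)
Your overall route is the paper's: Fubini as in \eqref{eq:fub}, the substitution $\delta_{i_1\dots i_N}$, the uniform stationary phase expansion of Theorem \ref{thm:J} (with the uniformity in $\sigma$ supplied by the two main theorems), the choice $\epsilon=\mu^{-1/N}$, and completion of the $Q_0$-term. But there is a genuine gap in your first step. You place into $^2I$ the whole region where \emph{at least one} $|\tau_{i_j}|<\epsilon$ and claim for it the trivial bound $O(\epsilon^{N(\kappa+1)})$ of \eqref{eq:I2}. That bound is only valid on the full cube $(-\epsilon,\epsilon)^N$, where each factor $|\tau_{i_j}|^{c^{(i_j)}+\sum_r d^{(i_r)}-1}\leq |\tau_{i_j}|^{\kappa}$ (note the inequality goes this way, not ``at least'') contributes a factor $\epsilon^{\kappa+1}$. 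On a mixed region, say $|\tau_{i_1}|<\epsilon$ while the other $|\tau_{i_j}|$ are of order one, the trivial bound yields only $O(\epsilon^{\kappa+1})=O(\mu^{-(\kappa+1)/N})$, which for $N\geq 2$ is far larger than the asserted remainder $O(\mu^{-\kappa-1}(\log\mu)^N)$ and need not even be $o(\mu^{-\kappa})$: for $N=2$, $\kappa=1$ it is of the same order as the leading term, and for $\kappa\geq 2$, $N=2$ it dominates it. The same defect resurfaces when you ``extend the integration back to the full cube'': the complement of the region $\{|\tau_{i_j}|>\epsilon\ \forall j\}$ in $(-1,1)^N$ is not just the small cube but also the mixed regions, and there the completion of the $Q_0$-integral costs $O(\mu^{-\kappa}\epsilon)$ whenever some exponent equals $\kappa$, again exceeding the target error.

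To close this you must use oscillation, not size, on the mixed regions. For instance, keep $^2I$ on the cube $(-\epsilon,\epsilon)^N$ exactly as in \eqref{eq:I2}, and on the rest split further according to $|\tau_{i_1}\cdots\tau_{i_N}|\lessgtr \mu^{-1}$: where the product is $<\mu^{-1}$ the bounds $|\hat J|\leq C$ and $\prod|\tau_{i_j}|^{\kappa}\leq(\prod|\tau_{i_j}|)^{\kappa}$ give $O(\mu^{-\kappa-1}(\log\mu)^{N-1})$, while where it is $\geq\mu^{-1}$ the expansion of Theorem \ref{thm:J} applies with $|\nu|\geq 1$, the worst remainder term being $\mu^{-\kappa-1}\int\prod|\tau_{i_j}|^{-1}d\tau=O(\mu^{-\kappa-1}(\log\mu)^{N})$; completing the $Q_0$-term over $\{\prod|\tau_{i_j}|<\mu^{-1}\}$ then costs $O(\mu^{-\kappa-1}(\log\mu)^{N-1})$. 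This is where the power $(\log\mu)^N$ genuinely originates. (The paper's own write-up is terse on this point as well --- its $^1I$ is taken over $\epsilon<|\tau_{i_j}|<1$ for all $j$, and only the small cube is added back at the end --- but your explicit claim that the estimate \eqref{eq:I2} covers the region ``at least one $|\tau_{i_j}|<\epsilon$'' is false as stated, and with it the proposed decomposition does not deliver the theorem.)
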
\qed

\section{Statement of the main result}
\label{sec:MR}

We can now state the main result of this paper. But before, we shall say a few words about the desingularization process. Consider  the resolution of $\mathcal{N}$ constructed in Theorem \ref{thm:desing}, and denote the global morphism induced by the local transformations \eqref{eq:40} by $\mathcal{Z}:\tilde X \rightarrow X=T^\ast M \times G$. Consider further  the local ideal $I_\Phi=(\Phi)$ generated by the phase function \eqref{eq:phase}, together with the ideal sheaf  $I_\mathcal{C}\subset \E_X$ of \eqref{eq:crit}. The derivative of $I_\Phi$ is given by $D(I_\Phi) = I_{\mathcal{C}|T^\ast Y\times G}$, 
while  by the implicit function theorem $\mathrm{Sing} \,V_\Phi \subset V_\Phi \cap \Crit(\Phi)=\Crit(\Phi)$, where $V_\Phi$ denotes the vanishing set $V_\Phi$ of $\Phi$. The desingularization process carried out in Section \ref{sec:DP} yields  a partial
monomialization of $I_\Phi$ according to  the diagram 
 \begin{displaymath}
\begin{CD} 
\mathcal{Z}^\ast (I_{\mathcal{C}})\cdot \E_{\tilde x, \tilde X}    & \quad  \supset \quad  & \mathcal{Z}^\ast (I_\Phi) \cdot \E_{\tilde x, \tilde X}  = \prod_j\sigma_j^{l_j}   \cdot\mathcal{Z}^{-1}_\ast(I_\Phi) \cdot \E_{\tilde x, \tilde X} &\quad\ni\quad & \prod_j\sigma_j^{l_j}  \cdot  \,^{(i_1\dots i_N)}\tilde \Phi^ {wk}              \\
@A {\mathcal{Z}^\ast}AA @AA {\mathcal{Z}^\ast}A\\
I_{\mathcal{C}}  &\supset & I_\Phi   & \ni &\Phi   
\end{CD}
\end{displaymath}
where $\tilde x \in \tilde X$. By Theorem \ref{thm:MT1}, $D(\mathcal{Z}^{-1}_\ast(I_\Phi))$ is a resolved ideal sheaf, and Theorem \ref{thm:II}  shows  that  the weak transforms $ \,^{(i_1\dots i_N)}  \tilde \Phi^ {wk}$ have clean critical sets. This allowed us to derive asymptotics for the integrals $I_{i_1\dots i_N}^{\rho_{i_1}\dots \rho_{i_N}}(\mu)$ in Theorem \ref{thm:9}. 
Nevertheless, it is easy to see that $\mathcal{Z}^{-1}_\ast(I_\Phi)$ is not resolved. Furthermore,  the inclusion \eqref{eq:derideal} implies  that 
$
\mathcal{Z}^{-1}_\ast (I_{\mathcal{C}|T^\ast Y\times G}) \subset D(\mathcal{Z}^{-1}_\ast (I_\Phi)).
$
 But since we do not have equality,  this results only in a partial resolution $\tilde{\mathcal{C}}$  of $\mathcal{C}$. In particular, the induced global transform $\mathcal{Z}: \tilde{\mathcal{C}} \rightarrow \mathcal{C}$ is in general not an isomorphism over the smooth locus of $\mathcal{C}$. This is because of the fact that the centers of our monoidal transformations were only chosen over $M \times G$, to keep the phase analysis of the weak transform of $\Phi$ as simple as possible. In turn, the singularities of $\mathcal{C}$ along the fibers of $T^\ast M$ were not completely resolved.
Note that in order to obtain a partial monomialization of $I_\Phi$,   we had to  construct a strong resolution of  $\mathcal{N}$  in $\M=M \times G$, and not just a resolution of the $G$-action in $M$.  
As explained in Section \ref{sec:SPRS}, such a  resolution always exists and is equivalent to a monomialization of the corresponding ideal sheaf.  But in general, it would not be explicit enough to describe the asymptotic behavior of the integrals $I(\mu)$ introduced in \eqref{int}.  {In particular, the so-called numerical data of $\zeta$ are not known a priori, which in our case are given in terms of the dimensions $c^{(i_j)}$ and $d^{(i_j)}$.}
This is the reason why we were forced to construct an explicit resolution  of $\mathcal{N}$, using as centers  isotropy bundles over unions of maximally singular orbits.

Let us now return to our departing point, that is, the asymptotic behavior of the integrals $I(\mu)$, and the proof of Weyl's law for the reduced spectral counting function $N_\chi(\lambda)$.  If $G$ acts on the chart $Y$ only with principal type $G/H_L$, we can directly apply the stationary phase theorem to obtain an expansion for $I(\mu)$. Let us therefore assume that this is not the case. We still have to examine  contributions to $I(\mu)$ coming from integrals of the form
\begin{gather*}
\begin{split}
\tilde I_{i_1\dots i_N}^{\rho_{i_1} \dots \rho_{i_N}}(\mu) = \qquad \qquad \qquad \qquad\qquad \qquad\qquad \qquad\qquad \qquad \\ \int_{M_{i_1}(H_{i_1})\times (-1,1)} \Big [ \int_{\gamma^{(i_{1})}((S_{i_1})_{p^{(i_1)}})_{i_2}(H_{i_2})\times (-1,1) } \dots \Big [  \int_{\gamma^{(i_{N-1})}((S_{i_1\dots i_{N-1}})_{p^{(i_{N-1})}})_{i_{N}}(H_{i_{N}})\times (-1,1)} \\
\Big [ \int_{\gamma^{(i_{N})}((\nu_{i_1\dots i_{N}})_{p^{(i_N)}})\times G_{p^{(i_{N})}}\times S_ \iota( \g_{p^{(i_{N})}}^\perp) \times \cdots \times \stackrel \circ D_ \iota (\g_{p^{(i_{1})}}^\perp)\times \rn }  e^{i\mu {\tau_1 \dots \tau_N} \, ^{(i_1\dots i_N)} \tilde \Phi ^{wk}}  \, a_{i_1\dots i_N}^{\rho_{i_1} \dots \rho_{i_N}} \,   \bar {\mathcal{J}}_{i_1\dots i_N}^{\rho_{i_1} \dots \rho_{i_N}} \\
  \d \xi  \d A^{(i_1)} \dots  \d A^{(i_N)}  \d h^{(i_N)} \d \tilde v^{(i_N)} \Big ]  \d \tau_{i_N} \d p^{(i_{N})} \dots  \Big ] \d \tau_{i_2} \d p^{(i_{2})} \Big ]\d \tau_{i_1} \d p^{(i_{1})}, 
\end{split}
\end{gather*}
where $\mklm{(H_{i_1}), \dots, (H_{i_N})}$ is an arbitrary totally ordered subset of non-principal isotropy types, $S_ \iota( \g_{p^{(i_{N})}}^\perp)$ is the sphere of radius $\iota>0$ in $ \g_{p^{(i_{N})}}^\perp$,
while $a_{i_1\dots i_N}^{\rho_{i_1} \dots \rho_{i_N}}$ is an amplitude which is supposed to have compact support in a system of $(\theta^{(i_1)}, \dots$, $\theta^{(i_{N-1})}, \alpha^{(i_N)})$-charts labeled by the indices $(\rho_{i_1}, \dots, \rho_{i_N})$, and
\begin{align*}
\bar {\mathcal{J}}_{i_1\dots i_N}^{\rho_{i_1} \dots \rho_{i_N}} &=\prod_{j=1}^N |\tau_{i_j}|^{c^{(i_j)}+\sum_r^j d^{(i_r)}-1}{\mathcal{J}}_{i_1\dots i_N}^{\rho_{i_1} \dots \rho_{i_N}},
\end{align*}
 ${\mathcal{J}}_{i_1\dots i_N}^{\rho_{i_1} \dots \rho_{i_N}}$ being a smooth function which does not depend on the variables $\tau_{i_j}$. Now, a computation of the $\xi$-derivatives of $\, ^{(i_1\dots i_N)} \tilde \Phi ^{wk}$ in any of the $\alpha^{(i_N)}$-charts shows that $\, ^{(i_1\dots i_N)} \tilde \Phi ^{wk}$ has no critical points  there. Consequently, repeating the arguments of the previous section, and making use of the  non-stationary phase theorem, see  \cite{hoermanderI}, Theorem 7.7.1, one  computes for large $\tilde N \in \N$ that 
 \begin{align*}
| \tilde I_{i_1 \dots i_N}^{\rho_{i_1} \dots \rho_{i_N}}(\mu)| \leq c_7 \mu^{-\tilde N} \int_{\epsilon<|\tau_{i_j}| < 1} \prod_{j=1} ^N |\tau_{i_j}|^{c^{(i_j)}+\sum_r^j d^{(i_r)}-1-\tilde N}d\tau + c_8 \epsilon^{N (\kappa+1)} \leq c_9 \max\mklm{\mu^{-\tilde N},\mu^{-\kappa-1}},
 \end{align*}
 where we took $\epsilon=\mu^{-1/N}$. Choosing $\tilde N$ large enough, we therefore conclude that
 \bqn
 | \tilde I_{i_1 \dots i_N}^{\rho_{i_1} \dots \rho_{i_N}}(\mu)| =O(\mu^{-\kappa-1}).
 \eqn
  As a consequence of this we see that, up to terms of order $O(\mu^{-\kappa-1})$,  $I(\mu)$ can be written as a sum
\begin{align}
\label{eq:65}
I(\mu)&=\sum _{N=1}^{\Lambda-1}\, \sum_{\stackrel{i_1<\dots< i_N}{ \rho_{i_1}, \dots ,\rho_{i_N}}} I_{i_1\dots i_N}^{\rho_{i_1} \dots \rho_{i_{N}}}( \mu)+\sum_{N=1}^{\Lambda-1} \, \sum_{\stackrel{i_1<\dots< i_{N-1}<L}{ \rho_{i_1}, \dots ,\rho_{i_{N-1}}}} I_{i_1\dots i_{N-1} L}^{\rho_{i_1} \dots \rho_{i_{N-1}}}( \mu),
\end{align}
where the first term is a sum over  maximal, totally ordered subsets of non-principal isotropy types, while the second term is a sum over  totally ordered subsets of non-principal isotropy types. The asymptotic behavior of the integrals $I_{i_1\dots i_N}^{\rho_{i_1}\dots \rho_{i_N}}(\mu)$ has been determined in the previous section, and using Lemma \ref{lemma:Reg} it is not difficult to see that the integrals $I_{i_1\dots i_{N-1} L}^{\rho_{i_1} \dots \rho_{i_{N-1}}}( \mu)$ have analogous asymptotic descriptions. This leads us to the following
\begin{theorem} 
\label{thm:I(mu)}
Let $M$ be a connected, closed Riemannian manifold, and $G$  a compact, connected Lie group $G$ acting isometrically and effectively on $M$. Consider the oscillatory integral
\begin{align}
I(\mu)
&= \int _{T^\ast Y}  \int_{G} e^{i\mu  \Phi(x , \xi,g) }   a( g  x,  x , \xi,g)  \d g \d(T^\ast Y)(  x,  \xi),  \qquad \mu \to +\infty,  
\end{align}
where $(\kappa,Y)$ are local coordinates on $M$,  $ \d(T^\ast Y)(  x,  \xi)$ is the canonical volume density on $T^\ast Y$, and $dg$ the volume density  on $G$ with respect to some left invariant metric on $G$, while $a \in \CT(Y \times T^\ast  Y\times G)$ is an amplitude, and
$
\Phi(x, \xi, g) =\eklm{\kappa(x) - \kappa (g x), \xi }
$.  
Then $I(\mu)$ has the asymptotic expansion 
\bqn 
I(\mu) = (2\pi/\mu)^{\kappa} \mathcal{L}_0 + O\big (\mu^{-\kappa-1}(\log \mu)^{\Lambda-1}\big ),  \qquad \mu \to +\infty.
\eqn
Here $\kappa$ is the dimension of an orbit of principal type in $M$, $\Lambda$ the maximal number of elements of a totally ordered subset of the set of isotropy types, and the leading coefficient is given by 
\bq
\label{eq:L0}
\mathcal{L}_0=\int_{\mathrm{Reg}\, \mathcal{C}} \frac { a( g  x,  x , \xi,g) }{|\det   \, \Phi''(x, \xi,g)_{N_{(x, \xi, g)}\mathrm{Reg}\, \mathcal{C}}|^{1/2}} \d(\mathrm{Reg}\, \mathcal{C})(x,\xi,g),
\eq
where $\mathrm{Reg}\, \mathcal{C}$ denotes the regular part of  $\mathcal{C}=\mklm{(x,\xi,g) \in \Omega \times G: g \cdot (x,\xi) =(x,\xi)}$, and $\d(\mathrm{Reg}\, \mathcal{C})$ the induced volume density. 
In particular, the integral over $\mathrm{Reg}\, \mathcal{C}$ exists.
\end{theorem}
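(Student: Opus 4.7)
The plan is to assemble the result from the machinery built up in Sections \ref{sec:DP}--\ref{sec:INT}. First I would fix the partition of unity subordinated to the covering \eqref{eq:covering} of $M$ together with the iterated tubular neighborhoods, producing the decomposition of $I(\mu)$ displayed in \eqref{eq:65}. This writes $I(\mu)$, up to an $O(\mu^{-\kappa-1})$ error already justified by Lemma \ref{lemma:Reg} applied on the principal stratum, as a finite sum of two kinds of contributions: the integrals $I_{i_1\dots i_N}^{\rho_{i_1}\dots \rho_{i_N}}(\mu)$ coming from the $(\theta^{(i_1)}, \dots, \theta^{(i_N)})$-charts in which the desingularization sits, and the complementary integrals $\tilde I_{i_1\dots i_N}^{\rho_{i_1}\dots \rho_{i_N}}(\mu)$ coming from the $\alpha^{(i_N)}$-charts.

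Next I would dispose of the complementary integrals. In the $\alpha^{(i_N)}$-charts, the computation of $\partial_\xi \, ^{(i_1\dots i_N)}\tilde \Phi^{wk}$ shows, as already noted above, that the weak transform has no critical points, so H\"ormander's non-stationary phase theorem (\cite{hoermanderI}, Theorem~7.7.1) gives rapid decay in the integrand; splitting the $\tau$-integration at the scale $\epsilon=\mu^{-1/N}$ as in Section \ref{sec:INT} and combining the resulting bounds with the volume estimate of Lemma \ref{lemma:kappa} yields $\tilde I_{i_1\dots i_N}^{\rho_{i_1}\dots \rho_{i_N}}(\mu)=O(\mu^{-\kappa-1})$. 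For the principal integrals, I would invoke Theorem \ref{thm:9} directly, obtaining
\[
I_{i_1\dots i_N}^{\rho_{i_1}\dots \rho_{i_N}}(\mu)=(2\pi/\mu)^{\kappa}\mathcal{L}_{i_1\dots i_N}^{\rho_{i_1}\dots \rho_{i_N}}+O\big(\mu^{-\kappa-1}(\log\mu)^{N}\big),
\]
with $\mathcal{L}_{i_1\dots i_N}^{\rho_{i_1}\dots \rho_{i_N}}$ given by \eqref{eq:L}. Since $N\leq \Lambda-1$ throughout and there are only finitely many terms, the worst logarithmic factor $(\log\mu)^{\Lambda-1}$ dominates, which accounts for the claimed remainder.

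The main obstacle, and the part I would treat most carefully, is the identification of $\sum \mathcal{L}_{i_1\dots i_N}^{\rho_{i_1}\dots \rho_{i_N}}$ with the integral \eqref{eq:L0} over $\mathrm{Reg}\,\mathcal{C}$. The idea is to use the fact that the global morphism $\mathcal{Z}:\tilde X\to X$ of Section \ref{sec:MR} restricts, on the open locus $\sigma_{i_1}\cdots\sigma_{i_N}\neq 0$, to a diffeomorphism onto its image; by \eqref{eq:XX} and \eqref{eq:CC}, the preimage $\mathcal{Z}^{-1}(\mathrm{Reg}\,\mathcal{C})$ is exactly the smooth part of $\Crit(\,^{(i_1\dots i_N)}\tilde\Phi^{wk})$ intersected with this locus. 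Pulling back along $\mathcal{Z}$, the Jacobians $\mathcal{J}_{i_1\dots i_N}^{\rho_{i_1}\dots \rho_{i_N}}$ together with the product of normal directions collapse to the volume density of $\mathrm{Reg}\,\mathcal{C}$, while the transversal Hessian of $\,^{(i_1\dots i_N)}\tilde\Phi^{wk}$ is related to that of $\Phi$ at the corresponding regular critical point by a straightforward application of Lemma~\ref{lemma:A} combined with the factorization $\Phi\circ\mathcal{Z}=\tau_{i_1}(\sigma)\cdots\tau_{i_N}(\sigma)\,^{(i_1\dots i_N)}\tilde\Phi^{wk}$ (recall that on the critical set $\tau_{i_1}\cdots\tau_{i_N}\neq 0$, so the prefactors contribute only invertible multiplicative factors that cancel against the Jacobian of $\mathcal{Z}$). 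A patch-by-patch change of variables, summed against the partition of unity whose total mass on each point of $\mathrm{Reg}\,\mathcal{C}$ is one, then yields \eqref{eq:L0}. Finally, the integral in \eqref{eq:L0} exists because $\mathrm{Reg}\,\mathcal{C}$ has finite measure in a neighborhood of $\supp a$ (being a smooth manifold of codimension $2\kappa$ covered by finitely many relatively compact charts), and the transversal Hessian is bounded away from zero on the support of the amplitude after pullback, as guaranteed by Theorem~\ref{thm:II} together with the uniform bounds on $\sigma$ established in Theorem~\ref{thm:J}.
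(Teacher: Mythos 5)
Your treatment of the decomposition \eqref{eq:65}, the non-stationary-phase estimate for the $\alpha^{(i_N)}$-chart integrals $\tilde I_{i_1\dots i_N}^{\rho_{i_1}\dots\rho_{i_N}}(\mu)$, and the appeal to Theorem \ref{thm:9} for the principal contributions all match the paper. Where you diverge is the crucial last step, the identification of the sum of the leading coefficients \eqref{eq:L} with the single integral \eqref{eq:L0} over $\mathrm{Reg}\,\mathcal{C}$, together with the assertion that this integral exists. The paper does \emph{not} do this by a change of variables along $\mathcal{Z}$: it excises an $\epsilon$-neighborhood of $\mathrm{Sing}\,\Omega$ with cut-offs $u_\epsilon$, applies the generalized stationary phase theorem directly to $I_\epsilon(\mu)$ (legitimate because the singular part of $\mathcal{C}$ lies over $\mathrm{Sing}\,\Omega$), applies the resolution machinery to the same $I_\epsilon(\mu)$, and equates the two leading coefficients by uniqueness of the first term of an asymptotic expansion; the limit $\epsilon\to 0$ is then handled by dominated convergence (Lemma \ref{lemlim}), and the existence of \eqref{eq:L0} itself is extracted afterwards by Fatou's lemma applied to a nonnegative majorant $a^+$. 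This indirect route is precisely what lets the paper avoid the Jacobian/Hessian bookkeeping you propose to do by hand.

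As written, your direct route has a genuine gap. First, the claim that the factors $\tau_{i_1}(\sigma)\cdots\tau_{i_N}(\sigma)$ ``contribute only invertible multiplicative factors that cancel against the Jacobian of $\mathcal{Z}$'' is asserted, not proved, and it is exactly the nontrivial content: on $\Crit(\,^{(i_1\dots i_N)}\tilde\Phi^{wk})$ the relation between the transversal Hessian of $\Phi$ and that of the weak transform, the factor $\bar{\mathcal{J}}=\prod_j|\tau_{i_j}|^{c^{(i_j)}+\sum_r d^{(i_r)}-1}\mathcal{J}$, and the ratio of induced densities each produce powers of the $\tau_{i_j}$, and one must check (via an inequality of the type of Lemma \ref{lemma:kappa}) that they combine into something integrable up to the exceptional divisor; they do not simply cancel. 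Second, your justification for the existence of \eqref{eq:L0} is incorrect: the transversal Hessian of $\Phi$ along $\mathrm{Reg}\,\mathcal{C}$ is \emph{not} bounded away from zero on $\supp a$ --- it degenerates as one approaches $\mathrm{Sing}\,\mathcal{C}$, which is the very reason the stationary phase theorem cannot be applied to $I(\mu)$ directly --- and finiteness of the measure of $\mathrm{Reg}\,\mathcal{C}$ near $\supp a$ does not give convergence of an improper integral with unbounded integrand. Either you carry out the full pullback computation (after which integrability follows from the finitely many compact charts upstairs), or you argue as the paper does via Fatou; the reason you give does neither.
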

\begin{remark}
Since $M$ is compact, $T^\ast M$ is a paracompact manifold, admitting a Riemannian metric. The restriction of the Riemannian metric on $T^\ast M \times G$ to $\mathrm{Reg}\, \mathcal{C}$ then induces a volume density  $d( \mathrm{Reg}\,  \mathcal{C})$ on  $\mathrm{Reg}\, \mathcal{C}$. 
Note that equation \eqref{eq:L0} in particular means that the obtained asymptotic expansion for $I(\mu)$ is independent of the explicit partial resolution we used. The amplitude $a(gx,x,\xi,g)$ might depend on $\mu$ as in the expression for $O(\mu^{n-2})$ in Theorem \ref{thm:Rt}. But as explained in the proof of Theorem \ref{thm:J}, this has no influence on the final asymptotics.
\end{remark}
\begin{proof} 
Assume that $G$ acts on $Y$ with several orbit types. 
By Theorem \ref{thm:9} and \eqref{eq:65}  one has 
\bqn 
I(\mu) = (2\pi/\mu)^{\kappa} \mathcal{L}_0 + O\big (\mu^{-\kappa-1}(\log \mu)^{\Lambda-1}\big ),  \qquad \mu \to +\infty,
\eqn
where $\mathcal{L}_0$ is given as a sum of integrals of the form \eqref{eq:L}, and similar expressions for the leading terms of the integrals $I_{i_1\dots i_{N-1} L}^{\rho_{i_1} \dots \rho_{i_{N-1}}}( \mu)$. It therefore remains to show the equality \eqref{eq:L0}.  For this, let us  introduce  certain cut-off functions for the singular part $\mathrm{Sing} \, \Omega$ of $\Omega$. Denote the Riemannian distance on   $T^\ast M$ by $|\cdot|$, and   let $K$ be a compact subset in $T^\ast M$, $\epsilon >0$. We then define
 \bqn
 (\mathrm{Sing}\,  \Omega \cap K)_{\epsilon}=\mklm{\eta \in T^\ast M : |\eta-\eta'| < \epsilon \text{ for some } \eta' \in \mathrm{Sing} \, \Omega \cap K}.
 \eqn
By using a partition of unity, one can  show the existence of a  test function $u_\epsilon \in \CT((\mathrm{Sing} \,\Omega \cap K)_{3\epsilon})$ satisfying $u_\epsilon =1$ on $(\mathrm{Sing}\,  \Omega \cap K)_\epsilon$,  see  \cite{hoermanderI},  Theorem 1.4.1. We then have the following
\begin{lemma}\label{lemlim}
Let $a \in \CT(Y \times T^\ast Y \times G)$, $K$ be a compact subset in $T^\ast M$ such that $\supp_{(x,\xi)} a \subset K$, and $u_\epsilon$ as above. Then the  limit 
\bq
\label{eq:MC}
\lim_{\eps \to 0}  \int_{\mathrm{Reg} \, \mathcal{C}}\frac{ a(gx,x,\xi,g) (1-u_\eps)(x, \xi)}{|\det  \, \Phi'' (x,\xi,g)_{|N_{(x, \xi,g )}\mathrm{Reg} \, \mathcal{C}} |^{1/2}} d(\mathrm{Reg}\,  \mathcal{C})(x,\xi,g )
\eq
exists and is equal to $\mathcal{L}_0$.
\end{lemma}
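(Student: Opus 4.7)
My plan is to split the amplitude as $a = a(1-u_\eps) + au_\eps$, write $I(\mu) = I^\eps(\mu) + I_\eps(\mu)$ accordingly, and compare two different asymptotic expansions extracted from the two pieces. Since $a(1-u_\eps)$ vanishes on the $\eps$-neighborhood of $\mathrm{Sing}\,\Omega \cap K$, every critical point $(x,\xi,g) \in \Crit(\Phi)$ in the $(x,\xi)$-support of $a(1-u_\eps)$ satisfies $\eta \in \mathrm{Reg}\,\Omega$, so by Lemma \ref{lemma:Reg} the relevant critical manifold is $\mathrm{Reg}\,\mathcal{C}$, a smooth submanifold of codimension $2\kappa$ with transversally non-degenerate Hessian. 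First I would apply Theorem \ref{thm:SP} directly to $I^\eps(\mu)$, obtaining
\bqn
I^\eps(\mu) = (2\pi/\mu)^\kappa \mathcal{L}_0^\eps + O_\eps(\mu^{-\kappa-1}),
\eqn
where $\mathcal{L}_0^\eps$ is exactly the integral appearing in \eqref{eq:MC} before passing to the limit. The constant in the remainder depends on $\eps$, but this is irrelevant since only the leading coefficient of the expansion will be used.

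Next I would apply Theorem \ref{thm:I(mu)} separately to $I(\mu)$ and to $I_\eps(\mu)$, yielding
\bqn
I(\mu) = (2\pi/\mu)^\kappa \mathcal{L}_0 + O\big(\mu^{-\kappa-1}(\log\mu)^{\Lambda-1}\big), \qquad I_\eps(\mu) = (2\pi/\mu)^\kappa \mathcal{L}_0(au_\eps) + O\big(\mu^{-\kappa-1}(\log\mu)^{\Lambda-1}\big),
\eqn
where $\mathcal{L}_0(au_\eps)$ denotes the leading coefficient produced by the resolution process, namely a finite sum of integrals of the form \eqref{eq:L} with amplitude $(au_\eps)\circ \zeta$ in place of $a\circ \zeta$. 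Since $I = I^\eps + I_\eps$ and the coefficient of $\mu^{-\kappa}$ in the joint expansion is uniquely determined, one obtains the identity $\mathcal{L}_0 = \mathcal{L}_0^\eps + \mathcal{L}_0(au_\eps)$. The conclusion of the lemma will then follow once I establish that $\mathcal{L}_0(au_\eps) \to 0$ as $\eps \to 0$.

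This last step is the main obstacle. I would handle it by dominated convergence on each compact resolved critical manifold $\Crit(^{(i_1\dots i_N)}\tilde\Phi^{wk})$: since the Hessian determinant is bounded below by transversal non-degeneracy (Theorem \ref{thm:II}) and $\mathcal{J}_{i_1\dots i_N}^{\rho_{i_1}\dots \rho_{i_N}}$ is smooth on a compactum, the integrand in \eqref{eq:L} is bounded uniformly in $\eps$ by a constant times $|u_\eps \circ \zeta|$, and is supported where $\zeta(\tilde m) \in (\mathrm{Sing}\,\Omega \cap K)_{3\eps}$. The delicate point is to verify that this support has vanishing Riemannian measure on the critical manifold as $\eps \to 0$. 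This should follow from the explicit structure exhibited in Theorem \ref{thm:MT1}: the intersection of $\Crit(^{(i_1\dots i_N)}\tilde\Phi^{wk})$ with $\zeta^{-1}(\mathrm{Sing}\,\Omega)$ is contained in the exceptional locus $\{\sigma_{i_1}\cdots \sigma_{i_N} = 0\}$, hence is a proper closed subset of positive codimension, so a tubular neighborhood of it has measure $O(\eps^\alpha)$ for some $\alpha > 0$. With this codimension fact in hand, dominated convergence yields $\mathcal{L}_0(au_\eps) \to 0$, and therefore $\mathcal{L}_0^\eps \to \mathcal{L}_0$, completing the proof.
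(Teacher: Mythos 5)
Your argument is essentially the paper's own proof, reorganized by linearity: the paper applies both the direct stationary phase expansion (justified by Lemma \ref{lemma:Reg}) and the resolution-based expansion to the single integral with amplitude $a(1-u_\eps)$, identifies the two leading coefficients by uniqueness, and then lets $\eps \to 0$ on the resolution side by bounded convergence, whereas you split $a = a(1-u_\eps) + a u_\eps$ and derive the equivalent identity $\mathcal{L}_0 = \mathcal{L}_0^\eps + \mathcal{L}_0(au_\eps)$ before showing $\mathcal{L}_0(au_\eps) \to 0$. The only point to adjust is your closing quantitative claim: the set $\{\tilde m \in \Crit(\, ^{(i_1\dots i_N)}\tilde\Phi^{wk}) : \zeta(\tilde m) \in (\mathrm{Sing}\,\Omega \cap K)_{3\eps}\}$ is the preimage of a shrinking neighborhood under a map that contracts the exceptional locus, so it is not a metric tubular neighborhood of $\{\sigma_{i_1}\cdots\sigma_{i_N}=0\}$ and an $O(\eps^\alpha)$ bound is neither clear nor needed. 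What suffices is the qualitative statement: these sets decrease, as $\eps \downarrow 0$, to the preimage of the compact set $\mathrm{Sing}\,\Omega \cap K$, which by your (correct) containment lies in $\{\sigma_{i_1}\cdots\sigma_{i_N}=0\}$ and hence has measure zero on the critical manifold because of its bundle structure over the $(\sigma, p^{(i_j)}, \tilde v^{(i_N)})$-base established in Theorem \ref{thm:MT1}; combined with the $\eps$-independent bound on the integrand coming from Theorem \ref{thm:II}, dominated convergence then gives $\mathcal{L}_0(au_\eps) \to 0$, which is precisely the Lebesgue bounded-convergence step with which the paper concludes.
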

\begin{proof}[Proof of Lemma \ref{lemlim}]
We define
\bqn
I_\eps(\mu)= \int _{T^\ast Y}  \int_{G} e^{i\mu  \Phi(x , \xi,g) }   a( g  x,  x , \xi,g)  (1-u_\epsilon)(x,\xi) \d g \d(T^\ast Y)(  x, \xi).   
\eqn
Since $(x,\xi,g) \in \mathrm{Sing}\,  {\mathcal{C}}$ implies $ (x,\xi) \in \mathrm{Sing}\,  \Omega$, a direct application of the generalized stationary phase theorem for fixed $\eps >0$ gives
\bq
\label{eq:asympt}
| I_\eps(\mu)- (2\pi /\mu)^\kappa \mathcal{L}_0(\eps) | \leq C_\eps \mu^{-\kappa-1},
\eq
where $C_\eps>0$ is a constant depending only on $\eps$, and
\bqn
\mathcal{L}_0(\eps)=  \int_{\mathrm{Reg} \,  {\mathcal{C}}}\frac{ a( g  x,  x , \xi,g)  (1-u_\epsilon)(x,\xi)}{|\det  \, \Phi'' (x, \xi, g)_{|N_{(x, \xi,g )}\mathrm{Reg}\,  {\mathcal{C}}} |^{1/2}} d(\mathrm{Reg} {\mathcal{C}})(x, \xi, g).
\eqn
On the other hand, applying our previous considerations to  $I_\eps(\mu)$ instead of $I(\mu)$, we obtain again an asymptotic expansion of the form \eqref{eq:asympt} for $I_\eps(\mu)$, with $\mu^{-\kappa-1}(\log \mu)^{\Lambda-1}$ instead of $\mu^{-\kappa-1}$, where now the first coefficient is given by a sum of integrals of the form \eqref{eq:L} with $a$ replaced by $a(1-u_\eps)$. Since the first term in the asymptotic expansion \eqref{eq:asympt} is uniquely determined, the two expressions for $\mathcal{L}_0(\eps)$ must be identical. The statement of the lemma now follows by  the Lebesgue theorem on bounded convergence.
\end{proof}
\begin{remark}
Note that the existence of the limit in  \eqref{eq:MC} has been established by  partially resolving the  singularities of the  set $\mathcal{C}$, the corresponding limit being given by $\mathcal{L}_0$.
\end{remark}
\noindent
\emph{End of proof of Theorem \ref{thm:I(mu)}}. 
 Let now $a ^+ \in \CT(Y \times T^\ast Y\times G, \R^+)$. Since one can assume that  $|u_\epsilon| \leq 1$, the lemma of Fatou implies that 
\bqn 
  \int_{\mathrm{Reg}\,  \mathcal{C}} \lim_{\eps \to 0}  \frac{ a^+( g  x,  x , \xi,g)  (1-u_\epsilon)(x,\xi)}{|\det  \, \Phi'' (x, \xi,g )_{|N_{(x, \xi,g)}\mathrm{Reg}\,  \mathcal{C}} |^{1/2}} d(\mathrm{Reg} \, \mathcal{C})(x, \xi,g)
\eqn
is mayorized by the limit  \eqref{eq:MC}, with $a$ replaced by $a^+$. Lemma \ref{lemlim} then implies that
\bqn 
  \int_{\mathrm{Reg}\,  \mathcal{C}} \frac{ a^+ (gx, x, \xi,g)}{|\det  \, \Phi'' (x, \xi,g )_{|N_{(x, \xi,g)}\mathrm{Reg} \, \mathcal{C}} |^{1/2}} d(\mathrm{Reg} \, \mathcal{C})(x, \xi,g ) < \infty.
\eqn
Choosing now $a^+$ to be equal  $1$ on a neighborhood of the support of $a$, and applying the theorem of Lebesgue on bounded convergence to the limit \eqref{eq:MC}, we obtain equation \eqref{eq:L0}. 
\end{proof}
 We shall now state the main result of this paper.
\begin{theorem}\label{thm:main}
 Let $M$ be a compact, connected, $n$-dimensional Riemannian manifold without boundary, and $G$  a compact, connected Lie group, acting effectively and isometrically on $M$. Let further  
 \bqn
 P_0: \Cinft(M) \longrightarrow \L^2(M)
 \eqn
  be an invariant, elliptic, classical pseudodifferential operator of order $m$ on $M$ with principal symbol $p(x,\xi)$, and assume that $P_0$ is positive and symmetric. Denote by $P$ its unique self-adjoint extension, and set 
\bqn
N_\chi(\lambda)=d_\chi \sum _{t \leq \lambda} \mult_\chi(t),
\eqn
 where $\mult_\chi(t)$ stands for the multiplicity of the unitary irreducible representation $\pi_\chi$ corresponding to the character $\chi \in \hat G$ in the eigenspace $E_t$ of $P$ belonging to the eigenvalue $t$. Let $\kappa$ be the dimension of a $G$-orbit of principal type, $\Lambda$ the maximal number of elements of a totally ordered subset of the set of isotropy types, and assume that $n-\kappa \geq 1$. Then \footnote{If $n-\kappa\geq 2$, the error term is slightly better, namely  $O\big (\lambda^{\frac{n-\kappa-1}m} (\log \lambda )^{\Lambda-1} \big )$.}
\bqn 
N_\chi(\lambda)= \frac{d_\chi [{\pi_\chi}_{|H}:1]}{(n-\kappa)(2\pi)^{n-\kappa}}   \mathrm{vol} \, [(\Omega \cap S^\ast M)/G] 
 \,  {\lambda} ^{\frac{n-\kappa}m }   + O\big (\lambda^{\frac{n-\kappa-1}m} (\log \lambda )^\Lambda \big ), \qquad \lambda \to +\infty,
\eqn
where  $d_\chi$ is the dimension of the irreducible representation $\pi_\chi$,  $ [{\pi_\chi}_{|H}:1]$  the multiplicity of the trivial representation in the restriction of $\pi_\chi$ to a principal isotropy group $H$, and $S^\ast M= \mklm{(x,\xi) \in T^\ast M: p(x,\xi) = 1}$, while $\Omega=\mathbb{J}^{-1}(0)$ is the zero level of the momentum map $\mathbb{J}:T^\ast M \rightarrow \g^\ast$ of the underlying Hamiltonian action.
 \end{theorem}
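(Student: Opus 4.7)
The plan is to chain together Theorem \ref{thm:Rt}, which reduces the singularity of $\hat\sigma_\chi$ at $t=0$ to integrals of the form $I(\mu)$ multiplied by $\mu^{n-1}$, with the singular asymptotics of Theorem \ref{thm:I(mu)}, and then to convert the resulting expansion into asymptotics of $N_\chi$ via a Fourier Tauberian theorem. In a first step, I would apply Theorem \ref{thm:I(mu)} chart-wise to the main term and the $O(\mu^{n-2})$ remainder in Theorem \ref{thm:Rt}; since the amplitude there involves the cut-off $\Delta_{\epsilon,1}(q(x,\eta))$ concentrating on the cosphere bundle $S^\ast M$, I would interchange the $\epsilon\to 0$ limit with the asymptotic expansion (this is legitimate because the constants $C_{\tilde N,i_1\dots i_N}$ in Theorem \ref{thm:J} were shown to be uniform in the parameters). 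The outcome will be an expansion
\bqn
\hat\sigma_\chi(\rho e^{i(\cdot)\mu}) = (2\pi)^{\kappa-n}\, d_\chi\, \rho(0)\, \mathcal{L}\, \mu^{n-\kappa-1} + O\big(\mu^{n-\kappa-2}(\log\mu)^{\Lambda}\big),
\eqn
the extra log power coming from re-applying Theorem \ref{thm:I(mu)} to the remainder estimate in Theorem \ref{thm:Rt}.

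The second step is to identify the coefficient $\mathcal{L}$ with the expected geometric quantity. After passing to the $\epsilon\to 0$ limit, Theorem \ref{thm:I(mu)} produces an integral over the regular part of $\mathcal{C}\cap (S^\ast M\times G)$ weighted by $|\det\Phi''_{|N\mathrm{Reg}\,\mathcal{C}}|^{-1/2}$. Using \eqref{eq:x}--\eqref{eq:z}, on $\mathrm{Reg}\,\mathcal{C}$ the fiber over $(x,\xi)\in \mathrm{Reg}\,\Omega$ is exactly the isotropy subgroup $G_{(x,\xi)}\sim H$, and a standard calculation of the transversal Hessian of $\Phi$ on $\mathrm{Reg}\,\mathcal{C}$ (analogous to the one in Lemma \ref{lemma:Reg}) will identify the density factor with a product of the Riemannian volume on the $G$-orbit and the Liouville density on $(\Omega\cap S^\ast M)/G$, in the spirit of Guillemin--Uribe. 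The integral over the fiber $H$ of the character $\overline{\chi(g)}$ contributes the factor $\int_H \overline{\chi(h)}\,dh = d_\chi^{-1}[\pi_{\chi|H}\colon\1]$ by Frobenius reciprocity. Summing the chart contributions and folding partitions of unity together, the leading coefficient becomes
\bqn
\mathcal{L} = d_\chi^{-1}[\pi_{\chi|H}\colon\1]\cdot\vol\big[(\Omega\cap S^\ast M)/G\big].
\eqn

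The third step is Tauberian. Since $m_\chi^Q(\mu_j)\ge 0$, the asymptotics obtained above, valid for $\rho\in \CT(-\delta,\delta)$ with $\hat\rho\ge 0$ and $\hat\rho(0)>0$, together with the rapid decrease of $\hat\sigma_\chi(\rho e^{i(\cdot)\mu})$ as $\mu\to -\infty$ and the polynomial bound $\sum_{\mu_j\le \mu}m_\chi^Q(\mu_j)=O(\mu^n)$ coming from Weyl's law for $P$, allow me to apply a Fourier Tauberian theorem of Hörmander type to deduce
\bqn
\sum_{\mu_j\le\mu} m_\chi^Q(\mu_j) = \frac{d_\chi[\pi_{\chi|H}\colon\1]}{(n-\kappa)(2\pi)^{n-\kappa}}\vol\big[(\Omega\cap S^\ast M)/G\big]\mu^{n-\kappa}+O\big(\mu^{n-\kappa-1}(\log\mu)^{\Lambda}\big).
\eqn
Since the eigenspaces of $P$ and $Q$ coincide with $\mu_j=\lambda_j^{1/m}$, and since $m_\chi^Q$ is constant on each eigenspace with total contribution $d_\chi\mult_\chi(\lambda_j)$, the left-hand side equals $N_\chi(\mu^m)$, and the substitution $\lambda=\mu^m$ yields the claimed asymptotics.

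The main obstacle I expect is the second step, the explicit identification of the coefficient $\mathcal{L}$. The asymptotic machinery (Theorems \ref{thm:I(mu)}, \ref{thm:Rt}) produces the leading term as an abstract density on $\mathrm{Reg}\,\mathcal{C}$ involving the transversal Hessian of $\Phi$; turning this into the canonical symplectic volume on $(\Omega\cap S^\ast M)/G$ together with the representation-theoretic factor $[\pi_{\chi|H}\colon\1]$ requires a careful orbit-by-orbit computation, using both a slice decomposition around a principal orbit and a factorization of $\Phi''$ into the tangential part along $G\cdot (x,\xi)$ (whose determinant is the orbit volume) and the symplectic normal part. A subsidiary technical obstacle is ensuring that the limit $\epsilon\to 0$ of the cut-off $\Delta_{\epsilon,1}$ commutes with the asymptotic expansion; this relies on the uniformity of the constants in Theorem \ref{thm:J}, which was precisely the payoff of the Main Theorems of Sections \ref{sec:MT1} and \ref{sec:MT2}.
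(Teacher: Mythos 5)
Your overall route coincides with the paper's: feed the singular asymptotics of Theorem \ref{thm:I(mu)} into the expansion of Theorem \ref{thm:Rt}, identify the leading coefficient geometrically, and finish with a Tauberian argument (the paper carries out the Tauberian step by hand, following Br\"uning, rather than invoking a H\"ormander-type theorem, but the content is the same). The substantive gap is exactly the step you flag as the main obstacle: the coefficient identification. The paper settles it by quoting Lemma 7 of \cite{cassanas-ramacher09}, which converts the integral of $\overline{\chi(g)}\,\alpha(x,\xi)\,|\det\Phi''_{|N\mathrm{Reg}\,\mathcal{C}}|^{-1/2}$ over $\mathrm{Reg}\,\mathcal{C}$ into $[\pi_{\chi|H}:\1]\int_{\mathrm{Reg}\,\Omega}\alpha\, \d(\mathrm{Reg}\,\Omega)/\mathrm{vol}\,\mathcal{O}_{(x,\xi)}$; note that the density induced on the isotropy fibre is not normalized Haar measure and that the transversal Hessian determinant has to be played off against the orbit volume, which is precisely what your slice-plus-factorization sketch would have to reprove. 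Moreover your sketch contains a concrete factor error: with normalized Haar measure $\int_H\overline{\chi(h)}\,dh=[\pi_{\chi|H}:\1]$, not $d_\chi^{-1}[\pi_{\chi|H}:\1]$, so your $\mathcal{L}$ carries a spurious $d_\chi^{-1}$; together with your prefactor $(2\pi)^{\kappa-n}$ (it should be $(2\pi)^{\kappa+1-n}$, namely $(2\pi)^{1-n}$ from Theorem \ref{thm:Rt} times $(2\pi)^{\kappa}$ from Theorem \ref{thm:I(mu)}), your intermediate expansion does not integrate to the final coefficient you assert, so the constants do not close as written.

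Two smaller points. First, your justification for interchanging $\epsilon\to 0$ with the expansion is not the right one: the uniformity proved in Theorem \ref{thm:J} is in the blow-up parameters $\sigma$, not in $\epsilon$, and the amplitude $\Delta_{\epsilon,1}(q(x,\eta))$ has derivatives of size $\epsilon^{-1-k}$, so the stationary-phase constants are certainly not uniform in $\epsilon$. What actually saves the argument (and is the paper's reasoning, cf.\ Lemma \ref{lemlim}) is that for each fixed $\epsilon$ the expansion holds with a leading coefficient independent of the chosen resolution, and these coefficients converge as $\epsilon\to 0$ by bounded convergence to the geometric limit. Second, the extra logarithm does not come from re-applying Theorem \ref{thm:I(mu)} to the $O(\mu^{n-2})$ remainder of Theorem \ref{thm:Rt} (that term only contributes $O(\mu^{n-\kappa-2})$); the expansion of $\hat\sigma_\chi$ has error $O(\mu^{n-\kappa-2}(\log\mu)^{\Lambda-1})$, and the exponent jumps to $\Lambda$ only in the case $n-\kappa=1$, from integrating $s^{-1}(\log s)^{\Lambda-1}$ in the Tauberian step --- which is why the footnote of the theorem records the sharper power $(\log\lambda)^{\Lambda-1}$ when $n-\kappa\geq 2$.
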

\begin{proof}
Let $\rho \in \CT(-\delta, \delta)$ and $\delta>0$ be sufficiently small. Theorems \ref{thm:Rt} and \ref{thm:I(mu)} together yield  
\begin{gather*}
\hat \sigma_\chi (\rho e^{i(\cdot) \mu})
=  d_\chi \rho(0) \mathcal{L} \,  ({\mu}{/2\pi} )^{n-\kappa -1}  + O\big (\mu^{n-\kappa-2}(\log \mu )^{\Lambda -1} \big ),
\end{gather*}
where 
\bqn
\mathcal{L}=  \lim_{\epsilon \to 0} \sum _\gamma \int_{\mathrm{Reg}\, \mathcal{C} } \frac { u_{\gamma,\epsilon}( x , \xi,g) }{|\det   \, \Phi''_\gamma (x, \xi,g)_{N_{(x, \xi, g)}\mathrm{Reg}\, \mathcal{C} }|^{1/2}} \d(\mathrm{Reg}\, \mathcal{C})(x,\xi,g),
\eqn
and $u_{\gamma,\epsilon}( x , \xi,g) = \overline{\chi(g)} f_\gamma(x)   \Delta_{\epsilon,1}(q( x, \xi))$.
In order to compute $\mathcal{L}$, let us note that for any  smooth, compactly supported function $\alpha$ on $\Omega  \cap T^\ast Y_\gamma$ one has the formula
\bqn
\int_{{\mathrm{Reg}} \, {\mathcal{C}}}\frac{\overline {\chi(g)}  \alpha(x,\xi)}{|\det  \, \Phi_\gamma'' (x,\xi,g)_{|N_{(x, \xi,g)}{\mathrm{Reg}} \, {\mathcal{C}}_\gamma} |^{1/2}} d({\mathrm{Reg}}  \, {\mathcal{C}})(x, \xi,g)
=[{\pi_\chi}_{|H}:1]\int_{{\mathrm{Reg}} \, \Omega} \alpha (x, \xi) \frac{d({\mathrm{Reg}}\, \Omega)(x, \xi)}{\mbox{vol }\mathcal{O}_{(x,\xi)}},
\eqn
where $H$ is a principal isotropy group, compare \cite{cassanas-ramacher09}, Lemma 7. As a consequence of this, we obtain the expression
\begin{align*}
\mathcal{L}&=   [{\pi_\chi}_{|H}:1] \lim_{\epsilon \to 0} \sum _\gamma \int_{{\mathrm{Reg}} \, \Omega}  f_\gamma(x)        \Delta_{\epsilon,1}(q( x, \xi))  \frac{d({\mathrm{Reg}}\, \Omega)(x,\xi)}{\mbox{vol }\mathcal{O}_{(x,\xi)}}\\
&=   [{\pi_\chi}_{|H}:1]   \sum _\gamma \int_{{\mathrm{Reg}} \, \Omega\, \cap \,  S^\ast M}  f_\gamma(x)    \lim_{\epsilon \to 0} \int  \Delta_{\epsilon, 1}(s)  \frac{ s ^{n-\kappa-1} \, ds }{ \mbox{vol}\, \mathcal{O}_{(x,  s \omega)}}  \,    {d({\mathrm{Reg}}\, \Omega \, \cap S^\ast M)(x,\omega)} \\
&=   [{\pi_\chi}_{|H}:1]  \sum _\gamma \int_{{\mathrm{Reg}} \, \Omega\, \cap \,  S^\ast M}  f_\gamma(x)   \,    \frac{d({\mathrm{Reg}}\, \Omega \, \cap S^\ast M)(x,\omega)}{ \mbox{vol}\, \mathcal{O}_{(x, \omega)}} \\
&= [{\pi_\chi}_{|H}:1] \mbox{vol} \, [({\mathrm{Reg}}\, \Omega \cap S^\ast M)/G].
\end{align*}
 Here we took into account that by Proposition \ref{prop:dim}, the set $\mklm{(x,\xi) \in {\mathrm{Reg}}\, \Omega: x \in {\mathrm{Sing}}\, M }$ has measure zero with respect to the induced volume form on $\mathrm{Reg}\, \Omega$, compare \cite{cassanas-ramacher09}, Lemma 3.
 Next, let 
\bqn 
N^Q_\chi(\mu)=d_\chi \sum_{t\leq \mu} \mult ^Q _\chi(t)=\sum _{\mu_j \leq \mu} m^Q_\chi (\mu_j), \qquad m^Q_\chi (\mu_j)=d _\chi  \mult ^Q _\chi (\mu_j)/ \dim E^Q_{\mu_j},
\eqn
 denote the equivariant spectral counting function of $Q= P^{1/m}$. An asymptotic description  for  $N^Q_\chi(\mu)$ can then be deduced from the one of $\hat \sigma_\chi (\rho e^{i(\cdot) \mu})$ by a  classical Tauberian argument \cite{bruening83}. Thus, let $\rho \in \CT(-\delta ,\delta)$ be such that $1=\int \hat \rho(s) \d s =2 \pi \rho(0)$. Then
\bqn 
N^Q_\chi(\mu)=\int_{-\infty} ^{+\infty} N^Q_\chi(\mu -s ) \hat \rho (s) ds + \int _{-\infty} ^{+\infty} [N^Q_\chi(\mu)-N^Q_\chi(\mu -s)] \hat \rho (s) ds= : H_\chi(\mu)+R_\chi(\mu). 
\eqn
$H_\chi(\mu)= \int_{-\infty} ^{+\infty} N^Q_\chi(s) \hat \rho (\mu -s) \, ds$ is a $\Cinft$-function, and by expressing its derivative by a Stieltjes integral one obtains
\begin{align*}
\frac{ dH_\chi}{\d \mu}(\mu) &=\int_{-\infty} ^{+\infty} \frac{\gd} { \gd \mu} \hat \rho(\mu -s) N^Q_\chi(s) \d s = - \int_{-\infty} ^{+\infty} \frac{\gd} { \gd s } \hat \rho(\mu -s)N^Q_\chi(s) \ d s  \\ & = \int_{-\infty} ^{+\infty}  \hat \rho(\mu -s) \d N^Q_\chi(s) =\sum_{j=1}^\infty m^Q_\chi(\mu_j) \hat \rho (\mu-\mu_j) = \hat \sigma_\chi (\check\rho e^{i(\cdot) \mu}),
\end{align*}
where we took into account that,  since $\sigma_\chi \in \S'(\R)$, $N^Q_\chi(\mu)$ is polynomially bounded, and $\check \rho(s) =\rho(-s)$. Now, in addition, let $\rho$ be such that $\hat \rho \geq 0$, and  $\hat \rho(s) \geq c_{10} >0$ for $|s| \leq 1$. Then, for $\mu \in \R$,
\bqn 
N^Q_\chi(\mu +1) -N^Q_\chi(\mu) \leq \sum _{|\mu - \mu_j| \leq 1} m^Q_\chi(\mu_j) \leq \frac 1 {c_{10}} \sum_{j=1}^\infty m^Q_\chi(\mu_j) \hat \rho( \mu-\mu_j) .
\eqn
  From $\hat \sigma_\chi (\check \rho e^{i(\cdot) \mu})=O(\mu^{n-\kappa -1})$  one  then infers that $R_\chi(\mu)=O(\mu^{n-\kappa -1})$ as  $1 \leq \mu \to +\infty$. On the other hand, since $\hat \sigma_\chi (\check \rho e^{i(\cdot) \mu})$ is rapidly decaying as $\mu \to -\infty$, integration gives 
  \bqn 
H_\chi(\mu)= \int_{1}^\mu \hat \sigma_\chi (\check\rho e^{i(\cdot) s}) \d s +c_{11}=\frac {d_\chi \mathcal{L}}{n-\kappa} \, ( \mu/2\pi)^{n-\kappa}   +
\begin{cases}
O\big (\mu^{n-\kappa-1}(\log \mu)^{\Lambda-1} \big ), & n-\kappa \geq 2,\\
O\big ((\log \mu)^{\Lambda} \big ), & n-\kappa=1, 
\end{cases}
 \eqn 
as $1 \leq \mu \to +\infty$, while  $R_\chi(\mu), \, H_\chi(\mu)\to 0$ as ${\mu \to -\infty} $. 
The proof of the theorem is now complete, since by the spectral theorem, $N_\chi(\lambda)=N^Q_\chi(\lambda^{1/m})$. 
\end{proof}

\section{On the spectrum of $\Gamma \setminus G$}
\label{sec:G}

As an application, we shall consider the spectrum of a discrete, uniform subgroup  $\Gamma$ of a connected, semisimple Lie group $G$ with finite center. Thus, let $\theta$ be a Cartan involution of $G$, and $\g=\k\oplus \p$ the  decomposition of the Lie algebra $\g$ of $G$ into the eigenspaces of $\theta$. Let $K$ be the analytic subgroup corresponding to $\k$, which is a maximal compact subgroup of $G$. Since $\Gamma$ is a uniform lattice, $M=\Gamma\setminus G$ is a closed manifold. 
By definition, $\theta$ is an involutive automorphism of $\g$ such that the bilinear form $\eklm{X,Y}_\theta=-\eklm{X, \theta \, Y}$ is strictly positive definite, where $\eklm{X,Y}=\tr (\ad X \circ   Y)$  denotes the Killing form on $\g$. The form $\eklm{\cdot ,\cdot }_\theta$ defines a left-invariant metric on $G$,  and by requiring that the the projection $G \rightarrow M$ is a Riemannian submersion, we obtain a Riemannian structure on $M$.  Since $\Ad(K)$ commutes with $\theta$, and leaves invariant the Killing form, $K$ acts on $G$ and on $M$ from the right  in an isometric  and effective way.
Note that the isotropy group of a point $\Gamma g\in M$ is conjugate 
to the finite group $gKg^{-1}\cap \Gamma$. Hence, all $K$-orbits in $M$ are either principal or exceptional. Since the maximal compact subgroups of $G$ are precisely the conjugates of $K$, exceptional $K$-orbits arise from elements in $\Gamma$ of finite order.   If $\Gamma$ is torsion-free, meaning that no non-trivial element $\gamma \in \Gamma$ is conjugate in $G$ to an element of $K$, there are no exceptional orbits. In this case  the action of $\Gamma $ on $G/K$ is free, and $\Gamma \setminus G/K$ becomes a smooth manifold of dimension $n-d$, where $n=\dim M$, and $d=\dim K$. As an immediate  consequence of  Theorem \ref{thm:main} we now  obtain
\begin{corollary} 
\label{cor:2}
 Let   
 $ P_0: \Cinft(\Gamma\setminus G) \rightarrow \L^2(\Gamma\setminus G)$
  be a $K$-invariant, elliptic, classical pseudodifferential operator of order $m$ on $\Gamma\setminus G$, and assume that $P_0$ is positive and symmetric. Denote by $P$ its unique self-adjoint extension, and let $
N_\chi(\lambda)$ be the reduced spectral counting function of $P$. Then, for each  $ \chi \in \hat K$, 
\bqn 
N_\chi(\lambda)= \frac{d_\chi [{\pi_\chi}_{|H}:1]}{(n-d)(2\pi)^{n-d}}   \mathrm{vol} \, [(\Omega \cap S^\ast (\Gamma \setminus G))/K] 
 \,  {\lambda} ^{\frac{n-d}m }   + O\big (\lambda^{\frac{n-d-1}m} (\log \lambda )^{\Lambda-1} \big ), \qquad \lambda \to +\infty,
\eqn
where $H\subset K$ is a principal isotropy group of the $K$-action on $\Gamma \setminus G$, and $\Lambda$ is bounded by the number of $\Gamma$-conjugacy classes of elements of finite order in $\Gamma$. 
\end{corollary}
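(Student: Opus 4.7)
The plan is to deduce the corollary as a direct application of Theorem \ref{thm:main} to the right $K$-action on $M = \Gamma \setminus G$, after verifying the hypotheses and computing the invariants $\kappa$ and $\Lambda$ explicitly. The preceding discussion already shows that $K$ acts isometrically and effectively on the closed Riemannian manifold $M$, and $P_0$ is a $K$-invariant elliptic, positive, symmetric classical pseudodifferential operator of the required kind, so Theorem \ref{thm:main} applies once $\kappa$ and $\Lambda$ are identified in terms of the Lie-theoretic and arithmetic data.

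To determine $\kappa$, I would note that the isotropy of $\Gamma g$ under the right $K$-action equals $K \cap g^{-1}\Gamma g$, the intersection of a compact and a discrete group, and hence a finite subgroup of $K$. Consequently every $K$-orbit has dimension $d = \dim K$, every orbit is either principal or exceptional, and $\kappa = d$. The principal isotropy $H$ is a finite subgroup of $K$, trivial precisely when $\Gamma$ is torsion-free, in which case $[{\pi_\chi}_{|H}:1] = d_\chi$. Substituting $\kappa = d$ and the factor $[{\pi_\chi}_{|H}:1]$ into the expansion of Theorem \ref{thm:main} already produces the leading term of the desired asymptotic formula.

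The only non-automatic ingredient is the bound on $\Lambda$, and this I expect to be the main obstacle. The isotropy types of the $K$-action are $K$-conjugacy classes of finite subgroups $F \subset K$ of the form $F = K \cap g^{-1}\Gamma g$, and a totally ordered chain of such types can be represented by a strict chain of finite subgroups $H_1 \supsetneq H_2 \supsetneq \dots \supsetneq H_\Lambda$ of $K$, with $H_1 = K \cap g^{-1}\Gamma g$ for a fixed $g \in G$ realizing the top stabilizer. Picking an element $k_i \in H_i \setminus H_{i+1}$ of finite order for each $i < \Lambda$ (and $k_\Lambda \in H_\Lambda$ if $H_\Lambda$ is non-trivial) and conjugating by $g$, I obtain torsion elements $\gamma_i := g k_i g^{-1} \in \Gamma$. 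The key claim to establish is that this assignment descends to an injection from the levels of the chain into the set of $\Gamma$-conjugacy classes of elements of finite order, yielding $\Lambda \leq \#\{\Gamma\text{-conjugacy classes of torsion elements in }\Gamma\}$. I would prove injectivity by transporting an alleged $\Gamma$-conjugation $\gamma_j = \delta \gamma_i \delta^{-1}$ back to $K$ via $h = g^{-1}\delta g \in g^{-1}\Gamma g$, and arguing that the resulting relation $k_j = h k_i h^{-1}$ between finite-order elements of $K$ inside a common conjugate of $H_1$ forces, via a normalizer analysis of the finite cyclic subgroups $\langle k_i \rangle, \langle k_j \rangle \subset H_1$, a collapse that contradicts the strict inclusion $k_i \in H_i \setminus H_{i+1}$. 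Once this bound is in place, Theorem \ref{thm:main} together with the sharper error estimate from the footnote, valid when $n - d \geq 2$, yields the claimed asymptotic expansion for $N_\chi(\lambda)$.
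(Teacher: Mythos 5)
Your first two paragraphs coincide with what the paper actually does: the stabilizer of $\Gamma g$ under the right $K$-action is $K\cap g^{-1}\Gamma g$, a finite group, so every orbit has dimension $d$, hence $\kappa=d$, all orbits are principal or exceptional, and Theorem \ref{thm:main} applies verbatim; the remainder exponent $\Lambda-1$ is indeed the footnote's sharper estimate, available because $n-d=\dim\mathfrak{p}\geq 2$ for non-compact semisimple $G$. Up to this point your route is the paper's route, which treats the corollary as an immediate consequence of Theorem \ref{thm:main} and, beyond the remark that exceptional orbits arise from torsion elements of $\Gamma$, does not spell out the bound on $\Lambda$ at all.

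The genuine gap is in your third paragraph, the claimed injection of the chain levels into $\Gamma$-conjugacy classes of torsion elements. Two steps fail. First, after writing $\gamma_j=\delta\gamma_i\delta^{-1}$ and setting $h=g^{-1}\delta g$, the element $h$ lies in $g^{-1}\Gamma g$ but in general not in $K$, so the relation $k_j=hk_ih^{-1}$ is a relation in $G$ and not a conjugation ``inside a common conjugate of $H_1$''; the announced normalizer analysis of $\langle k_i\rangle,\langle k_j\rangle$ inside the finite group $H_1$ therefore has nothing to act on. Second, and more fundamentally, even a conjugation that does take place inside $H_1$ yields no contradiction with $k_i\in H_i\setminus H_{i+1}$, because membership in the levels of a fixed chain is not a conjugation-invariant property: with $H_1\cong S_3\supsetneq H_2=\langle(12)\rangle\supsetneq\{e\}$ the admissible choice $k_1=(13)$, $k_2=(12)$ gives elements conjugate already in $H_1$; and with $H_1\cong(\mathbb{Z}/2)^2\supsetneq\mathbb{Z}/2\supsetneq\{e\}$ every admissible choice of $k_1,k_2$ consists of involutions, which may be identified by some $\delta\in\Gamma$ having no relation to the chain, so no cleverer choice of the $k_i$ can force pairwise non-conjugacy. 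Consequently the map ``level $\mapsto$ class of $gk_ig^{-1}$'' need not be injective, and the bound $\Lambda\leq$ (number of $\Gamma$-conjugacy classes of torsion elements) does not follow from the argument you sketch; any proof of it must use more than element-level data, for instance the fact that the $H_i$ are the full stabilizers along a nested family of fixed-point sets, a point which your sketch (and, admittedly, the paper's one-line derivation) leaves unaddressed. The asymptotic formula itself, with $\Lambda$ the maximal chain length of isotropy types, is unaffected by this gap.
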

\qed

Under the assumption that $\Gamma$ has no torsion, this result was derived previously by Duistermaat--Kolk--Varadarajan \cite{DKV1} for the Laplace--Beltrami operator $\Delta$ on $\L^2(\Gamma\setminus G /\,  K)\simeq \L^2(\Gamma\setminus G)^K$, i.e. in case that $\chi$ corresponds to the trivial representation. They proved this by  studying the spectrum on $\L^2(\Gamma \setminus G/ K)$ of the whole algebra $\D(G/K)$ of $G$-invariant differential operators on $G/K$, which is defined as follows.  Let $G=KAN$ be an Iwasawa decomposition of $G$, $\mathfrak{a}$ the Lie algebra of $A$, and $W$ the Weyl group of $(\g,\mathfrak{a})$. 
Since $\D(G/K)$ is  commutative, there is an orthogonal decomposition of $\L^2(\Gamma \setminus G/K)$  into finite dimensional subspaces of smooth simultaneous eigenfunctions of $\D(G/K)$. Now, each homomorphism from $\D(G/K)$ to $\C$ is precisely of the form $\chi_\mu:\D(G/K)\rightarrow \C$, where $\mu \in \mathfrak{a}^\ast_{\C}/W$. The spectrum $\Lambda(\Gamma)$ of $\D(G/K)$ on $\Gamma \setminus G/K$ is then defined as the set of all $\mu \in \mathfrak{a}^\ast_{\C}/W$ for which there exists a non-zero $\phi \in \Cinft(\Gamma \setminus G /K)$ with $D\phi=\chi_\mu(D) \phi$ for all $D \in \D(G/K)$.  The main result of \cite{DKV1} is  a description of the asymptotic growth of the tempered spectrum $\Lambda(\Gamma)_{temp}=\Lambda(\Gamma) \cap i\mathfrak{a}^\ast$, together with an estimate for the complementary spectrum $\Lambda(\Gamma)\setminus \Lambda(\Gamma)_{temp}$, using the Selberg trace formula, and the Paley--Wiener theorems of Gangolli and Harish-Chandra. From this, Weyl's law for $\Delta$ on $\L^2(\Gamma \setminus G/K)$ follows readily, since the eigenvalue of $\Delta$ corresponding to $\mu \in \Lambda_{temp}(\Gamma)$ is essentially given by $\norm{\mu}^2$. 

Let now  $ P_0: \Cinft(\Gamma\setminus G) \rightarrow \L^2(\Gamma\setminus G)$  satisfy the conditions of Corollary \ref{cor:2}, and in addition assume that it commutes with the right regular representation $R$ of $G$ on $\L^2(\Gamma\setminus G)$. Then each eigenspace of $P$ becomes a unitary $G$-module. Since $\Gamma \setminus G$ is compact, $R$ decomposes into a direct sum of irreducible representations of $G$ according to 
\bqn 
\L^2(\Gamma \setminus G) \simeq \bigoplus_{\omega \in \hat G}  m_\omega \H_\omega,
\eqn
where $\hat G$ denotes the unitary dual of $\hat G$, and $m_\omega$  the multiplicity of $\omega$ in $R$. In the same way, each eigenspace of $P$ decomposes into a direct sum of irreducible $G$-representations. Let  $\mult_\omega(t)$ be the multiplicity of $\omega\in \hat G$ in the eigenspace $E_t$ of $P$ belonging to the eigenvalue $t$, and $[\omega_{|K}:\chi]$ the multiplicity of $\chi \in \hat K$ in the $K$-representation obtained by restricting  $\omega$ to $K$. Then
\bqn 
\mult_\chi(t)= \sum_{\omega \in \hat G} \mult_\omega(t) \, [ \omega_{|K}:\chi]. 
\eqn
Thus, the study of the reduced spectral counting function $N_\chi(\lambda)$ amounts to a  description of the asymptotic multiplicities of those irreducible $G$-representations $\omega \in \hat G$ containing  a certain   $K$-type $\chi\in \hat K$. 
As a consequence of Corollary \ref{cor:2} one now deduces
\begin{theorem}
 Let $ P_0: \Cinft(\Gamma\setminus G) \rightarrow \L^2(\Gamma\setminus G)$ be a $G$-invariant, elliptic, classical pseudodifferential operator of order $m$ on $\Gamma\setminus G$, and assume that $P_0$ is positive and symmetric. Denote by  $\mult_\omega(t)$  the multiplicity of $\omega\in \hat G$ in the eigenspace $E_t$ belonging to the eigenvalue $t$ of the self-adjoint extension $P$ of $P_0$.  Then, for each $\chi \in \hat K$,
\begin{align*}
 \sum_{t\leq \lambda, \, \omega \in \hat G} \mult_\omega(t) \, [ \omega_{|K}:\chi] &= \frac{ [{\pi_\chi}_{|H}:1]}{(n-d)(2\pi)^{n-d}}   \mathrm{vol} \, [(\Omega \cap S^\ast (\Gamma \setminus G))/K] 
 \,  {\lambda} ^{\frac{n-d}m }   \\&+ O\big (\lambda^{\frac{n-d-1}m} (\log \lambda )^{\Lambda-1} \big ), \qquad \lambda \to +\infty,
\end{align*}
where $n=\dim \Gamma \setminus G$, $d=\dim K$, and  $\Lambda$ is bounded by the number of $\Gamma$-conjugacy classes of elements of finite order in $\Gamma$. 
\end{theorem}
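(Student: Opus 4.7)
The plan is to reduce the statement directly to Corollary \ref{cor:2}, the only substantive step being the bookkeeping that relates multiplicities of $K$-types to multiplicities of irreducible $G$-representations.

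First, I would observe that the hypotheses of Corollary \ref{cor:2} are satisfied. Indeed, since $K\subset G$ and $P_0$ commutes with the right regular representation $R$ of $G$ on $\L^2(\Gamma\setminus G)$, in particular $P_0$ is $K$-invariant. It is also elliptic, positive, symmetric and classical of order $m$ by hypothesis. Therefore Corollary \ref{cor:2} applies and gives, for each $\chi\in\hat K$,
\begin{equation*}
N_\chi(\lambda) = \frac{d_\chi \,[\pi_{\chi|H}:1]}{(n-d)(2\pi)^{n-d}} \,\vol[(\Omega\cap S^\ast(\Gamma\setminus G))/K] \, \lambda^{(n-d)/m} + O\bigl(\lambda^{(n-d-1)/m}(\log\lambda)^{\Lambda-1}\bigr).
\end{equation*}

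Next I would establish the multiplicity decomposition that is hinted at in the discussion preceding the theorem. Each eigenspace $E_t$ of $P$ is finite-dimensional by compactness of $\Gamma\setminus G$; because $P$ commutes with $R$, each $E_t$ is a unitary $G$-module, hence splits as a finite orthogonal direct sum $E_t \simeq \bigoplus_{\omega\in\hat G}\mult_\omega(t)\,\H_\omega$ of isotypic components. Restricting to $K$ and counting the multiplicity of an irreducible $K$-representation $\pi_\chi$ on both sides yields
\begin{equation*}
\mult_\chi(t) = \sum_{\omega\in\hat G} \mult_\omega(t)\,[\omega_{|K}:\chi],
\end{equation*}
where for each fixed $t$ only finitely many terms on the right are nonzero. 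Summing over $t\leq\lambda$ and using the definition of $N_\chi(\lambda)$ gives
\begin{equation*}
\sum_{t\leq\lambda,\,\omega\in\hat G} \mult_\omega(t)\,[\omega_{|K}:\chi] = \sum_{t\leq\lambda}\mult_\chi(t) = \frac{1}{d_\chi}\,N_\chi(\lambda).
\end{equation*}

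Finally, dividing the asymptotic expansion of $N_\chi(\lambda)$ obtained from Corollary \ref{cor:2} by $d_\chi$ yields precisely the asserted Weyl-type formula, with the same error term $O\bigl(\lambda^{(n-d-1)/m}(\log\lambda)^{\Lambda-1}\bigr)$. There is no genuine obstacle here: the main result Theorem \ref{thm:main} and its specialization Corollary \ref{cor:2} do the analytic work (resolution of singularities, phase analysis of the weak transforms, application of the stationary phase principle, and the Tauberian argument), and the present statement is obtained by recasting the $K$-isotypic counting function in terms of $G$-isotypic data via the branching identity above.
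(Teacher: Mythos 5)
Your proposal is correct and follows essentially the same route as the paper: the paper also deduces the theorem from Corollary \ref{cor:2} by decomposing each eigenspace $E_t$ into irreducible $G$-modules, invoking the branching identity $\mult_\chi(t)=\sum_{\omega\in\hat G}\mult_\omega(t)\,[\omega_{|K}:\chi]$, and then rewriting $N_\chi(\lambda)=d_\chi\sum_{t\le\lambda}\mult_\chi(t)$ so that division by $d_\chi$ gives the stated expansion. The only step you make explicit that the paper leaves implicit is the observation that $G$-invariance implies $K$-invariance, which is harmless.
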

\qed


\providecommand{\bysame}{\leavevmode\hbox to3em{\hrulefill}\thinspace}
\providecommand{\MR}{\relax\ifhmode\unskip\space\fi MR }
\providecommand{\MRhref}[2]{%
  \href{http://www.ams.org/mathscinet-getitem?mr=#1}{#2}
}
\providecommand{\href}[2]{#2}

\end{document}